%
%
%

\documentclass[envcountsame,envcountsect]{LMsvmult}


\usepackage{mathptmx}       
\usepackage{helvet}         
\usepackage{courier}        
%
\usepackage{makeidx}         
\usepackage{graphicx}        
\usepackage{multicol}        
\usepackage[bottom]{footmisc}
\usepackage{url}
\usepackage{latexsym}
\usepackage{amssymb,amsmath}
\usepackage{ascmac}
\usepackage{amsfonts}

\makeindex             




\usepackage{color}

\newtheorem{thm}{Theorem}[section]

\newtheorem{lem}[thm]{Lemma}
\newtheorem{prop}[thm]{Proposition}

\newtheorem{ex}[thm]{Example}
\newtheorem{rem}[thm]{Remark}
\newtheorem{ass}[thm]{Assumption}
\newtheorem{algo}[thm]{Algorithm}

\newcommand{\mca}{\mathcal{A}}\newcommand{\mcb}{\mathcal{B}}
\newcommand{\mcc}{\mathcal{C}}
\newcommand{\mcf}{\mathcal{F}}
\newcommand{\mcg}{\mathcal{G}}\newcommand{\mch}{\mathcal{H}}
\newcommand{\mci}{\mathcal{I}}\newcommand{\mcj}{\mathcal{J}}
\newcommand{\mcl}{\mathcal{L}}
\newcommand{\mcm}{\mathcal{M}}\newcommand{\mcn}{\mathcal{N}}
\newcommand{\mcp}{\mathcal{P}}

\newcommand{\mcs}{\mathcal{S}}
\newcommand{\mcv}{\mathcal{V}}


\newcommand{\mfp}{\mathfrak{p}}

\newcommand{\mfC}{\mathfrak{C}}


\newcommand{\mbbd}{\mathbb{D}}

\newcommand{\mbbg}{\mathbb{G}}\newcommand{\mbbh}{\mathbb{H}}

\newcommand{\mbbm}{\mathbb{M}}\newcommand{\mbbn}{\mathbb{N}}

\newcommand{\mbbr}{\mathbb{R}}
\newcommand{\mbbs}{\mathbb{S}}

\newcommand{\mbby}{\mathbb{Y}}\newcommand{\mbbz}{\mathbb{Z}}
\newcommand{\mbbrp}{\mathbb{R}_{+}}
\newcommand{\mbbzp}{\mathbb{Z}_{+}}


\newcommand{\mbT}{\mathbf{T}}

\newcommand{\al}{\alpha} \newcommand{\lam}{\lambda} \newcommand{\ep}{\epsilon} 
\newcommand{\vp}{\varphi}  \newcommand{\del}{\delta}
  \newcommand{\sig}{\sigma}
\newcommand{\Lam}{\Lambda} \newcommand{\gam}{\gamma} \newcommand{\Gam}{\Gamma}
\newcommand{\dd}{\Delta^{n}}  \newcommand{\Del}{\Delta}
\newcommand{\Sig}{\Sigma}

\newcommand{\p}{\partial}  

\newcommand{\cil}{\xrightarrow{\mcl}} 
\newcommand{\cip}{\xrightarrow{p}} 


 \newcommand{\argmax}{\mathop{\rm argmax}}

\newcommand{\diag}{\mathop{\rm diag}} 
 \newcommand{\sgn}{\mathop{\rm sgn}}


\def\nn{\nonumber}

\def\sumj{\sum_{j=1}^{n}}
\def\var{{\rm var}}

\def\tcm#1{\textcolor{magenta}{#1}}

\def\ds#1{\displaystyle{#1}}

\def\idl{infinitely divisible distribution}
\def\lp{L\'evy process}
\def\lm{L\'evy measure}
\def\ld{L\'evy density}

\def\cadlag{c\`adl\`ag}



\allowdisplaybreaks

\begin{document}

\begingroup

\title*{
Parametric estimation of {\lp es}
}
\titlerunning{
Parametric estimation of {\lp es}
}
\author{Hiroki Masuda}
\institute{
Hiroki Masuda \at 
Institute of Mathematics for Industry, Kyushu University,
744 Motooka, Nishi-ku, Fukuoka 819-0395, Japan\\
\email{hiroki@imi.kyushu-u.ac.jp}
}

\numberwithin{equation}{section} \smartqed

\maketitle

\abstract{
The main purpose of this chapter is to present some theoretical aspects of parametric estimation of {\lp es} 
based on high-frequency sampling, with a focus on infinite activity pure-jump models. 
Asymptotics for several classes of explicit estimating functions are discussed. 
In addition to the asymptotic normality at several rates of convergence, 
a uniform tail-probability estimate for statistical random fields is given.
As specific cases, we discuss method of moments for the stable {\lp es} in much greater detail, 
with briefly mentioning locally stable {\lp es} too. 
Also discussed is, due to its theoretical importance, 
a brief review of how the classical likelihood approach works or does not, 
beyond the fact that the likelihood function is not explicit.
\\[5mm]
\textbf{AMS Subject Classification 2000:}\\
\begin{tabular}{ll}
Primary: & \quad   60F05, 62F12, 60G51, 60G52.\\
Secondary: & \quad 60G18.
\end{tabular}
\keywords{Asymptotic normality, Convergence of moments, Estimating function, Fisher information matrix, 
L\'evy process, Likelihood inference, (Locally) Stable L\'evy process.}
}






\section{Introduction}

{\lp es} form the basic class of continuous-time stochastic processes, 
serving as building blocks to make up more general models, 
such as a solution to a L\'evy driven stochastic differential equation. 
An estimation paradigm with a universal implementable manner is, however, hard or impossible to be available 
because of the diversity of the driving {\lm}, and this has been attracting much interest from statisticians. 
The main objective of this chapter is to present several asymptotic results concerning 
parametric estimation of {\lp es} observed at high frequency. 
Explicit case studies will be presented along topics.

\medskip

Throughout, we are given an underlying probability space $(\Omega,\mcf,P)$ 
endowed with a real-valued L\'evy process $X=(X_{t})_{t\in\mbbrp}$. 
The expectation operator is denoted by $E$. 
Let $\vp_{\xi}$ and $\mcl(\xi)$ stand for the characteristic function 
and the distribution of a random variable $\xi$, respectively. 
We recall that there is one-to-one distributional correspondence $\mcl(X_{1})=F$ 
between a {\lp} $X$ and an {\idl} $F$ on $\mbbr$. 
The celebrated L\'evy-Khintchine formula for a {\lp} says that 
for each {\lp} there uniquely corresponds a generating triplet $(b,c,\nu)$ 
associated with the truncation function being the identity on $[-1,1]$ and $0$ otherwise:
\begin{equation}
\frac{1}{t}\log\vp_{X_{t}}(u)=ibu-\frac{1}{2}cu^{2}+\int(e^{iuz}-1-iuz\mathbf{1}_{U}(z))\nu(dz),\quad
t\in\mbbrp,
\label{hm:lk}
\end{equation}
where $b\in\mbbr$ is the constant trend, $c\ge 0$ is the variance of the Gaussian part, 
and $\nu$ is the L\'evy measure, namely, a $\sigma$-finite measure on $(\mbbr,\mcb(\mbbr))$ such that 
$\nu(\{0\})=0$ and $\int(|z|^{2}\wedge 1)\nu(dz)<\infty$, and finally 
$\mathbf{1}_{U}$ stands for the indicator function of the set $U:=\{z; |z|\le 1\}$. 
We may always deal with c\`adl\`ag (right-continuous and having left-hand side limits) 
modifications of $X$, implying that $X$ a.s. takes values in the space 
$\mbbd(\mbT):=\{x:\mbT\to\mbbr;~\text{$t\mapsto x_{t}$ is c\`adl\`ag.}\}$, $\mbT\subset\mbbrp$, 
equipped with the Skorokhod topology; hence we may always deal with separable {\lp}, 
so that, e.g., probabilities of union or intersection of seemingly uncountable corrections of events can be defined. 
The generating triplet uniquely determines the law of $X$ on the space $\mbbd(\mbT)$. 
As usual, we will denote by $\Del X_{t}:=X_{t}-\lim_{s\uparrow t}X_{s}$ the (directed) jump size of $X$ at time $t$. 
If $\nu(\mbbr)=\infty$ (resp. $\nu(\mbbr)<\infty$), 
then $X$ is said to be infinite-activity (resp. finite-activity), meaning that 
sample paths of $X$ a.s. have infinitely (resp. finitely) many jumps over each finite time interval. 
We refer to \cite{Ber96}, \cite{JacShi03}, and \cite{Sat99} 
for systematic and comprehensive accounts of {\lp es}.

We are concerned with parametric estimation of $\mcl(X)$. 
We denote by $\theta\in\Theta$ the finite-dimensional parameter of interest, 
and by $(P_{\theta}; \theta\in\Theta)$ the family of the induced image measures of $X$; 
in general, there may be nuisance elements of $\mcl(X)$, so that $\theta$ may not completely determine $(b,c,\nu)$. 
Throughout this chapter, we assume that $\Theta$ is a bounded convex domain in $\mbbr^{p}$ unless otherwise mentioned; 
the boundedness may or may not necessary according to each situation, 
but we put it as a standing assumption just for convenience. 
The closure of $\Theta$ will be denoted by $\overline{\Theta}$.

There do exist many {\idl s} admitting a closed-form density, 
and we indeed have a wide variety of $X$ with explicit density of $\mcl(X_{1})$. 
Even so, however, the likelihood function of $\mcl(X_{t})$ for $t\ne 1$ may not be necessarily explicit 
due to the lack of the reproducing property of $\mcl(X_{1})$; 
one such an example is the Student-$t$ {\lp} \cite{Cuf07}, where $\mcl(X_{1})$ is Student-$t$ hence fully explicit, 
but $\mcl(X_{t})$ for $t\ne 1$ is not. Also, as in the case of the stable {\lp es} (see Section \ref{hm:sec_slp}), 
it can happen that the {\lm} is given in a simple closed form 
while the transition density of $\mcl(X_{t})$ is intractable for any $t$.

Although it has been a long time since the rigorous probabilistic structure of L\'evy processes has been clarified, 
we do not have any absolute way to perform statistical estimation for its general class. 
The problem exhibits rather different features and solutions 
according as what the true data-generating triplet is: for example, 
the concrete structure of $\nu$ may essentially affect estimation of the drift $b$; 
and also, coexistence of both diffusion and jump parts can make estimation much more difficult 
than in continuous or purely-discontinuous cases.

More important from a statistical viewpoint is the structure of available data, that is to say, 
how much one can observe $X$'s sample path. 
We can single out the following two cases as basic situations in developing a large-sample theory.

\begin{itemize}
\item Having {\it continuous-time data} $(X_{t})_{t\in[0,T]}$ with $T\to\infty$ should be ideal, 
in which case we may estimate some parameters without error, rendering the statistical theory void.

\item Suppose that we observe $X$ at discrete-time points $(t^{n}_{j})_{j=0}^{n}\subset[0,\infty)$ such that
\begin{equation}
0\equiv t^{n}_{0}<t^{n}_{1}<\dots<t^{n}_{n}=:T_{n}
\label{hm:discsamp}
\end{equation}
for each $n\in\mbbn$. 
Then, we will refer to the sampling scheme as {\it low-frequency sampling} if 
the sampling intervals $\dd_{j}t:=t^{n}_{j}-t^{n}_{j-1}$ satisfy that
\begin{equation}
\liminf_{n\to\infty}\min_{1\le j\le n}\dd_{j}t>0,
\label{hm:LF_sampling}
\end{equation}
which entails that $T_{n}\to\infty$. 
In contrast, {\it high-frequency sampling} means that we have
\begin{equation}
h_{n}:=\max_{1\le j\le n}\dd_{j}t\to 0,
\label{hm:HF_sampling}
\end{equation}
as $n\to\infty$, and in this case the terminal sampling time $T_{n}$ may or may not tend to infinity. 
In either case, we are led to consider estimation based on the infinitesimal array of independent random variables 
$(\dd_{j}X)_{j=1}^{n}$, where
\begin{equation}
\dd_{j}X=X_{t^{n}_{j}}-X_{t^{n}_{j-1}}
\nonumber
\end{equation}
denotes the $j$th increments of $X$. 
Asymptotic results can become drastically different from the case of low-frequency sampling; 
in particular, best possible convergence rate of an estimator can differ for each component. 
For brevity, we assume that 
\begin{equation}
\text{$\dd_{j}t\equiv h_{n}$ for $j\le n$}\quad\text{and}\quad\text{$\liminf_{n\to\infty}~T_{n}>0$}
\label{hm:addeqnum-1}
\end{equation}
whenever discrete-time sampling is concerned. 
The equidistance of sampling could be weakened if we render 
$\dd_{1}t,\dots,\dd_{n}t$ asymptotically not so deviating from one another 
with a suitable control of balance between behaviors of $T_{n}$, $h_{n}$, and $\min_{1\le j\le n}\dd_{j}t$.

\end{itemize}

Our main interest is in parametric estimation of pure-jump {\lp es} having some nice explicit features, 
based on high-frequency sampling; the cases of continuous-time data and low-frequency sampling 
will be mentioned only briefly. 
Needless to say, ``high-frequency'' of data in statistical model is a relative matter, 
for there is no universal way to associate model time with actual time; 
one may put one day, one minute, one second, and so on to $t=1$, and more concretely, 
daily data over three years can be as high-frequency as one thousand intraday data over one day. 

\medskip

Here is the outline of this chapter. 
Section \ref{hm:sec_la} overviews some basic aspects of the maximum-likelihood approach 
for both continuous-time and discrete-time data. 
When attempting parametric inference for the unknown parameter $\theta\in\Theta$ 
based on a realization of $(X_{t})_{t\in\mbT}$, $\mbT\subset\mbbrp$, 
the maximum-likelihood estimator (MLE) is theoretically the first to be looked at, 
although it requires full specification of $P_{\theta}$ 
and may be fragile against model misspecification. 
Since the likelihood function directly depends on $\mcl(X_{t};t\in\mbT)$, 
it takes different forms according as structure of available data. 
Specific case studies given in Section \ref{hm:sec_la} are based on \cite{KawMas11}, \cite{KawMas13}, and \cite{Mas09}.

In Sections \ref{hm:sec_slp}, we will look at the non-Gaussian stable {\lp es} in much greater details. 
Although the stable {\lp es} has the intrinsic scaling property allowing us to 
make several estimates of probabilities and expectations tractable, 
the transition density does not have a closed form except for a few special cases. 
More severely, as long as the joint estimation of the stable-index and the scale parameters are concerned, 
the asymptotic Fisher information matrix will turn out to be singular at {\it any} admissible parameter value. 
Nevertheless, we can provide some practical moment estimators, 
which are asymptotically normally distributed with non-singular asymptotic covariance matrices. 
The contents of this section are based on \cite{Mas09_slp} and \cite{Mas10_proc}. 
In Section \ref{hm:sec_lslp}, we will briefly mention the locally stable {\lp es}, 
a far-reaching extension of the stable {\lp es}.

Section \ref{hm:sec_pldi} presents a somewhat general framework for deducing 
the convergence of moments of scaled $M$-estimators, which plays a crucial role in 
asymptotic analysis concerning the expectation of an estimator-dependent random sequence, 
such as the mean squared prediction and the AIC-like bias correction in model assessment. 
Thanks to the polynomial type large deviation inequality developed in \cite{Yos11}, 
we will give a set of easy sufficient conditions for the uniform tail-probability estimate 
for a class of statistical random fields associated with possibly multi-scaling $M$-estimation.

Finally, we conclude in Section \ref{hm:sec_cr} with a few remarks on related issues.


\medskip

Throughout this chapter, we will use the following basic notation. 
We denote by $\cil$ and $\cip$ the convergences in law and probability, respectively. 
For a multilinear form $M=\{M^{(i_{1}i_{2}\dots i_{K})}: i_{k}=1,\dots,d_{k}; k=1,\dots,K\}
\in\mbbr^{d_{1}}\otimes\dots\otimes\mbbr^{d_{K}}$ 
and variables $u_{k}=(u_{k}^{(i)})_{i\le d_{k}}\in\mbbr^{d_{k}}$, we write
\begin{equation}
M[u_{1},\dots,u_{K}]=\sum_{i_{1}=1}^{d_{1}}\dots
\sum_{i_{K}=1}^{d_{K}}M^{(i_{1}i_{2}\dots i_{K})}u_{1}^{(i_{1})}\dots u_{K}^{(i_{K})}.
\nonumber
\end{equation}
Sometimes we will write $a^{(i)}$ (resp. $A^{(ij)}$) for 
the $i$th (resp. $(i,j)$th) entry of a vector $a$ (resp. matrix $A$). 
We denote by $\p_{a}^{m}$ the $m$th partial differential operator with respect to a multidimensional variable $a$, 
and by $I_{r}$ the $r\times r$-identity matrix. For a matrix $A$, $A^{\top}$ denotes its transpose. 
We write $x_{n}\lesssim y_{n}$ if there exists a generic positive constant $C$, possibly varying from line to line, 
such that $x_{n}\le Cy_{n}$ for every $n$ large enough. 
We also write $f(\cdot)\sim g(\cdot)$ for two deterministic functions $f$ and $g$ 
if the ratio $f/g$ tends to $1$. 
The map $x\mapsto\mathrm{sgn}(x)$ takes values $1,0,-1$ according as $x>0$, $=0$, $<0$, respectively. 
Given two measures $\mu_{1}$ and $\mu_{2}$ on some measurable space, we write $\mu_{1}\sim\mu_{2}$ if they are equivalent, 
i.e., if they have the same null sets. 
Finally, we will write $N_{p}(\mu,\Sig)$ and $\phi(\cdot;\mu,\Sig)$ for 
the $p$-variate normal distribution and the probability density 
with mean vector $\mu$ and covariance matrix $\Sig$, respectively.


\section{Classical maximum-likelihood approach}\label{hm:sec_la}

\subsection{Local asymptotics for continuous-time and low-frequency sampling}\label{hm:sec_conti-lf}

\subsubsection{Continuous-time data}\label{hm:sec_conti.obs}

We denote by $P_{\theta}^{T}$ the restriction of $P_{\theta}$ to $\mcf^{X}_{T}$, 
where $(\mcf^{X}_{t})_{t\in\mbbrp}$ is the natural filtration of $X$, 
namely, the smallest $\sig$-field to which $X$ is adapted. 
The asymptotics here is taken for $T\to\infty$. 
No one doubts this situation is ``ideal''; 
in particular, we can completely distinguish the continuous part and possibly infinite-activity jump part. 
Although practically irrelevant and far from being realistic, 
the statistical theory based on continuous-time data is fruitful in its own right 
and enables us to get some insight into what we can do best for estimating $\mcl(X)$. 
In particular, we will see that continuous-time data $(X_{t})_{t\in[0,T]}$ 
may allow us to pinpoint some (not necessarily all!) parameter components ``path-wise'', 
so that no statistics is required. 
We refer to the review article \cite[Sections 2.4 and 4.1]{Jac09} for related discussions.


\medskip

We need a criterion for the equivalence between $P^{T}_{\theta}$ and $P^{T}_{\theta'}$, 
so as to make the likelihood ratio (the Radon-Nikodym derivative) $dP^{T}_{\theta'}/dP^{T}_{\theta}$ well-defined. 
The equivalence can be effectively characterized 
in terms of the generating triplet, say $(b_{\theta},c_{\theta},\nu_{\theta})$ (recall \eqref{hm:lk}):

\begin{thm}
Given any $T>0$ and $\theta,\theta'\in\Theta$, we have $P_{\theta}^{T}\sim P_{\theta'}^{T}$ 
if and only if both of the following conditions hold.
\begin{description}
\item[(a)] $c_{\theta}=c_{\theta'}$.
\item[(b)] $\nu_{\theta}\sim\nu_{\theta'}$ with the function
\begin{equation}
\zeta(z;\theta,\theta'):=\frac{d\nu_{\theta'}}{d\nu_{\theta}}(z)
\label{hm:zeta_def}
\end{equation}
satisfying that 
\begin{itemize}
\item $\displaystyle{b_{\theta'}=b_{\theta}+\int_{|z|\le 1}z\{\zeta(z;\theta,\theta')-1\}
\nu_{\theta}(dz)+\gam\sqrt{c_{\theta}}}$ for some $\gam\in\mbbr$,
\item $\displaystyle{\int\left(1-\sqrt{\zeta(z;\theta,\theta')}\right)^{2}
\nu_{\theta}(dz)<\infty}$.
\end{itemize}
\end{description}
\label{hm:thm_coac}
\end{thm}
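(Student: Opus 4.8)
The plan is to base the argument on the L\'evy--It\^o decomposition and on the independence, under each $P_{\theta}$, of the continuous Gaussian part of $X$ and its jump measure $\mu^{X}$. Writing
\[
X_{t}=b_{\theta}t+\sqrt{c_{\theta}}\,W_{t}+\int_{0}^{t}\!\!\int_{|z|\le1}z\,\widetilde{\mu}^{X}(ds,dz)+\int_{0}^{t}\!\!\int_{|z|>1}z\,\mu^{X}(ds,dz),
\]
with $W$ a standard Brownian motion independent of $\mu^{X}$ (a Poisson random measure on $[0,T]\times\mbbr$ of intensity $ds\,\nu_{\theta}(dz)$, and $\widetilde{\mu}^{X}=\mu^{X}-ds\,\nu_{\theta}(dz)$), one sees that $\mcl_{P_{\theta}}(X)$ on $\mbbd([0,T])$ is the pushforward of $\mcl(W)\otimes\mcl(\mu^{X})$ by a fixed measurable map that depends only on the constant $b_{\theta}$ and on $(c_{\theta},\nu_{\theta})$. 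Since $c_{\theta}$ (through the continuous quadratic variation) and $\mu^{X}$ are Borel functionals of the path, I expect to reduce $P_{\theta}^{T}\sim P_{\theta'}^{T}$ to: equality of the Gaussian variances; equivalence of the two Poisson random measures with finite Hellinger integral; and a compatibility relation for the drifts.

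For \emph{sufficiency}, assuming (a) and (b), I would first check that the drift correction $\int_{|z|\le1}z\{\zeta-1\}\nu_{\theta}(dz)$ (here $\zeta=\zeta(z;\theta,\theta')$) converges absolutely, by Cauchy--Schwarz from $\int_{|z|\le1}|z|^{2}\nu_{\theta}(dz)<\infty$ and $\int(1-\sqrt{\zeta})^{2}\nu_{\theta}(dz)<\infty$. I would then exhibit the candidate Radon--Nikodym density on $\mcf^{X}_{T}$ as the product of the Cameron--Martin exponential $\exp(\gam W_{T}-\tfrac12\gam^{2}T)$ for the Brownian part and the Dol\'eans--Dade exponential
\[
\exp\!\left(\int_{0}^{T}\!\!\int(\log\zeta)\,\widetilde{\mu}^{X}(ds,dz)-T\!\int(\zeta-1-\log\zeta)\,\nu_{\theta}(dz)\right)
\]
for the jump part; the Hellinger condition is precisely what makes the latter a uniformly integrable martingale on $[0,T]$. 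Computing the characteristic function of $X$ under the tilted measure via \eqref{hm:lk} should identify it as a {\lp} with triplet $(b_{\theta'},c_{\theta'},\nu_{\theta'})$, so that the tilted measure is $P_{\theta'}^{T}$ and $P_{\theta'}^{T}\ll P_{\theta}^{T}$. Since the conditions are symmetric in $\theta,\theta'$ — note $\int(1-\sqrt{1/\zeta})^{2}\nu_{\theta'}(dz)=\int(1-\sqrt{\zeta})^{2}\nu_{\theta}(dz)$ — the reverse absolute continuity follows as well, giving $P_{\theta}^{T}\sim P_{\theta'}^{T}$.

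For \emph{necessity}, assume $P_{\theta}^{T}\sim P_{\theta'}^{T}$. First, $\langle X^{c}\rangle_{T}$ equals $c_{\theta}T$ a.s.\ under $P_{\theta}$ and $c_{\theta'}T$ a.s.\ under $P_{\theta'}$, so equivalence forces (a). Next, for a Borel set $B$ bounded away from $0$, $\mu^{X}([0,T]\times B)$ is Poisson with mean $T\nu_{\theta}(B)$ under $P_{\theta}$; if $\nu_{\theta}(B)=0<\nu_{\theta'}(B)$ this count would vanish $P_{\theta}$-a.s.\ yet be positive with positive $P_{\theta'}$-probability, a contradiction, so (after a countable-exhaustion argument) $\nu_{\theta}\sim\nu_{\theta'}$ and $\zeta$ is well defined. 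Since $\mu^{X}$ is a Borel functional of the path, the laws of the Poisson random measure $\mu^{X}$ under $P_{\theta}$ and $P_{\theta'}$ are equivalent; the dichotomy for Poisson random measures — two of them with equivalent $\sigma$-finite intensities are either equivalent or mutually singular, the former exactly when the Hellinger integral is finite (cf.\ \cite{Sat99,JacShi03}) — then yields $\int(1-\sqrt{\zeta})^{2}\nu_{\theta}(dz)<\infty$. Finally, tilting $P_{\theta}^{T}$ by the jump-part density above with $\gam=0$ gives an equivalent law under which $X$ is a {\lp} with triplet $(b_{\theta}+\int_{|z|\le1}z(\zeta-1)\nu_{\theta}(dz),\,c_{\theta},\,\nu_{\theta'})$; comparing it with $P_{\theta'}^{T}$ reduces matters to two L\'evy processes sharing Gaussian variance and L\'evy measure, which are equivalent if and only if their drifts differ by a multiple of $\sqrt{c_{\theta}}$ (Cameron--Martin when $c_{\theta}>0$; exact equality, recoverable path-wise, when $c_{\theta}=0$) — precisely the drift condition in (b).

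The hard part will be the Poisson-random-measure dichotomy together with the truncation-function bookkeeping: one must confirm that the correction $\int_{|z|\le1}z(\zeta-1)\nu_{\theta}(dz)$ is absolutely convergent and that the $\log\zeta$-exponential is genuinely a uniformly integrable martingale on $[0,T]$, both resting on combining the Hellinger bound with $\int_{|z|\le1}|z|^{2}\nu_{\theta}(dz)<\infty$ via Cauchy--Schwarz. The remaining ingredients — measurability of the path functionals, the factorization into a product measure, and the symmetry of the stated conditions — are soft, and the identification of the tilted law is a routine characteristic-function computation.
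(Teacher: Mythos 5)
The paper itself does not prove Theorem \ref{hm:thm_coac}; it defers entirely to \cite[Theorem 33.1]{Sat99} (with further pointers to \cite{AkrJoh81}, \cite{JacShi03}, \cite{KabLipShi80}, \cite{Sat00}). Your plan is, in substance, the classical proof found in those references: L\'evy--It\^o decomposition, independence of the Gaussian part and the jump measure, Cameron--Martin tilting for the Brownian component, a Dol\'eans--Dade/Esscher tilt for the Poisson random measure governed by the Hellinger integral $\int(1-\sqrt{\zeta})^{2}\nu_{\theta}(dz)$, the Kakutani-type equivalence/singularity dichotomy for Poisson random measures in the converse direction, and pathwise recovery of $c_{\theta}$ and of $\nu_{\theta}$-null sets. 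All of the main steps are sound, and your symmetry observation $\int(1-\sqrt{1/\zeta})^{2}\nu_{\theta'}(dz)=\int(1-\sqrt{\zeta})^{2}\nu_{\theta}(dz)$ correctly disposes of the reverse absolute continuity.

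One technical point needs repair. The exponential you display for the jump part, with compensator term $T\int(\zeta-1-\log\zeta)\,\nu_{\theta}(dz)$ and stochastic integral $\int\!\!\int(\log\zeta)\,\widetilde{\mu}^{X}(ds,dz)$, is not well defined under the stated hypotheses alone: writing $\phi=\log\zeta$, the Hellinger condition controls $\phi^{2}$ near $\phi=0$ and $e^{\phi}$ for large positive $\phi$, but it does \emph{not} make $|\phi|$ integrable against $\nu_{\theta}$ on the set where $\zeta$ is very small (e.g.\ $\nu_{\theta}=\sum_{n}2^{-n}\ep_{x_{n}}$ with $\zeta(x_{n})=e^{-4^{n}}$ satisfies the Hellinger bound while $\int|\log\zeta|\,\nu_{\theta}(dz)=\infty$). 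The correct density compensates only the truncated part $\phi\mathbf{1}_{\{|\phi|\le1\}}$ and treats the jumps with $|\phi|>1$ as an absolutely convergent sum --- this works because $|e^{\phi}-1-\phi\mathbf{1}_{\{|\phi|\le1\}}|\lesssim(e^{\phi/2}-1)^{2}$ on all of $\mbbr$ --- or, as in Sato, one defines the exponent as a principal-value limit over $\{|z|>\ep\}$. You half-anticipate this under ``truncation-function bookkeeping,'' but as written the formula is the one place where the argument would break; once it is replaced by the truncated version, the rest of your outline (including the Cauchy--Schwarz bound for the drift correction, which also uses $\int_{|z|\le1}|z|^{2}\nu_{\theta'}(dz)<\infty$) goes through.
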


If $c_{\theta}=c_{\theta'}>0$, then we may have much wider possible choices for 
$b_{\theta}$, $b_{\theta'}$, $\nu_{\theta}$ and $\nu_{\theta'}$. 
See \cite[Theorem 33.1]{Sat99} for the proof of Theorem \ref{hm:thm_coac}; 
see also \cite[Theorem 4.1]{AkrJoh81}, \cite[Theorem IV.4.32]{JacShi03}, \cite[Theorem 15]{KabLipShi80}, and \cite{Sat00}.

\medskip

When the absolute continuity fails, we may identify some parameters without statistical error. 
If $P_{\theta}^{T}$ and $P_{\theta'}^{T}$ for each $T$ are mutually singular whenever $\theta\ne\theta'$, 
then, given a specific value $\theta_{0}$ of $\theta$ we may pathwise determine 
whether or not the true value equals $\theta_{0}$. 

\medskip

\begin{ex}{\rm 
Consider the model $X_{t}=bt+\sig J_{t}$ with $\theta=(b,\sig,\beta)\in\mbbr\times(0,\infty)\times(0,2]$, 
where $J$ is the $\beta$-stable {\lp} with $\log\vp_{J_{t}}(u)=-|u|^{\beta}$. 
Especially if $\beta\in(0,2)$, it follows from Theorem \ref{hm:thm_coac} that 
$P^{T}_{\theta}\sim P^{T}_{\theta'}$ if and only if $\theta=\theta'$ 
(the integrals in the conditions of the theorem should be zero), 
rendering that continuous-time data leads to no sensible result for {\it all} the parameters involved. 
Thus likelihood based arguments lose their meaning, 
while the statistical problem still {\it a priori} makes sense. 
In Section \ref{hm:sec_slp} we will look at the stable {\lp} in more detail, 
but let us here illustrate a possible error-free identification 
in the simple setting where $b=0$ and $\beta\in(0,2)$ are known, so that $\theta=\sig>0$. 
Fix any constant $p\in(-1/2,\beta/2)$ so that $E(|J_{1}|^{2p})<\infty$, and write $\mu_{\beta}(p)=E(|J_{1}|^{p})$. 
Having observed a sample path $(X_{t})_{t\in[0,T]}$, we can compute
\begin{equation}
\hat{\theta}_{T,n}:=\left\{\frac{\mu_{\beta}(p)^{-1}}{T^{p/\beta}2^{n(1-p/\beta)}}
\sum_{j=1}^{2^{n}}|X_{jT2^{-n}}-X_{(j-1)T2^{-n}}|^{p}\right\}^{1/p}
\nonumber
\end{equation}
for {\it any} $n$, hence $\lim_{n}\hat{\theta}_{T,n}$ too as soon as it exists. 
Thanks to the scaling property of the stable {\lp}, 
we can see that $E_{\theta}^{T}(\hat{\theta}_{T,n}^{p})=\theta^{p}$ and 
$\var_{\theta}^{T}(\hat{\theta}_{T,n}^{p})\lesssim 2^{-n}$. 
It follows from the Borel-Cantelli argument that 
$\hat{\theta}_{T,n}$ is a strongly consistent estimator: $P^{T}_{\theta}(\lim_{n}\hat{\theta}_{T,n}=\theta)=1$.
}\qed\label{hm:ex_coac_slp}
\end{ex}

\medskip

\begin{ex}{\rm 
The generalized hyperbolic distribution is a very popular {\idl} 
in the fields of turbulence and mathematical finance; a nice systematic review can be found in 
\cite{BibSor03} and \cite{Ham10}. 
The distribution of the generalized hyperbolic {\lp} $X$ is characterized by the five parameters 
$\theta:=(\lam,\al,\beta,\del,\mu)$; in particular, 
$\del>0$ and $\mu\in\mbbr$ represent scale and location, respectively, and 
the corresponding {\ld}, say $g$, admits the asymptotic expansion \cite[Proposition 2.18]{Rai00}
\begin{equation}
g(z)=\frac{\del}{\pi}z^{-2}+\frac{1}{2}\left(\lam+\frac{1}{2}\right)|z|^{-1}+\frac{\beta\del}{2}z^{-1}
+o(|z|^{-1}),\qquad |z|\to 0.
\nonumber
\end{equation}
By means of Theorem \ref{hm:thm_coac}, \cite[Sections 2.5 and 2.6]{Rai00} proved that 
$P^{T}_{\theta}\sim P^{T}_{\theta'}$ if and only if $\del=\del'$ and $\mu=\mu'$. 
As mentioned before, continuous-time sample allows us to distinguish all jump times and jump sizes, 
hence for each $n$ we can identify all $t$ such that $|\Del X_{t}|\ge 1/n$. 
Also proved in \cite{Rai00} is that the statistics
\begin{align}
\hat{\del}_{T,n}&:=\frac{\pi}{nT}\sharp\left\{s\le t;~\Del X_{s}\ge 1/n\right\},
\nn\\
\hat{\mu}_{T,n}&:=\frac{1}{T}\left(X_{T}-\sum_{0<s\le T}\Del X_{s}\mathbf{1}_{[1/n,\infty)}(|\Del X_{s}|)\right)
\nonumber
\end{align}
are strongly consistent estimators of $\del$ and $\mu$, respectively, as $n\to\infty$. 
A continuous-time data allows us to compute $\del=\lim_{n}\hat{\del}_{T,n}$ and $\mu=\lim_{n}\hat{\mu}_{T,n}$.
It is possible to see that $X$ is an example of the locally Cauchy {\lp} in the sense that 
$\mcl(h^{-1}X_{h})$ weakly tends to the Cauchy distribution as $h\to 0$ 
(see Section \ref{hm:sec_lslp} for brief remarks on locally stable {\lp es}); 
in Section \ref{hm:sec_niglp}, we will look at this point in more detail for the normal-inverse Gaussian {\lp}, 
the special case where $\lam=-1/2$.
}\qed\label{hm:ex_coac_gh}
\end{ex}

\medskip

Fix a $\theta\in\Theta$, assume that $P^{T}_{\theta}\sim P^{T}_{\theta'}$ for each $\theta'\ne\theta$, and let 
\begin{equation}
A_{T}(\theta)=\diag\{a_{1T}(\theta),\dots,a_{pT}(\theta)\}
\nonumber
\end{equation}
be a non-random positive definite diagonal matrices such that $a_{kT}(\theta)\to 0$ as $T\to\infty$ 
for $k=1,\dots, p$, 
and $\mci(\theta)\in\mbbr^{p}\otimes\mbbr^{p}$ a non-random nonnegative definite symmetric matrix. 
Let $u\in\mbbr^{p}$ and
\begin{equation}
\theta_{T}=\theta_{T}(u):=\theta+A_{T}(\theta)u.
\nonumber
\end{equation}
When $T\to\infty$ we may assume that $\theta_{T}\in\Theta$. 
The family of probability measures $(P_{\theta}^{T};\theta\in\Theta, T>0)$ is said to satisfy 
{\it the local asymptotic normality (LAN)} at $\theta$ with rate $A_{n}(\theta)$ and 
(constant) Fisher information matrix $\mci(\theta)$, if for each $u$ the stochastic expansion
\begin{equation}
\log\frac{dP^{T}_{\theta_{T}}}{dP^{T}_{\theta}}(u)
=\Del_{T}(\theta)[u]-\frac{1}{2}\mci(\theta)[u,u]+o_{p}(1)
\label{hm:LAN_def}
\end{equation}
holds under $P_{\theta}$, where $\Del_{T}(\theta)\cil N_{p}(0,\mci(\theta))$; 
a nice concise exposition of interpretation of the LAN as the weak convergence to a Gaussian experiment 
can be found in \cite[Chapter 7]{vdV98}.

From a decision theoretic point of view, the LAN is of dominant importance in asymptotic statistics. 
If we have the LAN, the notion of asymptotic optimality in regular statistical estimation and testing hypotheses come into effect, 
and the asymptotic optimality is described in term of the sequence $\Del_{T}(\theta)$ up to deterministic factors. 
Especially, the matrix $A_{T}(\theta)$ corresponds to the maximal (multiscale) rate 
at which we can infer the true value of $\theta$. 
We here recall that an estimator $\hat{\theta}_{T}$ of $\theta$ is called regular if for each $u$ the distribution 
$A_{T}(\theta)^{-1}(\hat{\theta}_{T}-\theta)$ weakly converges along $(P^{T}_{\theta_{T}})$ 
to some distribution $\Pi(\theta)$ free of $u$. 
The celebrated Haj\'ek-Inagaki convolution theorem (\cite{Haj70} 
and \cite{Ina70}) tells us that $\Pi(\theta)=N_{p}(0,\mci(\theta)^{-1})\ast\mu$ for some distribution $\mu$, 
based on which we can deduce the asymptotically maximal concentration property: 
for any convex set $C\subset\mbbr^{p}$ symmetric around the origin 
and any regular estimator $\hat{t}_{n}$ of $\theta$ we have
\begin{equation}
\limsup_{T\to\infty}P_{\theta}\left\{A_{T}(\theta)^{-1}(\hat{t}_{n}-\theta)\in C\right\}
\le\int_{C}\phi\left(z;0,\mci(\theta)^{-1}\right)dz.
\label{hm:mc_ineq}
\end{equation}
Moreover, we have
\begin{equation}
\liminf_{T\to\infty}\var_{\theta}\left\{A_{T}(\theta)^{-1}(\hat{t}_{n}-\theta)\right\}\ge \mci(\theta)^{-1}
\label{hm:mincovmat}
\end{equation}
in the matrix sense; hence, $\mci(\theta)^{-1}$ is the minimal possible asymptotic covariance matrix, 
and if $A_{T}(\theta)^{-1}(\hat{t}_{n}-\theta)\cil N_{p}(0,\mcv(\theta))$ under $P_{\theta}$, 
then $\mcv(\theta)-\mci(\theta)^{-1}$ should be non-negative definite.

It is also possible for several kinds of tests to construct a locally asymptotically optimal test function. 
We refer to \cite{LeCYan00}, \cite{Str85} and \cite{vdV98} 
for more details of what we can benefit from the LAN theory in testing hypothesis. 

We should note that, in order to apply the general asymptotic optimality theory based on the LAN, 
the matrix $\mci(\theta)$ should be positive definite over $\Theta$; if not, the LAN may not be of much help. 
It will turn out that in our framework 
the singularity of$\mci(\theta)$ will very naturally occur for every $\theta\in\Theta$ 
(see Sections \ref{hm:sec_mxlplan} and \ref{hm:sec_sslp_LAN}). 

The LAN for continuously observed pure-jump {\lp es} was proved in \cite[Theorem 5.1]{AkrJoh81} 
(see also \cite{Lus92} and \cite{Sor91} for related general results 
concerning continuously observed multidimensional models containing non-null diffusion part). 
To state the result, let $\mu(dt,dz):=\sum_{s>0}\mathbf{1}_{\{0\}^{c}}(\Delta X_{s})\ep_{(s,\Delta X_{s})}(dt,dz)$ 
denote the random measure of jumps associated with $X$ (cf. \cite[II \S 1b]{JacShi03}), 
and $\tilde{\mu}_{\theta}(dt,dz):=\mu(dt,dz)-\nu_{\theta}(dz)dt$ its compensated version under $P_{\theta}$.

\begin{thm}
Let $X$ be a {\lp} having the generating triplet $(b_{\theta},0,\nu_{\theta})$. 
Assume that $P^{T}_{\theta}\sim P^{T}_{\theta'}$ for $\theta\ne\theta'$ 
and that there exists an $\mbbr^{p}$-valued measurable function $k$ on $\mbbr\times\Theta$ for which,
\begin{description}
\item[(a)] The Fisher information matrix
\begin{equation}
\mci(\theta):=4\int k^{\otimes 2}(z;\theta)\nu_{\theta}(dz)
\nonumber
\end{equation} 
is finite and positive definite for each $\theta\in\Theta$.

\item[(b)] The following convergences hold (recall \eqref{hm:zeta_def}):
\begin{itemize}
\item $\displaystyle{\int |u|^{-2}\left\{\sqrt{\zeta(z;\theta,\theta+u)}-1-k(z;\theta)[u]
\right\}^{2}\nu_{\theta}(dz)\to 0}$ as $|u|\to 0$,

\item $\displaystyle{\int |u|^{-2}\left|\left\{\sqrt{\zeta(z;\theta,\theta+u)}-1\right\}^{2}
-\left\{k(z;\theta)[u]\right\}^{2}\right|\nu_{\theta}(dz)\to 0}$ as $|u|\to 0$,

\item For each $|u|\ne 0$, we have as $T\to\infty$
\begin{equation}
T\int\left\{\sqrt{\zeta(z;\theta,\theta+T^{-1/2}u)}-1\right\}^{2}\nu_{\theta}(dz)
\to\int\left\{k(z;\theta)[u]\right\}^{2}\nu_{\theta}(dz),
\nonumber
\end{equation}

\end{itemize}

\end{description}
Then, the stochastic expansion \eqref{hm:LAN_def} holds at $\theta$ with $A_{T}(\theta)=T^{-1/2}$ and
\begin{equation}
\Del_{T}(\theta)=\frac{2}{\sqrt{T}}
\int_{0}^{T}\!\!\int k(z;\theta)
\tilde{\mu}_{\theta}(dt,dz).
\nonumber
\end{equation}
\label{hm:thm_AkrJoh81}
\end{thm}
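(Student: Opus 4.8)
The plan is to derive the LAN expansion directly from the structure of the likelihood ratio for continuously observed pure-jump {\lp es}, exploiting Theorem \ref{hm:thm_coac} to obtain an explicit Girsanov-type formula. Since $c_\theta=0$ and $P^T_\theta\sim P^T_{\theta'}$, Theorem \ref{hm:thm_coac} gives that $\nu_\theta\sim\nu_{\theta'}$ with density $\zeta(\cdot;\theta,\theta')$, and the Radon--Nikodym derivative on $\mcf^X_T$ is the stochastic exponential of a purely discontinuous martingale. Concretely, with $\theta'=\theta_T=\theta+T^{-1/2}u$, one has the classical representation
\begin{equation}
\log\frac{dP^T_{\theta_T}}{dP^T_\theta}
=\int_0^T\!\!\int \log\zeta(z;\theta,\theta_T)\,\mu(dt,dz)
-\int_0^T\!\!\int\bigl(\zeta(z;\theta,\theta_T)-1\bigr)\nu_\theta(dz)\,dt,
\nonumber
\end{equation}
valid since the drift-matching condition (b) in Theorem \ref{hm:thm_coac} forces the continuous compensator term to vanish. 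First I would record this identity and reduce everything to asymptotic analysis of the two jump integrals as $T\to\infty$.

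The second step is a second-order Taylor-type expansion of the integrand in the ``distance'' $\sqrt{\zeta}-1$. Write $Y_T(z):=\sqrt{\zeta(z;\theta,\theta_T)}-1$. Using the algebraic identity $\log\zeta = 2\log(1+Y_T)$ and $\zeta-1=2Y_T+Y_T^2$, the log-likelihood splits as
\begin{equation}
\log\frac{dP^T_{\theta_T}}{dP^T_\theta}
=2\int_0^T\!\!\int Y_T(z)\,\tilde\mu_\theta(dt,dz)
-\int_0^T\!\!\int Y_T(z)^2\,\nu_\theta(dz)\,dt + R_T,
\nonumber
\end{equation}
where $R_T$ collects the higher-order remainder from $2\log(1+Y_T)-2Y_T+Y_T^2$. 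The hypotheses (b) are tailored precisely so that, after dividing by the time horizon, the compensator $\int_0^T\!\int Y_T^2\,\nu_\theta\,dt = T\int Y_T^2\,\nu_\theta(dz)$ converges to $\tfrac14\mci(\theta)[u,u]$ — this is the third bullet of (b) — while the first two bullets control the discrepancy between $Y_T$ and the linearization $T^{-1/2}k(z;\theta)[u]$ in the relevant $L^2(\nu_\theta)$ senses needed to pass to the Gaussian limit. I would show $R_T=o_p(1)$ by bounding it pathwise by $\int_0^T\!\int |Y_T|^3\,\mu(dt,dz)$ up to constants, taking compensators, and using the first bullet of (b) (which gives $\int Y_T^2\,\nu_\theta = O(T^{-1})$ uniformly and hence $\int |Y_T|^3\nu_\theta = o(T^{-1})$ via the pointwise smallness of $Y_T$).

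The third step is the central limit theorem for the martingale term. Set $\Del_T(\theta):=2T^{-1/2}\int_0^T\!\int k(z;\theta)\,\tilde\mu_\theta(dt,dz)$; this is a sum of i.i.d.\ mean-zero increments over $[j-1,j]$, $j=1,\dots,T$ (along integer $T$, with an obvious interpolation otherwise), so the ordinary Lindeberg--L\'evy CLT gives $\Del_T(\theta)\cil N_p(0,\mci(\theta))$ with $\mci(\theta)=4\int k^{\otimes 2}\nu_\theta$ by the It\^o isometry for the jump measure. It then remains to show that $2T^{-1/2}\int_0^T\!\int Y_T(z)\,\tilde\mu_\theta(dt,dz)$ and $\Del_T(\theta)[u]$ differ by $o_p(1)$: this is a second-moment estimate, equal up to constants to $T\int\bigl(Y_T(z)-T^{-1/2}k(z;\theta)[u]\bigr)^2\nu_\theta(dz)$, which is exactly the first bullet of (b). Assembling the three pieces yields \eqref{hm:LAN_def}.

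I expect the main obstacle to be the careful bookkeeping of the remainder term $R_T$ and the interchange of pathwise estimates with their compensators: one must verify integrability so that $\int_0^T\!\int |Y_T|^3\,\tilde\mu_\theta$ is a genuine martingale (or localize), and one must handle the case of non-integer $T$ by a routine but slightly delicate interpolation argument. A secondary technical point is that $\log\zeta$ and $\zeta-1$ individually need not be $\nu_\theta$-integrable when $\nu_\theta(\mbbr)=\infty$, so the splitting into a compensated integral plus a compensator must be done at the level of the stochastic exponential rather than term by term; the second bullet of (b) (controlling $|Y_T^2-(T^{-1/2}k[u])^2|$ in $L^1(\nu_\theta)$) is what legitimizes replacing the exact compensator by its quadratic approximation. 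Once these integrability issues are dispatched, the argument is the standard LAN scheme and follows \cite[Theorem 5.1]{AkrJoh81}.
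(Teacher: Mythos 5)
The paper does not actually prove this theorem in the text --- it is quoted from \cite[Theorem 5.1]{AkrJoh81} --- so your proposal is judged on its own terms. Your architecture (Girsanov representation of the density process, expansion in $Y_{T}=\sqrt{\zeta}-1$, martingale CLT for $\Del_{T}(\theta)$, and $L^{2}$-approximation of the linear term via the first bullet of (b)) is the standard and correct route. However, there is a genuine error in your second step that propagates to the conclusion: your decomposition delivers $-\frac{1}{4}\mci(\theta)[u,u]$ for the quadratic term rather than the required $-\frac{1}{2}\mci(\theta)[u,u]$.

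Concretely, from $\log\zeta=2\log(1+Y_{T})=2Y_{T}-Y_{T}^{2}+O(|Y_{T}|^{3})$ and $\zeta-1=2Y_{T}+Y_{T}^{2}$, your first display yields
\begin{equation}
\log\frac{dP^{T}_{\theta_{T}}}{dP^{T}_{\theta}}
=2\int_{0}^{T}\!\!\int Y_{T}(z)\,\tilde{\mu}_{\theta}(dt,dz)
-\int_{0}^{T}\!\!\int Y_{T}(z)^{2}\,\mu(dt,dz)
-T\int Y_{T}(z)^{2}\,\nu_{\theta}(dz)+R_{T},
\nonumber
\end{equation}
with $R_{T}$ genuinely third order. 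You have dropped the middle term $-\int_{0}^{T}\!\int Y_{T}^{2}\,\mu(dt,dz)$ (equivalently, folded it into $R_{T}$ and then declared it negligible by a third-order bound). It is not negligible: writing it as $-\int_{0}^{T}\!\int Y_{T}^{2}\,\tilde{\mu}_{\theta}(dt,dz)-T\int Y_{T}^{2}\,\nu_{\theta}(dz)$, the martingale part is indeed $o_{p}(1)$ (given a fourth-moment control), but the compensator converges, by the third bullet of (b), to $-\int\{k(z;\theta)[u]\}^{2}\nu_{\theta}(dz)=-\frac{1}{4}\mci(\theta)[u,u]$. Added to the single copy of $-T\int Y_{T}^{2}\nu_{\theta}$ that you do retain, the full quadratic term is $-2T\int Y_{T}^{2}\nu_{\theta}\to-\frac{1}{2}\mci(\theta)[u,u]$, as the LAN requires; your version, with $R_{T}=o_{p}(1)$, would end at $\Del_{T}(\theta)[u]-\frac{1}{4}\mci(\theta)[u,u]+o_{p}(1)$, which is not \eqref{hm:LAN_def}. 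A secondary, fixable point: the claim $\int|Y_{T}|^{3}\nu_{\theta}=o(T^{-1})$ does not follow from ``pointwise smallness'' of $Y_{T}$, since the hypotheses do not give $\sup_{z}|Y_{T}(z)|\to 0$; one should split on $\{|Y_{T}|\le\ep\}$ and $\{|Y_{T}|>\ep\}$ and invoke the second bullet of (b) on the latter set. Once the missing $\mu$-integral of $Y_{T}^{2}$ is reinstated and its compensator accounted for, the rest of your argument (the CLT for $\Del_{T}$ with variance $4\int k^{\otimes 2}\nu_{\theta}=\mci(\theta)$, and the $L^{2}$ replacement of $Y_{T}$ by $T^{-1/2}k[u]$) goes through as you describe.
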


Having the LAN in hand, we then look for an estimator $\hat{\theta}_{T}$ such that 
$A_{T}(\theta)^{-1}(\hat{\theta}_{n}-\theta)\cil N_{p}(0,\mci(\theta)^{-1})$. 
The consistency and asymptotic normality for general pure-jump {\lp es} were studied by \cite{Akr82}.

\medskip

\begin{rem}{\rm 
Given a {\lm} $\nu$ and $\Theta^{\natural}:=\{\theta\in\Theta;~\int_{|z|\ge 1}e^{\theta z}\nu(dz)<\infty\}$ 
with $\Theta^{\natural}\ne\{0\}$, 
we can form a natural exponential family $(P_{\theta};\theta\in\Theta^{\natural})$ generated by $X$ 
based on the newly defined {\lm} $\nu_{\theta}(dz):=e^{\theta z}\nu(dz)$. 
This simple transform makes it possible to do several explicit computations. 
See \cite[Chapter 2 and Section 11.5]{KucSor97} 
for details of the exponential family generated by a general semimartingale models. 
The special form $\nu_{\theta}(dz)$ leads to a handy asymptotically optimal estimator of $\theta$ 
even for discrete-time data \cite[Section 3.1]{Woe01}; 
needless to say, the estimator may not be of direct use when $\nu$ also depends on unknown parameters.
}\qed\end{rem}


\medskip

In the rest of this chapter, we will concentrate on discrete-time data. 
The filtration of the underlying statistical experiments are then much smaller than 
in the case of continuous-time data, and estimation without error is seldom possible.

We will suppose that $\dd_{j}t=h_{n}$ for $j=1,\dots,n$. According to the distributional identity 
\begin{equation}
\mcl(\dd_{j}X)=\mcl(X_{h_{n}}),
\nonumber
\end{equation}
the problem amounts to estimation based on the rowwise independent triangular-array data. 
Even if we know that $\mcl(X_{h_{n}})$ is absolutely continuous with respect to the Lebesgue measure, 
the likelihood function $\theta\mapsto\prod_{j=1}^{n}p_{h_{n}}(\dd_{j}X;\theta)$ 
can be in general described only in terms of the seemingly intractable Fourier-inversion formula:
\begin{equation}
p_{h_{n}}(\dd_{j}X;\theta)=\frac{1}{2\pi}\int\exp(-iu\dd_{j}X)\{\vp_{X_{1}}(u)\}^{h_{n}}du.
\label{hm:fi_formula}
\end{equation}
This annoying fact prevents us from developing a (more or less) universally feasible procedure for 
studying asymptotic behavior of likelihoods; with a positive thought, we could get a nice opportunity of research. 
As will be seen later, there do exist some specific examples where we may get rid of the integral in \eqref{hm:fi_formula} 
to obtain a tractable form, from which we can derive valuable information about asymptotically optimal inference.

\begin{rem}{\rm 
The above situation is somewhat similar to estimation of the discretely observed nonlinear diffusion model
\begin{equation}
dX_{t}=b(X_{t},\mu)dt+\sqrt{c(X_{t},\sig)}dw_{t}.
\nonumber
\end{equation}
Under some regularity conditions on the coefficients 
we can prove the existence of the likelihood (transition density). 
However, its closed-form is seldom known. We then face the statistical problem: 
how can we estimate the parameter $\theta=(\mu,\sig)$ based on $(X_{t^{n}_{j}})_{j=1}^{n}$? 
There is a large literature on this subject, and still lively ongoing in several directions. 
See \cite[Chapters 3 and 5]{Pra99} and \cite{Sor12} for an extensive review of recent developments.
}\qed\end{rem}


\subsubsection{Low-frequency sampling}

Woerner, in her thesis \cite{Woe01}, systematically studied the LAN with several case studies, 
largely under low-frequency sampling. Suppose that $h_{n}\equiv h>0$, 
so that the situation is a special case of the classical i.i.d. sample
\begin{equation}
Y_{h,j}:=X_{jh}-X_{(j-1)h}
\nonumber
\end{equation}
having the infinitely divisible population $\mcl(X_{h})$. 
The model is then to be estimated at usual rate $\sqrt{n}$. 
In order to deduce the LAN, it is therefore possible to resort to 
the classical criterion based on the differentiability in quadratic mean of 
the family $(P_{\theta};\theta\in\Theta)$:

\begin{thm}
Assume that $\mcl(X_{h})$ admits a parametric density $p_{h}(x;\theta)$ with respect to some measure $\mu$: 
$P_{\theta}(X_{h}\in dx)=p_{h}(x;\theta)\mu(dx)$, and that for $\theta\in\Theta$ 
there exists a measurable $\mbbr^{p}$-valued function $\zeta_{h}(x;\theta)$ such that
\begin{equation}
\int\left\{
\sqrt{p_{h}(x;\theta+u)}-\sqrt{p_{h}(x;\theta)}-\frac{1}{2}\zeta_{h}(x;\theta)[u]\sqrt{p_{h}(x;\theta)}
\right\}^{2}\mu(dx)=o(|u|^{2})
\label{hm:def_dqm}
\end{equation}
as $|u|\to 0$. Then:
\begin{enumerate}
\item $\displaystyle{\int\zeta_{h}(x;\theta)p_{h}(x;\theta)\mu(dx)=0}$;
\item the Fisher information matrix
\begin{equation}
\mci_{h}(\theta):=\int\left\{\zeta_{h}(x;\theta)\right\}^{\otimes 2}p_{h}(x;\theta)\mu(dx)
\nonumber
\end{equation}
exists and is finite;
\item for each $u$, we have the stochastic expansion
\begin{equation}
\sumj\log\frac{p_{h}(Y_{h,j};\theta+u/\sqrt{n})}{p_{h}(Y_{h,j};\theta)}
=\frac{1}{\sqrt{n}}\sumj \zeta_{h}(Y_{h,j};\theta)[u]-\frac{1}{2}\mci_{h}(\theta)[u,u]+o_{p}(1)
\label{hm:LAN_def+1}
\end{equation}
under $P_{\theta}$, where $n^{-1/2}\sumj \zeta_{h}(Y_{h,j};\theta)\cil N_{p}(0,\mci_{h}(\theta))$.
\end{enumerate}
\label{hm:thm_vdV_LAN}
\end{thm}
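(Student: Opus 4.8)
The plan is to recast the statement as the standard implication ``differentiability in quadratic mean $\Rightarrow$ LAN'' for the i.i.d.\ sample $(Y_{h,j})_{j=1}^{n}$ with common law $\mcl(X_{h})$, following the route of \cite[Chapter 7]{vdV98}. Throughout, fix $\theta\in\Theta$ and abbreviate $g:=\sqrt{p_{h}(\cdot\,;\theta)}$, $g_{u}:=\sqrt{p_{h}(\cdot\,;\theta+u)}$, regarded as unit vectors in $L^{2}(\mu)$; the hypothesis \eqref{hm:def_dqm} then reads $\|g_{u}-g-\tfrac12\zeta_{h}(\cdot\,;\theta)[u]\,g\|_{L^{2}(\mu)}=o(|u|)$ as $|u|\to0$, with the remainder denoted $R_{u}$, so that $\|R_{u}\|_{L^{2}(\mu)}=o(|u|)$.

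First I would settle claims (1) and (2). Since $g_{u}-g\in L^{2}(\mu)$ and $R_{u}=o(|u|)$ in $L^{2}(\mu)$, the linear part $\tfrac12\zeta_{h}(\cdot\,;\theta)[u]\,g$ lies in $L^{2}(\mu)$ for each direction $u$; letting $u$ run over the coordinate axes (on the carrier $\{p_{h}(\cdot\,;\theta)>0\}$ of $P_{\theta}$) shows $\zeta_{h}^{(k)}(\cdot\,;\theta)\,g\in L^{2}(\mu)$ for each $k$, so by Cauchy--Schwarz $\mci_{h}(\theta)=\int\bigl(\zeta_{h}(\cdot\,;\theta)\,g\bigr)^{\otimes 2}\,d\mu$ is finite (and symmetric, nonnegative definite). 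For the zero-mean identity, observe that $\|g_{u}\|^{2}=\|g\|^{2}=1$ forces $\langle g_{u}-g,\,g\rangle=-\tfrac12\|g_{u}-g\|^{2}$. Taking $u=tu_{0}$ and dividing by $t\downarrow0$, the right side is $O(t)\to0$ while the left side tends to $\tfrac12\int\zeta_{h}(\cdot\,;\theta)[u_{0}]\,g^{2}\,d\mu$ by the $L^{2}$-convergence $t^{-1}(g_{tu_{0}}-g)\to\tfrac12\zeta_{h}(\cdot\,;\theta)[u_{0}]\,g$; hence $\int\zeta_{h}(x;\theta)p_{h}(x;\theta)\,\mu(dx)=0$.

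For the LAN expansion I would write $\theta_{n}:=\theta+u/\sqrt n$, $g_{n}:=g_{u/\sqrt n}$, and, on $\{g(Y_{h,j})>0\}$, put $\xi_{n,j}:=2\bigl(g_{n}(Y_{h,j})/g(Y_{h,j})-1\bigr)$, so that $\log\{p_{h}(Y_{h,j};\theta_{n})/p_{h}(Y_{h,j};\theta)\}=2\log(1+\xi_{n,j}/2)$. Decomposing $g_{n}-g=\tfrac{1}{2\sqrt n}\zeta_{h}(\cdot\,;\theta)[u]\,g+R_{n}$ with $R_{n}:=R_{u/\sqrt n}$, so $\int R_{n}^{2}\,d\mu=o(1/n)$, gives $\xi_{n,j}=n^{-1/2}\zeta_{h}(Y_{h,j};\theta)[u]+2R_{n}(Y_{h,j})/g(Y_{h,j})$. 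The argument rests on three facts: (i) the array is uniformly negligible, $\max_{j\le n}|\xi_{n,j}|\cip0$, via a Lindeberg-type truncation using square-integrability of $\zeta_{h}(Y_{h,1};\theta)[u]$ together with $n\,E_{\theta}[(R_{n}(Y_{h,1})/g(Y_{h,1}))^{2}]=o(1)$; (ii) $\sumj\xi_{n,j}^{2}\cip\mci_{h}(\theta)[u,u]$, by the law of large numbers on the leading term and the $L^{2}$-control of $R_{n}$ on the rest; and (iii) $\sumj\xi_{n,j}=n^{-1/2}\sumj\zeta_{h}(Y_{h,j};\theta)[u]-\tfrac14\mci_{h}(\theta)[u,u]+o_{p}(1)$, where the deterministic drift comes from $n\,E_{\theta}[\xi_{n,1}]=2n\bigl(\langle g_{n},g\rangle-1\bigr)=-n\|g_{n}-g\|^{2}\to-\tfrac14\mci_{h}(\theta)[u,u]$ --- here claim (1) is exactly what annihilates the potential first-order term in $\langle g_{n}-g,g\rangle$, and the $o_{p}(1)$ fluctuation is controlled by the vanishing variance of $\sum_{j}R_{n}(Y_{h,j})/g(Y_{h,j})$. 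Inserting the Taylor expansion $2\log(1+x/2)=x-\tfrac14x^{2}+O(|x|^{3})$ (legitimate on $\{\max_{j}|\xi_{n,j}|\le1\}$, an event of probability $\to1$ by (i)) into $\sumj 2\log(1+\xi_{n,j}/2)$ and using (i)--(iii), one collects $-\tfrac14\mci_{h}(\theta)[u,u]$ from $\sumj\xi_{n,j}$ and another $-\tfrac14\mci_{h}(\theta)[u,u]$ from $-\tfrac14\sumj\xi_{n,j}^{2}$, the cubic remainder being bounded by $\max_{j}|\xi_{n,j}|\cdot\sumj\xi_{n,j}^{2}=o_{p}(1)$; this yields \eqref{hm:LAN_def+1}. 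Finally, by (1) the i.i.d.\ vectors $\zeta_{h}(Y_{h,j};\theta)$ are centered with finite covariance $\mci_{h}(\theta)$ by (2), so the Lindeberg--L\'evy CLT and the Cram\'er--Wold device give $n^{-1/2}\sumj\zeta_{h}(Y_{h,j};\theta)\cil N_{p}(0,\mci_{h}(\theta))$.

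I expect fact (iii) to be the main obstacle: pinning down the correct drift $-\tfrac14\mci_{h}(\theta)[u,u]$ of the first-order sum requires computing $n\,E_{\theta}[\xi_{n,1}]$ through the unit-norm (Hellinger-type) identity $\langle g_{n}-g,g\rangle=-\tfrac12\|g_{n}-g\|^{2}$, which in turn presupposes the zero-mean score property of (1) so that no spurious $O(n^{-1/2})$ term survives; combined with the uniform negligibility (i), whose proof needs the Lindeberg truncation, this is the technical heart of the ``DQM $\Rightarrow$ LAN'' mechanism, the remaining steps being bookkeeping with the law of large numbers and the classical CLT.
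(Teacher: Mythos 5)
Your proposal is correct and follows essentially the same route as the paper, which does not reproduce a proof but defers to \cite[Theorem 7.2]{vdV98}: your argument is a faithful reconstruction of that classical ``differentiability in quadratic mean implies LAN'' proof, including the Hellinger identity $\langle g_{u}-g,g\rangle=-\tfrac{1}{2}\|g_{u}-g\|^{2}$ for the zero-mean score and the drift $nE_{\theta}[\xi_{n,1}]\to-\tfrac{1}{4}\mci_{h}(\theta)[u,u]$, with the Taylor expansion of $2\log(1+x/2)$ contributing the second quarter of the quadratic term. The bookkeeping of the remainder via $\max_{j}|\xi_{n,j}|\cdot\sumj\xi_{n,j}^{2}=o_{p}(1)$ and the final Lindeberg--L\'evy step are exactly as in the cited source, so nothing further is needed.
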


See \cite[Theorem 7.2]{vdV98} for the proof of Theorem \ref{hm:thm_vdV_LAN}. 
For \eqref{hm:def_dqm} to hold, it is sufficient that:
\begin{itemize}
\item for each $x$, the nonnegative map $\theta\mapsto s_{h}(x;\theta):=\sqrt{p_{h}(x;\theta)}$ 
is of class $\mcc^{1}(\Theta)$;
\item and the matrix 
$\displaystyle{\int\left\{(\p_{\theta}p_{h}/p_{h})(x;\theta)\right\}^{\otimes 2}p_{h}(x;\theta)\mu(dx)}$ 
is well-defined and continuous as a function of $\theta$.
\end{itemize}
Indeed, we can then apply Scheff\'e type argument to 
deduce \eqref{hm:def_dqm} with $\zeta_{h}=\p_{\theta}p_{h}/p_{h}$, 
which may be defined to be $0$ when $p_{h}=0$. See \cite[Lemma 7.6]{vdV98} for details.

\medskip

What should be noted here is the dependence of the Fisher information matrix on the sampling step size $h$, 
which may clarify how estimation of each component of $\theta$ is affected by $h$. 
If we can let $h=h_{n}$ vary as $n$ increases in such a way that 
$r_{n}^{-1}\mci_{h_{n}}(\theta)^{(jj)}\to\mci_{0}(\theta)^{(jj)}>0$ for some positive sequence $(r_{n})$, 
then the $\sqrt{r_{n}n}$-consistency for the MLE of $\theta^{(j)}$ can be expected. 
The high-frequency sampling scheme corresponds to such a situation, where we will see later that 
both $r_{n}\to 0$ and $r_{n}\to\infty$ may occur, depending on the concrete structure of the underlying {\lp}. 
In Theorem \ref{hm:thm_lan_ds} below, 
we will present a unified treatment of low- and high-frequency sampling schemes for proving LAN 
under somewhat more restrictive conditions involving the second derivative of 
$\theta\mapsto\log p_{h_{n}}(x;\theta)$.


\subsection{Local asymptotics for high-frequency sampling}

From now on we will concentrate on the equidistant high-frequency sampling scheme; 
recall \eqref{hm:discsamp}, \eqref{hm:HF_sampling}, and \eqref{hm:addeqnum-1}.

\subsubsection{On small-time behavior of increments}

When $h_{n}\to 0$, things become entirely different from the low-frequency sampling. 
The high-frequency sampling is theoretically fruitful, for it allows us to take into account 
approximation of the underlying model structure in small time, providing a somewhat unified picture for asymptotics. 
As was already mentioned, this brings about special phenomena in estimating an underlying continuous-time model. 
In particular, various optimal rates of convergence of regular estimators become available through the LAN. 
A criterion for deducing the LAN in case of high-frequency sampling and univariate $\theta$ 
was proved in \cite[Theorem 1.6]{Woe01}. 
Theorem \ref{hm:thm_lan_ds} below will put similar conditions, 
but importantly, it can deal with cases where optimal rate may be different componentwise.

\medskip

Since each $\dd_{j}X$ vanishes as $h_{n}\to 0$, it is meaningful to clarify 
a transform giving rise to a nontrivial weak limit. 
The simplest yet important one is the location-scale linear transform
\begin{equation}
\dd_{j}X\to\sig_{n}^{-1}(\dd_{j}X-\mu h_{n})
\label{hm:lem_ltst-1-1}
\end{equation}
for some $\mu\in\mbbr$ and $\sig_{n}>0$ with $\sig_{n}\to 0$ as $n\to\infty$. 
In this case the limit is necessarily strictly stable (cf. Section \ref{hm:sec_slp}), 
and moreover, due to \cite[Proposition 1]{BerDon97} much more is true:

\begin{lem}
Assume that $Y$ is a {\lp} in $\mbbr$ 
and that there exist a non-random positive function $\sig:(0,\infty)\to(0,\infty)$ 
and a non-degenerate distribution $F$ (i.e. $F$ is not a Dirac measure) such that 
\begin{equation}
\sig(h)^{-1}Y_{h}\cil F,\qquad h\to 0.
\label{hm:lem_ltst-1}
\end{equation}
Then we have the following.
\begin{enumerate}
\item $\sig$ is regularly varying with index $1/\beta$ 
(i.e. $\sig(uh)/\sig(h)\to u^{1/\beta}$ as $h\to 0$ for each $u>0$) where $\beta\in(0,2]$, 
and $F$ is strictly $\beta$-stable.
\item $\vp_{Y_{h}}\in L^{1}(du)$ for each $h>0$, 
hence in particular $\mcl(Y_{h})$ admits a continuous Lebesgue density, say $p_{h}(\cdot)$.
\item the uniform convergence
\begin{equation}
\sup_{y\in\mbbr}\left|\sig(h)p_{h}(y\sig(h))-\phi(y;\beta)\right|\to 0,\qquad h\to 0,
\nonumber
\end{equation}
is valid, where $\phi(\cdot;\beta)$ denotes the $\beta$-stable density of $F$ given in \eqref{hm:lem_ltst-1}.
\end{enumerate}
\label{hm:lem_ltst}
\end{lem}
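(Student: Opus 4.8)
The plan is to exploit the self-similarity forced by the convergence \eqref{hm:lem_ltst-1} together with the elementary but powerful fact that the normalizing function of a convergent sequence of infinitely divisible laws is regularly varying. First I would establish item (1). Fix $u>0$ and observe that, by the L\'evy property, $Y_{uh}\eil$ is the law of a sum of (roughly) $u$ independent copies of $Y_h$ after rescaling; more precisely, use the characteristic function identity $\vp_{Y_{uh}}(\cdot)=\vp_{Y_h}(\cdot)^{u}$ coming from \eqref{hm:lk}. Writing $G_h(u):=\sig(uh)/\sig(h)$, the hypothesis gives that $\vp_{\sig(uh)^{-1}Y_{uh}}\to\widehat F$ and $\vp_{\sig(h)^{-1}Y_h}\to\widehat F$ as $h\to0$, and combining these via the convergence-of-types theorem shows that $G_h(u)$ converges to a finite positive limit $g(u)$ for each $u$, with $\widehat F(t)=\widehat F(t/g(u))^{u}$. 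The functional equation $g(uv)=g(u)g(v)$ plus measurability forces $g(u)=u^{1/\beta}$ for some $\beta\in(0,\infty)$; the constraint $\beta\le 2$ comes from the fact that $F$, being a weak limit of infinitely divisible laws that is non-degenerate and satisfies the above stability relation, must be strictly $\beta$-stable, and stable indices lie in $(0,2]$. This simultaneously yields that $\sig$ is regularly varying of index $1/\beta$ and that $F$ is strictly $\beta$-stable; here I would cite \cite[Proposition 1]{BerDon97} for the precise statement, since this is exactly their small-time analogue of the classical domain-of-attraction result.

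Next, item (2). Since $F$ is strictly $\beta$-stable and non-degenerate, its characteristic function has the form $\widehat F(t)=\exp(-|t|^{\beta}\psi(t))$ with $\mathrm{Re}\,\psi$ bounded below by a positive constant (for $\beta<2$ this is the standard stable exponent; for $\beta=2$ it is Gaussian). Thus $\widehat F\in L^{1}(dt)$. To transfer integrability to finite $h>0$, I would argue that for each fixed $h$ the relation $\vp_{Y_h}(t)^{u}=\vp_{Y_{uh}}(t)$ together with the small-time scaling gives a lower bound $|\log|\vp_{Y_h}(t)||\gtrsim |t|^{\beta}$ for large $|t|$: concretely, pick $u$ small, use that $\sig(uh)^{-1}Y_{uh}$ is close in law to $F$ so $|\vp_{Y_{uh}}(\sig(uh)s)|\le \tfrac12$ say for $|s|$ in a fixed annulus, hence $|\vp_{Y_h}(\sig(uh)s)|\le 2^{-1/u}$, and then let $u\to0$ to conclude $|\vp_{Y_h}(t)|\to 0$ faster than any polynomial along a sequence of scales; a standard subadditivity/monotonicity argument for $-\log|\vp_{Y_h}|$ (it is the real part of a L\'evy exponent, hence subadditive-type) upgrades this to the genuine bound $-\log|\vp_{Y_h}(t)|\gtrsim|t|^{\beta'}$ for any $\beta'<\beta$ and all large $|t|$, which is integrable since $\beta'$ can be taken $>0$... but to be safe one simply invokes \cite[Proposition 1]{BerDon97} which asserts $\vp_{Y_h}\in L^{1}$ directly. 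Fourier inversion then produces the continuous density $p_h$.

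Finally, item (3). With $\vp_{Y_h}\in L^{1}$ and $\widehat F\in L^{1}$, both $\sig(h)p_h(\sig(h)\,\cdot)$ and $\phi(\cdot;\beta)$ are given by the inversion formulas
\begin{equation}
\sig(h)p_h(\sig(h)y)=\frac{1}{2\pi}\int e^{-iyt}\,\vp_{Y_h}(t/\sig(h))\,dt,
\qquad
\phi(y;\beta)=\frac{1}{2\pi}\int e^{-iyt}\,\widehat F(t)\,dt,
\end{equation}
so that
\begin{equation}
\sup_{y}\bigl|\sig(h)p_h(\sig(h)y)-\phi(y;\beta)\bigr|
\le\frac{1}{2\pi}\int\bigl|\vp_{Y_h}(t/\sig(h))-\widehat F(t)\bigr|\,dt.
\end{equation}
The pointwise convergence $\vp_{Y_h}(t/\sig(h))=\vp_{\sig(h)^{-1}Y_h}(t)\to\widehat F(t)$ is \eqref{hm:lem_ltst-1}; to pass to $L^1$ convergence of the integrand I would dominate: using the scaling relation at a fixed small $u$ as in step (2), one gets a uniform (in $h$) bound $|\vp_{Y_h}(t/\sig(h))|\le C\exp(-c|t|^{\beta'})$ for $|t|$ large and $h$ small, an integrable dominating function, so dominated convergence applies. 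This gives the claimed uniform convergence. The main obstacle is step (2)/the uniform tail control of $\vp_{Y_h}$: the pointwise stable limit only controls $\vp_{Y_h}$ on scale $\sig(h)$, and one needs a genuinely uniform-in-$h$ exponential-type decay of $|\vp_{Y_h}|$ at infinity to justify both the $L^1$-membership for fixed $h$ and the dominated-convergence step; the cleanest route is to quote \cite[Proposition 1]{BerDon97}, whose proof is precisely designed to furnish this, rather than to reconstruct the subadditivity estimate by hand.
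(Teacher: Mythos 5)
The paper offers no proof of this lemma at all: it simply attributes the statement to \cite[Proposition 1]{BerDon97} and moves on, so there is no argument in the text to compare yours against. Your sketch is a correct outline of the standard proof behind that cited result --- convergence of types applied to $\vp_{Y_{uh}}=\vp_{Y_{h}}^{u}$ to get strict stability and regular variation of $\sig$, the lower bound $-\log|\vp_{Y_{h}}(t)|\gtrsim|t|^{\beta'}$ obtained by patching together the annuli $[a/\sig(uh),\,b/\sig(uh)]$ as $u\downarrow 0$ to get $\vp_{Y_{h}}\in L^{1}$, and Fourier inversion with a dominating integrable bound for the uniform (local limit theorem type) convergence --- and you correctly identify the uniform-in-$h$ tail control of $|\vp_{Y_{h}}|$ as the only genuinely delicate step, deferring to the same reference the paper cites. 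Nothing in your plan would fail; it is simply more detailed than what the paper provides.
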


Apart from the $\beta$-stable {\lp es}, for which the stable approximation is trivial due to the scaling property, 
several familiar {\lp es} are known to fulfill \eqref{hm:lem_ltst-1} with $\sig(h)=h^{1/\beta}$, 
hence $\sig_{n}=h_{n}^{1/\beta}$ in \eqref{hm:lem_ltst-1-1}. 
Such a {\lp} may be called locally $\beta$-stable, which we will briefly discuss in Section \ref{hm:sec_lslp}.

Some information about small-time asymptotic behaviors of an increment 
both in probability and a.s. can be found in \cite[Chapter 10]{Don07}.

\medskip

\begin{rem}{\rm 
One may wonder what will occur when $\mcl\{\sig(h)^{-1}(X_{h}-\mu h)\}$ is not weakly convergent 
for any $\sig(\cdot)>0$ and $\mu\in\mbbr$. In such cases, the {\lm} $\nu_{\theta}$ 
does not behave as any $\beta$-stable {\lp}, 
and some non-linear transform of $X_{h}$, say $f_{h}(X_{h})$, might be relevant. 
The right $f_{h}$ should be strongly model-dependent, so that 
it may be hard to formulate a general way to find it. 
Nevertheless, there exists a concrete example concerning subordinators; 
recall that a subordinator $X$ is a univariate L\'evy process whose sample path is a.s. nondecreasing, 
and whose general form of the L\'evy-Khintchine formula is given by
\begin{equation}
\frac{1}{t}\log\vp_{X_{t}}(u)=iub+\int_{0}^{\infty}(e^{iuz}-1)\nu(dz)
\nn
\end{equation}
for some $b\ge 0$ and $\nu$ supported by $\mbbrp$. 
Recently, \cite{BarLopWol12} characterized the class of drift-free ($b=0$) subordinators $X$ for which
\begin{equation}
X_{h}^{-h}\cil\mcp_{\gam},\quad\text{as $h\to 0$},
\label{hm:ex_nlt_subo_lim}
\end{equation}
where $\mcp_{\gam}$ ($\gam>0$) denotes the Pareto distribution corresponding to the density 
$x\mapsto\gam x^{-\gam-1}\mathbf{1}_{[1,\infty)}(x)$. 
For example, \cite{BarLopWol12} proved that the above weak convergence holds if 
$\mcl(X_{1})$ admits a Lebesgue density $p_{1}(x)$ such that
\begin{equation}
\frac{\log p_{1}(x)}{\log x}\to\gam-1,\quad\text{as $x\to 0$.}
\label{hm:ex_nlt_subo}
\end{equation}
Building on \eqref{hm:ex_nlt_subo_lim}, 
one may think of making semiparametric inference based on the array $\{(\dd_{j}X)^{-h_{n}}\}_{j=1}^{n}$, 
leaving the parameters other than $\gam$ unknown; 
a simple and fully explicit example satisfying \eqref{hm:ex_nlt_subo} is the gamma subordinator $X$ 
with the density of $\mcl(X_{h})$ being $p_{h}(x)=\beta^{\gam h}x^{\gam h-1}\exp(-\beta x)/\Gam(\gam h)$, $x>0$. 
We do not pursue this subject further in this chapter, but only make a small remark about simulations: 
it may happen that a $\mcl(X_{h})$-random number is too small to be regarded as non-zero by computer, 
causing a trouble in taking its reciprocal. 
\label{hm:rem_nltrans}
}\qed\end{rem}


\subsubsection{LAN with multi-scaling}

We will assume some regularity conditions. 

\begin{ass}
The support of $\mcl(X_{t})$ does not depend on $t>0$ and $\theta\in\Theta$. 
For each $t>0$ and $\theta\in\Theta$ 
the distribution $\mcl(X_{t})$ under $P_{\theta}$ admits a Lebesgue density $p_{t}(x;\theta)$, 
which is in turn of the class $\mcc^{2}(\Theta)$ for each $x\in\mbbr$ as a function of $\theta$.
\label{hm:lan_assump1_dd}
\end{ass}

The log-likelihood function a.s. exists as the sum of the rowwise independent triangular arrays:
\begin{equation}
\ell_{n}(\theta)=\sum_{j=1}^{n}\log p_{h_{n}}(\dd_{j}X;\theta).
\nonumber
\end{equation}
We will present a criterion for the LAN under discrete-time sampling, 
which is applicable to both low- and high-frequency sampling schemes.

Under Assumption \ref{hm:lan_assump1_dd}, we let
\begin{align}
g_{nj}(\theta)&:=\p_{\theta}\log p_{h_{n}}(\dd_{j}X_{j};\theta),
\nn\\
A_{n}(\theta)&:=\diag\{a_{1n}(\theta),\dots,a_{pn}(\theta)\},
\label{hm:def_An}
\end{align}
where each positive entry $a_{jn}(\theta)\to 0$. 
We further assume the following.

\begin{ass}
The following convergences hold true as $n\to\infty$:
\begin{description}
\item[(a)] $\displaystyle{n\left|A_{n}(\theta)E_{\theta}\{g_{n1}(\theta)\}\right|^{2}\to 0}$;
\item[(b)] $\displaystyle{nE_{\theta}[\{A_{n}(\theta)g_{n1}(\theta)\}^{\otimes 2}]\to\mci(\theta)}$;
\item[(c)] $\displaystyle{n\left\{\sup_{\rho\in D_{n}(a;\theta)}
E_{\rho}\left(|A_{n}(\theta)\p_{\theta}[g_{n1}(\rho)^{\top}]A_{n}(\theta)|^{2}
+|A_{n}(\theta)g_{n1}(\rho)|^{4}\right)\right\}\to 0}$ for every $a>0$, 
where $D_{n}(a;\theta):=\{\rho\in\Theta; |A_{n}(\theta)^{-1}(\rho-\theta)|\le a\}$.
\end{description}
\label{hm:lan_assump2_dd}
\end{ass}

Of course, Assumption \ref{hm:lan_assump2_dd} are partly related to the Lindeberg-Feller central limit theorem. 
Once $A_{n}(\theta)$ is specified and $g_{nj}(\theta)$ is explicit, 
verification of Assumption \ref{hm:lan_assump2_dd} may not be so difficult. 
We note that (c) ensures the Lindeberg condition:
\begin{equation}
\sum_{j=1}^{n}E_{\theta}\left\{|A_{n}(\theta)g_{nj}(\theta)|^{2};
~|A_{n}(\theta)g_{nj}(\theta)|\ge\ep\right\}\to 0
\nonumber
\end{equation}
for every $\ep>0$. 

Let $P^{n}_{\theta}$ denotes the restriction of $P_{\theta}$ to $\sig(X_{t^{n}_{j}};j\le n)$.

\begin{thm}
Under Assumptions \ref{hm:lan_assump1_dd} and \ref{hm:lan_assump2_dd}, 
the family of probability measures $(P^{n}_{\theta};\theta\in\Theta, n\in\mbbn)$ 
satisfies the LAN at $\theta\in\Theta$ with rate $A_{n}(\theta)$ and Fisher information matrix $\mci(\theta)$: 
for each $u$, we have under $P_{\theta}$
\begin{equation}
\ell_{n}(\theta+A_{n}(\theta)u)-\ell_{n}(\theta)
=A_{n}(\theta)\p_{\theta}\ell_{n}(\theta)[u]-\frac{1}{2}\mci(\theta)[u,u]+o_{p}(1),
\label{hm:LAN_def+2}
\end{equation}
with $A_{n}(\theta)\p_{\theta}\ell_{n}(\theta)\cil N_{p}(0,\mci(\theta))$.
\label{hm:thm_lan_ds}
\end{thm}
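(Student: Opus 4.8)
The plan is to establish the stochastic expansion \eqref{hm:LAN_def+2} by a Taylor expansion of the log-likelihood around $\theta$, controlling the linear, quadratic, and remainder terms separately via the three parts of Assumption \ref{hm:lan_assump2_dd}. Write $u_n:=A_n(\theta)u$, so that $\theta+u_n\in\Theta$ for $n$ large, and by Assumption \ref{hm:lan_assump1_dd} the function $\rho\mapsto\log p_{h_n}(\dd_jX;\rho)$ is $\mcc^2$ on $\Theta$. A second-order Taylor expansion gives, for each $j$,
\begin{equation}
\log p_{h_n}(\dd_jX;\theta+u_n)-\log p_{h_n}(\dd_jX;\theta)
=g_{nj}(\theta)[u_n]+\frac{1}{2}\int_0^1 2(1-s)\,\p_\theta[g_{nj}(\theta+su_n)^\top][u_n,u_n]\,ds,
\nonumber
\end{equation}
and summing over $j\le n$ and inserting $A_n(\theta)A_n(\theta)^{-1}=I_p$ appropriately yields
\begin{equation}
\ell_n(\theta+u_n)-\ell_n(\theta)=A_n(\theta)\p_\theta\ell_n(\theta)[u]+\frac{1}{2}R_n,
\nonumber
\end{equation}
where $R_n=\sum_{j=1}^n\int_0^1 2(1-s)\,\{A_n(\theta)\p_\theta[g_{nj}(\theta+su_n)^\top]A_n(\theta)\}[u,u]\,ds$. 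Thus it remains to prove (i) $A_n(\theta)\p_\theta\ell_n(\theta)\cil N_p(0,\mci(\theta))$ and (ii) $R_n\cip-\mci(\theta)[u,u]$.

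For (i), note $A_n(\theta)\p_\theta\ell_n(\theta)=\sum_{j=1}^n A_n(\theta)g_{nj}(\theta)$ is a sum of rowwise i.i.d.\ terms by the distributional identity $\mcl(\dd_jX)=\mcl(X_{h_n})$. Center it: $\sum_j A_n(\theta)\{g_{nj}(\theta)-E_\theta g_{n1}(\theta)\}$ plus the bias term $n A_n(\theta)E_\theta g_{n1}(\theta)$, which is $o(1)$ in $L^2$-norm, hence $o_p(1)$, by Assumption \ref{hm:lan_assump2_dd}(a) (and also $|\sqrt n A_n E_\theta g_{n1}|^2\to0$ controls it directly). The covariance of the centered sum is $n\,\mathrm{Var}_\theta\{A_n(\theta)g_{n1}(\theta)\}=nE_\theta[\{A_ng_{n1}\}^{\otimes2}]-n(A_nE_\theta g_{n1})^{\otimes2}\to\mci(\theta)$ by (b) and (a). The Lindeberg condition follows from (c) as remarked in the text: $\sum_j E_\theta\{|A_ng_{nj}|^2;|A_ng_{nj}|\ge\ep\}\le\ep^{-2}nE_\theta|A_ng_{n1}|^4\to0$. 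The Lindeberg--Feller CLT then gives the claimed asymptotic normality.

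For (ii), expand $R_n=\sum_j\int_0^1 2(1-s)\,M_{nj}(su_n)[u,u]\,ds$ with $M_{nj}(\cdot):=A_n(\theta)\p_\theta[g_{nj}(\cdot)^\top]A_n(\theta)$. Add and subtract $-\mci(\theta)$: it suffices that $\sum_j\int_0^1(1-s)\{M_{nj}(su_n)-M_{nj}(\theta)\}\,ds\cip0$ and $\sum_j\{M_{nj}(\theta)+n^{-1}\mci(\theta)\}\cip0$ — the latter being $n^{-1}\sum_j$ of i.i.d.\ terms with mean tending to $-\mci(\theta)$, which follows from differentiating the identity in part 1 of Theorem \ref{hm:thm_vdV_LAN}-type reasoning (the second Bartlett identity $E_\theta\p_\theta[g_{n1}^\top]=-E_\theta g_{n1}^{\otimes2}$, valid under Assumption \ref{hm:lan_assump1_dd}) combined with (a)+(b), plus a weak law of large numbers for the triangular array using the $L^2$-bound in (c). The increment term is handled by the uniform bound in (c): since $su_n\in D_n(a;\theta)$ for $a\ge|u|$, we have $n\sup_{\rho\in D_n(a;\theta)}E_\rho|M_{n1}(\rho)|^2\to0$, so $E|R_n-(\text{centered part})|$ is dominated by $|u|^2$ times $n\sup E_\rho|M_{n1}(\rho)|\le|u|^2(n^2\sup E_\rho|M_{n1}(\rho)|^2)^{1/2}$ — here one must be a little careful, combining the $O(n^{-1})$ size of the mean with the $o(n^{-1})$-type control of fluctuations; the clean route is to show $\sum_j\{M_{nj}(su_n)-E_{\theta+su_n}M_{n1}(su_n)\}\cip0$ (variance $\le n\sup_\rho E_\rho|M_{n1}(\rho)|^2\to0$) and $\sum_j E_{\theta+su_n}M_{n1}(su_n)\to-\mci(\theta)$ uniformly in $s$ (a continuity/domination argument using (b), (c) and that $g$ and its derivatives are evaluated at shrinking neighborhoods of $\theta$). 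Integrating over $s$ and using $\int_0^1 2(1-s)\,ds=1$ gives $R_n\cip-\mci(\theta)[u,u]$.

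\textbf{Main obstacle.} The delicate point is step (ii): transferring the i.i.d.-type control from the fixed point $\theta$ to the moving points $\theta+su_n$ uniformly in $s\in[0,1]$, i.e.\ replacing $E_\rho$-expectations of the Hessian array by $\mci(\theta)$ with only an $L^2$-modulus of continuity in hand. One must argue that the expectation $nE_{\theta+su_n}\{A_n\p_\theta[g_{n1}(\theta+su_n)^\top]A_n\}$ converges to $-\mci(\theta)$ — not merely that its fluctuations vanish — and this requires combining the second Bartlett identity at the shifted parameter with Assumption \ref{hm:lan_assump2_dd}(b)--(c) and an argument that the shift $su_n\to0$ does not disturb the limit. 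Everything else (the CLT, the bias cancellation, the Lindeberg verification) is essentially bookkeeping once the structure of Assumption \ref{hm:lan_assump2_dd} is unpacked.
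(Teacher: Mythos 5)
Your overall route---second-order Taylor expansion of $\ell_{n}$, Lindeberg--Feller for the score, and a law of large numbers for the normalized Hessian along the segment---is exactly the scheme the paper has in mind (it gives no proof itself, deferring to \cite[Section 4.1]{KawMas13}). Two of your steps, however, do not go through as written.

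First, the bias of the score. Assumption \ref{hm:lan_assump2_dd}(a) reads $n|A_{n}(\theta)E_{\theta}\{g_{n1}(\theta)\}|^{2}\to 0$, i.e.\ $|A_{n}(\theta)E_{\theta}\{g_{n1}(\theta)\}|=o(n^{-1/2})$. The mean of $A_{n}(\theta)\p_{\theta}\ell_{n}(\theta)$ is $nA_{n}(\theta)E_{\theta}\{g_{n1}(\theta)\}$, whose norm is therefore only $o(n^{1/2})$; your assertion that it is ``$o(1)$ \dots by Assumption (a)'' is a non sequitur (try $|A_{n}(\theta)E_{\theta}g_{n1}(\theta)|=n^{-3/4}$). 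Condition (a) as stated only removes the mean's contribution to the variance, $n(A_{n}E_{\theta}g_{n1})^{\otimes 2}\to 0$; the centred limit $N_{p}(0,\mci(\theta))$ requires $nA_{n}(\theta)E_{\theta}\{g_{n1}(\theta)\}\to 0$, which you must either obtain from exactness of the likelihood ($E_{\theta}g_{n1}(\theta)=0$ by differentiating $\int p_{h_{n}}(x;\theta)\,dx=1$ --- an interchange that the pointwise $\mcc^{2}$ of Assumption \ref{hm:lan_assump1_dd} alone does not license, and which you also invoke without justification for the second Bartlett identity) or flag as an additional hypothesis.

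Second, and more substantially, step (ii) conflates the two probability measures. Assumption \ref{hm:lan_assump2_dd}(c) bounds $\sup_{\rho\in D_{n}(a;\theta)}E_{\rho}(|M_{n1}(\rho)|^{2})$: the moment of the Hessian evaluated at $\rho$ is taken under $P_{\rho}$. The remainder $R_{n}$, built from $M_{nj}(\theta+su_{n})$ with $\theta+su_{n}\ne\theta$, must converge under $P_{\theta}$. Your bound ``variance $\le n\sup_{\rho}E_{\rho}|M_{n1}(\rho)|^{2}$'' controls the variance of $\sum_{j}M_{nj}(\theta+su_{n})$ under $P_{\theta+su_{n}}$, not under $P_{\theta}$, and says nothing about the discrepancy $E_{\theta}\{M_{n1}(\theta+su_{n})\}-E_{\theta+su_{n}}\{M_{n1}(\theta+su_{n})\}$. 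Bridging the measures is the real content of the proof: either a change-of-measure estimate (Cauchy--Schwarz against the single-increment likelihood ratio $p_{h_{n}}(\cdot;\theta)/p_{h_{n}}(\cdot;\theta+su_{n})$, whose second moment must then itself be controlled), or contiguity of $(P^{n}_{\theta+A_{n}(\theta)u})$ and $(P^{n}_{\theta})$ --- which is a consequence of the LAN being proved and hence unavailable at this point without a bootstrapping argument. The same defect affects your claim $nE_{\theta+su_{n}}\{M_{n1}(\theta+su_{n})\}\to-\mci(\theta)$: Bartlett at the shifted point gives $-nE_{\theta+su_{n}}[(A_{n}(\theta)g_{n1}(\theta+su_{n}))^{\otimes 2}]$, while Assumption \ref{hm:lan_assump2_dd}(b) identifies this limit only at $s=0$; passing to the shifted point needs a quantitative continuity estimate (extractable from $A_{n}(g_{n1}(\rho)-g_{n1}(\theta))=\int_{0}^{1}M_{n1}(\theta+t(\rho-\theta))\,dt\,[A_{n}^{-1}(\rho-\theta)]$ together with (c), but once again the expectations land under the wrong measure). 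Until these transfers are made explicit, the key step $R_{n}\cip-\mci(\theta)[u,u]$ is asserted rather than proved.
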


Theorem \ref{hm:thm_lan_ds} can be proved all the same as in \cite[Section 4.1]{KawMas13}; 
we should note that it is somewhat straightforward to extend Theorem \ref{hm:thm_lan_ds} to deal with ergodic models, 
with the help of limit theorems for mixing random variables and/or martingale limit theorems. 
So far, several explicit examples have been known for which we can apply Theorem \ref{hm:thm_lan_ds}. 
Real difficulty arises when $g_{nj}$ is not explicit, even if its existence can be verified; 
obviously, without restricting the target class of {\lp es} 
it is impossible to deduce any LAN with specific $A_{n}(\theta)$ and $\mci(\theta)$. 
Research in this direction is currently under investigation.

\medskip

The asymptotic orthogonality of parameters (diagonal Fisher information matrix) 
is known to be very useful in statistics; e.g. \cite{CoxRei87} and \cite{JorKnu04}. 
In the high-frequency sampling scheme, we quite naturally encounter the opposite-side phenomenon, namely, 
the determinant of the normalized observed information matrix 
$-A_{n}(\theta)\p_{\theta}^{2}\ell_{n}(\theta)A_{n}(\theta)$ tends in probability to zero 
(so the Fisher information matrix $\mci(\theta)=0$) for {\it every $\theta\in\Theta$}. 
This problem does not seem to be sidestepped 
simply by using an off-diagonal norming $A_{n}(\theta)=\{A_{n}^{kl}(\theta)\}_{k,l}$. 
We will look at some such examples in Sections \ref{hm:sec_mxlplan} and \ref{hm:sec_sslp_LAN}. 
As mentioned before, the LAN itself is not quite meaningful if $\mci(\cdot)\equiv 0$, 
although it reveals which parameters cause the unpleasant asymptotic singularity, 
giving us a caution for adopting the likelihood approach. 
In this case, there would exist no unbiased estimator with finite variance, 
and the possible asymptotic distributions of the maximum likelihood estimators would be no longer normal 
and have infinite-variance (see \cite{LiuBro93} and \cite{StoMar01}). 
Nevertheless, it is worth mentioning that we may bypass the non-invertibility of the asymptotic covariance matrix 
at the expense of the optimal rate of convergence, retaining asymptotic normality 
(\cite{Kaw13}, \cite{Mas09_slp} and \cite{Tod13}); 
some examples will be given in Sections \ref{hm:sec_sym.slp_me} and \ref{hm:sec_skewslp_me} for the stable models.


\subsubsection{Example: Meixner {\lp}}\label{hm:sec_mxlplan}

The Meixner distribution, denoted by ${\rm Meixner}(\alpha,\beta,\delta,\mu)$, 
is infinitely divisible and admits a density 
\begin{equation}\label{hm:basic parametrization}
 x\mapsto \frac{\left(2 \cos (\beta/2)\right)^{2\delta}}{2\pi\alpha
\Gamma(2\delta)}\exp\left[\frac{\beta}{\alpha}(x-\mu)\right]
\left|\Gamma\left(\delta+i\frac{x-\mu}{\alpha}\right)\right|^2,
\quad x\in\mathbb{R}.
\end{equation}
We write $\theta=\left(\alpha,\,\beta,\,\delta,\,\mu\right)\in \Theta$, 
a bounded convex domain whose closure satisfies that
\[
 \Theta^-\subset \left\{\left(\alpha,\,\beta,\,\delta,\,\mu\right)\in
 \mathbb{R}^4;\,\alpha>0,\,|\beta|< \pi,\,\delta>0,\,\mu\in\mathbb{R}\right\}.
\]
The L\'evy measure of ${\rm Meixner}(\alpha,\beta,\delta,\mu)$ admits the explicit Lebesgue density 
\[
 g(z;\theta):=\delta\frac{\exp(\beta z/\alpha)}{z{\rm sinh}(\pi z/\alpha)},\quad z\ne 0.
\]
We refer the reader to \cite{Gri99} and \cite{SchTeu98} for more details of the Meixner distribution.

Let $X$ be a L\'evy process such that $\mcl(X_1)={\rm Meixner}(\alpha,\beta,\delta,\mu)$. 
The characteristic function of $\mathcal{L}(X_t)$ is given by
\[
 \vp_{X_{t}}(u)=e^{iu\mu t}\left(\frac{\cos (\beta/2)}{{\rm cosh}\left((\alpha
 u-i\beta)/2\right)}\right)^{2\delta t},
\]
implying that for each $c>0$ and $t>0$,
\begin{equation}
\mathcal{L}\left(c (X_t-t \mu)\right)={\rm Meixner}(c\alpha,\,\beta,\,t\delta,\,0).
\label{hm:rep}
\end{equation}
For each $n\in\mbbn$, we define the i.i.d. random variables $\ep_{n1},\ep_{n2},\dots$ by
\begin{equation}\label{hm:def of epsilon}
\epsilon_{nj}=\epsilon_{nj}(\alpha,\delta,\mu,h_{n}):=
\frac{\dd_{j}X-h_{n}\mu}{h_{n}\alpha\delta},
\end{equation}
with common distribution $\mcl(\ep_{n1})={\rm Meixner}((h_{n}\delta)^{-1},\beta,h_{n}\delta,0)$. 
We can also see that $\mcl(\epsilon_{n1})$ has mean, variance, skewness and kurtosis, respectively,
\[
 \tan\frac{\beta}{2},\quad \frac{1}{2h_{n}
 \delta \cos^2(\beta/2)},\quad \sin
 \frac{\beta}{2}\sqrt{\frac{2}{h_{n}\delta}},\quad 3+\frac{2-\cos (\beta)}{h_{n}\delta}.
\]
Further, $\mcl(\ep_{n1})$ converges to the standard Cauchy distribution, as $n\to\infty$: 
indeed, for each $u\in\mbbr$
\begin{align*}
\vp_{\ep_{n1}}(u)
&=\left\{\frac{\cos (\beta/2)}{{\rm cosh}\left((u/(h_{n}\delta)-i\beta)/2
\right)}\right\}^{2h_{n}\delta}\\
&=\left\{\frac{2}{e^{u/(2h_{n}\delta)}(1-i\tan (\beta/2))+
e^{-u/(2h_{n}\delta)}(1+i\tan (\beta/2))}\right\}^{2h_{n}\delta}\\
&\sim\left(\frac{2}{1-i\sgn(u)\tan(\beta/2)}\right)^{2h_{n}\delta}e^{-|u|}
\nn\\
&\to e^{-|u|},\quad n\to\infty.
\end{align*}
The Meixner {\lp} possesses the small-time Cauchy property as well as the long-time Gaussianity 
in the functional sense; see \cite{KawMas11} and the references therein.

The log-likelihood function is
\begin{align}
\ell_{n}(\theta)&=\sumj
\bigg\{2h_{n}\delta \log \left(2
\cos\frac{\beta}{2}\right) -\log (2\pi\alpha) -\log\Gamma(2h_{n}\delta)
\nn\\
&{}\qquad+h_{n}\beta\del\ep_{nj}+
\log\left|\Gamma\left(h_{n}\delta(1+i\ep_{nj})\right)\right|^2\bigg\},
\nn
\end{align}
and we have the following LAN result:

\begin{thm}
If $T_{n}\to\infty$, then we have the LAN for each $\theta\in \Theta$ at rate
\begin{equation}
A_{n}={\rm diag}\left(\frac{1}{\sqrt{n}},\frac{1}{\sqrt{T_{n}}},\frac{1}{\sqrt{n}},\frac{1}{\sqrt{n}}\right)
\nonumber
\end{equation}
and Fisher information matrix
\begin{equation}
\mci(\theta):=\left(
\begin{array}{cccc}
1/(2\alpha^2) & 0 & 1/(2\alpha\delta) & 0 \\
 & \del/\{2\cos^2(\beta/2)\} & 0 & 0 \\
 &  & 1/(2\delta^2) & 0 \\
\text{{\rm sym.}} &  &  & 1/(2\alpha^2\delta^2)
\end{array}\right).
\label{hm:F_info}
\end{equation}
\label{hm:mx_main_thm}
\end{thm}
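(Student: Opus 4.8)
The plan is to verify the hypotheses of Theorem \ref{hm:thm_lan_ds}, namely Assumptions \ref{hm:lan_assump1_dd} and \ref{hm:lan_assump2_dd}, for the Meixner model. Assumption \ref{hm:lan_assump1_dd} is immediate: the density \eqref{hm:basic parametrization} is strictly positive on all of $\mbbr$ for every admissible $\theta$, so the support is $\mbbr$ independently of $t$ and $\theta$, and smoothness in $\theta$ follows since $\Gamma$ is holomorphic and non-vanishing off the non-positive integers while $\log\cos(\beta/2)$ is smooth on $|\beta|<\pi$. The real work is Assumption \ref{hm:lan_assump2_dd}, for which the key reduction is to rewrite everything in terms of the standardized i.i.d.\ variables $\ep_{nj}$ of \eqref{hm:def of epsilon}, whose law is ${\rm Meixner}((h_{n}\delta)^{-1},\beta,h_{n}\delta,0)$ and which converge to the standard Cauchy law. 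First I would compute the score $g_{n1}(\theta)=\p_{\theta}\log p_{h_{n}}(\dd_{1}X;\theta)$ explicitly from the displayed $\ell_{n}$, using the digamma function $\psi=\Gamma'/\Gamma$ for the $\p_{\del}$ and $\p_{\mu}$ (equivalently $\p_{\ep}$) derivatives; each component of $g_{n1}$ becomes an explicit function of $\ep_{n1}$, $h_{n}$, and $\theta$.

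Next I would identify the correct norming $A_{n}$. The heuristic is the standard one: differentiating the representation \eqref{hm:rep} shows that $\al,\del,\mu$ enter $\mcl(\dd_{j}X)$ in a way that accumulates Fisher information at rate $n$ per observation (a ``scale/shape/location'' type effect that survives the small-time limit), whereas $\beta$ enters only through the skewness, which is $O((h_{n}\del)^{-1/2})$ and hence contributes information of order $h_{n}$ per increment, i.e.\ total order $nh_{n}\asymp T_{n}$; this is exactly why the $\beta$-slot of $A_{n}$ is $T_{n}^{-1/2}$ rather than $n^{-1/2}$. Then I would verify Assumption \ref{hm:lan_assump2_dd}(a) by showing $nA_{n}E_{\theta}g_{n1}(\theta)\to0$: for the components with $n^{-1/2}$ norming one needs $\sqrt{n}\,E_{\theta}g_{n1}^{(k)}(\theta)\to0$, which should follow from the identity $E_{\theta}g_{n1}(\theta)=0$ whenever differentiation under the integral is justified, or more robustly from an expansion of $E_{\theta}g_{n1}$ in powers of $h_{n}$ using the asymptotics $\psi(z)=\log z-\tfrac1{2z}+O(z^{-2})$ and $|\Gamma(z)|$-asymptotics as the argument $h_{n}\del(1+i\ep)$ has small real part. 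For (b) I would pass to the Cauchy limit: $nE_{\theta}[(A_{n}g_{n1})^{\otimes2}]$ is, componentwise, an expectation of an explicit function of $\ep_{n1}$; using $\mcl(\ep_{n1})\to$ standard Cauchy together with a uniform-integrability bound (the polynomial moments of $\ep_{n1}$ are controlled by the displayed variance/kurtosis formulas), these expectations converge to integrals against the Cauchy density, and a direct evaluation of those integrals yields the entries of \eqref{hm:F_info}; in particular the off-diagonal $1/(2\al\del)$ arises from the coupling of the $\p_{\al}$ and $\p_{\del}$ scores through the common scaling variable, and the vanishing of the $\beta$-row off-diagonals reflects that the skewness perturbation is asymptotically orthogonal to the scale/shape/location directions.

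For Assumption \ref{hm:lan_assump2_dd}(c) I would bound, uniformly over $\rho\in D_{n}(a;\theta)$, the second moment of $A_{n}\p_{\theta}[g_{n1}(\rho)^{\top}]A_{n}$ and the fourth moment of $A_{n}g_{n1}(\rho)$, each multiplied by $n$; since $|\rho-\theta|\le a\,\|A_n\|\to0$, the parameter stays in a fixed compact neighborhood of $\theta$ on which all the $\Gamma$, $\psi$, $\psi'$ terms and the $\cos,\,{\rm cosh},\,\tanh$ terms are bounded, so it suffices to control moments of polynomials in $\ep_{n1}(\rho)$ of the appropriate degree and check that the powers of $h_{n}$ coming from $A_{n}$ beat the growth; the $n^{-2}$ from $A_{n}^{\otimes2}$ in the Hessian term and $n^{-2}$ from $A_{n}^{\otimes4}$-times-$n$ in the quartic term give the required decay, using that $E|\ep_{n1}|^{q}\lesssim (h_n\del)^{-q/2}$ for $q<1$ only — so care is needed because the Cauchy-type tails mean $\ep_{n1}$ has no moment of order $\ge1$. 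This is the main obstacle: the score components involve $\ep_{nj}$ itself (through $h_n\beta\del\ep_{nj}$) and through $\log|\Gamma(h_n\del(1+i\ep_{nj}))|$, whose $\ep$-derivatives grow like $\psi$ of a large imaginary argument, i.e.\ logarithmically, so one must check that after multiplication by the $h_n$-powers in $A_n$ the relevant moments are finite and vanish at the right rate despite the heavy tails; the clean way is to exploit the exact Meixner structure — $\mcl(\ep_{n1})$ is itself Meixner with parameter $h_n\del\to0$, so $h_n\del\,\ep_{n1}$ has a genuine Gaussian-like concentration and the potentially dangerous terms are the ones where no such damping factor is present, which one handles by the explicit reflection/recurrence identities for $|\Gamma|^2$ and the bound $|\psi(\del+iy)|\lesssim 1+\log(1+|y|)$. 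Once (a)--(c) are in place, Theorem \ref{hm:thm_lan_ds} applies verbatim and delivers \eqref{hm:LAN_def+2} with the stated $A_{n}$ and $\mci(\theta)$, with the central limit theorem for $A_{n}\p_{\theta}\ell_{n}(\theta)$ coming from the Lindeberg--Feller conclusion already noted after Assumption \ref{hm:lan_assump2_dd}.
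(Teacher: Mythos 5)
Your proposal takes exactly the route the paper intends: the proof of Theorem \ref{hm:mx_main_thm} is omitted in the text with a pointer to \cite{KawMas11}, but the machinery set up for it --- Theorem \ref{hm:thm_lan_ds} with Assumptions \ref{hm:lan_assump1_dd}--\ref{hm:lan_assump2_dd}, the standardized variables $\ep_{nj}$ of \eqref{hm:def of epsilon} and their convergence to the standard Cauchy law --- is precisely what you verify, and your identification of the rates, of the Cauchy-limit evaluation of the entries of \eqref{hm:F_info} (including the collinearity of the $\al$- and $\del$-scores behind the $1/(2\al\del)$ entry and the singularity), and of the heavy-tail obstacle in Assumption \ref{hm:lan_assump2_dd}(c) is correct. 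The one imprecision is your stated bound $E|\ep_{n1}|^{q}\lesssim(h_{n}\del)^{-q/2}$: for $q<1$ the absolute moments are in fact $O(1)$, while for $q\ge 1$ they blow up at rates read off from the displayed variance and kurtosis formulas (e.g.\ $E(\ep_{n1}^{2})\asymp(h_{n}\del)^{-1}$), which is what actually makes the $n^{-2}$ and $T_{n}^{-1}$ decays in (c) come out --- but this does not change the argument.
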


We omit the proof of Theorem \ref{hm:mx_main_thm}, referring the interested reader to \cite{KawMas11}.

\medskip

The Fisher information matrix (\ref{hm:F_info}) is singular for every $\theta\in\Theta$, 
which is obviously caused solely by the joint maximum-likelihood estimation of $\al$ and $\del$; 
as soon as $\alpha$ or $\delta$ is fixed, 
the resulting $3\times 3$ Fisher information matrix becomes purely diagonal, 
ensuring that the maximum likelihood estimators are asymptotically independent. 
The asymptotic singularity also acts as a practical warning in the maximum likelihood 
estimation for the Meixner {\lp} with a very small $\delta$ 
under low-frequency sampling scheme, since, as seen by \eqref{hm:rep}, 
the parameter $\delta$ and the time $t$ play the same role. 
The form of $\mci(\theta)$ of (\ref{hm:F_info}) is much simpler compared with 
that of the Fisher information matrix in the low-frequency sampling; see \cite[Appendix A]{GriPro09}. 

As we will see in Section \ref{hm:sec_sslp_LAN}, 
the joint maximum-likelihood estimation of the stability index and the scale parameter of the stable {\lp es}
also leads to a constantly singular Fisher information matrix. It can be expected that 
the asymptotic singularity occurs for every L\'evy process satisfying the small-time stable approximation 
and having unknown index $\beta$ (Lemma \ref{hm:lem_ltst}) and scale; 
a discussion on this issue can be found in \cite{Kaw13}. 
In this direction, the case of the Meixner {\lp es} is not directly relevant 
since we beforehand know that the small-time stability index equals one.

\medskip

We may expect from the definition of $\ep_{nj}$ of \eqref{hm:def of epsilon} 
that the asymptotic singularity stems from the non-identifiability between the parameters 
$\al$ and $\del$ in small time; they may be identifiable only in the product form $\al\del$. 
The case of continuous-time data captures this point more directly:

\begin{prop}\label{hm:prop continuous time}
Let $T>0$ and let $\theta_k:=(\alpha_k,\beta_k,\delta_k,\mu_k)\in \Theta$, $k=1,2$.
The probability measures 
$P^{T}_{\theta_{1}}$ and $P^{T}_{\theta_{2}}$ 
are equivalent if and only if $\alpha_1\delta_1=\alpha_2\delta_2$ and $\mu_1=\mu_2$.
\end{prop}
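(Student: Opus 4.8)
The plan is to invoke Theorem~\ref{hm:thm_coac} with $(b_{\theta},c_{\theta},\nu_{\theta})$ the generating triplet (in the sense of \eqref{hm:lk}) of the Meixner {\lp}. Reading off the characteristic function of $\mcl(X_{t})$ one sees that the process has no Gaussian part, so $c_{\theta_{1}}=c_{\theta_{2}}=0$; hence condition (a) of the theorem holds automatically and the term $\gam\sqrt{c_{\theta_{1}}}$ in condition (b) drops out. Moreover $\nu_{\theta_{k}}(dz)=g(z;\theta_{k})\,dz$ with each $g(\cdot;\theta_{k})$ strictly positive and smooth on $\mbbr\setminus\{0\}$, so $\nu_{\theta_{1}}\sim\nu_{\theta_{2}}$ holds for all parameter values and $\zeta(z;\theta_{1},\theta_{2})=g(z;\theta_{2})/g(z;\theta_{1})$. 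Everything therefore reduces to the two remaining requirements in (b): the integrability $\int(1-\sqrt{\zeta})^{2}\nu_{\theta_{1}}(dz)<\infty$, and the drift relation $b_{\theta_{2}}=b_{\theta_{1}}+\int_{|z|\le1}z\{\zeta(z;\theta_{1},\theta_{2})-1\}\nu_{\theta_{1}}(dz)$.

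First I would dispose of the integrability condition. On $\{|z|>1\}$ it is automatic: writing $(1-\sqrt\zeta)^{2}=1-2\sqrt\zeta+\zeta$, the three contributions are $\nu_{\theta_{1}}(\{|z|>1\})$, the cross term $2\int_{|z|>1}\sqrt{g(z;\theta_{1})g(z;\theta_{2})}\,dz\le 2\{\nu_{\theta_{1}}(\{|z|>1\})\nu_{\theta_{2}}(\{|z|>1\})\}^{1/2}$ by Cauchy--Schwarz, and $\nu_{\theta_{2}}(\{|z|>1\})$, all finite since a {\lm} is finite off a neighbourhood of the origin. So the condition is governed by $z\to0$. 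From ${\rm sinh}\,x=x(1+O(x^{2}))$ one has $g(z;\theta)=\frac{\alpha\delta}{\pi z^{2}}(1+\frac{\beta}{\alpha}z+O(z^{2}))$, hence $\zeta(z;\theta_{1},\theta_{2})\to\alpha_{2}\delta_{2}/(\alpha_{1}\delta_{1})$ as $z\to0$; since $\nu_{\theta_{1}}(dz)$ is comparable to $z^{-2}\,dz$ near $0$, which is non-integrable, $\int_{0<|z|\le1}(1-\sqrt\zeta)^{2}\nu_{\theta_{1}}(dz)$ is finite only if $(1-\sqrt{\alpha_{2}\delta_{2}/(\alpha_{1}\delta_{1})})^{2}=0$, i.e. $\alpha_{1}\delta_{1}=\alpha_{2}\delta_{2}$. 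Conversely, when $\alpha_{1}\delta_{1}=\alpha_{2}\delta_{2}$ the expansion sharpens to $\log\zeta(z;\theta_{1},\theta_{2})=(\beta_{2}/\alpha_{2}-\beta_{1}/\alpha_{1})z+O(z^{2})$, so $(1-\sqrt\zeta)^{2}=O(z^{2})$ near $0$ and the integral converges. Thus the integrability requirement in (b) is equivalent to $\alpha_{1}\delta_{1}=\alpha_{2}\delta_{2}$.

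Next, under the standing assumption $\alpha_{1}\delta_{1}=\alpha_{2}\delta_{2}=:\kappa$ I would show the drift relation is equivalent to $\mu_{1}=\mu_{2}$. From $\vp_{X_{t}}$ one reads $E_{\theta}[X_{1}]=\mu+\alpha\delta\tan(\beta/2)$, and since the Meixner {\lm} has exponentially light tails, $b_{\theta}=\mu+\alpha\delta\tan(\beta/2)-\int_{|z|>1}z\,g(z;\theta)\,dz$. Substituting this and $\zeta-1=(g(\cdot;\theta_{2})-g(\cdot;\theta_{1}))/g(\cdot;\theta_{1})$ into the drift relation and merging the integrals over $\{|z|\le1\}$ and $\{|z|>1\}$, it collapses to
\[
\mu_{2}+\kappa\tan\tfrac{\beta_{2}}{2}-\mu_{1}-\kappa\tan\tfrac{\beta_{1}}{2}
=\mathrm{p.v.}\!\int_{\mbbr}z\bigl(g(z;\theta_{2})-g(z;\theta_{1})\bigr)\,dz ,
\]
where the principal value is in fact an ordinary (absolutely convergent) integral because, when $\alpha_{1}\delta_{1}=\alpha_{2}\delta_{2}$, the $z^{-1}$-singularities of $z\,g(z;\theta_{1})$ and $z\,g(z;\theta_{2})$ at the origin cancel. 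It remains to evaluate $\mathrm{p.v.}\int_{\mbbr}z\,g(z;\theta)\,dz$: substituting $x=\pi z/\alpha$, splitting $e^{\beta x/\pi}=\cosh(\beta x/\pi)+\sinh(\beta x/\pi)$ so the $\cosh$-part integrates to $0$ by oddness, and using the classical formula $\int_{0}^{\infty}\frac{{\rm sinh}(ax)}{{\rm sinh}\,x}\,dx=\frac{\pi}{2}\tan\frac{\pi a}{2}$ (valid for $|a|<1$, applicable since $|\beta|<\pi$), one gets $\mathrm{p.v.}\int_{\mbbr}z\,g(z;\theta)\,dz=\alpha\delta\tan(\beta/2)$, so the right-hand side equals $\kappa\tan(\beta_{2}/2)-\kappa\tan(\beta_{1}/2)$. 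The tangent terms cancel and one is left with $\mu_{1}=\mu_{2}$. Combining this with the previous paragraph gives $P^{T}_{\theta_{1}}\sim P^{T}_{\theta_{2}}$ iff $\alpha_{1}\delta_{1}=\alpha_{2}\delta_{2}$ and $\mu_{1}=\mu_{2}$.

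The hard part is this last step: because $\int_{\mbbr}z\,g(z;\theta)\,dz$ is only conditionally convergent (the integrand behaves like $\alpha\delta/(\pi z)$ at the origin), one has to keep the drift/compensator bookkeeping perfectly consistent with the identity-truncation convention of \eqref{hm:lk} --- either by always working with the difference $g(\cdot;\theta_{2})-g(\cdot;\theta_{1})$, where the leading singularity vanishes exactly when $\alpha_{1}\delta_{1}=\alpha_{2}\delta_{2}$, or by using principal values throughout --- and to recall (or re-derive) the above $\sinh$-ratio integral. Everything else is a routine small-$z$ Taylor expansion of $g$.
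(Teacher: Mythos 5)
Your proposal is correct, and its first half — reducing the claim via Theorem \ref{hm:thm_coac} to the Hellinger-type integrability condition plus the drift identity, then reading off $\al_{1}\del_{1}=\al_{2}\del_{2}$ from the $z^{-2}$ singularity of $g$ at the origin and the exponential decay at infinity — is essentially identical to the paper's argument. Where you genuinely diverge is the treatment of the drift condition. The paper deliberately avoids evaluating the conditionally convergent integral $\int z\,g(z;\theta)\,dz$: it isolates the function $f(\beta_{1},\beta_{2})$ multiplying $C=\al_{1}\del_{1}=\al_{2}\del_{2}$, checks $f(0,0)=0$ by oddness, and then shows $\p_{\beta_{1}}f\equiv\p_{\beta_{2}}f\equiv 0$ by recognizing the resulting integrals as the known second moment $\int z^{2}g(z;\theta_{k})\,dz=\al_{k}^{2}\del_{k}/(2\cos^{2}(\beta_{k}/2))$ of the Meixner distribution. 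You instead evaluate the principal value directly, via the substitution $x=\pi z/\al$, the $\cosh/\sinh$ parity split, and the classical formula $\int_{0}^{\infty}\sinh(ax)/\sinh(x)\,dx=(\pi/2)\tan(\pi a/2)$ for $|a|<1$, obtaining $\mathrm{p.v.}\int z\,g(z;\theta)\,dz=\al\del\tan(\beta/2)$ so that the tangent terms cancel outright. Both routes are sound; yours is more direct and makes the cancellation of the $\tan(\beta_{k}/2)$ terms transparent, at the cost of importing an external integral identity and of the careful principal-value bookkeeping you rightly flag (the $z^{-1}$ singularities of $zg(\cdot;\theta_{1})$ and $zg(\cdot;\theta_{2})$ cancel exactly when $\al_{1}\del_{1}=\al_{2}\del_{2}$, so the difference integral is absolutely convergent), while the paper's differentiation trick stays entirely within facts already recorded about the Meixner law (its variance) and never touches a conditionally convergent integral.
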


\begin{proof}
Since $g(z;\theta)>0$ for every $z\ne 0$, the function
\begin{equation}
\zeta(\cdot;\theta_{1},\theta_{2}):=\frac{g(\cdot;\theta_{2})}{g(\cdot;\theta_{1})}:~\mbbr\backslash\{0\}\to(0,\infty)
\nonumber
\end{equation}
is well-defined. 
The mean of $\mcl(X_{1})$ is given by $\mu_{0}(\theta):=\mu +\alpha\delta\tan(\beta/2)$, hence
\begin{equation}
\vp_{X_{1}}(u)=\exp\left\{
iu\mu_{0}(\theta)+\int \left(e^{iuz}-1-iuz\right)g(z;\theta)dz
\right\},\quad u\in\mathbb{R}.
\nonumber
\end{equation}
Now, according to Theorem \ref{hm:thm_coac} it suffices to show that the following two conditions 
hold if and only if $\alpha_1\delta_1=\alpha_2\delta_2$ and $\mu_1=\mu_2$:
\begin{description}
\item[{\rm (a)}] $\ds{\int\{1-\sqrt{\zeta(z;\theta_{1},\theta_{2})}\}^{2}g(z;\theta_{1})dz<\infty}$;
\item[{\rm (b)}] $\ds{\mu_{0}(\theta_{2})=\mu_{0}(\theta_{1})
+\int z\{\zeta(z;\theta_{1},\theta_{2})-1\}g(z;\theta_{1})dz}$.
\end{description}
Let us look at the behaviors of the L\'evy density $g(z;\theta)$ near the origin and at infinity. 
By means of the approximation
\begin{equation}
\frac{z}{{\rm sinh}(z)}=1-\frac{z^{2}}{6}+O(z^{4}),\quad|z|\to 0,
\nonumber
\end{equation}
we see that the L\'evy density $g(z;\theta)$ satisfies that
\begin{equation}
g(z;\theta)=\frac{\alpha\delta}{\pi z^{2}}
\left(1+\frac{\beta}{\alpha}z+O(z^{2})\right),\quad |z|\to 0.
\label{hm:g_origin}
\end{equation}
Since $x\mapsto{\rm sinh}(x)$ behaves like $e^{x}/2$ (resp. $-e^{-x}/2$) 
as $x\to\infty$ (resp. $x\to-\infty$), we have
\begin{equation}
g(z;\theta)\sim\left\{
\begin{array}{ll}
2\delta z^{-1}\exp\{-(\pi-\beta)z/\alpha\}, &\quad z\to\infty, \\
2\delta |z|^{-1}\exp\{-(\pi+\beta)|z|/\alpha\}, &\quad z\to-\infty.
\end{array}
\right.
\label{hm:g_infi}
\end{equation}
By (\ref{hm:g_origin}) and (\ref{hm:g_infi}),
\begin{align}
& \left(1-\sqrt{\zeta(z;\theta_1,\theta_{2})}\right)^2g(z;\theta_1)
\nn\\
&=\left(\sqrt{g(z;\theta_{1})}-\sqrt{g(z;\theta_{2})}\right)^2
\nn\\
&\sim\left\{
\begin{array}{ll}
(\pi|z|)^{-2}\Big\{\left(\sqrt{\alpha_1\delta_1}-\sqrt{\alpha_2\delta_2}\right)^{2} & \\
\qquad+\left(\sqrt{\alpha_1\delta_1}-\sqrt{\alpha_2\delta_2}\right)
\left(\beta_1\sqrt{\frac{\delta_1}{\alpha_1}}-\beta_2\sqrt{\frac{\delta_2}{\alpha_2}}\right)z+O(z^{2})
\Big\}, &\quad |z|\to 0, \\
C_+z^{-1}\exp(-q_+z), &\quad z\to\infty, \\
C_-|z|^{-1}\exp(-q_-|z|), &\quad z\to-\infty,
\end{array}
\right.
\nn
\end{align}
for some positive constants $C_{\pm}$ and $q_{\pm}$, depending on $(\theta_1,\theta_2)$. 
Hence (a) holds if and only if $\alpha_{1}\delta_{1}=\alpha_{2}\delta_{2}$, 
which is to be imposed in the rest of this proof.

The condition (b) is equivalent to
\[
\mu_1+\alpha_1\delta_1\tan\frac{\beta_1}{2}-\mu_2-\alpha_2\delta_2\tan\frac{\beta_2}{2}
=\int \left(\delta_1 
\frac{\exp(\beta_1 z/\alpha_1)}{\sinh (\pi z/\alpha_1)}
-\delta_2 \frac{\exp(\beta_2 z/\alpha_2)}{\sinh (\pi z/\alpha_2)}
\right)dz.
\]
In the case $\alpha_{1}\delta_{1}=\alpha_{2}\delta_{2}=:C>0$, the last display can be rewritten as
\[
\mu_1-\mu_2+C\left\{\tan\frac{\beta_1}{2}-\tan\frac{\beta_2}{2}
-\int \left(\frac{\exp(\beta_1 z/\alpha_1)}{\alpha_1\sinh (\pi z/\alpha_1)}
-\frac{\exp(\beta_2 z/\alpha_2)}{\alpha_2\sinh (\pi z/\alpha_2)}
\right)dz\right\}=0.
\]
Denote the $\{\dots\}$ part on the left-hand side by $f(\beta_1,\beta_2)$. 
We show that $f$ is identically zero (given any positive $\al_{1}$ and $\al_{2}$), 
entailing that (b) holds if and only if $\mu_{1}=\mu_{2}$ hence completing the proof. We have
\[
 f(0,0)=\int \left(\frac{1}{\alpha_2 \sinh (\pi z/\alpha_2)}
-\frac{1}{\alpha_1 \sinh (\pi z/\alpha_1)}\right)dz\equiv 0,
\]
since the integrand is odd, continuous in $\mathbb{R}$, and exponentially decreasing as $|z|\to\infty$. 
Using the fact that the variance $\alpha_k^2\delta_k/(2\cos^2(\beta_k/2))$ of
${\rm Meixner}(\alpha_k,\beta_k,\delta_k,\mu_k)$ equals $\int z^{2}g(z;\theta_k)dz$, we get
\[
\frac{1}{\alpha_k^2}\int z~\frac{\exp(\beta_k z/\alpha_k)}{{\rm sinh}(\pi z/\alpha_k)}dz
=\frac{1}{2\cos^{2}(\beta_k/2)}.
\]
Hence
\begin{align*}
\p_{\beta_{1}}f(\beta_1,\beta_2)&=\frac{1}{2(\cos (\beta_1/2))^2}-
\frac{1}{\alpha_1^2}\int_{\mathbb{R}\backslash\{0\}}z~\frac{\exp(\beta_1 z/\alpha_1)}{\sinh (\pi z/\alpha_1)}dz\equiv 0.
\end{align*}
We can deduce that $\p_{\beta_2}f(\beta_1,\beta_2)\equiv 0$ in a similar manner. 
It follows that $f$ is identically zero.
\qed\end{proof}


\subsection{Uniform asymptotic normality of MLE with non-degenerate Fisher information}

\subsubsection{Basic result}

When the Fisher information matrix is non-degenerate, we can go further in an elegant way. 
The contents of this section is essentially a special case of 
Sweeting's general result \cite{Swe80} (also relevant is \cite[Chapter 1, Section 4]{BasSco83}), 
based on which we can provide a simple set of sufficient conditions for 
the asymptotic normality and the asymptotic optimality of the MLE, as well as for the LAN. 
A nice feature of the results is that it is almost enough to look at the uniform asymptotic behavior of 
the normalized observed information matrix having a {\it positive definite} limit (the Fisher information matrix) 
continuous in the parameter, and need not take care of the central limit theorem for the score-function part. 

\medskip

We will assume that the log-likelihood function $\theta\mapsto\ell_{n}(\theta)$ a.s. 
belongs to the class $C^{2}(\Theta)$, 
and we write the score function and the observed information matrix by
\begin{equation}
\mcs_{n}(\theta)=\p_{\theta}\ell_{n}(\theta)\quad\mathrm{and}
\quad\mci_{n}(\theta)=-\p_{\theta}^{2}\ell_{n}(\theta),
\nn
\end{equation}
respectively. To state the result we need to introduce some more definitions. 
Let us recall that the convergences in distribution and in probability of random vectors are metrizable. 
We will need their uniform versions. 
Let the symbol $\to_{u}$ stand for the ordinary uniform convergence over each compact subset of $\Theta$. 
For vector-valued random functions $\zeta_{n}(\cdot)$ and $\zeta(\cdot)$ on $\Theta$ 
with each $\zeta_{n}(\theta)$ being $\sig(X_{t^{n}_{j}};j\le n)$-measurable, we write 
$\zeta_{n}(\theta)\cil_{u}\zeta(\theta)$ and $\zeta_{n}(\theta)\cip_{u}\zeta(\theta)$ 
if $d_{\mcl}(\zeta_{n}(\theta),\zeta(\theta);\theta)\to_{u}0$ and 
$d_{p}(\zeta_{n}(\theta),\zeta(\theta);\theta)\to_{u}0$, respectively. 
Here, $d_{\mcl}(\cdot,\cdot;\theta)$ and $d_{p}(\cdot,\cdot;\theta)$ denote 
any metric characterizing $\cil$ and $\cip$ under $P_{\theta}$, respectively: 
for example, we may take
\begin{align}
d_{\mcl}(\zeta_{n}(\theta),\zeta(\theta);\theta)
&=\sup\left\{\left|
E_{\theta}\left\{f(\zeta_{n}(\theta))\right\}-E\left\{f(\zeta(\theta))\right\}
\right|:~\|f\|_{BL}\le 1\right\},
\label{hm:uan_metric_L} \\
d_{p}(\zeta_{n}(\theta),\zeta(\theta);\theta)
&=E_{\theta}\left(\frac{|\zeta_{n}(\theta)-\zeta(\theta)|}{1+|\zeta_{n}(\theta)-\zeta(\theta)|}\right),
\label{hm:uan_metric_p}
\end{align}
where
\begin{equation}
\|f\|_{BL}:=\sup_{x\ne y}\frac{|f(x)-f(y)|}{|x-y|}+\sup_{x}|f(x)|
\nonumber
\end{equation}
is the bounded-Lipschitz norm; 
by the definitions, $\zeta_{n}(\cdot)\cip_{u}\zeta(\cdot)$ implies $\zeta_{n}(\cdot)\cil_{u}\zeta(\cdot)$; 
e.g., \cite[Appendix A.1]{BasSco83}. 
Finally, let $A_{n}(\theta)$ be as in \eqref{hm:def_An}, now satisfying that $a_{jn}(\theta)\to_{u}0$.

Recall that $P^{n}_{\theta}$ stands for the restriction of $P_{\theta}$ to $\sig(X_{t^{n}_{j}};j\le n)$. 
With the above-mentioned notation, we will say that the family of probability measures $(P^{n}_{\theta}; n\in\mbbn)$ is 
{\it uniform LAN (ULAN) in $\Theta$ with rate $A_{n}(\theta)$ and Fisher information $\mci(\theta)$} if 
there exists a non-random function $\mci:\Theta\to\mbbr^{p}\otimes\mbbr^{p}$ 
with $\mci(\theta)$ being positive definite for any $\theta\in\Theta$, such that 
$A_{n}(\theta)\mcs_{n}(\theta)\cil_{u}N_{p}(0,\mci(\theta))$, 
that $A_{n}(\theta)\mci_{n}(\theta)A_{n}(\theta)\cip_{u}\mci(\theta)$, and that
\begin{align}
& \ell_{n}(\theta+A_{n}(\theta)u_{n})-\ell_{n}(\theta) \nn\\
& \qquad{}
-\left(\mcs_{n}(\theta)[A_{n}(\theta)u_{n}]
-\frac{1}{2}\mci_{n}(\theta)[A_{n}(\theta)u_{n},~A_{n}(\theta)u_{n}]\right)
\cip_{u}0
\label{hm:uan_sa+1}
\end{align}
for any non-random bounded sequence $(u_{n})\subset\mbbr^{p}$.

\medskip

The normalized observed information matrix is defined by
\begin{equation}
H_{n}(\theta):=A_{n}(\theta)\mci_{n}(\theta)A_{n}(\theta)
=\left[a_{kn}(\theta)a_{ln}(\theta)\mci^{(kl)}_{n}(\theta)\right]_{k,l=1}^{p}.
\nonumber
\end{equation}
The following theorem provides a simple tool for verifying ULAN, uniform asymptotic normality, 
and asymptotic efficiency.

\begin{thm}
Assume that
\begin{description}
\item[(a)] The log-likelihood  functions $\ell_{n}(\cdot)$ are of class $\mcc^{2}(\Theta)$,
\item[(b)] For each $k\in\{1,\dots,p\}$ and $a>0$, 
\begin{equation}
{\sup_{\rho}}
\left|a_{kn}(\theta)a_{kn}(\rho)^{-1}-1\right|\to_{u}0,
\label{hm:thm_uan_A}
\end{equation}
where the supremum is taken over all $\rho\in\Theta$ such that $|A_{n}(\theta)^{-1}(\rho-\theta)|\le a$.
\item[(c)] There exists a continuous map $\theta\mapsto\mci(\theta)$ 
with $\mci(\theta)$ being positive definite for each $\theta\in\Theta$, such that 
$E_{\theta}\{H_{n}(\theta)\}\to_{u}\mci(\theta)$ and that 
$\var_{\theta}\{H_{n}^{(kl)}(\theta)\}\to_{u}0$ for each $k,l\in\{1,\dots,p\}$. 
\end{description}
Then, we have the following.
\begin{enumerate}
\item The family of probability measures $(P^{n}_{\theta};n\in\mbbn)$ is ULAN with rate $A_{n}(\theta)$ 
and Fisher information matrix $\mci(\theta)$.
\item There exists a local maximizer $\hat{\theta}_{n}$ of $\ell_{n}(\theta)$ 
with probability tending to one, for which 
$A_{n}(\theta)^{-1}(\hat{\theta}_{n}-\theta)\cil_{u}N_{p}(0,\mci(\theta)^{-1})$.
\end{enumerate}
\label{hm:thm_uan}
\end{thm}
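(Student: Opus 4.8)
The plan is to follow Sweeting's approach, reducing the whole statement to a Taylor-expansion argument around the true value $\theta$ in which the normalized observed information matrix $H_n(\theta)$ plays the central role, and in which assumption (b) handles the subtlety that the multi-scaling matrix $A_n(\cdot)$ is itself parameter-dependent. First I would fix a compact $K\subset\Theta$ and work uniformly over $\theta\in K$. The backbone is the second-order Taylor expansion: for a bounded non-random sequence $(u_n)$, writing $\theta_n=\theta+A_n(\theta)u_n$, there is an intermediate point $\bar\theta_n$ on the segment $[\theta,\theta_n]$ with
\begin{equation}
\ell_n(\theta_n)-\ell_n(\theta)=\mcs_n(\theta)[A_n(\theta)u_n]-\tfrac12\mci_n(\bar\theta_n)[A_n(\theta)u_n,A_n(\theta)u_n].
\nonumber
\end{equation}
So to get \eqref{hm:uan_sa+1} it suffices to show that $A_n(\theta)\{\mci_n(\bar\theta_n)-\mci_n(\theta)\}A_n(\theta)\cip_u 0$ as a quadratic form on bounded vectors, i.e. that $H_n(\cdot)$ is ``locally uniformly stochastically equicontinuous'' along shrinking neighborhoods $D_n(a;\theta)$.

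The second step is precisely that equicontinuity, and this is where I expect the main obstacle to lie. The natural route is to insert $A_n(\rho)^{-1}A_n(\rho)$ and write, for $\rho\in D_n(a;\theta)$,
\begin{equation}
A_n(\theta)\mci_n(\rho)A_n(\theta)=\left[\,a_{kn}(\theta)a_{kn}(\rho)^{-1}\right]_{k}\odot H_n(\rho)\odot\left[\,a_{ln}(\theta)a_{ln}(\rho)^{-1}\right]_{l},
\nonumber
\end{equation}
where $\odot$ denotes entrywise multiplication; assumption (b) forces the outer factors to $1$ uniformly, so it remains to control $H_n(\rho)-H_n(\theta)$ uniformly over $\rho\in D_n(a;\theta)$. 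Here I would combine assumption (c) — convergence of $E_\theta\{H_n(\theta)\}$ to the continuous limit $\mci(\theta)$ together with vanishing variances — with a chaining/maximal-inequality argument, or alternatively a Markov bound on $\sup_{\rho\in D_n(a;\theta)}|H_n(\rho)-E_\rho H_n(\rho)|$ using the variance control, plus continuity of $\theta\mapsto\mci(\theta)$ and the fact that $\mathrm{diam}\,D_n(a;\theta)\to 0$, to conclude $\sup_{\rho\in D_n(a;\theta)}|H_n(\rho)-\mci(\theta)|\cip_u 0$. The delicate point is that $H_n$ is not assumed continuous in $\rho$ with any modulus uniform in $n$, so one genuinely leans on Sweeting's device of passing through expectations and the uniform convergence $E_\theta\{H_n(\theta)\}\to_u\mci(\theta)$; this is the step I would cite \cite{Swe80} for rather than reprove in detail.

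Granting the ULAN expansion, part (1) follows once we also produce the CLT $A_n(\theta)\mcs_n(\theta)\cil_u N_p(0,\mci(\theta))$ — but, as advertised in the preamble to the theorem, this is a by-product rather than an input: apply the expansion \eqref{hm:uan_sa+1} with $u_n$ replaced by $H_n(\theta)^{-1}A_n(\theta)\mcs_n(\theta)$ (well-defined with high probability since $H_n(\theta)\cip_u\mci(\theta)\succ0$), and the quadratic-mean structure of log-likelihoods together with $H_n(\theta)\cip_u\mci(\theta)$ pins down the limit law of $A_n(\theta)\mcs_n(\theta)$ via a standard contiguity/quadratic-form argument (this is the usual ``LAN $\Rightarrow$ asymptotic normality of the score'' step, uniformized by (b)--(c)). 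For part (2), I would define $\hat\theta_n$ as a local maximizer of $\ell_n$ and run the standard argument-max continuous mapping: on the event where $H_n(\theta)$ is invertible and close to $\mci(\theta)$, the map $u\mapsto\ell_n(\theta+A_n(\theta)u)-\ell_n(\theta)$ is, by the ULAN expansion, uniformly on compacts within $o_p(1)$ of the concave quadratic $u\mapsto \mcs_n(\theta)[A_n(\theta)u]-\tfrac12\mci(\theta)[u,u]$, whose maximizer is $\hat u_n:=\mci(\theta)^{-1}A_n(\theta)\mcs_n(\theta)$; a convexity/Aronszajn-type argument (or \cite[Chapter 5]{vdV98}) then gives, with probability tending to one, a local maximizer $\hat\theta_n$ with $A_n(\theta)^{-1}(\hat\theta_n-\theta)=\hat u_n+o_p(1)\cil_u N_p(0,\mci(\theta)^{-1})$, all bounds being uniform over $K$ by (b)--(c). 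The only care needed is to make each ``$o_p$'' uniform in $\theta$, which is exactly what the $\cip_u$ formalism and assumptions (b)--(c) are designed to deliver.
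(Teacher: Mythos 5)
Your skeleton matches the paper's: both proofs reduce everything to uniform control of the normalized observed information $H_{n}(\theta)$ over the shrinking neighborhoods $D_{n}(a;\theta)$, use (b) to absorb the parameter dependence of the scaling $A_{n}(\cdot)$, and lean on Sweeting \cite{Swe80} for the core probabilistic content. The differences are in the division of labor. The paper first deduces $H_{n}(\theta)\cip_{u}\mci(\theta)$ from (c) by a direct Chebyshev bound on the metric \eqref{hm:uan_metric_p}, then introduces the row-wise multi-point matrix $\mci_{n}(\rho_{1},\dots,\rho_{p})$ and the functional $F_{n}(\theta;a)$, bounds it by a four-term decomposition (your entrywise-rescaling identity is exactly the first step of that decomposition), concludes $F_{n}(\theta;a)\cip_{u}0$, and then invokes \cite[Theorems 1 and 2]{Swe80} to obtain \emph{both} the score CLT $A_{n}(\theta)\mcs_{n}(\theta)\cil_{u}N_{p}(0,\mci(\theta))$ \emph{and} the existence and uniform asymptotic normality of the local maximizer; the ULAN remainder \eqref{hm:uan_sa+1} is then a one-line consequence of the same estimate applied to the Taylor intermediate point. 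You instead propose to derive the score CLT from the expansion by a contiguity argument and part (2) by an argmax continuous-mapping argument. That route can be made to work, but two of your suggested shortcuts would not survive as written. First, a Markov bound on $\sup_{\rho\in D_{n}(a;\theta)}|H_{n}(\rho)-E_{\rho}\{H_{n}(\rho)\}|$ does not follow from (c): the variance condition there is pointwise in $\rho$ and taken under $P_{\rho}$, whereas the supremum must be controlled under the single measure $P_{\theta}$, and no modulus of continuity of $H_{n}$ in $\rho$ is assumed; the paper sidesteps this precisely by passing through the uniform convergence $H_{n}(\cdot)\cip_{u}\mci(\cdot)$ evaluated along convergent sequences $\rho_{k}\to\theta$ and then handing the rest to Sweeting, so your fallback of citing \cite{Swe80} here is not optional but necessary. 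Second, substituting the random vector $H_{n}(\theta)^{-1}A_{n}(\theta)\mcs_{n}(\theta)$ for $u_{n}$ in \eqref{hm:uan_sa+1} is not licensed by the expansion itself, which is stated for non-random bounded sequences; one needs an additional tightness-plus-discretization step (or, again, Sweeting's Theorem 1, which delivers the score CLT directly). With those two repairs your argument is equivalent to the paper's; the paper's arrangement simply front-loads everything into the verification of \eqref{hm:thm_uan_A}, \eqref{hm:thm_uan-1}, and \eqref{hm:thm_uan-2} and lets \cite{Swe80} do the rest.
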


\begin{proof}
First we prove 2. We have
\begin{align}
& \hspace{-1cm}\left\{E_{\theta}\left(
\frac{|H_{n}(\theta)-\mci(\theta)|}{1+|H_{n}(\theta)-\mci(\theta)|}\right)
\right\}^{2}
\nn\\
&\le E_{\theta}\left(|H_{n}(\theta)-\mci(\theta)|^{2}\right)
\nn\\
&\lesssim\sum_{k,l}\left\{\var_{\theta}\{H_{n}^{(kl)}(\theta)\}
+\left(E_{\theta}\{H^{(kl)}_{n}(\theta)\}-\mci^{(kl)}(\theta)\right)^{2}\right\}\to_{u}0,
\nn
\end{align}
so that by \eqref{hm:uan_metric_p},
\begin{equation}
H_{n}(\theta)\cip_{u}\mci(\theta).
\label{hm:thm_uan-1}
\end{equation}
For $(\rho_{k})_{k=1}^{p}\subset\Theta$ and a constant $a>0$ we let 
$\mci_{n}(\rho_{1},\dots,\rho_{p}):=[\mci_{n}^{(kl)}(\rho_{k})]_{k,l=1}^{p}$, and
\begin{equation}
F_{n}(\theta;a):=\sup_{\rho_{1},\dots,\rho_{p}}
\left|A_{n}(\theta)\{\mci_{n}(\rho_{1},\dots,\rho_{p})-\mci_{n}(\theta)\}A_{n}(\theta)\right|,
\nonumber
\end{equation}
where the supremum is taken over all $\rho_{1},\dots,\rho_{p}\in\Theta$ 
such that $|A_{n}(\theta)^{-1}(\rho_{k}-\theta)|\le a$ for $k=1,\dots,p$. 
We have
\begin{equation}
\left[A_{n}(\theta)\{\mci_{n}(\rho_{1},\dots,\rho_{p})-\mci_{n}(\theta)\}A_{n}(\theta)\right]^{(kl)}
=a_{kn}(\theta)a_{ln}(\theta)\{\mci^{(kl)}(\rho_{k})-\mci^{(kl)}(\theta)\}
\nonumber
\end{equation}
for each $(k,l)$, hence
\begin{align}
& |F_{n}(\theta;a)| \nn\\
&\lesssim\sum_{k,l}{\sup_{\rho_{k}}}
\left|a_{kn}(\theta)a_{ln}(\theta)\{\mci^{(kl)}_{n}(\rho_{k})-\mci^{(kl)}_{n}(\theta)\}\right|
\nn\\
&\lesssim\sum_{k,l}\left\{
{\sup_{\rho_{k}}}
\left|\left\{a_{kn}(\theta)a_{ln}(\theta)a_{kn}(\rho_{k})^{-1}a_{ln}(\rho_{k})^{-1}\right\}
\left\{H^{(kl)}_{n}(\rho_{k})-\mci^{(kl)}(\rho_{k})\right\}\right|\right.
\nn\\
&{}\qquad
+{\sup_{\rho_{k}}}
\left|\left\{a_{kn}(\theta)a_{ln}(\theta)a_{kn}(\rho_{k})^{-1}a_{ln}(\rho_{k})^{-1}-1\right\}
\mci^{(kl)}(\rho_{k})\right|
\nn\\
&{}\qquad\left.
+{\sup_{\rho_{k}}}\left|\mci^{(kl)}(\rho_{k})-\mci^{(kl)}(\theta)\right|
+\left|\mci^{(kl)}(\theta)-H^{(kl)}_{n}(\theta)\right|
\right\}.
\label{hm:thm_uan-3}
\end{align}
Given any functions $f_{n}$ on $\Theta$, 
we have $f_{n}\to_{u}0$ if and only if $f_{n}(\theta_{n})\to 0$ for any convergent $(\theta_{n})\subset\Theta$. 
It follows from \eqref{hm:thm_uan-1} that
\begin{equation}
F_{n}(\theta;a)\cip_{u}0.
\label{hm:thm_uan-2}
\end{equation}
Based on \eqref{hm:thm_uan_A}, \eqref{hm:thm_uan-1}, and \eqref{hm:thm_uan-2}, 
the claims 2 follows from \cite[Theorems 1 and 2]{Swe80}. 

Turning to the claim 1, 
since we also have $A_{n}(\theta)\mcs_{n}(\theta)\cil_{u}N_{p}(0,\mci(\theta))$ from \cite{Swe80}, 
it remains to prove \eqref{hm:uan_sa+1}. 
But this readily follows from a similar estimate to \eqref{hm:thm_uan-3} about the upper bound of
\begin{align}
& \left|\ell_{n}(\theta+A_{n}(\theta)u_{n})-\ell_{n}(\theta)-\mcs_{n}(\theta)[A_{n}(\theta)u_{n}]
+\frac{1}{2}H_{n}(\theta)[u_{n},u_{n}]\right|
\nn\\
&\lesssim \left|A_{n}(\theta)\left\{
\mci_{n}(\tilde{\theta}_{n}(u_{n}))-\mci_{n}(\theta)\right\}A_{n}(\theta)\right|
\nonumber
\end{align}
for a point $\tilde{\theta}_{n}(u_{n})$ lying in the segment joining $\theta+A_{n}(\theta)u_{n}$ and $\theta$.
\qed\end{proof}

Needless to say, we can remove the condition (b) in Theorem \ref{hm:thm_uan} 
as soon as $A_{n}(\theta)$ is free of $\theta$. 
We should note that under the conditions of Theorem \ref{hm:thm_uan}, 
the convolution theorem automatically ensures the asymptotic optimality of the MLE among the class of all regular estimators, 
in terms of the maximal concentration and the minimal asymptotic covariance matrix: 
recall \eqref{hm:mc_ineq} and \eqref{hm:mincovmat} in Section \ref{hm:sec_conti.obs}.


\subsubsection{Example: Gamma subordinator}

Let $X$ be the gamma subordinator such that $\mcl(X_{t})=\Gamma(\del t,\gamma)$ whose density is given by
\begin{equation}
p_{t}(x;\del,\gamma)=\frac{\gamma^{\del t}}{\Gamma(\del t)}x^{\del t-1}
\exp(-\gamma x)\mathbf{1}_{\mbbrp}(x).
\label{hm:g-d}
\end{equation}
The L\'evy density of $X$ is given by
\begin{equation}
g(z;\del,\gamma)
=\frac{\del}{z}\exp(-\gamma z)\mathbf{1}_{\mbbrp}(z).
\nn
\end{equation}
In this model, the stable approximation in small time through \eqref{hm:lem_ltst-1-1} fails to hold, 
but a certain nonlinear transform $(\dd_{j}X)_{j=1}^{n}$ is in force instead (Remark \ref{hm:rem_nltrans}). 
We also note that, given any $\theta_{i}=(\del_{i},\gamma_{i})$, $i=1,2$, and $T>0$, 
it follows from Theorem \ref{hm:thm_coac} that 
$P^{T}_{\theta_{1}}$ and $P^{T}_{\theta_{2}}$ are not mutually absolutely continuous when $\del_{1}\ne\del_{2}$.

\medskip

The log-likelihood function based on $(\dd_{j}X)_{j=1}^{n}$ is given by
\begin{equation}
\ell_{n}(\theta)=\sum_{j=1}^{n}\bigg\{
\del h_{n}\log\gamma-\log\Gamma(\del h_{n})+\del h_{n}\log(\dd_{j}X)-\gamma\dd_{j}X\bigg\}.
\label{hm:ll-g}
\end{equation}
Denoting by $\psi(x):=\p_{x}\Gamma(x)/\Gamma(x)$ the digamma function, 
we get the following likelihood equations for $(\del,\gam)$:
\begin{align}
\sum_{j=1}^{n}h_{n}\{\log(\del h_{n})-\psi(\del h_{n})\}&=
T_{n}\log\bigg(\frac{X_{T_{n}}}{T_{n}}\bigg)-
\sum_{j=1}^{n}(h_{n})\log\bigg(\frac{\dd_{j}X}{h_{n}}\bigg), \label{hm:g-ee1} \\
\gamma&=\del\frac{T_{n}}{X_{T_{n}}}.
\label{hm:g-ee2}
\end{align}
It is easy to see that the equation $\sum_{j=1}^{n}h_{n}\{\log(\del h_{n})-\psi(\del h_{n})\}=K$ 
admits a unique root $\hat{\del}_{n}$ for each positive $K$, 
hence it is straightforward to solve \eqref{hm:g-ee1} numerically.

\medskip

The following results can be obtained by a direct application of Theorem \ref{hm:thm_uan}:

\begin{thm}
Let $X$ be the gamma subordinator such that $\mcl(X_{1})=\Gamma(\del,\gamma)$ 
with $\theta=(\del,\gamma)\in\Theta$ where $\overline{\Theta}\subset(0,\infty)^{2}$, 
and let $\ell_{n}(\theta)$ and $\hat{\theta}_{n}=(\hat{\del}_{n},\hat{\gamma}_{n})$ be 
as in (\ref{hm:ll-g}) and the solution to (\ref{hm:g-ee1}) and (\ref{hm:g-ee2}), respectively. 
If $T_{n}\to\infty$ and $h_{n}\to 0$, then we have the ULAN with rate
\begin{equation}
A_{n}=\diag\left(\frac{1}{\sqrt{n}},\frac{1}{\sqrt{T_{n}}}\right)
\nonumber
\end{equation}
and Fisher information matrix
\begin{equation}
\mci(\theta)=\left(
\begin{array}{cc}
1/\del^{2} & 0 \\
0          & \del/\gamma^{2}
\end{array}
\right).
\label{hm:g-fi}
\end{equation}
Further, we have $A_{n}^{-1}(\hat{\theta}_{n}-\theta)\cil_{u}N_{2}(0,\mci(\theta)^{-1})$.
\label{hm:thm-uan-g}
\end{thm}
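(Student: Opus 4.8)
The plan is to apply Theorem~\ref{hm:thm_uan}, and the crucial simplification is that, because $\ell_n$ is affine in the sufficient statistics $\log(\dd_j X)$ and $\dd_j X$, its second derivative — hence the observed information matrix — is \emph{non-random}. First I would differentiate \eqref{hm:ll-g}: the score $\mcs_n(\theta)=\p_\theta\ell_n(\theta)$ has components $\sum_j h_n\{\log\gam-\psi(\del h_n)+\log(\dd_j X)\}$ and $\sum_j\{\del h_n/\gam-\dd_j X\}$, whose vanishing is exactly \eqref{hm:g-ee1}--\eqref{hm:g-ee2}, while
\[
\mci_n(\theta)=-\p_\theta^2\ell_n(\theta)=\begin{pmatrix} n h_n^2\,\psi'(\del h_n) & -T_n/\gam \\ -T_n/\gam & \del T_n/\gam^2\end{pmatrix},
\]
with $\psi'$ the trigamma function; this is deterministic. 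With $A_n=\diag(n^{-1/2},T_n^{-1/2})$ and $T_n=nh_n$, the normalized observed information $H_n(\theta)=A_n\mci_n(\theta)A_n$ has entries $H_n^{(11)}(\theta)=h_n^2\psi'(\del h_n)$, $H_n^{(12)}(\theta)=-h_n^{1/2}/\gam$ and $H_n^{(22)}(\theta)=\del/\gam^2$.

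Next I would check the three hypotheses of Theorem~\ref{hm:thm_uan}. Hypothesis~(a) is clear: $\psi,\psi'$ are smooth on $(0,\infty)$ and $\dd_j X>0$ a.s., so $\ell_n\in\mcc^2(\Theta)$ (indeed $\mcc^\infty$). Hypothesis~(b) is vacuous because $A_n$ does not depend on $\theta$ (cf.\ the remark after Theorem~\ref{hm:thm_uan}). For hypothesis~(c), non-randomness of $\mci_n(\theta)$ gives $\var_\theta\{H_n^{(kl)}(\theta)\}\equiv0$ and $E_\theta\{H_n(\theta)\}=H_n(\theta)$, so it suffices to show $H_n(\theta)\to_{u}\mci(\theta)=\diag(\del^{-2},\del/\gam^2)$ over compact subsets of $\Theta$, with $\mci(\cdot)$ positive definite. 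The latter is immediate since $\det\mci(\theta)=1/(\del\gam^2)>0$; and the only non-trivial convergence is for $H_n^{(11)}$: from the recurrence $\psi'(x)=x^{-2}+\psi'(x+1)$ one gets $h_n^2\psi'(\del h_n)=\del^{-2}+h_n^2\psi'(1+\del h_n)=\del^{-2}+O(h_n^2)$ uniformly for $\del$ in compacts, while $H_n^{(12)}=-h_n^{1/2}/\gam\to_{u}0$ and $H_n^{(22)}$ is identically $\del/\gam^2$. Theorem~\ref{hm:thm_uan} then yields the ULAN with rate $A_n$ and Fisher information \eqref{hm:g-fi}, together with a local maximizer $\hat\theta_n$ of $\ell_n$ in $\Theta$, existing with $P_\theta$-probability tending to one, such that $A_n^{-1}(\hat\theta_n-\theta)\cil_{u}N_2(0,\mci(\theta)^{-1})$.

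It remains to identify this abstract maximizer with the explicit estimator of the statement. Any critical point of $\ell_n$ in $\Theta$ satisfies the likelihood equations \eqref{hm:g-ee1}--\eqref{hm:g-ee2}; since, as noted before the theorem, \eqref{hm:g-ee1} has a \emph{unique} root in $(0,\infty)$, namely $\hat\del_n$, and $\overline{\Theta}\subset(0,\infty)^2$, the $\del$-component of the maximizer must equal $\hat\del_n$, whence by \eqref{hm:g-ee2} the maximizer coincides with $(\hat\del_n,\hat\gam_n)$ on the event (of probability tending to one) that it exists. This gives $A_n^{-1}(\hat\theta_n-\theta)\cil_{u}N_2(0,\mci(\theta)^{-1})$ for the estimator as defined, completing the proof. (If desired, one may add as a sanity check that $\del h_n\psi'(\del h_n)=(\del h_n)^{-1}+\del h_n\psi'(1+\del h_n)>1$ for all $n$ large, so $\det\mci_n(\theta)=(T_n/\gam)^2\{\del h_n\psi'(\del h_n)-1\}>0$ and $\ell_n$ is strictly concave on $\Theta$, making the stationary point genuinely unique.)

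I do not expect a genuine obstacle here: the only mildly technical point is the uniform-in-$\theta$ small-argument asymptotics of the trigamma function — elementary via the recurrence, but the uniformity over compacts should be stated — and everything else is routine precisely because, unlike in the stable or Meixner cases, the normalized observed information is deterministic and converges to a positive definite limit, which is exactly the situation Theorem~\ref{hm:thm_uan} is designed for.
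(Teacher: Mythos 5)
Your proof is correct and takes exactly the route the paper intends: the paper itself gives no details for Theorem \ref{hm:thm-uan-g}, stating only that it is ``a direct application of Theorem \ref{hm:thm_uan}'', and your computation of the (non-random) observed information, the uniform trigamma asymptotics $h_n^2\psi'(\del h_n)=\del^{-2}+O(h_n^2)$, and the identification of the maximizer with the root of \eqref{hm:g-ee1}--\eqref{hm:g-ee2} via strict concavity supply precisely the missing details of that application. No gaps.
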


\begin{rem}{\rm 
Here are some observations concerning Theorem \ref{hm:thm-uan-g}.
\begin{itemize}
\item If $T_{n}$ does not tend to infinity, then the observed information associated with $\gamma$ 
is stochastically bounded in $n$ without normalization: we have $-\p_{\gam}^{2}\ell_{n}(\theta)=O_{p}(1)$. 
That is to say, data over fixed time period does not carry enough information to estimate $\gam$ consistently.

\item In contrast, it is possible to deduce $\sqrt{n}(\hat{\del}_{n}-\del)\cil_{u}N_{2}(0,\del^{2})$ 
even when $T_{n}$ is bounded, with leaving the true value of $\gam$ unknown; 
note that we can still use the estimating equation \eqref{hm:g-ee1} for $\del$. 
We then have the ULAN for $\del$ with rate $1/\sqrt{n}$ and Fisher information $\del^{-2}$, 
and the MLE is asymptotically efficient.

\item Using a naive estimator may result in essential loss of asymptotic efficiency, and even worse, 
we may have a slower rate of convergence. 
For example, consider the method of moments based on
\begin{equation}
\frac{1}{T_{n}}\sum_{j=1}^{n}\dd_{j}X\cip\frac{\del}{\gamma}\quad\text{and}\quad
\frac{1}{T_{n}}\sum_{j=1}^{n}(\dd_{j}X)^{2}\cip\frac{\del}{\gamma^{2}}.
\nonumber
\end{equation}
By means of the Lindeberg-Feller central limit theorem and the delta method，
it is easy to prove that the resulting moment estimator $\hat{\theta}_{M,n}=(\hat{\del}_{M,n},\hat{\gam}_{M,n})$ 
satisfies the asymptotic normality with the slower rate of convergence for estimating $\del$ 
and with the non-diagonal asymptotic covariance matrix:
\begin{equation}
\sqrt{T_{n}}(\hat{\theta}_{M,n}-\theta)\cil N_{2}
\bigg(0,
\left(
\begin{array}{cc}
2\del & 2\gamma \\
2\gamma & 3\gamma^{2}/\del
\end{array}
\right)\bigg).
\nonumber
\end{equation}
Thus, considerable amount of information of $\del$ contained in high frequency of data has been thrown away. 
As for $\hat{\gam}_{M,n}$, the rate is optimal but the relative efficiency is $1/3$.

\item In the low-frequency sampling case where $h_{n}\equiv h>0$, Theorem \ref{hm:thm_uan} gives 
\begin{equation}
\sqrt{n}\left(
\begin{array}{c}
\hat{\del}_{n}-\del \\
\hat{\gamma}_{n}-\gamma
\end{array}
\right)\cil_{u}N_{2}\left(
\left(
\begin{array}{c}
0 \\
0
\end{array}
\right),
\left(
\begin{array}{cc}
h^{2}\psi'(\del h) & -h/\gamma \\
-h/\gamma & h\del/\gamma^{2}
\end{array}
\right)^{-1}\right).
\label{hm:g-fi_lfs}
\end{equation}
Since $\ep^{2}\psi'(\ep)\to 1$ as $\ep\to 0$, we see that 
formally letting $h\to 0$ in \eqref{hm:g-fi_lfs} 
after multiplying the matrix $\diag(1,\sqrt{h_{n}})$ on the both sides results in \eqref{hm:g-fi}. 
This exemplifies quite different features between low- and high-frequency sampling schemes.
\end{itemize}
\label{hm:rem1_gamma_uan}
}\qed\end{rem}


\subsubsection{Example: Inverse-Gaussian subordinator}\label{hm:sec_ig_subo_uan}

Let $X$ be the inverse-Gaussian subordinator such that $\mcl(X_{t})=IG(\del t,\gamma)$, which admits the density
\begin{equation}
p_{t}(x;\del,\gamma)=\frac{\del te^{\del t\gamma}}{\sqrt{2\pi}}x^{-3/2}
\exp\bigg\{-\frac{1}{2}\bigg(\gamma^{2}x+\frac{(\del t)^{2}}{x}\bigg)\bigg\}\mathbf{1}_{\mbbrp}(x).
\label{hm:ig-d}
\nonumber
\end{equation}
The positive half-stable subordinator appears as the limit for $\gam\to 0$. 
The L\'evy measure admits the density
\begin{equation}
g(z;\del,\gamma)
=\frac{\del}{\sqrt{2\pi}}z^{-3/2}\exp\bigg(-\frac{\gamma^{2}z}{2}\bigg)\mathbf{1}_{\mbbrp}(z).
\nn
\end{equation}
In case where a continuous-time data $(X_{t})_{t\in[0,T]}$ is available, 
Theorem \ref{hm:thm_coac} tells us that, given any $\theta_{i}=(\del_{i},\gamma_{i})$, $i=1,2$, and $T>0$, 
the measures $P^{T}_{\theta_{1}}$ and $P^{T}_{\theta_{2}}$ fail to be mutually absolutely continuous 
if $\del_{1}\ne\del_{2}$.

The log-likelihood function of $(X_{t^{n}_{j}})_{j=0}^{n}$ is
\begin{equation}
\ell_{n}(\theta)=\sum_{j=1}^{n}\bigg\{\log\del+\del\gamma h_{n}-
\frac{1}{2}\bigg(\frac{\del^{2}h_{n}^{2}}{\dd_{j}X}+\gamma^{2}\dd_{j}X\bigg)\bigg\},
\label{hm:ll-ig}
\end{equation}
based on which the MLE is explicitly given by
\begin{equation}
\hat{\del}_{n}=\bigg\{\frac{1}{n}\bigg(
\sum_{j=1}^{n}\frac{h_{n}^{2}}{\dd_{j}X}
-\frac{T_{n}^{2}}{X_{T_{n}}}\bigg)\bigg\}^{-1/2},\quad
\hat{\gamma}_{n}=\frac{T_{n}\hat{\del}_{n}}{X_{T_{n}}}.
\label{hm:ig1}
\end{equation}
As soon as $\del,\gam>0$, we have $E_{\theta}(X_{h}^{k})<\infty$ for each $h>0$ and $k\in\mbbz$. 
In fact, it can be shown that
\begin{equation}
E(X_{h}^{k})=\sqrt{\frac{2}{\pi}}e^{\del h\gam}\gam^{1/2-k}(\del h)^{1/2+k}
K_{1/2-k}(\del h\gam),\quad k\in\mbbz,
\nonumber
\end{equation}
where $K_{w}(y)$, $y>0$, denotes the modified Bessel function of the third kind with index $w\in\mbbr$ 
(see \cite{AbrSte92}; 
sometimes also referred to as ``modified Bessel function of the second kind'' or ``modified Hankel function''):
\begin{equation}
K_{w}(y):=\frac{1}{2}\int_{0}^{\infty}x^{w-1}\exp\bigg\{-\frac{y}{2}\bigg(x+\frac{1}{x}\bigg)\bigg\}dx.
\label{hm:K_def}
\end{equation}
In particular, for the negative-order moments we have
\begin{equation}
E(X_{h}^{-k})=\gam^{k}(\del h)^{-k}\left\{
1+\sum_{j=1}^{k}\frac{(k+j)!}{(k-j)!j!}(2\gam\del)^{-j}h^{-j}
\right\},\quad k\in\mbbn,
\nonumber
\end{equation}
which follows from the formula
\begin{equation}
K_{l+1/2}(z)=e^{-z}\sqrt{\frac{\pi}{2z}}\sum_{j=0}^{l}\frac{(l+j)!}{(l-j)!j!(2z)^{j}},\quad l\in\mbbn.
\nonumber
\end{equation}
It follows that
\begin{equation}
\sup_{h\in(0,1]}E\bigg\{\bigg(\frac{h^{2}}{X_{h}}\bigg)^{k}\bigg\}<\infty,\quad k\in\mbbn,
\label{hm:ig_inv_moments}
\end{equation}
and also that
\begin{align}
& E_{\theta}(X_{h})=\frac{\del h}{\gam}, \quad 
E_{\theta}\{(X_{h})^{2}\}=\frac{\del h}{\gam^{3}}+\bigg(\frac{\del h}{\gam}\bigg)^{2}, \nonumber \\
& E_{\theta}\{(X_{h})^{-1}\}=\frac{1}{(\del h)^{2}}+\frac{\gam}{\del h}, \quad 
E_{\theta}\{(X_{h})^{-2}\}=\frac{3}{(\del h)^{4}}+\frac{3\gam}{(\del h)^{3}}
+\left(\frac{\gam}{\del h}\right)^{2}.
\nonumber
\end{align}
Now we can apply Theorem \ref{hm:thm_uan} to derive the following, 
a quite similar phenomenon to Theorem \ref{hm:thm-uan-g}:

\begin{thm}
Let $X$ be the inverse-Gaussian subordinator such that $\mcl(X_{1})=IG(\del,\gamma)$ 
with $\theta=(\del,\gamma)\in\Theta$ where $\overline{\Theta}\subset(0,\infty)^{2}$, 
and let $\ell_{n}(\theta)$ and $\hat{\theta}_{n}=(\hat{\del}_{n},\hat{\gamma}_{n})$ be 
as in (\ref{hm:ll-ig}) and (\ref{hm:ig1}), respectively. 
If $T_{n}\to\infty$ and $h_{n}\to 0$, then we have the ULAN with rate
\begin{equation}
A_{n}=\diag\left(\frac{1}{\sqrt{n}},\frac{1}{\sqrt{T_{n}}}\right)
\nonumber
\end{equation}
and Fisher information
\begin{equation}
\mci(\theta)=\left(
\begin{array}{cc}
2/\del^{2} & 0 \\
0          & \del/\gam
\end{array}
\right),
\label{hm:g-fi+1}
\end{equation}
and moreover, $A_{n}^{-1}(\hat{\theta}_{n}-\theta)\cil_{u}N_{2}(0,\mci(\theta)^{-1})$.
\label{hm:thm-uan-ig}
\end{thm}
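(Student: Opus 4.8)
The plan is to verify the three hypotheses (a)--(c) of Theorem~\ref{hm:thm_uan} with $A_{n}=\diag(n^{-1/2},T_{n}^{-1/2})$ and then read off both conclusions. Since the log-likelihood \eqref{hm:ll-ig} is a finite sum of smooth functions of $\dd_{j}X>0$, condition~(a) is immediate, and condition~(b) is trivial because $A_{n}$ does not depend on $\theta$ here (as remarked after Theorem~\ref{hm:thm_uan}). So the entire content is condition~(c): compute the normalized observed information $H_{n}(\theta)=A_{n}\mci_{n}(\theta)A_{n}$, show $E_{\theta}\{H_{n}(\theta)\}\to_{u}\mci(\theta)$ with $\mci(\theta)$ the claimed positive-definite matrix \eqref{hm:g-fi+1}, and show $\var_{\theta}\{H_{n}^{(kl)}(\theta)\}\to_{u}0$.

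First I would differentiate \eqref{hm:ll-ig} twice. One finds $-\p_{\del}^{2}\ell_{n}=\sum_{j}(h_{n}^{2}/\dd_{j}X + \del^{-2})$ --- wait, more carefully: $\p_{\del}\ell_{n}=\sum_{j}\{\del^{-1}+\gam h_{n}-\del h_{n}^{2}/\dd_{j}X\}$, so $-\p_{\del}^{2}\ell_{n}=\sum_{j}\{\del^{-2}+h_{n}^{2}/\dd_{j}X\}$; then $\p_{\gam}\ell_{n}=\sum_{j}\{\del h_{n}-\gam\dd_{j}X\}$, so $-\p_{\gam}^{2}\ell_{n}=\sum_{j}\dd_{j}X$ and $-\p_{\del}\p_{\gam}\ell_{n}=-\sum_{j}h_{n}=-T_{n}$. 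Hence $H_{n}^{(11)}(\theta)=\frac1n\sum_{j}\{\del^{-2}+h_{n}^{2}/\dd_{j}X\}$, $H_{n}^{(22)}(\theta)=\frac{1}{T_{n}}\sum_{j}\dd_{j}X$, and $H_{n}^{(12)}(\theta)=-\frac{1}{\sqrt{nT_{n}}}T_{n}=-\sqrt{T_{n}/n}=-\sqrt{h_{n}}\to 0$. For the expectations I would use the moment formulas listed just before the theorem: $E_{\theta}\{h_{n}^{2}/X_{h_{n}}\}=h_{n}^{2}\{(\del h_{n})^{-2}+\gam/(\del h_{n})\}=\del^{-2}+\gam h_{n}/\del\to\del^{-2}$, giving $E_{\theta}\{H_{n}^{(11)}\}=\del^{-2}+\del^{-2}+\gam h_{n}/\del\to 2/\del^{2}$; and $E_{\theta}\{H_{n}^{(22)}\}=\frac{1}{T_{n}}\cdot n\cdot(\del h_{n}/\gam)=\del/\gam$ exactly. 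This produces \eqref{hm:g-fi+1}, which is positive definite on $\overline{\Theta}$.

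Next, for the variance conditions I would use rowwise independence of $(\dd_{j}X)_{j}$. For $H_{n}^{(11)}$: $\var_{\theta}\{H_{n}^{(11)}\}=n^{-2}\sum_{j}\var_{\theta}\{h_{n}^{2}/\dd_{j}X\}\le n^{-2}\cdot n\cdot\sup_{h\le 1}E\{(h^{2}/X_{h})^{2}\}=O(n^{-1})$, where finiteness of the supremum is exactly \eqref{hm:ig_inv_moments}. For $H_{n}^{(22)}$: $\var_{\theta}\{H_{n}^{(22)}\}=T_{n}^{-2}\sum_{j}\var_{\theta}(\dd_{j}X)=T_{n}^{-2}\cdot n\cdot(\del h_{n}/\gam^{3})=\del/(\gam^{3}T_{n})\to 0$. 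The off-diagonal $H_{n}^{(12)}$ is deterministic, so its variance is zero. All bounds are uniform over compacts in $\Theta$ since the constants depend continuously on $(\del,\gam)$ and $\overline{\Theta}$ is compact in $(0,\infty)^{2}$; hence $\to_{u}$ holds. Theorem~\ref{hm:thm_uan} then yields ULAN at rate $A_{n}$ with Fisher information \eqref{hm:g-fi+1}, and the existence of a local maximizer $\hat\theta_{n}$ with $A_{n}^{-1}(\hat\theta_{n}-\theta)\cil_{u}N_{2}(0,\mci(\theta)^{-1})$.

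Finally I would identify that local maximizer with the explicit estimator \eqref{hm:ig1}. Setting $\p_{\gam}\ell_{n}=0$ gives $\hat\gam_{n}=\del h_{n}n/(\sum_{j}\dd_{j}X)=T_{n}\hat\del_{n}/X_{T_{n}}$ once $\hat\del_{n}$ is plugged in; setting $\p_{\del}\ell_{n}=0$ gives $n/\del+\gam T_{n}-\del\sum_{j}h_{n}^{2}/\dd_{j}X=0$, and substituting $\gam=T_{n}\del/X_{T_{n}}$ yields $n/\del=\del(\sum_{j}h_{n}^{2}/\dd_{j}X-T_{n}^{2}/X_{T_{n}})$, i.e. precisely \eqref{hm:ig1}; one checks the bracket is a.s. positive (it is $n\cdot\var$ of an empirical-type quantity, positive by Jensen/Cauchy--Schwarz since the $\dd_{j}X$ are not all equal a.s.), so $\hat\del_{n}$ is well-defined, and the Hessian computed above is negative definite, so this critical point is the maximizer.

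\textbf{Main obstacle.} The only genuinely delicate point is the uniform control of moments of the negative power $X_{h}^{-1}$ as $h\to 0$: naively $E(X_{h}^{-1})=(\del h)^{-2}+\gam/(\del h)$ blows up, and it is only after multiplying by $h^{2}$ that things stabilize --- this is why the estimate \eqref{hm:ig_inv_moments} is stated separately and why the rate for $\del$ is $\sqrt n$ rather than $\sqrt{T_{n}}$. Everything else is a routine, if somewhat lengthy, bookkeeping exercise with the Bessel-function moment formulas already recorded in the text.
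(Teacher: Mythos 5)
Your proposal is correct and follows exactly the route the paper intends: the text states that Theorem \ref{hm:thm-uan-ig} is ``a direct application of Theorem \ref{hm:thm_uan}'' using the moment formulas and the bound \eqref{hm:ig_inv_moments} recorded in Section \ref{hm:sec_ig_subo_uan}, and your verification of conditions (a)--(c), the computation of $E_{\theta}\{H_{n}(\theta)\}$ and $\var_{\theta}\{H_{n}^{(kl)}(\theta)\}$, and the identification of the explicit root \eqref{hm:ig1} as the (unique, by strict concavity via Cauchy--Schwarz) maximizer supply precisely the omitted details.
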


Analogous remarks to the items in Remark \ref{hm:rem1_gamma_uan} are valid for Theorem \ref{hm:thm-uan-ig}. 
In particular, we can consistently estimate $\del$ even when $T_{n}\lesssim 1$; 
then, for each $\del>0$ we have LAN at rate $1/\sqrt{n}$ with Fisher information $2/\del^{2}$, and moreover
\begin{equation}
\sqrt{n}(\hat{\del}_{n}-\del)\cil_{u}\mcn(0,\del^{2}/2),
\nonumber
\end{equation}
with $\hat{\del}_{n}$ being the same one as in \eqref{hm:ig1}.


\subsubsection{Example: Normal inverse-Gaussian {\lp}}\label{hm:sec_niglp}

In this section, we will present a fully explicit example of a real-valued {\lp} whose likelihood is well-behaved.

The normal inverse-Gaussian (NIG) distribution ${\rm NIG}(\al,\beta,\del,\mu)$ on $\mbbr$ is defined by the density
\begin{equation}
p(y;\al,\beta,\sig,\mu)=\frac{\al\del}{\pi}
\exp\{\del\sqrt{\al^{2}-\beta^{2}}+\beta(y-\mu)\}
\frac{K_{1}\left(\al \sqrt{\del^{2}+(y-\mu)^{2}}\right)}{\sqrt{\del^{2}+(y-\mu)^{2}}},
\label{hm:nig_density}
\end{equation}
where $K_{1}$ is the modified Bessel function given by \eqref{hm:K_def}. 
We will consider estimation of $\theta:=(\al,\beta,\del,\mu)\in\Theta\subset\mbbr^{4}$, 
with $\Theta$ being a bounded convex domain whose closure satisfies that
\begin{equation}
\overline{\Theta}\subset\left\{(\al,\beta,\del,\mu);~\al>0,~|\beta|\in[0,\al),~\del>0,~\mu\in\mbbr\right\}.
\label{hm:nig_Theta_region}
\end{equation}
Note that we precluded the Cauchy case ($\al=|\beta|=0$), which occurs as 
the total-variation limit of ${\rm NIG}(\al,\beta,\del,\mu)$ for $\al=|\beta|\to 0$.

The distribution ${\rm NIG}(\al,\beta,\del,\mu)$ is infinitely divisible whose generating triplet 
$(b_{\theta},c_{\theta},\nu_{\theta})$ of the form \eqref{hm:lk} is given as follows:
\begin{itemize}
\item The L\'evy measure $\nu_{\theta}$ admits the density
\begin{equation}
g(z;\al,\beta,\del)=\frac{\al\del}{\pi|z|}e^{\beta z}K_{1}(\al|z|),\quad z\ne 0;
\label{hm:nig_ld}
\end{equation}
\item $c_{\theta}=0$;
\item $b_{\theta}=m_{\theta}-\int_{|z|>1}zg(z;\al,\beta,\del)dz$, 
with $m_{\theta}:=\mu+\beta\del/\sqrt{\al^{2}-\beta^{2}}$ denoting the mean of $X_{1}$.
\end{itemize}
We refer to \cite{Bar95} and \cite{Bar98} for more details of the NIG distribution and the NIG L\'evy process.

Let $X$ be the univariate NIG L\'evy process such that $\mcl(X_{1})=NIG(\al,\beta,\del,\mu)$. 
Once again, some of the parameters could be estimated without error if a continuous-time data were available. 
Fix any $T>0$, and let $P^{T}_{\theta_{k}}$, $k=1,2$, denote the distribution of $(X_{t})_{t\le T}$ associated with 
$\theta_{k}=(\al_{k},\beta_{k},\del_{k},\mu_{k})\in\Theta$, $k=1,2$. 
Applying Theorem \ref{hm:thm_coac}, we see that 
$P_{\theta_{1}}^{T}$ and $P_{\theta_{2}}^{T}$ are equivalent if and only if $\del_{1}=\del_{2}$ and $\mu_{1}=\mu_{2}$.

\medskip

We now specify what will occur for high-frequency data. 
The LAN and non-degeneracy of the Fisher information has been previously obtained by \cite{KawMas13}, 
where the most materials given in the proof of the next theorem were presented. 
In the light of Theorem \ref{hm:thm_uan}, we can refine \cite[Theorem 3.1]{KawMas13} as follows:

\begin{thm}
Assume the aforementioned setting, and let $T_{n}\to\infty$ and $h_{n}\to 0$. 
Then we have the ULAN with rate
\begin{equation}
A_{n}=\diag\left(\frac{1}{\sqrt{T_{n}}},\frac{1}{\sqrt{T_{n}}},\frac{1}{\sqrt{n}},\frac{1}{\sqrt{n}}\right),
\label{hm:rate}
\end{equation}
and Fisher information matrix
\begin{equation}
\mci(\theta)=\left(
\begin{array}{cccc}
\mci_{11}(\theta) & \mci_{12}(\theta) & 0 & 0 \\
 & \mci_{22}(\theta) & 0 & 0 \\
 & & \mci_{33}(\theta) & 0 \\
\text{{\rm sym.}} & & & \mci_{44}(\theta)
\end{array}
\right),
\label{hm:nig_FI_form}
\end{equation}
where the entries are given as follows:
\begin{align}
& \mci_{11}(\theta)=\frac{\del}{\al\pi}\int_{0}^{\infty}(e^{(\beta/\al)y}+e^{-(\beta/\al)y})y
\frac{\{K_{0}(y)\}^{2}}{K_{1}(y)}dy,
\nn\\
& \mci_{12}(\theta)=\frac{-\al\beta\del}{(\al^{2}-\beta^{2})^{3/2}},\quad
\mci_{22}(\theta)=\frac{\al^{2}\del}{(\al^{2}-\beta^{2})^{3/2}},
\nn\\
& \mci_{33}(\theta)=\frac{1}{2\del^{2}}, \quad 
\mci_{44}(\theta)=\frac{1}{2\del^{2}}.
\nonumber
\end{align}
For each $\theta\in\Theta$ the integral in $\mci_{11}(\theta)$ is finite and $\mci(\theta)$ is positive definite. 
Further, we have $A_{n}^{-1}(\hat{\theta}-\theta)\cil_{u}N_{4}(0,\mci(\theta)^{-1})$.
\label{hm:nig_thm}
\end{thm}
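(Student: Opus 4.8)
The plan is to verify the three hypotheses of Theorem \ref{hm:thm_uan} for the NIG model, with the rate matrix $A_{n}$ of \eqref{hm:rate}, and then read off both the ULAN and the asymptotic normality of the MLE as immediate consequences. Condition (a) of Theorem \ref{hm:thm_uan} is the twice differentiability of $\ell_{n}(\cdot)$; this follows since $p_{h_{n}}(\cdot;\theta)$ is smooth in $\theta$ on the region \eqref{hm:nig_Theta_region} (the Bessel function $K_{1}$ and the argument $\sqrt{\del^{2}+(y-\mu)^{2}}$ are smooth, and $\al>|\beta|$ keeps $\al^{2}-\beta^{2}>0$), and since $\Theta$ is bounded away from the boundary one can differentiate under the integral/sum. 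Condition (b) is automatic because $A_{n}$ here is free of $\theta$, so $a_{kn}(\theta)a_{kn}(\rho)^{-1}\equiv 1$; I would just note this. The substance is condition (c): identifying the continuous positive-definite limit $\mci(\theta)$ via $E_{\theta}\{H_{n}(\theta)\}\to_{u}\mci(\theta)$ and $\var_{\theta}\{H_{n}^{(kl)}(\theta)\}\to_{u}0$.

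For condition (c) I would split the parameter block into the ``$T_{n}$-rate'' pair $(\al,\beta)$ and the ``$n$-rate'' pair $(\del,\mu)$, mirroring the block structure of \eqref{hm:nig_FI_form}. The key input is the small-time behavior of $\mcl(X_{h})$: by Lemma \ref{hm:lem_ltst} (and the locally-Cauchy remark in Example \ref{hm:ex_coac_gh}), $\del$ and $\mu$ are scale/location-type parameters seen already at the level of a single increment, which is why they are estimated at rate $\sqrt{n}$; whereas $\al,\beta$ govern the tail/compensator and are only informative cumulatively, giving rate $\sqrt{T_{n}}=\sqrt{nh_{n}}$. Concretely I would compute $-h_{n}\p_{\del}^{2}\log p_{h_{n}}$, etc., expand the relevant digamma/Bessel quantities as $h_{n}\to 0$, and show that $n$ times the appropriately normalized second-derivative entry converges to the stated $\mci_{kl}(\theta)$; the off-block entries (e.g. $\p_{\al}\p_{\del}$) must be shown to vanish after normalization, which accounts for the zeros in \eqref{hm:nig_FI_form}. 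The finiteness of the integral defining $\mci_{11}(\theta)$ follows from the known asymptotics $K_{0}(y)\sim -\log y$, $K_{1}(y)\sim 1/y$ as $y\to 0$ and exponential decay as $y\to\infty$, together with $|\beta|<\al$ so that $e^{(\beta/\al)y}+e^{-(\beta/\al)y}$ is dominated by the decay of $K_{0}^{2}/K_{1}$. Positive definiteness then reduces to checking the $2\times2$ block $\begin{pmatrix}\mci_{11}&\mci_{12}\\\mci_{12}&\mci_{22}\end{pmatrix}$, whose determinant one computes to be strictly positive on \eqref{hm:nig_Theta_region}, plus $\mci_{33},\mci_{44}>0$.

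The variance bound $\var_{\theta}\{H_{n}^{(kl)}(\theta)\}\to_{u}0$ is where one needs the moment control. Since $H_{n}^{(kl)}(\theta)$ is a normalized sum of $n$ i.i.d. terms built from $\p_{\theta}^{2}\log p_{h_{n}}(\dd_{j}X;\theta)$, its variance is $n$ times the variance of one term times the fourth power of the relevant $a_{kn}$; it suffices to bound $E_{\theta}[|\p_{\theta}^{2}\log p_{h_{n}}(X_{h_{n}};\theta)|^{2}]$ suitably in $h_{n}$. For the $(\del,\mu)$ entries this requires uniform-in-$h$ control of negative and positive moments of $X_{h_{n}}$-type quantities — the analogue of \eqref{hm:ig_inv_moments} in the subordinator examples — which for NIG follows from its representation as a normal mean-variance mixture over an inverse-Gaussian subordinator, letting one transfer the IG moment bounds. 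I would state this as a lemma on $\sup_{h\le1}E_{\theta}[\,\cdot\,]$ for the relevant functionals of $X_{h}$. The main obstacle, I expect, is precisely this step: carrying out the small-$h$ asymptotic expansions of the Bessel-function-valued score and observed-information entries with enough uniformity in $\theta$ over compacts to get $\to_{u}$ (not just pointwise), while simultaneously producing the integrable dominating functions needed for the moment/variance estimates. Once condition (c) is in hand, parts 1 and 2 of the theorem — ULAN with rate \eqref{hm:rate} and $A_{n}^{-1}(\hat\theta-\theta)\cil_{u}N_{4}(0,\mci(\theta)^{-1})$ — are immediate from Theorem \ref{hm:thm_uan}, and the asymptotic efficiency of the MLE follows from the convolution theorem as remarked after that theorem.
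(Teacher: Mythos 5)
Your overall strategy is exactly the paper's: verify conditions (a)--(c) of Theorem \ref{hm:thm_uan} with the rate matrix \eqref{hm:rate}, note that (b) is vacuous since $A_{n}$ is free of $\theta$, drive the convergence $E_{\theta}\{H_{n}(\theta)\}\to_{u}\mci(\theta)$ off the locally Cauchy behavior of $\ep_{nj}=(\dd_{j}X-\mu h_{n})/(\del h_{n})$, and then read off ULAN and the asymptotic normality of the MLE. Two points deserve comment. First, on the moment/variance control: you propose transferring inverse-Gaussian moment bounds through the mean--variance mixture representation, but this detour is unnecessary. The paper rewrites the log-likelihood in terms of $\phi_{1}(\ep_{nj})$ and $K_{1}(q_{nj})$ with $q_{nj}=\al\del h_{n}\sqrt{1+\ep_{nj}^{2}}$, and every second-derivative entry is then built from the functions $\eta=\phi_{1}'/\phi_{1}$, $y\eta(y)$, $y^{2}\eta'(y)$, $H=-K_{0}/K_{1}$ and $yH'(y)$, all of which are \emph{bounded}; the variance bound $\var_{\theta}\{H_{n}^{(kl)}(\theta)\}\to_{u}0$ then follows with no negative-moment estimates at all. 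Your route would likely work but costs more than it buys.

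Second, and this is a genuine gap: positive definiteness of $\mci(\theta)$ does not reduce to ``computing the determinant of the $2\times2$ block to be strictly positive,'' because $\mci_{11}(\theta)$ is only available as the integral $\frac{\del}{\al\pi}\int_{0}^{\infty}(e^{(\beta/\al)y}+e^{-(\beta/\al)y})y\,K_{0}(y)^{2}/K_{1}(y)\,dy$ while $\mci_{12}$ and $\mci_{22}$ are closed-form rational expressions in $(\al,\beta,\del)$; there is no closed form for the determinant to ``compute.'' The paper instead derives a second integral representation $\mci_{12}(\theta)=-\frac{\del}{\al\pi}\int_{0}^{\infty}(e^{(\beta/\al)y}-e^{-(\beta/\al)y})yK_{0}(y)\,dy$, bounds $|\mci_{12}|$ by the corresponding integral with a plus sign, normalizes $(e^{(\beta/\al)y}+e^{-(\beta/\al)y})yK_{1}(y)$ into a probability density $q(y;\beta)$ using the explicit evaluation $C(\beta)=\al^{3}\pi/(\al^{2}-\beta^{2})^{3/2}$ (which itself requires the arctan integral identity), and then recognizes $\mci_{11}\mci_{22}-\mcj_{12}^{2}$ as a Cauchy--Schwarz gap $\int q\,(K_{0}/K_{1})^{2}\cdot\int q-(\int q\,K_{0}/K_{1})^{2}>0$, strict because $K_{0}/K_{1}$ is non-constant. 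Without some argument of this kind your plan stalls at the positive-definiteness claim. A related computational point you would also need is the identity $\del m^{-1/2}+\al^{-2}A_{1}(\theta)=0$ (with $A_{1}$ the limit of $h_{n}^{-1}E_{\theta}\{q_{n1}H(q_{n1})\}$), which is what makes the $(1,1)$ entry converge to the stated $\mci_{11}$ rather than to that integral plus a spurious constant.
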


\begin{rem}{\rm 
We took this opportunity to correct an error about the expression $\mci_{12}(\theta)$ of \cite{KawMas13}, 
which contains ``$\arctan$''. As seen by the proof given below, Theorem 3.1 of \cite{KawMas13} remains to hold if 
we replace $\mci_{12}(\theta)$ therein by the correct one specified in Theorem \ref{hm:nig_thm}.
}\qed\end{rem}

\begin{proof}[Theorem \ref{hm:nig_thm}]
In view of Theorem \ref{hm:thm_uan}, we need to verify the uniform convergence of 
$E_{\theta}\{H_{n}(\theta)\}$ and $\var\{H_{n}^{(kl)}(\theta)\}$ 
for the observed information matrix $H_{n}(\theta):=A_{n}\mci_{n}(\theta)A_{n}$. 
The proof is divided into several steps.

\medskip

{\it Step 1.} 
We begin with the locally Cauchy distributional property in small time. We have
\begin{equation}
\vp_{X_{1}}(u)=\exp\left\{iu\mu+\del\left(
\sqrt{\al^{2}-\beta^{2}}-\sqrt{\al^{2}-(iu+\beta)^{2}}
\right)\right\},
\nn
\end{equation}
from which
\begin{equation}
\mcl\left(a(X_{h}-\mu h)\right)
=NIG\left(\frac{\al}{|a|},\frac{\beta}{a},\del|a|h_{n},0\right)
\label{hm:nig_rp}
\end{equation}
for any $h>0$ and $a\ne 0$. 
Observe that for each $n\in\mbbn$ the i.i.d. triangular array
\begin{equation}
\ep_{nj}=\ep_{nj}(\del,\mu):=\frac{\dd_{j}X-\mu h_{n}}{\del h_{n}}
\nonumber
\end{equation}
has the common distribution $NIG(\al\del h_{n},\beta\del h_{n},1,0)$. 
We denote by $f_{h_{n}}:\mbbr\to(0,\infty)$ the density of $\mcl\{(X_{h_{n}}-h_{n})/(\del h_{n})\}$. 
The goal of this first step is to prove that
\begin{equation}
\forall k\in\mbbzp,\quad 
\lim_{h_{n}\to 0}\sup_{y\in\mbbr}\left|\p_{y}^{k}f_{h_{n}}(y)-\p_{y}^{k}\phi_{1}(y)\right|=0,
\label{hm:nig_step1_goal}
\end{equation}
where $\mbbzp:=\mbbn\cup\{0\}$ and $\phi_{1}(y)=(1+y^{2})^{-1}/\pi$ denotes 
the standard symmetric Cauchy density corresponding to the characteristic function $u\mapsto\exp(-|u|)$. 

Let $m:=\al^{2}-\beta^{2}>0$. Then, we trivially have
\begin{equation}
\vp_{\ep_{n1}}(u)=\exp\left\{\del h_{n}\sqrt{m}-\sqrt{(\al\del h_{n})^{2}-(iu+\beta\del h_{n})^{2}}\right\}.
\nonumber
\end{equation}
Put $A=(\del h_{n})^{2}m+u^{2}$ and $B=-2\beta\del h_{n}u$. 
Then, simple manipulation gives
\begin{equation}
\vp_{\ep_{n1}}(u)=e^{\del h_{n}\sqrt{m}}\exp\left\{
-\sqrt{\frac{1}{2}(A+\sqrt{A^{2}+B^{2}})}-\frac{iB}{\sqrt{2(A+\sqrt{A^{2}+B^{2}})}}
\right\}.
\label{hm:rev_eq+2}
\end{equation}
It follows that $\vp_{\ep_{n1}}(u)\to\exp(-|u|)$ for each $u\in\mbbr$. 
The expression (\ref{hm:rev_eq+2}) also leads to the estimate
\begin{equation}
|\vp_{\ep_{n1}}(u)|\lesssim\exp\bigg\{-\sqrt{\frac{1}{2}(A+\sqrt{A^{2}+B^{2}})}\bigg\}
\le e^{-\sqrt{A}}\le e^{-|u|}.
\label{hm:nig_lem2-1}
\end{equation}
By means of the Fourier inversion formula we have
\begin{align}
\sup_{y\in\mbbr}\left|\p_{y}^{k}f_{h_{n}}(y)-\p_{y}^{k}\phi_{1}(y)\right|
&\lesssim\int|u|^{k}\left|\vp_{\ep_{n1}}(u)-e^{-|u|}\right|du.
\label{hm:nig_lem2-2}
\end{align}
Then \eqref{hm:nig_step1_goal} follows on applying the dominated convergence theorem 
to the upper bound of \eqref{hm:nig_lem2-2} under \eqref{hm:nig_lem2-1}.

\medskip

{\it Step 2.}
We introduce the functions
\begin{align}
\eta(y)&:=\phi_{1}'(y)/\phi_{1}(y),\quad y\in\mbbr, \nonumber \\
H(y)&:=y^{-1}\{1+yK_{1}'(y)/K_{1}(y)\}=-K_{0}(y)/K_{1}(y),\quad y\in[0,\infty),
\label{hm:H_def}
\end{align}
where we used the identity $K_{w}'(y)=-K_{w-1}(y)-(w/y)K_{w}(y)$ for (\ref{hm:H_def}). 
The function $H$ and its derivatives are to be defined at $y=0$ as limits from the right. 
These functions will play important roles later on. 
In this step, we will prove the following three properties.
\begin{description}
\item[(a)] The functions $y\mapsto \eta(y)$, $y\eta(y)$, and $y^{2}\eta'(y)$ are bounded in $\mbbr$.
\item[(b)] $y\mapsto H(y)$ is bounded and continuous in $[0,\infty)$. Moreover, 
$H(y)\sim -y\log(1/y)$ as $y\to 0$ and $H(y)=-1+1/(2y)-3/(8y^{2})+O(y^{-3})$ as $y\to\infty$.
\item[(c)] $H'(y)\sim -\log(1/y)$ as $y\to 0$ and $y^{2}H'(y)=-1/2+O(y^{-1})$ as $y\to\infty$. 
In particular, $y\mapsto yH'(y)$ is bounded and continuous in $[0,\infty)$.
\end{description}

\medskip

The claim (a) follows from the fact 
$\sup_{y\in\mbbr}|y|^{k}|\p_{y}^{k}\phi_{1}(y)|/\phi_{1}(y)<\infty$ for each $k\in\mbbzp$. 
As for (b), the continuity of $H$ is obvious. It is known that
\begin{align}
K_{w}(y)&\sim
\left\{
\begin{array}{ll}
\log(1/y)+\log 2-\mfC &\quad \text{if $w=0$}, \\
\Gam(|w|)2^{|w|-1}y^{-|w|} &\quad \text{if $w\ne 0$},
\end{array}
\right.\quad\text{as $y\to 0$}, \label{hm:bessel1} \\
K_{w}(y)&=\sqrt{\frac{\pi}{2y}}e^{-y}\left\{
1+\frac{\kappa-1}{8y}+\frac{(\kappa-1)(\kappa-9)}{(8y)^{2}2!}+O(y^{-3})
\right\}
\quad\text{as $y\to\infty$}, \label{hm:bessel2}
\end{align}
where $\mfC$ $(\approx 0.5772)$ denotes the Euler-Mascheroni constant and $\kappa:=4w^{2}$ (see \cite{AbrSte92}). 
The desired behavior of $H(y)$ as $y\to 0$ follows on applying (\ref{hm:bessel1}) to \eqref{hm:H_def}. 
Further, we can deduce the desired behavior of $H(y)$ as $y\to\infty$ 
by applying (\ref{hm:bessel2}) for $w=0,1$ and then expanding the fraction $-K_{0}(y)/K_{1}(y)$ 
as a power series of $y^{-1}$. Now the boundedness of $H$ is trivial. 

Turning to (c), we note the identity $H'(y)=1+H(y)/y-\{H(y)\}^{2}$, hence $y^{2}H'(y)=y^{2}+yH(y)-y^{2}\{H(y)\}^{2}$. 
This follows on applying \eqref{hm:H_def} together with the identity 
$K_{w}(y)=K_{-w}(y)$, which is valid for each $w,y>0$. These expressions combined with (b) prove (c).

\medskip

{\it Step 3.}
In view of \eqref{hm:nig_density} and \eqref{hm:nig_rp}, we can express the log-likelihood function as
\begin{align}
\ell_{n}(\theta)&=\sum_{j=1}^{n}\bigg\{
\log\al+\del h_{n}(\sqrt{m}+\beta\ep_{nj})+\log\phi_{1}(\ep_{nj})
\nn\\
&{}\qquad
+\frac{1}{2}\log(1+\ep_{nj}^{2})+\log K_{1}\left(\al\del h_{n}\sqrt{1+\ep_{nj}^{2}}\right)\bigg\}.
\label{hm:nig_lf}
\end{align}
The introduction of the standard Cauchy density $\phi_{1}$ in the expression \eqref{hm:nig_lf} 
will turn out to be convenient 
in the process of deriving various limiting values as well as deducing estimates of stochastically small terms. 

Let
\begin{equation}
q_{nj}=q_{nj}(\al,\del,\mu):=\al\del h_{n}\sqrt{1+\ep_{nj}^{2}}.
\nonumber
\end{equation}
Noting that $\p_{\mu}\ep_{nj}=-\del^{-1}$, $\p_{\mu}^{2}\ep_{nj}=0$, 
$\p_{\del}\ep_{nj}=-\del^{-1}\ep_{nj}$, $\p_{\del}^{2}\ep_{nj}=2\del^{-2}\ep_{nj}$, 
and $\p_{\del}\p_{\mu}\ep_{nj}=\del^{-2}$, 
we can differentiate (\ref{hm:nig_lf}) to get the following partial derivatives:
\begin{align}
\p_{\al}\ell_{n}(\theta)&=\sum_{j=1}^{n}
\left(
\frac{\al\del h_{n}}{\sqrt{m}}+\frac{1}{\al}q_{nj}H(q_{nj})
\right),
\nn\\
\p_{\beta}\ell_{n}(\theta)&=\sum_{j=1}^{n}
\left\{\del h_{n}\left(\ep_{nj}-\frac{\beta}{\sqrt{m}}\right)
\right\},
\nn\\
\p_{\del}\ell_{n}(\theta)&=\sum_{j=1}^{n}
\bigg\{-\frac{1}{\del}(\ep_{nj}\eta(\ep_{nj})+1)
+h_{n}\left(\sqrt{m}+\frac{\al}{\sqrt{1+\ep_{nj}^{2}}}H(q_{nj})\right)
\bigg\},
\nn\\
\p_{\mu}\ell_{n}(\theta)&=\sum_{j=1}^{n}
\bigg\{
-\frac{1}{\del}\eta(\ep_{nj})-h_{n}\left(\beta
+\frac{\al\ep_{nj}}{\sqrt{1+\ep_{nj}^{2}}}H(q_{nj})\right)
\bigg\},
\nn\\
\p_{\al}^{2}\ell_{n}(\theta)&=\sum_{j=1}^{n}
\bigg(
-\frac{\beta^{2}\del h_{n}}{m^{3/2}}+\frac{q_{nj}^{2}}{\al^{2}}H'(q_{nj})
\bigg),
\nn\\
\p_{\beta}^{2}\ell_{n}(\theta)&=\sum_{j=1}^{n}
\left(
-\frac{\al^{2}\del h_{n}}{m^{3/2}}
\right)
=-\frac{\al^{2}\del T_{n}}{m^{3/2}},
\nn\\
\p_{\del}^{2}\ell_{n}(\theta)&=\sum_{j=1}^{n}
\bigg\{
\frac{1}{\del^{2}}(1+2\ep_{nj}\eta(\ep_{nj})+\ep_{nj}^{2}\eta'(\ep_{nj}))
\nn\\
&{}\qquad
+\frac{\al h_{n}}{\del}\bigg(
\frac{q_{nj}H'(q_{nj})}{(1+\ep_{nj}^{2})^{3/2}}
+\frac{\ep_{nj}^{2}H(q_{nj})}{(1+\ep_{nj}^{2})^{3/2}}
\bigg)\bigg\},
\nn\\
\p_{\mu}^{2}\ell_{n}(\theta)&=\sum_{j=1}^{n}
\bigg\{
\frac{1}{\del^{2}}\eta'(\ep_{nj})+\frac{\al h_{n}}{\del}
\bigg(
\frac{\ep_{nj}^{2}q_{nj}H'(q_{nj})}{(1+\ep_{nj}^{2})^{3/2}}
+\frac{H(q_{nj})}{(1+\ep_{nj}^{2})^{3/2}}
\bigg)\bigg\},
\nn\\
\p_{\al}\p_{\beta}\ell_{n}(\theta)&=\sum_{j=1}^{n}
\frac{\al\beta\del h_{n}}{m^{3/2}}
=\frac{\al\beta\del T_{n}}{m^{3/2}},
\nn\\
\p_{\al}\p_{\del}\ell_{n}(\theta)&=\sum_{j=1}^{n}
\bigg(
\frac{\al h_{n}}{\sqrt{m}}+\frac{h_{n}}{\sqrt{1+\ep_{nj}^{2}}}(H(q_{nj})+q_{nj}H'(q_{nj}))
\bigg),
\nn\\
\p_{\al}\p_{\mu}\ell_{n}(\theta)&=\sum_{j=1}^{n}
\bigg\{
-\frac{h_{n}\ep_{nj}}{\sqrt{1+\ep_{nj}^{2}}}(H(q_{nj})+q_{nj}H'(q_{nj}))
\bigg\},
\nn\\
\p_{\beta}\p_{\del}\ell_{n}(\theta)&=\sum_{j=1}^{n}
\bigg(
-\frac{\beta h_{n}}{\sqrt{m}}
\bigg)
=-\frac{\beta T_{n}}{\sqrt{m}},
\nn\\
\p_{\beta}\p_{\mu}\ell_{n}(\theta)&=\sum_{j=1}^{n}
(-h_{n})=-T_{n},
\nn\\
\p_{\del}\p_{\mu}\ell_{n}(\theta)&=\sum_{j=1}^{n}
\bigg(\frac{1}{\del^{2}}(\eta(\ep_{nj})+\ep_{nj}\eta'(\ep_{nj}))
\nn\\
&{}\qquad
-\frac{\al h_{n}}{\del}\frac{\ep_{nj}}{(1+\ep_{nj}^{2})^{3/2}}(q_{nj}H'(q_{nj})-H(q_{nj}))
\bigg).
\nn
\end{align}
The task is to verify the uniform convergences of 
$E_{\theta}\{H_{n}(\theta)\}$ and $\var\{H_{n}^{(kl)}(\theta)\}$, 
and also the positive definiteness of $\mci(\theta)$. 
For the former we will only prove $E_{\theta}\{H_{n}(\theta)\}\to_{u}\mci(\theta)$; 
as a matter of fact, we can prove that $\var_{\theta}\{H_{n}^{(kl)}(\theta)\}\to_{u}0$ 
in an analogous and simpler way, making use of the statements (a)$\sim$(c) in Step 2. 

It is straightforward to deduce the convergences $E_{\theta}\{H_{n}^{(kl)}(\theta)\}\to_{u}\mci_{kl}(\theta)$ 
except for the case $(k,l)=(1,1)$, 
by using the identities $E_{\theta}(\ep_{nj})=\beta m^{-1/2}$ and 
$E_{\theta}\{(\ep_{nj}-\beta m^{-1/2})^{2}\}=(\del h_{n})^{-1}\al^{2}m^{-3/2}$, 
the convergence \eqref{hm:nig_step1_goal}, and (a)$\sim$(c) together with the bounded convergence theorem, 
and also by reminding the identity $H'(y)=1+H(y)/y-\{H(y)\}^{2}$; for example,
\begin{align}
E_{\theta}\left\{H^{(34)}_{n}(\theta)\right\}
&=-E_{\theta}\left\{\frac{1}{n}\p_{\del}\p_{\mu}\ell_{n}(\theta)\right\}
\nn\\
&=-\frac{1}{\del^{2}}\frac{1}{n}\sum_{j=1}^{n}
E_{\theta}\left\{\eta(\ep_{nj})+\ep_{nj}\eta'(\ep_{nj})\right\}+o_{p}^{\ast}(1)
\nn\\
&\cip_{u}-\frac{1}{\del^{2}}\int_{\mbbr}\frac{\phi_{1}'(y)}{\phi_{1}(y)}
\left\{1+y\frac{\phi_{1}'(y)}{\phi_{1}(y)}\right\}\phi_{1}(y)dy=0=\mci_{34}(\theta).
\nonumber
\end{align}
Here and in the sequel, the asterisk means that it holds uniformly over each compact subset of $\Theta$. 
To prove the remaining
\begin{equation}
E_{\theta}\{H_{n}^{(11)}(\theta)\}\to_{u}\mci_{11}(\theta),
\label{hm:nig_11-conv}
\end{equation}
we need some preliminary facts.

\medskip

{\it Step 4.}
Let
\begin{equation}
A_{k}(\theta):=(-1)^{k}\frac{\al\del}{\pi}\int_{0}^{\infty}(e^{(\beta/\al)y}+e^{-(\beta/\al)y})y^{k-1}
K_{1}(y)\bigg\{\frac{K_{0}(y)}{K_{1}(y)}\bigg\}^{k}dy.
\nonumber
\end{equation}
In this step, we will prove that
\begin{equation}
\lim_{n\to\infty}\frac{1}{h_{n}}E_{\theta}\left[\{q_{n1}H(q_{n1})\}^{k}\right]
=A_{k}(\theta),\quad k\in\mbbn,
\label{hm:nig_proof+1}
\end{equation}
each limit being finite. Applying (\ref{hm:H_def}), we have
\begin{align}
& \hspace{-1.5cm}\frac{1}{h_{n}}E_{\theta}\left[\{q_{n1}H(q_{n1})\}^{k}\right] \nonumber \\
&=\frac{1}{h_{n}}\int_{\mbbr}
\left\{\al\del h_{n}\sqrt{1+x^{2}}H\left(\al\del h_{n}\sqrt{1+x^{2}}\right)\right\}^{k}
\nn\\
&{}\qquad
\times\frac{\al\del h_{n}}{\pi}e^{\del h_{n}\sqrt{m}+\beta\del h_{n} x}
\frac{K_{1}\left(\al\del h_{n}\sqrt{1+x^{2}}\right)}{\sqrt{1+x^{2}}}dx
\nonumber \\
&=(-1)^{k}\frac{\al\del}{\pi}e^{\del h_{n}\sqrt{m}}\bigg[
\int_{\mbbr}\al\del h_{n}e^{\beta\del h_{n} x}\left(\al\del h_{n}\sqrt{1+x^{2}}\right)^{k-1}
\nn\\
&{}\qquad
\times\bigg\{\frac{K_{0}(\al\del h_{n}\sqrt{1+x^{2}})}{K_{1}(\al\del h_{n}\sqrt{1+x^{2}})}
\bigg\}^{k}K_{1}(\al\del h_{n}\sqrt{1+x^{2}})dx\bigg] \nonumber \\
&=:(-1)^{k}\frac{\al\del}{\pi}e^{\del h_{n}\sqrt{m}}B^{(k)}_{h_{n}}
\nn\\
&\sim (-1)^{k}\frac{\al\del}{\pi}B^{(k)}_{h_{n}}.
\label{hm:nig_lem4-0}
\end{align}
Let $B^{(k)}_{h_{n}}=\int_{0}^{\infty}+\int_{-\infty}^{0}=:B^{(k)+}_{h_{n}}+B^{(k)-}_{h_{n}}$.

First we look at $B^{(k)+}_{h_{n}}$. 
The change of variable $y=\al\del h_{n}(\sqrt{1+x^{2}}-1)$ leads to 
$B^{(k)+}_{h_{n}}=\int_{0}^{\infty}b^{(k)+}_{h_{n}}(y)dy$, where
\begin{equation}
b^{(k)+}_{h_{n}}(y):=e^{(\beta/\al)\sqrt{y}\sqrt{y+2\al\del h_{n}}}
\frac{(y+\al\del h_{n})^{k}}{\sqrt{y}\sqrt{y+2\al\del h_{n}}}
\bigg\{\frac{K_{0}(y+\al\del h_{n})}{K_{1}(y+\al\del h_{n})}\bigg\}^{k}K_{1}(y+\al\del h_{n}).
\nonumber
\end{equation}
Obviously, for each $y\in(0,\infty)$
\begin{equation}
b^{(k)+}_{h_{n}}(y)\to e^{(\beta/\al)y}
y^{k-1}K_{1}(y)\bigg\{\frac{K_{0}(y)}{K_{1}(y)}\bigg\}^{k}=:b^{(k)+}_{0}(y).
\label{hm:nig_lem4-1}
\end{equation}
In order to apply the dominated convergence theorem 
to derive the limit of $B^{(k)+}_{h_{n}}$, we have to look at the behaviors of 
$b^{(k)+}_{h_{n}}(y)$ as $y\to 0$ and $y\to\infty$ uniformly in small $h_{n}\in(0,1]$.

By means of (b) in Step 2 and (\ref{hm:bessel2}), we have
\begin{align}
\sup_{h_{n}\le 1}|b^{(k)+}_{h_{n}}(y)|&\lesssim 
e^{(\beta/\al)y}y^{-1/2}(y+\al\del h_{n})^{k-1/2}K_{1}(y+\al\del h_{n})
\nonumber \\
&\lesssim e^{-(1-\beta/\al)y}y^{k-3/2},\quad y\to\infty,
\label{hm:nig_lem4-2}
\end{align}
the upper bound being Lebesgue integrable at infinity since $|\beta|<\al$. 
It follows form (\ref{hm:bessel1}) and (b) that 
$y^{k-1/2}\{K_{0}(y)/K_{1}(y)\}^{k}K_{1}(y)\sim C y^{2k-3/2}\{\log(1/y)\}^{k}\to 0$ as $y\to 0$, 
so that $\sup_{y\in(0,1]}y^{k-1/2}\{K_{0}(y)/K_{1}(y)\}^{k}K_{1}(y)<\infty$. 
Consequently, we have
\begin{align}
\sup_{h_{n}\le 1}|b^{(k)+}_{h_{n}}(y)|&\lesssim y^{-1/2}
\sup_{h_{n}\le 1}\bigg[(y+\al\del h_{n})^{k-1/2}
\bigg\{\frac{K_{0}(y+\al\del h_{n})}{K_{1}(y+\al\del h_{n})}\bigg\}^{k}K_{1}(y+\al\del h_{n})\bigg]
\nonumber \\
&\lesssim y^{-1/2},\quad y\to 0,
\label{hm:nig_lem4-3}
\end{align}
the upper bound being Lebesgue integrable near the origin. 
Having (\ref{hm:nig_lem4-1}), (\ref{hm:nig_lem4-2}) and (\ref{hm:nig_lem4-3}) in hand, 
we can apply the dominated convergence theorem to conclude that
\begin{equation}
B^{(k)+}_{h_{n}}\to\int_{0}^{\infty}b^{(k)+}_{0}(y)dy<\infty.
\nonumber
\end{equation}
In the same manner, we can deduce that
\begin{equation}
B^{(k)-}_{h_{n}}\to\int_{0}^{\infty}b^{(k)-}_{0}(y)dy,
\nonumber
\end{equation}
where $b^{(k)-}_{0}(y):=e^{-(\beta/\al)y}y^{k-1}K_{1}(y)\{K_{0}(y)/K_{1}(y)\}^{k}$. 
Thus we arrive at
\begin{equation}
B^{(k)}_{h_{n}}\to\int_{0}^{\infty}\{b^{(k)+}_{0}(y)+b^{(k)-}_{0}(y)\}dy,
\nonumber
\end{equation}
which combined with \eqref{hm:nig_lem4-0} gives \eqref{hm:nig_proof+1}.

\medskip

{\it Step 5.}
Now we prove \eqref{hm:nig_11-conv}, for which it suffices to show
\begin{equation}
E_{\theta}\left[
-\frac{\beta^{2}\del}{m^{3/2}}+\frac{1}{\al^{2}h_{n}}\left\{
q_{nj}^{2}+q_{nj}H(q_{nj})-\left(q_{nj}H(q_{nj})\right)^{2}
\right\}\right]\to_{u}-\frac{1}{\al^{2}}A_{2}(\theta).
\label{hm:nig_proof+2}
\end{equation}
By \eqref{hm:nig_proof+1}, the left-hand side of \eqref{hm:nig_proof+2} equals
\begin{align}
& -\frac{\beta^{2}\del}{m^{3/2}}+\del^{2}h_{n}\left\{1+E_{\theta}(\ep_{n1}^{2})\right\}
+\frac{1}{\al^{2}}A_{1}(\theta)-\frac{1}{\al^{2}}A_{2}(\theta)+o^{\ast}(1)
\nn\\
&=\frac{\del}{\sqrt{m}}
+\frac{1}{\al^{2}}A_{1}(\theta)-\frac{1}{\al^{2}}A_{2}(\theta)+o^{\ast}(1),
\nn
\end{align}
so that \eqref{hm:nig_proof+2} follows on showing $\del m^{-1/2}+\al^{-2}A_{1}(\theta)=0$. 
We note that $(x+x^{-1})/2\ge 1$ for any $x\ge 0$, and that, for any $|b|<1$,
\begin{align}
& \hspace{-1cm}
\frac{1}{2}\int_{0}^{\infty}\frac{1}{x}\left\{\frac{1}{2}\left(x+\frac{1}{x}\right)+b\right\}^{-1}dx
\nn\\
&=\frac{1}{\sqrt{1-b^{2}}}\left\{
\frac{\pi}{2}-\arctan\left(\frac{b}{\sqrt{1-b^{2}}}\right)
\right\}.
\label{hm:nig_proof+3}
\end{align}
By (\ref{hm:K_def}) and \eqref{hm:nig_proof+3}, 
straightforward computation with Fubini's theorem gives $\al^{-2}A_{1}(\theta)=-\del m^{-1/2}$.

\medskip

{\it Step 6.}
It remains to prove the positive definiteness of $\mci(\theta)$. 
To this end we note an alternative expression for 
$\mci_{12}(\theta)~(=-\al\beta\del(\al^{2}-\beta^{2})^{-3/2})$: 
let $\p_{\al}\ell_{n}(\theta)=\sumj\zeta^{\al}_{nj}(\theta)$ and 
$\p_{\beta}\ell_{n}(\theta)=\sumj\zeta^{\beta}_{nj}(\theta)$, 
then $\mci_{12}(\theta)=(nh_{n})^{-1}\sumj E_{\theta}\{(\zeta^{\al}_{nj}(\theta))\cdot(\zeta^{\beta}_{nj}(\theta))\}
=h_{n}^{-1}E_{\theta}\{(\zeta^{\al}_{n1}(\theta))(\zeta^{\beta}_{n1}(\theta))\}$. 
Then, a computation similar to the one in Step 5 gives
\begin{equation}
\mci_{12}(\theta)
=-\frac{\del}{\al\pi}\int_{0}^{\infty}
\left(e^{(\beta/\al)y}-e^{-(\beta/\al)y}\right)yK_{0}(y)dy,
\nonumber
\end{equation}
so that
\begin{equation}
|\mci_{12}(\theta)|\le
\frac{\del}{\al\pi}\int_{0}^{\infty}
\left(e^{\beta y/\al}+e^{-\beta y/\al}\right)yK_{0}(y)dy
=:\mcj_{12}(\theta).
\nonumber
\end{equation}
It suffices to show that, for any $\al>0$ and $\del>0$ the function
\begin{equation}
f(\beta):=\mci_{11}(\theta)\mci_{22}(\theta)-\{\mcj_{12}(\theta)\}^{2}
\nonumber
\end{equation}
is positive in $[0,\al)$. We introduce the probability density
\begin{equation}
q(y;\beta):=C(\beta)^{-1}(e^{(\beta/\al)y}+e^{-(\beta/\al)y})yK_{1}(y),\quad y>0.
\nonumber
\end{equation}
Using the identity
\begin{align}
& \hspace{-1cm}
\int_{0}^{\infty}\left\{\frac{1}{2}\left(x+\frac{1}{x}\right)+b\right\}^{-2}dx
\nn\\
&=\frac{2}{1-b^{2}}\left[\frac{1}{\sqrt{1-b^{2}}}
\left\{\frac{\pi}{2}-\arctan\left(\frac{b}{\sqrt{1-b^{2}}}\right)\right\}-b\right],
\nn
\end{align}
we get
\begin{align}
C(\beta)
&:=\int_{0}^{\infty}\left(e^{(\beta/\al)y}+e^{-(\beta/\al)y}\right)yK_{1}(y)dy
\nn\\
&=\frac{1}{2}\int_{0}^{\infty}\int_{0}^{\infty}
y\left(e^{-\{(x+x^{-1})/2-\beta/\al\}y}+e^{-\{(x+x^{-1})/2+\beta/\al\}y}\right)dydx
\nonumber \\\
&=\frac{1}{2}\left[\int_{0}^{\infty}
\left\{\frac{1}{2}\left(x+\frac{1}{x}\right)-\frac{\beta}{\al}\right\}^{-2}dx
+\int_{0}^{\infty}
\left\{\frac{1}{2}\left(x+\frac{1}{x}\right)+\frac{\beta}{\al}\right\}^{-2}dx
\right] \nonumber \\
&=\frac{\al^{3}\pi}{m^{3/2}}.
\nonumber
\end{align}
Then, some elementary manipulations and Cauchy-Schwarz's inequality lead to
\begin{align}
f(\beta)
&=\al^{-2}A_{2}(\theta)\cdot\al^{2}\del m^{-3/2}
-\bigg(-\frac{\al^{2}\del}{m^{3/2}}
\int_{0}^{\infty}q(y;\beta)\frac{K_{0}(y)}{K_{1}(y)}dy
\bigg)^{2}
\nonumber \\
&=\frac{\al^{4}\del^{2}}{m^{3}}\bigg\{
\int_{0}^{\infty}q(y;\beta)\bigg(\frac{K_{0}(y)}{K_{1}(y)}\bigg)^{2}dy
\cdot\int_{0}^{\infty}q(y;\beta)dy
\nn\\
&{}\qquad
-\bigg(\int_{0}^{\infty}q(y;\beta)\frac{K_{0}(y)}{K_{1}(y)}dy
\bigg)^{2}\bigg\}>0,
\nonumber
\end{align}where the last strict inequality does hold 
since $y\mapsto K_{0}(y)/K_{1}(y)$ is not a constant on $(0,\infty)$. 
We thus get the positivity of $f$, and the proof is complete.
\qed\end{proof}

\begin{rem}{\rm 
The lower right $2\times 2$ submatrix of \eqref{hm:nig_FI_form} is the same as 
the Fisher information matrix in estimation of the Cauchy L\'evy process such that $\mcl(X_{1})$ admits the Lebesgue density 
$x\mapsto\del^{-1}\phi_{1}(\del^{-1}(x-\mu))=(\del/\pi)\{\del^{2}+(x-\mu)^{2}\}^{-1}$. 
See Theorem \ref{hm:sslp_th1} for details.
}\qed\end{rem}


\section{Estimation of stable {\lp}}\label{hm:sec_slp}

The objective of this section is parametric estimation of some stable-process models based on high-frequency sampling.

\subsection{Some preliminaries}\label{hm:sec_slp_preliminary}

The stable distributions form a pretty special subclass of general infinitely divisible distributions. 
There are several books containing a systematic account of the general stable distributions 
and the stable L\'evy processes: 
\cite{Ber96}, \cite{IbrLin71}, \cite{JanWer94}, \cite{SamTaq94}, \cite{Sat99}, and \cite{Zol86}. 
See also \cite{BorHarWer05} for discussion from financial point of view.

\medskip

The $\beta$-stable {\lp} is characterized by
\begin{equation}
\log\vp_{X_{t}}(u)=\left\{
\begin{array}{ll}
\displaystyle{
-(t^{1/\beta}\sig)^{\beta}|u|^{\beta}\bigg(1-i\rho\mathrm{sign}(u)\tan\frac{\beta\pi}{2}\bigg)+it\gam u}, 
& \quad\beta\ne 1, \\
\displaystyle{
-t\sig|u|\bigg(1+i\frac{2\rho}{\pi}\mathrm{sign}(u)\log|u|\bigg)+it\gam u}, 
& \quad\beta=1,
\end{array}
\right.
\label{hm:ex_st1}
\end{equation}
with the stable index $\beta\in(0,2]$, the scale $\sig>0$, the degree of skewness $\rho\in[-1,1]$, 
and the deterministic trend $\gam\in\mbbr$; 
then, we will write
\begin{equation}
\mcl(X_{t})=S_{\beta}(t^{1/\beta}\sig,\rho,t\gam).
\nonumber
\end{equation}
For $\beta\in(0,2)$, the stable distribution is characterized by the {\lm} $\nu(dz)=g(z)dz$ plus a trend, 
where $g$ takes the form
\begin{equation}
g(z)=\del_{+}z^{-1-\beta}\mathbf{1}_{(0,\infty)}(z)+\del_{-}|z|^{-1-\beta}\mathbf{1}_{(-\infty,0)}(z),
\nonumber
\end{equation}
with $\del{+},\del_{-}\ge 0$ satisfying $(\del_{+},\del_{-})\ne(0,0)$. 
The parameters $(\rho,\sig)$ and $(\del_{+},\del_{-})$ are related by the identities
\begin{equation}
\frac{\del_{+}-\del_{-}}{\del_{+}+\del_{-}}=\rho\quad\text{and}\quad
\sig^{\beta}=\frac{1}{\beta}\Gam(1-\beta)(\del_{+}+\del_{-})\cos\frac{\beta\pi}{2},
\nonumber
\end{equation}
which readily follow on invoking, e.g., \cite[Lemma 14.11]{Sat99}. 
Here we will focus on the non-Gaussian case ($\beta\in(0,2)$), so that for each $t>0$
\begin{equation}
E(|X_{t}|^{q})<\infty\ \iff\ q\in(-1,\beta).
\nonumber
\end{equation}

\begin{rem}{\rm 
The $\beta$-stable distributions has several variants of its parametrization, 
the most typical one being (\ref{hm:ex_st1}). 
When $\rho\ne 0$, the parametrization (\ref{hm:ex_st1}) is ``discontinuous'' at $\beta=1$. 
To get rid of the inconvenience, \cite{Nol98} discussed an alternative parametrization 
via a suitable translation operation of (\ref{hm:ex_st1}).
\qed}\end{rem}

We say that a stochastic process $Y$ has the {\it selfsimilarity}, 
also referred to as the {\it scaling property}, 
if there exist positive constants $a$ and $H$ for which $(Y_{t})=(a^{-H}Y_{at})$ in distribution; 
the parameter $H$ is called the selfsimilarity (or Hurst) index. 
It is known that it is only the stable {\lp} that can have the selfsimilarity among all {\lp es}. 
Specifically, for each $t>0$ we have
\begin{align}
\left\{
\begin{array}{ll}
\mcl\left(t^{-1/\beta}\sig^{-1}(X_{t}-t\gam)\right)=S_{\beta}(1,\rho,0), & \quad\beta\ne 1, \\
\mcl\left(t^{-1}\sig^{-1}(X_{t}-t\gam)-2\rho\pi^{-1}\log(t\sig)\right)=S_{1}(1,\rho,0), & \quad\beta=1.
\end{array}
\right.
\label{hm:ex_st3}
\end{align}
In other words, if $\mcl(S)=S_{\beta}(1,\rho,0)$ then
\begin{equation}
\left\{
\begin{array}{ll}
\mcl(X_{t})=\mcl\left(\sig t^{1/\beta}S+t\gam \right), & \quad\beta\ne 1, \nonumber \\
\mcl(X_{t})=\mcl\left(\sig t S+t\gam+2\rho\sig t\pi^{-1}\log(\sig t)\right), & \quad\beta=1. \nonumber
\end{array}\right.
\end{equation}
This implies that $X$ has the selfsimilarity (with index $1/\beta$) if and only if 
$\gam=0$ (resp. $\rho=0$) when $\beta\ne 1$ (resp. $\beta=1$). 
Note that the right-hand sides of \eqref{hm:ex_st3} are free of $t$. 
This fact is particularly useful when attempting simulations on computer, 
since in order to simulate $\mcl(X_{t})$ it suffices to have a recipe for 
generating $S_{\beta}(1,\rho,0)$-random numbers. 
Let us mention the highly efficient algorithm for generating univariate stable-random numbers 
(\cite{ChaMalStu76} and \cite{Wer96}), based on which 
we can readily generate a discrete-time random sample $(\dd_{j}X)_{j\le n}$.

\begin{algo}
Fix any $t>0$. 
\begin{itemize}
\item[0.] For $\beta\ne 1$, set
\begin{equation}
A_{\beta,\rho}=\bigg\{1+\bigg(\rho\tan\frac{\beta\pi}{2}\bigg)^{2}\bigg\}^{1/(2\beta)}\quad\text{and}\quad
B_{\beta,\rho}=\beta^{-1}\arctan\bigg(\rho\tan\frac{\beta\pi}{2}\bigg).
\nonumber
\end{equation}

\item[1.] Draw random numbers $U$ and $V$ independently from 
the uniform over $(0,1)$ and the exponential with unit mean, respectively, and then set
\begin{align}
S&\gets A_{\beta,\rho}\frac{\sin\{\beta(U+B_{\beta,\rho})\}}{(\cos U)^{1/\beta}}
\bigg[\frac{\cos\{U-\beta(U+B_{\beta,\rho})\}}{V}\bigg]^{(1-\beta)/\beta} & \text{if }\beta\ne 1, \nonumber \\
S&\gets \frac{2}{\pi}\bigg\{\bigg(\frac{\pi}{2}+\rho U\bigg)\tan U
-\rho\log\bigg(\frac{(\pi/2)V\cos U}{\rho U+\pi/2}\bigg)\bigg\} & \text{if }\beta=1. \nonumber
\end{align}
Then $\mcl(S)=S_{\beta}(1,\rho,0)$ in both cases. 
(The original Eq.(3.9) of \cite{Wer96} contains the error: 
for the expression of $S$ when $\beta=1$, 
we need the multiplicative constant $\pi/2$ in the numerator inside the logarithm.)

\item[2.] Set
\begin{align}
X_{t}&\gets t^{1/\beta}\sig S+t\gam & \text{if }\beta\ne 1, \nonumber \\
X_{t}&\gets t\sig S+\frac{2t\sig\rho}{\pi}\log(t\sig)+t\gam. & \text{if }\beta=1. \nonumber
\end{align}
Then $\mcl(X_{t})=S_{\beta}(t^{1/\beta}\sig,\rho,t\gam)$ in both cases.

\end{itemize}
\label{hm:algo_stable}
\end{algo}

\begin{rem}{\rm 
Taking formally $\beta=2$ and $\rho=0$ in Algorithm \ref{hm:algo_stable} results in 
the Box-Muller transform for generating increments of a scaled Wiener process with drift.
\qed}\end{rem}

\medskip

In the case where the {\lm} is symmetric, we have
\begin{equation}
\log\vp_{X_{t}}(u)=-(t^{1/\beta}\sig)^{\beta}|u|^{\beta}+it\gam u,\quad \beta\in(0,2].
\nonumber
\end{equation}
Denote by $y\mapsto\phi_{\beta}(y;\sig)$ the density of 
the symmetric $\beta$-stable distribution corresponding to the 
characteristic function $u\mapsto\exp\{-(\sig|u|)^{\beta}\}$; we will use the shorthands
\begin{equation}
S_{\beta}(\sig):=S_{\beta}(\sig,0,0),\quad \phi_{\beta}(y):=\phi_{\beta}(y;1).
\nonumber
\end{equation} 
The following well-known facts will be frequently used later.

\begin{itemize}
\item The map $(\beta,y)\mapsto\phi_{\beta}(y)$ is everywhere positive 
and of class $C^{\infty}((0,2)\times\mbbr)$.

\item The relation $\phi_{\beta}(y;a)=a^{-1}\phi_{\beta}(a^{-1}y)$ for all $y\in\mbbr$ and $a>0$ is valid, 
as easily seen from the Fourier inversion formula
\begin{equation}
\phi_{\beta}(y;\sig)=\frac{1}{2\pi}\int\exp\{-iuy-(\sig|u|)^{\beta}\}du.
\nonumber
\end{equation}
In particular, $\phi_{\beta}(0;\sig)=(\sig\pi)^{-1}\Gamma(1+1/\beta)$.

\item For any $k,k'\in\mbbzp$, there exist constants $c_{i}=c_{i}(\beta,k,k')>0$ such that
\begin{equation}
|\p^{k}\p_{\beta}^{k'}\phi_{\beta}(y)|\lesssim\int e^{-|u|^{\beta}}|u|^{c_{1}}\{1+(\log|u|)^{c_{2}}\}du.
\nonumber
\end{equation}

\item It follows from the series expansion of the density (e.g. \cite[Remark 14.18]{Sat99}) that 
for any $k,k'\in\mbbzp$
\begin{equation}
|\p^{k}\p_{\beta}^{k'}\phi_{\beta}(y)|\sim C_{k,k',\beta}(\log|y|)^{k'}|y|^{-\beta-1-k},\quad
|y|\to\infty,
\label{hm:sslp_asymp}
\end{equation}
for some constant $C_{k,k',\beta}>0$.
\end{itemize}

\medskip

In the rest of this section we will proceed as follows. 
In Section \ref{hm:sec_sslp_LAN}, we will look at the local asymptotics for the log-likelihood function when the {\ld} is symmetric, 
and then Section \ref{hm:sec_sym.slp_me} presents 
some practical moment estimators which are asymptotically normally distributed. 
In Section \ref{hm:sec_skewslp_me}, we will formulate a practical estimation procedure 
when the L\'evy density is skewed and the scale is time-varying. 
Sections \ref{hm:sec_gen_slp} and \ref{hm:sec_lslp} give some brief remarks concerning 
simple estimation of general $\beta$-stable {\lp es} and locally stable {\lp es}, respectively.


\subsection{LAN with singular Fisher information: symmetric jumps}\label{hm:sec_sslp_LAN}

This section is concerned with the LAN when we observe $(X_{t^{n}_{j}})_{j=1}^{n}$ 
under the high-frequency sampling 
from a $\beta$-stable {\lp} $X$ such that $\mcl(X_{1})=S_{\beta}(\sig,0,\gam)$. 
The parameter of interest is
\begin{equation}
\theta=(\beta,\sig,\gam),
\nonumber
\end{equation}
the parameter space $\Theta\subset\mbbr^{3}$ being a convex domain with compact closure
\begin{equation}
\overline{\Theta}\subset\left\{(\beta,\sig,\gam);~\beta\in(0,2),~\sig>0,~\gam\in\mbbr\right\}.
\nonumber
\end{equation}
It will turn out that, although the log-likelihood admits a LAN structure, 
the asymptotic Fisher information matrix is constantly singular whenever 
both the index $\beta$ and the scale $\sig$ are to be estimated 
(\cite{AitJac08} and \cite{Mas09_slp}). 
This asymptotic singularity is inevitable, so we are in a similar situation 
to the case of the Meixner {\lp} mentioned in Section \ref{hm:sec_mxlplan}.

\medskip

For $j=1,2,\dots,n$ and $\theta\in\Theta$, we write
\begin{equation}
Y_{nj}(\theta)=\sig^{-1}h_{n}^{-1/\beta}(\dd_{j}X-\gamma h_{n}).
\label{hm:sslp_Y}
\end{equation}
According to the scaling property \eqref{hm:ex_st3}, 
the random variables $\{Y_{nj}(\theta)\}_{j=1}^{n}$ under $P_{\theta}$ 
are i.i.d. with common distribution $S_{\beta}(1)$. 
The log-likelihood function of $(X_{jh_{n}})_{j=1}^{n}$ is
\begin{align}
\ell_{n}(\theta)&=\sum_{j=1}^{n}\log\phi_{\beta}\left(\dd_{j}X-\gamma h_{n};\sig h_{n}^{1/\beta}\right)
\nonumber \\
&=\sum_{j=1}^{n}\log\left\{\sig^{-1}h_{n}^{-1/\beta}\phi_{\beta}(Y_{nj}(\theta))\right\} \nonumber \\
&=\sum_{j=1}^{n}\left\{-\log\sig+\beta^{-1}\log(1/h_{n})+\log\phi_{\beta}(Y_{nj}(\theta))\right\}. \nonumber
\end{align}
Let 
\begin{equation}
A_{n}(\beta)=\mathrm{diag}\left(a_{1n},a_{2n},a_{3n}\right):=
\diag\left\{\frac{1}{\sqrt{n}\log(1/h_{n})},~\frac{1}{\sqrt{n}},~\frac{1}{\sqrt{n}h_{n}^{1-1/\beta}}\right\}.
\label{hm:sslp_A}
\end{equation}
We are assuming \eqref{hm:addeqnum-1}, hence $\sqrt{n}h_{n}^{1-1/\beta}\to\infty$ as soon as $\beta<2$.

\begin{thm} Fix any $\theta\in\Theta$. 
For each $u\in\mbbr^{3}$ we have the stochastic expansion
\begin{equation} 
\ell_{n}(\theta+A_{n}(\beta)u)-\ell_{n}(\theta)
=\mcs_{n}(\theta)[u]-\frac{1}{2}\mci(\theta)[u,u]+o_{p}(1),
\nonumber
\end{equation}
where $\mcs_{n}(\theta)\cil N_{3}(0,\mci(\theta))$ under $P_{\theta}$ with
\begin{equation}
\mci(\theta):=\left(
\begin{array}{cccc}
H_{\beta}/\beta^{4}       & H_{\beta}/(\sig\beta^{2}) & 0 \\
H_{\beta}/(\sig\beta^{2}) & H_{\beta}/\sig^{2}      & 0 \\
0                     & 0                     & M_{\beta}/\sig^{2}
\end{array}
\right),
\label{hm:sslp_FI}
\end{equation}
where
\begin{equation}
H_{\beta}:=\int\{\phi_{\beta}(y)+y\p\phi_{\beta}(y)\}^{2}\phi_{\beta}(y)^{-1}dy,\quad
M_{\beta}:=\int\{\p\phi_{\beta}(y)\}^{2}\phi_{\beta}(y)^{-1}dy,
\label{hm:sslp_HM}
\end{equation}
both being finite. In particular, the Fisher information matrix $\mci(\theta)$ is singular for any $\theta\in\Theta$.
\label{hm:sslp_th1}
\end{thm}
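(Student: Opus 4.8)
The plan is to verify the hypotheses of Theorem~\ref{hm:thm_lan_ds} with $A_{n}(\theta)=A_{n}(\beta)$ of \eqref{hm:sslp_A} and the matrix $\mci(\theta)$ of \eqref{hm:sslp_FI}, and then to read off the singularity of $\mci(\theta)$ from its explicit block form. Assumption~\ref{hm:lan_assump1_dd} is immediate: for $\beta\in(0,2)$ the symmetric $\beta$-stable law has full support $\mbbr$, independent of $t$ and $\theta$, and $p_{h_{n}}(x;\theta)=\sig^{-1}h_{n}^{-1/\beta}\phi_{\beta}\big(\sig^{-1}h_{n}^{-1/\beta}(x-\gam h_{n})\big)$ is of class $\mcc^{2}(\Theta)$ for each $x$ because $(\beta,y)\mapsto\phi_{\beta}(y)$ is of class $\mcc^{\infty}((0,2)\times\mbbr)$. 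Moreover $a_{1n},a_{2n},a_{3n}\to0$ under \eqref{hm:addeqnum-1}, as noted after \eqref{hm:sslp_A}.

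Writing $Y:=Y_{n1}(\theta)$, $g_{\beta}:=\p\phi_{\beta}/\phi_{\beta}$ and $\dot\psi_{\beta}:=\p_{\beta}\phi_{\beta}/\phi_{\beta}$, and using $\p_{\gam}Y=-\sig^{-1}h_{n}^{1-1/\beta}$, $\p_{\sig}Y=-\sig^{-1}Y$ and $\p_{\beta}Y=-\beta^{-2}\log(1/h_{n})Y$, differentiation of $\log p_{h_{n}}$ gives $\p_{\gam}\log p_{h_{n}}=-\sig^{-1}h_{n}^{1-1/\beta}g_{\beta}(Y)$, $\p_{\sig}\log p_{h_{n}}=-\sig^{-1}\{1+Yg_{\beta}(Y)\}$ and $\p_{\beta}\log p_{h_{n}}=-\beta^{-2}\log(1/h_{n})\{1+Yg_{\beta}(Y)\}+\dot\psi_{\beta}(Y)$, so that $\sqrt{n}\,A_{n}(\beta)g_{n1}(\theta)=\big(-\beta^{-2}\{1+Yg_{\beta}(Y)\}+\dot\psi_{\beta}(Y)/\log(1/h_{n}),\ -\sig^{-1}\{1+Yg_{\beta}(Y)\},\ -\sig^{-1}g_{\beta}(Y)\big)$. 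The structural point --- also the source of the singularity --- is that the $\beta$- and $\sig$-scores differ to leading order only by the scalar $\log(1/h_{n})$, while the genuinely new term $\dot\psi_{\beta}(Y)$ enters divided by $\log(1/h_{n})\to\infty$ and is asymptotically negligible. By the scaling property \eqref{hm:ex_st3}, $Y\sim S_{\beta}(1)$ exactly under $P_{\theta}$, so Assumption~\ref{hm:lan_assump2_dd}(a) follows from the score identities $\int g_{\beta}\phi_{\beta}=0$, $\int(1+yg_{\beta}(y))\phi_{\beta}(y)\,dy=0$ and $\int\p_{\beta}\phi_{\beta}(y)\,dy=0$ (the latter two by integration by parts and by differentiating $\int\phi_{\beta}=1$, the boundary term $y\phi_{\beta}(y)$ vanishing at $\pm\infty$ since $\phi_{\beta}(y)\sim C|y|^{-\beta-1}$), whence $E_{\theta}\{g_{n1}(\theta)\}=0$. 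For \ref{hm:lan_assump2_dd}(b) I would pass to the limit in $E_{\theta}[\{\sqrt{n}\,A_{n}(\beta)g_{n1}(\theta)\}^{\otimes2}]$ by dominated convergence, dominating the first component by $\beta^{-4}\{1+Yg_{\beta}(Y)\}^{2}+\dot\psi_{\beta}(Y)^{2}$, which is integrable because $\dot\psi_{\beta}(y)=O(\log|y|)$ is killed by $\phi_{\beta}(y)\sim C|y|^{-\beta-1}$ (cf.~\eqref{hm:sslp_asymp}); this gives $E_{\theta}[Z^{\otimes2}]$ with $Z=\big(-\beta^{-2}\{1+Yg_{\beta}(Y)\},-\sig^{-1}\{1+Yg_{\beta}(Y)\},-\sig^{-1}g_{\beta}(Y)\big)$, and evaluating the six entries via $H_{\beta}=\int(1+yg_{\beta}(y))^{2}\phi_{\beta}(y)\,dy$, $M_{\beta}=\int g_{\beta}(y)^{2}\phi_{\beta}(y)\,dy$, $\int g_{\beta}\phi_{\beta}=0$, and the oddness of $y\mapsto yg_{\beta}(y)^{2}\phi_{\beta}(y)$ (yielding the two vanishing off-diagonal entries) reproduces exactly \eqref{hm:sslp_FI}; finiteness of $H_{\beta}$ and $M_{\beta}$ is again a consequence of \eqref{hm:sslp_asymp}.

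For Assumption~\ref{hm:lan_assump2_dd}(c) I would differentiate the score once more; invoking the chain rule through $\p_{\beta}Y=-\beta^{-2}\log(1/h_{n})Y$, each entry of $A_{n}(\beta)\p_{\theta}[g_{n1}(\rho)^{\top}]A_{n}(\beta)$, and likewise $|A_{n}(\beta)g_{n1}(\rho)|^{4}$, becomes $n^{-1}$ (resp.~$n^{-2}$) times a function of the $S_{\beta(\rho)}(1)$-variable $Y_{n1}(\rho)$: the powers of $\log(1/h_{n})$ and of $h_{n}^{1-1/\beta}$ produced by differentiation cancel precisely against $A_{n}(\beta)$, leaving residual $\log(1/h_{n})^{-1}$-factors, and every function of the stable variable that occurs ($g_{\beta}$, $\p g_{\beta}$, $1+yg_{\beta}$, $\dot\psi_{\beta}$, $\p_{\beta}g_{\beta}$, and so on) is smooth and bounded near the origin while growing at most polylogarithmically at infinity against $\phi_{\beta}(y)\sim C|y|^{-\beta-1}$, so all its moments are finite, with uniformity over compact $\theta$-sets following from continuity in $(\beta,y)$ and compactness. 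Since $D_{n}(a;\theta)$ shrinks to $\{\theta\}$ (every entry of $A_{n}(\beta)^{-1}$ diverges), $n\sup_{\rho\in D_{n}(a;\theta)}E_{\rho}(\cdots)=O(n^{-1})\to0$. Theorem~\ref{hm:thm_lan_ds} then yields the stated expansion with $\mcs_{n}(\theta)=A_{n}(\beta)\p_{\theta}\ell_{n}(\theta)\cil N_{3}(0,\mci(\theta))$. Finally, in \eqref{hm:sslp_FI} the $\gam$-row and column decouple and the first row of the upper-left $2\times2$ block equals $(\sig/\beta^{2})$ times the second, so $\det\mci(\theta)=\big(H_{\beta}^{2}/(\beta^{4}\sig^{2})-H_{\beta}^{2}/(\beta^{4}\sig^{2})\big)M_{\beta}/\sig^{2}=0$ for every $\theta\in\Theta$, giving the asserted singularity.

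I expect the bookkeeping in Assumption~\ref{hm:lan_assump2_dd}(c) to be the main obstacle: tracking the second-order $\theta$-derivatives of $\log p_{h_{n}}$ carefully enough to see that the $\log(1/h_{n})$- and $h_{n}^{1-1/\beta}$-factors generated by the chain rule cancel exactly against $A_{n}(\beta)$, and then confirming uniform integrability of the surviving functions of the $S_{\beta}(1)$-variable. Everything else is either an explicit finite integral or a direct application of the density estimates collected in Section~\ref{hm:sec_slp_preliminary}.
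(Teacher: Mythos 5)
Your proposal is correct and follows essentially the same route as the paper's proof: both verify Assumption \ref{hm:lan_assump2_dd} for Theorem \ref{hm:thm_lan_ds} with the rate matrix \eqref{hm:sslp_A}, using the exact scaling reduction to an $S_{\beta}(1)$ i.i.d.\ array, the same three score components ($F_{\beta 1}=1+yg_{\beta}$, $F_{\beta 2}=\dot\psi_{\beta}$, $F_{\beta 3}=g_{\beta}$ in your notation), the tail estimate \eqref{hm:sslp_asymp} plus oddness for the off-diagonal zeros, and the boundedness of $h_{n}^{4(1/\beta-1/\beta')}$ over the shrinking neighbourhood $D_{n}(a;\theta)$ to control condition (c). The paper simply carries out the second-derivative bookkeeping you flag as the main obstacle by listing all nine second partials of $\log p_{h_{n}}$ explicitly; your reading of the cancellation of the $\log(1/h_{n})$ and $h_{n}^{1-1/\beta}$ factors against $A_{n}(\beta)$ is exactly what that computation confirms.
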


\begin{proof}
We may and do assume $\log(1/h_{n})>0$ without loss of generality. 
Fix any $\theta\in\Theta$ and $u\in\mbbr^{3}$ in the sequel. 
Let $p_{h_{n}}(y;\theta):=\sig^{-1}h_{n}^{-1/\beta}\phi_{\beta}(y)$ 
and $g_{nj}(\theta):=\p_{\theta}\log p_{h_{n}}(Y_{nj}(\theta);\theta)$. 
Obviously we have $E_{\theta}\{g_{nj}(\theta)\}=0$, hence it suffices to verify 
Assumptions \ref{hm:lan_assump2_dd}(b) and \ref{hm:lan_assump2_dd}(c).

\medskip

First we consider Assumption \ref{hm:lan_assump2_dd}(b):
\begin{equation}
C_{n}(\theta)=\left[C_{n}^{(kl)}(\theta)\right]_{k,l=1}^{3}:=A_{n}(\beta)\sum_{j=1}^{n}E_{\theta}
\left\{g_{nj}(\theta)^{\otimes 2}\right\}A_{n}(\beta)\to\mci(\theta).
\label{hm:sslp_LL}
\end{equation}
Put $g_{nj}(\theta)=[g_{nj,k}(\theta)]_{k=1}^{3}$ and
\begin{equation}
F_{\beta 1}(y)=\frac{\phi_{\beta}(y)+y\p\phi_{\beta}(y)}{\phi_{\beta}(y)},
\quad F_{\beta 2}(y)=\frac{\p_{\beta}\phi_{\beta}(y)}{\phi_{\beta}(y)},
\quad F_{\beta 3}(y)=\frac{\p\phi_{\beta}(y)}{\phi_{\beta}(y)}.
\nonumber
\end{equation}
Then,
\begin{align}
g_{nj,1}(\theta)&=-\beta^{-2}\log(1/h_{n})F_{\beta 1}(Y_{nj}(\theta))
+F_{\beta 2}(Y_{nj}(\theta)), \label{hm:sslp_g1} \\
g_{nj,2}(\theta)&=-\sig^{-1}F_{\beta 1}(Y_{nj}(\theta)), \label{hm:sslp_g2} \\
g_{nj,3}(\theta)&=-\sig^{-1}h_{n}^{1-1/\beta}F_{\beta 3}(Y_{nj}(\theta)). \label{hm:sslp_g3}
\end{align}
Since $Y_{nj}(\theta)$ forms an i.i.d. array with common distribution $S_{\beta}(1)$, 
it is straightforward to deduce \eqref{hm:sslp_LL} by 
substituting \eqref{hm:sslp_g1}, \eqref{hm:sslp_g2} and \eqref{hm:sslp_g3} in
\begin{equation}
C_{n}^{(kl)}(\theta)=\sum_{j=1}^{n}a_{kn}a_{ln}g_{nj,k}(\theta)g_{nj,l}(\theta),
\quad k,l\in\{1,2,3\}.
\nonumber
\end{equation}
Here, we note that the finiteness of the limit can be ensured by means of 
Schwarz's inequality together with (\ref{hm:sslp_asymp}); 
in particular, we have $\int F_{1}(y)F_{3}(y)\phi_{\beta}(y)dy=0$ 
since $y\mapsto y\{\p\phi_{\beta}(y)\}^{2}/\phi_{\beta}(y)$ is odd.

\medskip

Next we turn to verifying Assumption \ref{hm:lan_assump2_dd}(c). Fix any $a>0$. 
It follows from the expressions (\ref{hm:sslp_g1}) to (\ref{hm:sslp_g3}) that 
(note that $a_{3n}$ depends on $\beta$)
\begin{align}
& \sup_{\rho\in D_{n}(a;\theta)}
\left(a_{n1}^{4}E_{\rho}\{|g_{n1,1}(\rho)|^{4}\}+a_{n2}^{4}E_{\rho}\{|g_{n1,2}(\rho)|^{4}\}
\right)\lesssim n^{-2}, \nonumber \\
& \sup_{\rho\in D_{n}(a;\theta)}a_{n3}^{4}E_{\rho}\{|g_{n1,3}(\rho)|^{4}\}
\lesssim n^{-2}\sup_{\beta':\sqrt{n}\log(1/h_{n})|\beta'-\beta|\le a}h_{n}^{4(1/\beta-1/\beta')}
\lesssim n^{-2}.
\nn
\end{align}
For the latter estimate we used the elementary fact: 
a positive function $f$ is bounded below and above if and only if $|\log f(h)|\lesssim 1$. 
Thus we get
\begin{equation}
n\sup_{\rho\in D_{n}(a;\theta)}E_{\rho}\left\{|A_{n}(\beta)g_{n1}(\rho)|^{4}\right\}\lesssim n^{-1}\to 0.
\label{hm:sslp_ldlan+1}
\end{equation}
It remains to look at $\p_{\theta}g_{nj}(\theta)$. It is straightforward, though a bit tedious, to get
\begin{align}
\p_{\beta}^{2}\log p_{h_{n}}(y;\theta)&=
\frac{1}{\beta^{4}}\{\log(1/h_{n})\}^{2}\bigg\{yF_{\beta 3}(y)
+y^{2}\bigg(\frac{\p^{2}\phi_{\beta}(y)}{\phi_{\beta}(y)}
-F_{\beta 3}(y)^{2}\bigg)\bigg\} \nonumber \\
& {}+\frac{2}{\beta^{3}}\log(1/h_{n})\bigg\{F_{\beta 1}(y)
-\beta y\bigg(\frac{\p\p_{\beta}\phi_{\beta}(y)}{\phi_{\beta}(y)}-
F_{\beta 2}(y)F_{\beta 3}(y)\bigg)\bigg\} \nonumber \\
& {}+\frac{\p_{\beta}^{2}\phi_{\beta}(y)}{\phi_{\beta}(y)}-F_{\beta 2}(y)^{2},
\nn\\
\p_{\sig}^{2}\log p_{h_{n}}(y;\theta)&=\frac{1}{\sig^{2}}\bigg\{1+2yF_{\beta 3}(y)
+y^{2}\bigg(\frac{\p^{2}\phi_{\beta}(y)}{\phi_{\beta}(y)}
-F_{\beta 3}(y)^{2}\bigg)\bigg\},
\nn\\
\p_{\gamma}^{2}\log p_{h_{n}}(y;\theta)&=\frac{1}{\sig^{2}}h_{n}^{2(1-1/\beta)}
\bigg(\frac{\p^{2}\phi_{\beta}(y)}{\phi_{\beta}(y)}-F_{\beta 3}(y)^{2}\bigg),
\nn\\
\p_{\sig}\p_{\beta}\log p_{h_{n}}(y;\theta)&=\frac{1}{\sig\beta^{2}}\log(1/h_{n})\bigg\{
yF_{\beta 3}(y)+y^{2}
\bigg(\frac{\p^{2}\phi_{\beta}(y)}{\phi_{\beta}(y)}-F_{\beta 3}(y)^{2}\bigg)\bigg\} \nonumber \\
& {}-\frac{1}{\sig}y\bigg(\frac{\p\p_{\beta}\phi_{\beta}(y)}{\phi_{\beta}(y)}-
F_{\beta 2}(y)F_{\beta 3}(y)\bigg),
\nn\\
\p_{\gamma}\p_{\beta}\log p_{h_{n}}(y;\theta)&=
\frac{1}{\sig\beta^{2}}h_{n}^{1-1/\beta}\log(1/h_{n})\bigg\{F_{\beta 3}(y)
+y\bigg(\frac{\p^{2}\phi_{\beta}(y)}{\phi_{\beta}(y)}
-F_{\beta 3}(y)^{2}\bigg)\bigg\} \nonumber \\
& {}-\frac{1}{\sig}h_{n}^{1-1/\beta}
\bigg(\frac{\p\p_{\beta}\phi_{\beta}(y)}{\phi_{\beta}(y)}
-F_{\beta 2}(y)F_{\beta 3}(y)\bigg),
\nn\\
\p_{\gamma}\p_{\sig}\log p_{h_{n}}(y;\theta)&=
\frac{1}{\sig^{2}}h_{n}^{1-1/\beta}\bigg\{F_{\beta 3}(y)
+y\bigg(\frac{\p^{2}\phi_{\beta}(y)}{\phi_{\beta}(y)}
-F_{\beta 3}(y)^{2}\bigg)\bigg\}.
\nn
\end{align}
From these expressions we can deduce as before that
\begin{equation}
n\sup_{\rho\in D_{n}(a;\theta)}
E_{\rho}\{|A_{n}(\theta)\p_{\theta}[g_{n1}(\rho)^{\top}]A_{n}(\theta)|^{2}\}\to 0,
\nn
\end{equation}
which combined with \eqref{hm:sslp_ldlan+1} verifies Assumption \ref{hm:lan_assump2_dd}(c), 
completing the proof.
\qed\end{proof}

\begin{rem}{\rm 
We refer to \cite{DuM83} and the references therein for the asymptotic normality 
in the low-frequency sampling case, where the Fisher information matrix is non-singular. 
As long as the high-frequency sampling is concerned, 
the constant asymptotic singularity also emerges in the case of $\beta$-stable 
subordinators (see \cite{Mas09_slp}), and 
we conjecture that this is the case for the whole class of stable {\lp es}.
}\qed\end{rem}

\begin{rem}{\rm 
If $X$ is the Cauchy L\'evy process such that $\mcl(X_{1})=S_{1}(\sig,0,\gam)$, then 
the LAN holds at each $\theta:=(\sig,\gam)$ 
with rate $\sqrt{n}$ and Fisher information matrix $\diag\{1/(2\sig^{2}),1/(2\sig^{2})\}$; 
we refer to \cite[Sections 3.4 and 4.2]{Mas09_slp} for further exposition. 
Concerning the MLE $\hat{\theta}_{n}$ of $(\sig,\gam)$, 
Theorem \ref{hm:thm_uan} ensures the asymptotic efficiency as well as 
the uniform asymptotic normality 
$\sqrt{n}(\hat{\theta}_{n}-\theta)\cil_{u}N_{2}(0,2\sig^{2}I_{2})$. 
}\qed\end{rem}


\subsection{Symmetric {\lm}}\label{hm:sec_sym.slp_me}

In this section, we discuss how to construct 
asymptotically normally distributed estimators of $\theta=(\beta,\sig,\gam)$ 
with non-singular asymptotic covariance matrix.

\subsubsection{Scenario for constructions of easy joint estimators}

As we have seen in Theorem \ref{hm:sslp_th1}, 
the MLE has a disadvantage for the joint estimation of the index $\beta$ and the scale $\sig$. 
More suitable for practical use would be to adopt an $M$-estimation based on moment fitting, 
with giving preference to simplicity of implementation over theoretical asymptotic efficiency. 

In view of the scaling property of $X$, the sample mean 
$\bar{X}_{n}:=T_{n}^{-1}\sum_{j=1}^{n}\dd_{j}X=T_{n}^{-1}X_{T_{n}}$ 
satisfies that
\begin{equation}
\mcl\left(T_{n}^{1-1/\beta}(\bar{X}_{n}-\gamma)\right)=S_{\beta}(\sig)
\nn
\end{equation}
for each $n\in\mbbn$ under $P_{\theta}$. 
We immediately notice the following unpleasant features:
\begin{itemize}
\item $\bar{X}_{n}$ has infinite variance for each $n\in\mbbn$;

\item $\bar{X}_{n}$ is $T_{n}^{1-1/\beta}$-consistent only when $\beta>1$ and $T_{n}\to\infty$;

\item Since $T_{n}^{1-1/\beta}/(\sqrt{n}h_{n}^{1-1/\beta})=n^{1/2-1/\beta}\to 0$ for $\beta<2$, 
$\bar{X}_{n}$ is not rate-optimal for estimating $\gam$ (see Theorem \ref{hm:sslp_th1}).

\end{itemize}
Hence the sample mean is of rather limited use as an estimator of $\gamma$, and we need something else. 
In what follows, we will prove that the sample median based estimator 
(equivalently, the least absolute deviation estimator) of $\gamma$ attains 
the optimal rate $\sqrt{n}h_{n}^{1-1/\beta}$ whatever $\beta\in(0,2)$ is. 
Then, it will be used to construct an asymptotically normally distributed joint estimator of $\theta$, 
which has a non-singular asymptotic covariance matrix 
in compensation for the optimal $\sqrt{n}\log(1/h_{n})$-rate of convergence in estimating $\beta$.

In the rest of this section we fix a $\theta\in\Theta$ as the true value, 
and the stochastic convergences are taken under $P_{\theta}$.

\medskip

In order to construct estimators via moment fitting, we will of course make use of the law of large numbers 
for $\{Y_{nj}(\theta)\}$ of \eqref{hm:sslp_Y}:
\begin{equation}
\frac{1}{n}\sumj f\left(h_{n}^{-1/\beta}(\dd_{j}X-\gamma h_{n})\right)
\cip\int f(y)\phi_{\beta}(y;\sig)dy
\label{hm:sslp_me+1}
\end{equation}
for suitable functions $f$. The point here is two-fold: 
first, we can replace the unknown $\gamma$ in \eqref{hm:sslp_me+1} 
by the sample median $\hat{\gamma}_{n}$ 
(to be defined later) by virtue of the forthcoming Theorem \ref{hm:smed_bthm}; 
second, applying the delta method suitably we can eliminate 
the effect of $\beta$ involved in the summand $f(h_{n}^{-1/\beta}(\dd_{j}X-\gamma h_{n}))$. 
Specifically, we will consider the two moment-matching procedures:
\begin{itemize}
\item the logarithmic moments 
$f(x)=\log|y|$ and $(\log|y|)^{2}$;
\item the lower-order fractional moments 
$f(x)=|y|^{r}$ and $|y|^{2r}$ for $r\in(0,\beta/6)$.
\end{itemize}


\subsubsection{Median-adjusted central limit theorem}

Here we step away from the main context, and prove a simple yet unusual central limit theorem, 
which will play an important role later on. 

\medskip

The setting we consider in this section is as follows. 
Let $Y_{1},Y_{2},\dots$ be an i.i.d. sequence of $\mbbr$-valued random variables 
with common continuous Lebesgue density $f$, and let $m_{k}$ denote the sample median of 
$(Y_{j})_{j=1}^{n}$ with odd $n=2k+1$, i.e., 
\begin{equation}
m_{k}:=Y_{n,k+1},
\nonumber
\end{equation}
where $Y_{n1}<Y_{n2}<\dots<Y_{nn}$ denotes the ordered sample. 
Let $g=(g_{l})_{l=1}^{L}:\mbbr\to\mbbr^{L}$ be a measurable function. 
The objective here is to derive a central limit theorem for 
$(\sqrt{2k}(G_{k}-\rho),~\sqrt{2k}(m_{k}-\gam))$ where
\begin{align}
G_{k}&:=\frac{1}{2k}\sum_{j=1,\dots,2k+1 \atop j\ne k+1}g(Y_{nj}-m_{k}),
\nn\\
\rho&:=\int g(y-\gamma)f(y)dy,
\nonumber
\end{align}
and $\gam$ denotes the median of $\mcl(Y_{1})$. 
The form of $G_{k}$, where the term $g(Y_{n,k+1}-m_{k})\equiv g(0)$ is eliminated from 
the sum, enables us to deal with cases where $g(0)$ cannot be defined, 
such as $g_{l}(y)=\log|y|$ and $g_{l}(y)=|y|^{-\del}$ for some $\del>0$.

If $g$ happens to be sufficiently smooth and integrable and if $f(\gam)>0$, 
then Taylor's formula and the stochastic expansion for $\sqrt{2k}(m_{k}-\gam)$ 
(e.g., \cite[Example 5.24]{vdV98}) yield that
\begin{align}
&{}\left(\sqrt{2k}(G_{k}-\rho),~\sqrt{2k}(m_{k}-\gam)\right) \nn\\
&=\frac{1}{\sqrt{2k}}\sum_{j=1,\dots,2k+1 \atop j\ne k+1}
\left(
\begin{array}{cc}
\ds{g(Y_{j}-\gam)-\rho-\frac{\rho_{1}}{2f(\gam)}\sgn(Y_{j}-\gam)} \\
\ds{\frac{1}{2f(\gam)}\sgn(Y_{j}-\gam)}
\end{array}
\right)+o_{p}(1),
\nonumber
\end{align}
where $\rho_{1}:=\int\p g(y-\gam)f(y)dy$, for which we can readily apply the usual central limit theorem. 
However, it is not so clear what will occur when $g$ is not smooth enough. 
Theorem \ref{hm:smed_bthm} below clarifies that we do have a central limit theorem in that case as well, 
provided that $f$ satisfies a mild smoothness assumption.

\begin{thm}
Assume that
\begin{description}
\item[(a)] $f$ is of class $\mcc^{1}(\mbbr)$ and admits a unique median $\gamma$ for which $f(\gamma)>0$, 
and there exist a constant $\ep_{0}>0$ and a Lebesgue integrable function $\zeta$ such that 
\begin{equation}
\sup_{|a|\le\ep_{0}}\left|\p_{y}f(y+a)\right|\le\frac{\zeta(y)}{1+|g(y-\gam)|^{3}},\qquad y\in\mbbr;
\nonumber
\end{equation}

\item[(b)] $\ds{\int|g(y-\gamma)|^{3}f(y)dy<\infty}$.
\end{description}
Then, we have
\begin{equation}
\left(\sqrt{2k}(G_{k}-\rho),~\sqrt{2k}(m_{k}-\gamma)\right)\cil
N_{L+1}(0,\Sigma)
\label{hm:smed_bthm_eq01}
\end{equation}
as $k\to\infty$, where
$\ds{\Sig=\left(
\begin{array}{cc}
\Sig_{11} & \Sig_{12} \\
\text{{\rm sym.}} & \Sig_{22}
\end{array}
\right)
}$ is given by
\begin{align}
\Sig_{11}&:=\int\left\{g(y-\gam)-\rho\right\}^{\otimes 2}f(y)dy
+\frac{1}{4f(\gam)^{2}}\left(\int g(y-\gam)\p f(y)dy\right)^{\otimes 2}
\nn\\
&{}\qquad+
\frac{1}{f(\gam)}\int g(y-\gam)\p f(y)dy
\nn\\
&{}\qquad\qquad\cdot
\left(\int_{\gam}^{\infty}g(y-\gam)f(y)dy-\int_{-\infty}^{\gam}g(y-\gam)f(y)dy\right)^{\top},
\nn\\
\Sig_{12}&:=\frac{1}{2f(\gam)}\left(\int_{\gam}^{\infty}g(y-\gam)f(y)dy-\int_{-\infty}^{\gam}g(y-\gam)f(y)dy\right)
\nn\\
&{}\qquad
+\frac{1}{4f(\gam)^{2}}\int g(y-\gam)\p f(y)dy,
\nn\\
\Sig_{22}&:=\frac{1}{4f(\gam)^{2}}.
\nn
\end{align}
\label{hm:smed_bthm}
\end{thm}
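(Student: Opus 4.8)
The plan is to reduce the statement to an ordinary joint central limit theorem for a single triangular array of i.i.d.\ summands, by combining a Bahadur-type linearization of the sample median with a linearization of $\sqrt{2k}(G_k-\rho)$ in which the ``derivative'' is transferred from $g$ onto $f$; it is this transfer that lets the argument dispense with any smoothness of $g$. First I would recall the classical stochastic expansion of the sample median: under hypothesis~(a) one has, with $n=2k+1$,
\[
\sqrt{2k}\,(m_k-\gamma)=\frac{1}{2f(\gamma)}\cdot\frac{1}{\sqrt{2k}}\sum_{j=1}^{n}\sgn(Y_j-\gamma)+o_p(1)
\]
(cf.\ \cite[Example~5.24]{vdV98}); in particular $\sqrt{2k}(m_k-\gamma)=O_p(1)$, $m_k\cip\gamma$, and $\sqrt{2k}(m_k-\gamma)\cil N_1\!\left(0,(4f(\gamma)^2)^{-1}\right)$.

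The core of the proof is a conditioning argument. Writing $F$ for the distribution function of $\mcl(Y_1)$ and using the well-known conditional structure of order statistics, given $m_k=Y_{n,k+1}=t$ the remaining $2k$ observations form two conditionally independent blocks, distributed as the order statistics of $k$ i.i.d.\ draws from $f(\cdot)\mathbf{1}_{(-\infty,t)}/F(t)$, respectively $f(\cdot)\mathbf{1}_{(t,\infty)}/(1-F(t))$; as $G_k$ sees each block only through a sum of $g$-values, we may treat the blocks as i.i.d.\ for the purpose of computing moments. Hence $E[G_k\mid m_k=t]=\mu(t)$ with, after the change of variables $z=y-t$,
\[
\mu(t)=\frac12\left(\frac{1}{F(t)}\int_{-\infty}^{0}g(z)f(z+t)\,dz+\frac{1}{1-F(t)}\int_{0}^{\infty}g(z)f(z+t)\,dz\right),
\]
and $\mu(\gamma)=\rho$ since $F(\gamma)=\tfrac12$. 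Performing the substitution \emph{before} differentiating is the decisive device: $\mu$ is then manifestly of class $\mcc^1$ near $\gamma$, because differentiation under the integral sign is legitimate under the bound $|g(z)\,\p f(z+t)|\le \tfrac{|g(z)|}{1+|g(z)|^{3}}\,\zeta(z+\gamma)\le\zeta(z+\gamma)\in L^1(dz)$, valid for $|t-\gamma|\le\ep_0$ by hypothesis~(a), and the same domination gives continuity of $\mu'$. Evaluating at $t=\gamma$ yields
\[
\mu'(\gamma)=\int g(y-\gamma)\,\p f(y)\,dy+2f(\gamma)\left(\int_{\gamma}^{\infty}g(y-\gamma)f(y)\,dy-\int_{-\infty}^{\gamma}g(y-\gamma)f(y)\,dy\right),
\]
so that, since $\sqrt{2k}(m_k-\gamma)=O_p(1)$, we get $\sqrt{2k}\,(\mu(m_k)-\rho)=\mu'(\gamma)\,\sqrt{2k}(m_k-\gamma)+o_p(1)$.

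It then remains to treat $W_k:=\sqrt{2k}\,(G_k-\mu(m_k))$ jointly with $\sqrt{2k}(m_k-\gamma)$. Conditionally on $m_k=t$, $W_k$ is a normalized sum of $2k$ independent centered summands whose conditional second moments are finite and bounded uniformly for $|t-\gamma|\le\ep_0$; here hypothesis~(b) enters, together with the consequence $f(z+t)\le f(z+\gamma)+\zeta(z+\gamma)/(1+|g(z)|^3)$ of~(a). A Lindeberg argument, uniform over such $t$, shows that the conditional characteristic function of $W_k$ converges, uniformly in $t$ near $\gamma$, to $\exp(-\tfrac12\lambda^{\top}V\lambda)$, where $V$ is the limiting conditional covariance obtained by letting $t\to\gamma$ in the conditional-variance formula $\mathrm{Var}(G_k\mid m_k=t)=\tfrac1{4k}[\mathrm{Var}_-(t)+\mathrm{Var}_+(t)]$. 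Feeding this uniform conditional convergence into a characteristic-function computation, and using tightness of $\sqrt{2k}(m_k-\gamma)$ (so that $P(|m_k-\gamma|>k^{-1/4})\to0$), one obtains that $W_k$ and $\sqrt{2k}(m_k-\gamma)$ are asymptotically independent, with $W_k\cil N_L(0,V)$. Consequently
\[
\bigl(\sqrt{2k}(G_k-\rho),\ \sqrt{2k}(m_k-\gamma)\bigr)=\bigl(W_k+\mu'(\gamma)\sqrt{2k}(m_k-\gamma),\ \sqrt{2k}(m_k-\gamma)\bigr)+o_p(1)\cil N_{L+1}(0,\Sig),
\]
and a direct computation of $V$ and $\mu'(\gamma)$ — only elementary integrals of $g$ against $f$ over the half-lines $(-\infty,\gamma)$, $(\gamma,\infty)$, and the identity $F(\gamma)=\tfrac12$ — identifies the blocks of $\Sig$ with the stated $\Sig_{11},\Sig_{12},\Sig_{22}$.

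I expect the main obstacle to be the uniformity in the conditional central limit theorem for $W_k$: one must have the conditional law of $W_k$ given $m_k=t$ converging to its Gaussian limit uniformly for $t$ in a shrinking neighbourhood of $\gamma$, so that $W_k$ may be ``detached'' from $m_k$ in the limit and the dependence of $G_k$ on the data through $m_k$ contributes nothing beyond the single term $\mu'(\gamma)\sqrt{2k}(m_k-\gamma)$. This is exactly the step at which smoothness of $g$ is unavailable and one must lean instead on the conditional independence of the two halves of the sample and on the domination built into~(a)--(b). The remaining points — finiteness of the limiting (co)variances and the uniform moment bounds under the perturbed densities $f(\cdot+t)$ — are routine consequences of~(a) and~(b); the omission of the middle order statistic from $G_k$ causes no difficulty, since $\mu(t)$ is defined purely through integrals of $g$ over the two half-lines and never involves $g(0)$.
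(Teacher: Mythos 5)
Your proposal is correct and would prove the theorem, but it is organized quite differently from the paper's argument. The paper computes the characteristic function of the joint vector head-on: it writes the joint density of the order statistics, uses their Markov property to factor the conditional law of the two halves of the sample given the median into the product $A_k^{-}(m)A_k^{+}(m)$, collapses the nested integrals by integration by parts into $k$-th powers, rescales $s=\sqrt{2k}(m-\gamma)$, and extracts the limit by a third-order Taylor expansion of the complex exponential inside $B_{1,k}(s)B_{2,k}(s)$ followed by dominated convergence; the term $\sqrt{2k}\{c_{11,k}(s)+c_{21,k}(s)\}$ is precisely where the derivative is shifted from $g$ onto $f$ under hypothesis (a). Your route rests on the same structural fact --- conditional independence of the two blocks given $m_k$ --- but modularizes it: the conditional mean $\mu(t)=E(G_k\mid m_k=t)$ is isolated as a $\mcc^{1}$ function (differentiation under the integral licensed by the same domination in (a)), the median contributes only through $\mu'(\gamma)\sqrt{2k}(m_k-\gamma)$ via the delta method, and the residual $W_k$ is handled by a conditional Lyapunov CLT made uniform near $\gamma$ by the uniformly bounded conditional third moments that (a)--(b) provide. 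Your decomposition makes the covariance structure transparent from the outset, namely $\Sig_{11}=V+\mu'(\gamma)^{\otimes 2}/(4f(\gamma)^{2})$ and $\Sig_{12}=\mu'(\gamma)/(4f(\gamma)^{2})$ with $V$ the limiting conditional covariance, and your $\mu'(\gamma)$ reproduces the paper's $\Sig_{12}$ exactly and its $\Sig_{11}$ after symmetrizing the cross term (automatic for $L=1$); the price is that you must actually establish the uniform conditional CLT and handle the regular conditional distributions, whereas the paper avoids any conditional limit theorem by doing everything inside one dominated-convergence computation, at the cost of a longer bare-hands expansion. The one step you should spell out is the uniformity of the conditional CLT over $|t-\gamma|\le\delta_k$; it does follow from a Lyapunov bound of order $k^{-1/2}$ with constants uniform in $t$, so there is no gap, but as written it is the only place where the argument is not yet a complete proof.
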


Theorem \ref{hm:smed_bthm} allows the function $g=(g_{l})_{l\le L}$ to be non-differentiable. 
If $\mcl(Y_{1})$ admits a finite variance, $L=1$, and $g_{1}(y)=|y|$, then the claim of 
Theorem \ref{hm:smed_bthm} reduces to the result of Zeigler \cite{Zei50} 
since we have $\int |y-\gam|\p f(y)dy=0$.

\begin{proof}[Theorem \ref{hm:smed_bthm}]
Let $\vp_{k}$ denote the characteristic function 
of the random variable on the left-hand side of (\ref{hm:smed_bthm_eq01}), 
and fix any $u=(v^{\top},u_{L+1})=(v_{1},\dots,v_{L},u_{L+1})\in\mbbr^{L+1}$ in the sequel. 
Write
\begin{equation}
a_{k}(y)=a_{k}(y,v):=\exp\bigg(\frac{i}{\sqrt{2k}}(g(y)-\rho)[v]\bigg).
\nonumber
\end{equation}
Then we have
\begin{align}
& \vp_{k}(u)\nn\\
&=E\left\{\exp\left(i\sqrt{2k}
(G_{k}-\rho)[v]+iu_{L+1}\sqrt{2k}(m_{k}-\gamma)\right)\right\} \nonumber \\
&=E\left[\exp\left\{\frac{i}{\sqrt{2k}}\left(
\sum_{j=1,\dots,2k+1 \atop j\ne k+1}(g(Y_{nj}-m_{k})-\rho)
\right)[v]+iu_{L+1}\sqrt{2k}(m_{k}-\gamma)\right\}\right] \nonumber \\
&=E\bigg\{\bigg(\prod_{j=1}^{k}a_{k}(Y_{nj}-m_{k})\bigg)
\bigg(\prod_{j=k+1}^{2k+1}a_{k}(Y_{nj}-m_{k})\bigg)
\exp\left(iu_{L+1}\sqrt{2k}(m_{k}-\gamma)\right)\bigg\}.
\label{hm:smed_bthm_eq1}
\end{align}
We will show that $\lim_{k\to\infty}\vp_{k}(u)=\exp\{-(\Sig/2)[u,u]\}$.

\medskip

The joint distribution $\mcl(Y_{n1},\dots,Y_{nn})$ admits the density
\begin{equation}
(y_{j})_{j\le n}\mapsto(2k+1)!\prod_{j=1}^{2k+1}f(y_{j}),\qquad
y_{1}\le y_{2}\le\dots\le y_{2k+1},
\label{hm:smed_bthm_eq2}
\end{equation}
and moreover the variables $Y_{n1},\dots,Y_{nn}$ form a Markov chain, 
so that $(Y_{n1},\dots,Y_{nk})$ and $(Y_{n,k+2},\dots,Y_{nn})$ are independent conditional on $Y_{k+1}$; 
see \cite[Chapter 2]{DavNag03}. 
Substituting (\ref{hm:smed_bthm_eq2}) in (\ref{hm:smed_bthm_eq1}) and then changing the 
order of the integrations so that the integration with respect to $m_{k}$ is carried out last, 
we get
\begin{align}
\vp_{k}(u)&=\int(2k+1)!\bigg\{
\int_{-\infty}^{m}\int_{-\infty}^{y_{k}}\dots\int_{-\infty}^{y_{2}}
\bigg(\prod_{j=1}^{k}a_{k}(y_{j}-m)\bigg)f(y_{1})dy_{1}\dots f(y_{k})dy_{k}
\bigg\} \nonumber \\
&\qquad{}\cdot\bigg\{
\int_{m}^{\infty}\int_{y_{k+2}}^{\infty}\dots\int_{y_{2k}}^{\infty}
\bigg(\prod_{j=k+2}^{2k+1}a_{k}(y_{j}-m)\bigg)f(y_{k+2})dy_{k+2}\dots f(y_{2k+1})dy_{2k+1}\bigg\} \nonumber \\
& \qquad{}\cdot \exp\left\{iu_{L+1}\sqrt{2k}(m-\gamma)\right\}f(m)dm \nonumber \\
&=:\int(2k+1)! A^{-}_{k}(m)A^{+}_{k}(m)\exp\left\{iu_{L+1}\sqrt{2k}(m-\gamma)\right\}f(m)dm.
\label{hm:smed_bthm_eq3}
\end{align}
Put $\xi_{k}(y_{2})=\int_{-\infty}^{y_{2}}a_{k}(y_{1}-m)f(y_{1})dy_{1}$, 
then $\lim_{y\to-\infty}\xi_{k}(y)=0$ and the integration by parts for the Lebesgue-Stieltjes integral yields that 
$\int_{-\infty}^{y_{3}}\xi_{k}(y_{2})a_{k}(y_{2}-m)f(y_{2})dy_{2}=\int_{-\infty}^{y_{3}}\xi_{k}(y_{2})\xi_{k}(dy_{2})
=(1/2)\xi_{k}(y_{3})^{2}$. 
Repeating this inductively and also handling $A^{+}_{k}(m)$ in a similar manner, we get
\begin{align}
A_{k}^{-}(m)&=\frac{1}{k!}\left(\int_{-\infty}^{m}a_{k}(y-m)f(y)dy\right)^{k},
\nn\\
A_{k}^{+}(m)&=\frac{1}{k!}\left(\int_{m}^{\infty}a_{k}(y-m)f(y)dy\right)^{k}.
\nonumber
\end{align}
Substituting these two expressions in (\ref{hm:smed_bthm_eq3}) and then going through the change of variable 
$s=\sqrt{2k}(m-\gamma)$, we can continue as follows:
\begin{align}
\vp_{k}(u)&=\int\frac{(2k+1)!}{(2^{k})^{2}(k!)^{2}}
\bigg(2\int_{-\infty}^{m}a_{k}(y-m)f(y)dy\bigg)^{k}
\bigg(2\int_{m}^{\infty}a_{k}(y-m)f(y)dy\bigg)^{k} \nonumber \\
&{}\quad\cdot\exp\left\{iu_{L+1}\sqrt{2k}(m-\gamma)\right\}f(m)dm \nonumber \\
&=\int\frac{(2k+1)!}{\sqrt{2k}(2^{k})^{2}(k!)^{2}}
\bigg[\bigg\{2\int_{-\infty}^{\gamma+s/\sqrt{2k}}a_{k}
\bigg(y-\gamma-\frac{s}{\sqrt{2k}}\bigg)f(y)dy\bigg\}^{k} \nonumber \\
& \quad{}\cdot\bigg\{2\int_{\gamma+s/\sqrt{2k}}^{\infty}a_{k}
\bigg(y-\gamma-\frac{s}{\sqrt{2k}}\bigg)f(y)dy\bigg\}^{k}
\exp(iu_{L+1}s)f\bigg(\gamma+\frac{s}{\sqrt{2k}}\bigg)\bigg]ds \nonumber \\
&=:\int C_{k}\bigg[\{B_{1,k}(s)\}^{k}\{B_{2,k}(s)\}^{k}\exp(iu_{L+1}s)f\bigg(\gamma+\frac{s}{\sqrt{2k}}\bigg)\bigg]ds
\nn\\
&=:\int C_{k}\Xi_{k}(s)ds.
\label{hm:smed_bthm_eq4}
\end{align}
We know Stirling's formula $C_{k}\sim\sqrt{2/\pi}$, hence it suffices to look at the integral $\int\Xi_{k}(s)ds$.

Let $\del_{k}(s):=\int^{\gamma+s/\sqrt{2k}}_{\gam}f(y)dy$, 
with the standard convention $\int_{a}^{b}=-\int_{b}^{a}$ for $a>b$. Then
\begin{align}
|\Xi_{k}(s)|
&\le f\bigg(\gamma+\frac{s}{\sqrt{2k}}\bigg)
\left(2\int^{\gamma+s/\sqrt{2k}}_{-\infty}f(y)dy\right)^{k}
\left(2\int_{\gamma+s/\sqrt{2k}}^{\infty}f(y)dy\right)^{k}
\nn\\
&=f\bigg(\gamma+\frac{s}{\sqrt{2k}}\bigg)\left\{1-(2\del_{k}(s))^{2}\right\}^{k}
\label{hm:smed_bthm_rev+1}\\
&\le\|f\|_{\infty}
\left\{1-\frac{1}{k}\left(2\sqrt{k}\del_{k}(s)\right)^{2}\right\}^{k}.
\label{hm:smed_bthm_rev+2}
\end{align}
Pick a constant $\del_{0}>0$ such that $\inf_{y: |y-\gam|\le\del_{0}}f(y)\ge f(\gam)/\sqrt{2}>0$, 
and introduce the event $\mcs_{k}:=\{t\in\mbbr:~|t|/\sqrt{2k}\le\del_{0}\}$. 
Since $c_{0}:=\sup_{k}\sup_{s\in\mcs_{k}^{c}}\{1-(2\del_{k}(s))^{2}\}\in(0,1)$, 
\eqref{hm:smed_bthm_rev+1} gives
\begin{equation}
\int_{\mcs_{k}^{c}}|\Xi_{k}(s)|ds\le c_{0}^{k}\int f\bigg(\gamma+\frac{s}{\sqrt{2k}}\bigg)ds
=c_{0}^{k}\sqrt{2k}\to 0,
\nonumber
\end{equation}
hence $\int_{\mcs_{k}^{c}}\Xi_{k}(s)ds\to 0$. 
Moreover, we have $2\sqrt{k}\del_{k}(s)\ge f(\gam)|s|$ whenever $s\in\mcs_{k}$, 
which together with \eqref{hm:smed_bthm_rev+2} implies that 
$\sup_{k}|\Xi_{k}(s)|\mathbf{1}_{\mcs_{k}}(s)\lesssim\exp\{-f(\gam)^{2}|s|^{2}\}$. 
To apply the dominated convergence theorem for $\int_{\mcs_{k}}\Xi_{k}(s)ds$, 
it remains to look at the point-wise convergence of $\Xi_{k}(\cdot)$.

\medskip

Fix any $s\in\mbbr$ and let $k$ be large enough so that $s\in\mcs_{k}$. 
First we look at $B_{1,k}(s)$. Divide the domain of integration as
\begin{align}
B_{1,k}(s)&=2\int_{-\infty}^{\gamma}a_{k}\bigg(y-\gamma-\frac{s}{\sqrt{2k}}\bigg)f(y)dy
+2\int_{\gamma}^{\gamma+s/\sqrt{2k}}a_{k}\bigg(y-\gamma-\frac{s}{\sqrt{2k}}\bigg)f(y)dy \nonumber \\
&=:B_{11,k}(s)+B_{12,k}(s).
\label{hm:smed_bthm_eq4.5}
\end{align}
For brevity, we write $b(y)=(g(y)-\rho)[v]$, so that $a_{k}(y)=\exp\{(i/\sqrt{2k})b(y)\}$. 
Using the inequality
\begin{equation}
\bigg|e^{iv}-\sum_{j=0}^{M}\frac{(iv)^{j}}{j!}\bigg|\le\frac{|v|^{M+1}}{(M+1)!},\quad M\in\mbbn,
\nonumber
\end{equation}
and the identity $\int_{-\infty}^{\gamma}f(y)dy=1/2$, we have
\begin{align}
B_{11,k}(s)&=1+\frac{2i}{\sqrt{2k}}\int_{-\infty}^{\gamma}
b\bigg(y-\gamma-\frac{s}{\sqrt{2k}}\bigg)f(y)dy \nonumber \\
&{}\quad-\frac{1}{2k}\int_{-\infty}^{\gamma}
b\bigg(y-\gamma-\frac{s}{\sqrt{2k}}\bigg)^{2}f(y)dy
+\frac{1}{k^{3/2}}\int_{-\infty}^{\gam}r_{k}(y,s)f(y)dy
\nn\\
&=1+\frac{2i}{\sqrt{2k}}\int_{-\infty}^{\gamma}
b\bigg(y-\gamma-\frac{s}{\sqrt{2k}}\bigg)f(y)dy \nonumber \\
&{}\quad-\frac{1}{2k}\int_{-\infty}^{\gamma}
b\bigg(y-\gamma-\frac{s}{\sqrt{2k}}\bigg)^{2}f(y)dy+O(k^{-3/2}).
\label{hm:smed_bthm_eq5}
\end{align}
Here, the second equality follows from the estimate 
$|r_{k}(y,s)|\lesssim |b(y-\gamma-s/\sqrt{2k})|^{3}$ and the assumption (a):
\begin{align}
\int_{-\infty}^{\gam}|r_{k}(y,s)|f(y)dy
&\lesssim 1+\int|g(z-\gamma)-\rho|^{3}f\left(z+\frac{s}{\sqrt{2k}}\right)dz \nn\\
&\lesssim 1+\int|g(z-\gamma)-\rho|^{3}f(z)dz+\frac{s}{\sqrt{2k}}\int\zeta(z)dz
\nn\\
&<\infty.
\nonumber
\end{align}
For $B_{12,k}(s)$ we proceed in a similar manner to the above:
\begin{align}
B_{12,k}(s)
&=2\int_{\gam}^{\gam+s/\sqrt{2k}}f(y)dy+
\frac{2i}{\sqrt{2k}}\int_{\gam}^{\gam+s/\sqrt{2k}}b\bigg(y-\gamma-\frac{s}{\sqrt{2k}}\bigg)f(y)dy
\nn\\
&{}\qquad-\frac{1}{2k}\int_{\gam}^{\gam+s/\sqrt{2k}}b\bigg(y-\gamma-\frac{s}{\sqrt{2k}}\bigg)^{2}f(y)dy
+O(k^{-3/2})
\nn\\
&=2\int_{\gam}^{\gam+s/\sqrt{2k}}\!\!f(y)dy+
\frac{2i}{\sqrt{2k}}\int_{\gam}^{\gam+s/\sqrt{2k}}\!\!\!b\bigg(y-\gamma-\frac{s}{\sqrt{2k}}\bigg)f(y)dy
+O(k^{-3/2})
\nn\\
&=\frac{2}{\sqrt{2k}}\int_{0}^{s}f\bigg(\gamma+\frac{z}{\sqrt{2k}}\bigg)dz \nonumber \\
&\qquad
+\frac{i}{k}\int_{0}^{s}b\bigg(\frac{z-s}{\sqrt{2k}}\bigg)f\bigg(\gamma+\frac{z}{\sqrt{2k}}\bigg)dz
+O(k^{-3/2}). \label{hm:smed_bthm_eq6}
\end{align}
Combining (\ref{hm:smed_bthm_eq4.5}), (\ref{hm:smed_bthm_eq5}), and (\ref{hm:smed_bthm_eq6}) gives
\begin{equation}
B_{1,k}(s)=1+\frac{2}{\sqrt{2k}}c_{11,k}(s)-\frac{1}{2k}c_{12,k}(s)+O(k^{-3/2}),
\label{hm:smed_bthm_eq7}
\end{equation}
where
\begin{align}
c_{11,k}(s)&:=i\int_{-\infty}^{\gamma}b\bigg(y-\gamma-\frac{s}{\sqrt{2k}}\bigg)f(y)dy
+\int_{0}^{s}f\bigg(\gamma+\frac{z}{\sqrt{2k}}\bigg)dz, \nonumber \\
c_{12,k}(s)&:=\int_{-\infty}^{\gamma}b\bigg(y-\gamma-\frac{s}{\sqrt{2k}}\bigg)^{2}f(y)dy
-2i\int_{0}^{s}b\bigg(\frac{z-s}{\sqrt{2k}}
\bigg)f\bigg(\gamma+\frac{z}{\sqrt{2k}}\bigg)dz.
\nonumber
\end{align}

\medskip

We deal with $B_{2,k}(s)$ in the same way: 
dividing the domain of integration $\int_{\gamma+s/\sqrt{2k}}^{\infty}$ as 
$\int_{\gamma}^{\infty}-\int_{\gamma}^{\gamma+s/\sqrt{2k}}$, we get
\begin{equation}
B_{2,k}(s)=1+\frac{2}{\sqrt{2k}}c_{21,k}(s)-\frac{1}{2k}c_{22,k}(s)+O(k^{-3/2}),
\label{hm:smed_bthm_eq8}
\end{equation}
where
\begin{align}
c_{21,k}(s)&:=i\int_{\gamma}^{\infty}b\bigg(y-\gamma-\frac{s}{\sqrt{2k}}\bigg)f(y)dy
-\int_{0}^{s}f\bigg(\gamma+\frac{z}{\sqrt{2k}}\bigg)dz, \nonumber \\
c_{22,k}(s)&:=\int_{\gamma}^{\infty}b\bigg(y-\gamma-\frac{s}{\sqrt{2k}}\bigg)^{2}f(y)dy
+2i\int_{0}^{s}b\bigg(\frac{z-s}{\sqrt{2k}}
\bigg)f\bigg(\gamma+\frac{z}{\sqrt{2k}}\bigg)dz.
\nonumber
\end{align}
From (\ref{hm:smed_bthm_eq7}) and (\ref{hm:smed_bthm_eq8}), we arrive at
\begin{align}
B_{1,k}(s)B_{2,k}(s)&=1-\frac{1}{k}\bigg[
-\sqrt{2k}\{c_{11,k}(s)+c_{21,k}(s)\} \nonumber \\
&{}\quad+\frac{1}{2}\{c_{12,k}(s)-4c_{11,k}(s)c_{21,k}(s)+c_{22,k}(s)\}+O(k^{-1/2})
\bigg].
\label{hm:smed_bthm_eq9}
\end{align}
Let us obtain the limit of the $[\cdots]$ part in \eqref{hm:smed_bthm_eq9}. Direct computation gives
\begin{equation}
\sqrt{2k}\left\{c_{11,k}(s)+c_{21,k}(s)\right\}=is
\left\{\int g(y-\gam)\int_{0}^{1}\p f\left(y+\ep\frac{s}{\sqrt{2k}}\right)d\ep dy\right\}[v].
\nonumber
\end{equation}
Under the assumption (a) we can apply the dominated convergence theorem to conclude that
\begin{align}
\sqrt{2k}\left\{c_{11,k}(s)+c_{21,k}(s)\right\}
=is\left(\int g(y-\gam)\p f(y)dy\right)[v]+o(1).
\label{hm:smed_bthm_eq10}
\end{align}
Making use of a similar argument for the integrals 
(plugging in $\int_{0}^{s}f(\gamma+z/\sqrt{2k})dz=sf(\gam)+O(k^{-1/2})$, 
$\int_{\gam}^{\infty}b(y-\gamma-s/\sqrt{2k})^{l}f(y)dy
=\int_{\gam}^{\infty}b(y-\gamma-s/\sqrt{2k})^{l}f(y)dy+O(k^{-1/2})$, and so on), 
we can proceed as
\begin{align}
& \frac{1}{2}\left\{c_{12,k}(s)-4c_{11,k}(s)c_{21,k}(s)+c_{22,k}(s)\right\} \nonumber \\
&=\frac{1}{2}\bigg[\int b(y-\gamma)^{2}f(y)dy
+4\left(\int_{-\infty}^{\gam}b(y-\gamma)f(y)dy\right)\left(\int_{\gam}^{\infty}b(y-\gamma)f(y)dy\right)
\nn\\
&{}\qquad+4s^{2}f(\gam)^{2}
-4isf(\gam)\left(\int_{\gam}^{\infty}b(y-\gamma)f(y)dy
-\int_{-\infty}^{\gam}b(y-\gamma)f(y)dy\right)
\bigg]\nn\\
&{}\qquad+O(k^{-1/2}).
\nn\\
&=\frac{1}{2}M[v,v]+2\bigg\{s^{2}f(\gamma)^{2}
+(H_{-}H_{+}^{\top})[v,v]
-isf(\gamma)(H_{+}-H_{-})[v]\bigg\}+O(k^{-1/2}),
\label{hm:smed_bthm_eq11}
\end{align}
where we wrote 
$M=\int\{g(y-\gamma)-\rho\}^{\otimes 2}f(y)dy$, 
$H_{-}=\int_{-\infty}^{\gamma}\{g(y-\gamma)-\rho\}f(y)dy$, 
and $H_{+}=\int_{\gamma}^{\infty}\{g(y-\gamma)-\rho\}f(y)dy$. 

Piecing together (\ref{hm:smed_bthm_eq9}), (\ref{hm:smed_bthm_eq10}) and (\ref{hm:smed_bthm_eq11}) 
yields that for each $s$,
\begin{align}
& \{B_{1,k}(s)B_{2,k}(s)\}^{k} \nn\\
&=\exp\bigg\{-\frac{1}{2}M[v,v]-2s^{2}f(\gamma)^{2}-2(H_{-}H_{+}^{\top})[v,v] \nn\\
&{}\qquad
+2isf(\gamma)(H_{+}-H_{-})[v]+is\left(\int g(y-\gam)\p f(y)dy\right)[v]\bigg\}+o(1),
\label{hm:smed_bthm_eq12}
\end{align}
which specifies the point-wise limit of $\Xi_{k}(\cdot)$.

Now we can apply the dominated convergence theorem: 
substituting (\ref{hm:smed_bthm_eq12}) in (\ref{hm:smed_bthm_eq4}), 
eliminating the Gaussian-density factor integrating to $1$, and noting that $H_{+}+H_{-}=0$, 
we finally obtain
\begin{align}
\vp_{k}(u)&\to\int\sqrt{\frac{2}{\pi}}
\exp\bigg\{
-\frac{1}{2}M[v,v]-2(H_{-}H_{+}^{\top})[v,v]
\nonumber \\
& \qquad{}-2s^{2}f(\gamma)^{2}+2isf(\gamma)(H_{+}-H_{-})[v]
\nn\\
&{}\qquad+is\left(\int g(y-\gam)\p f(y)dy,~1\right)[u]\bigg\}f(\gamma)ds \nonumber \\
&=\exp\bigg\{-\frac{1}{2}\bigg(\Sig_{11}[v,v]+2\Sig_{12}[v,u_{L+1}]+\Sig_{22}u_{L+1}^{2}\bigg)\bigg\}
\nonumber \\
&=\exp\bigg(-\frac{1}{2}\Sig[u,u]\bigg).
\nonumber
\end{align}
The proof is complete.
\qed\end{proof}


\subsubsection{Rate-efficient sample median}

Let us return to our model. We will keep to set $n=2k+1$ in what follows. 
Denote by $Y_{nj}(\beta,\gamma)$ 
the order statistics of the $S_{\beta}(\sig)$-i.i.d. array $\{h_{n}^{-1/\beta}(\dd_{j}X-\gamma h_{n})\}_{j=1}^{n}$: 
$Y_{n1}(\beta,\gamma)<Y_{n2}(\beta,\gamma)<\dots<Y_{nn}(\beta,\gamma)$ a.s. 
Let $m_{n}$ denote the sample median of $(\dd_{j}X)_{j=1}^{n}$ and
\begin{equation}
\hat{\gamma}_{n}:=\frac{1}{h_{n}}m_{n}.
\label{hm:sslp_smed}
\end{equation}
Observe that the central limit theorem \eqref{hm:smed_bthm_eq01}, 
or the standard asymptotic theory for the least absolute deviation estimator, gives
\begin{align}
\sqrt{n}h_{n}^{1-1/\beta}(\hat{\gamma}_{n}-\gamma)
&=\sqrt{n}h_{n}^{-1/\beta}(m_{n}-\gamma h_{n}) \nonumber \\
&=\sqrt{n}\{Y_{n,k+1}(\beta,\gamma)-0\} \nonumber \\
&=\sqrt{n}\left\{\left(\textrm{Sample median of }
\{h_{n}^{-1/\beta}(\dd_{j}X-\gamma h_{n})\}_{j\le n}\right)-0\right\} \nonumber \\
&\cil N_{1}(0,\{2\phi_{\beta}(0;\sig)\}^{-2}) \nn\\
&=N_{1}\bigg(0,\bigg(\frac{\sig\pi}{2\Gamma(1+1/\beta)}\bigg)^{2}\bigg). \nonumber
\end{align}
This means that the pretty simple statistic $\hat{\gam}_{n}$ 
serves as a rate-optimal and asymptotically normally distributed estimator of $\gamma$. 
We note that the unbiasedness of $\hat{\gamma}_{n}$ follows from the argument \cite[p.241]{Zol86}.

Figure \ref{hm:sslp_fig1} shows the asymptotic variance of $\sqrt{n}h_{n}^{1-1/\beta}(\hat{\gamma}_{n}-\gamma)$. 
It is expected that the estimator $\hat{\gam}_{n}$ shows good performance especially for small $\beta\in(0,1)$, 
since the rate of convergence of $\hat{\gamma}_{n}$ is then faster than $\sqrt{n}$ 
with the asymptotic variance being quite small; for $\sig>0$ fixed, the function 
$\beta\mapsto\{2\phi_{\beta}(0;\sig)\}^{-2}$ decreases to zero as $\beta\to 0$. 

\begin{figure}[htb!]
\centering
\includegraphics[width=10cm]{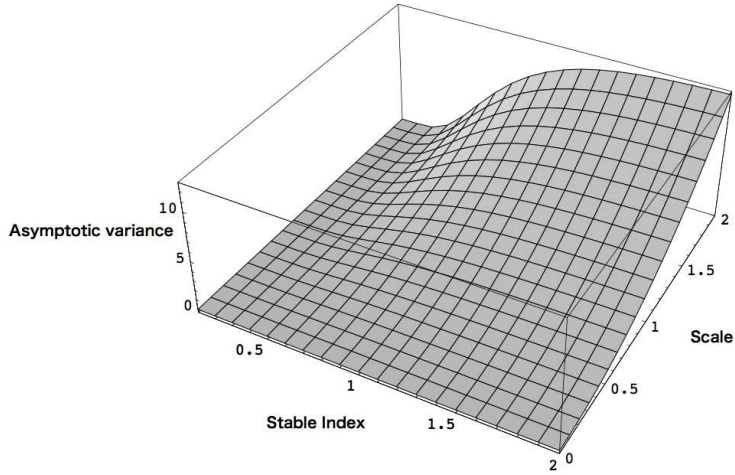}
\caption{
A plot of the asymptotic variance of $\sqrt{n}h_{n}^{1-1/\beta}(\hat{\gamma}_{n}-\gamma)$ 
as a function of $(\beta,\sig)$ on $(0,2)\times(0,2)$.}
\label{hm:sslp_fig1}
\end{figure}

Unlike with the Gaussian case we may consider a bounded-domain asymptotics (i.e. $\limsup_{n}T_{n}\lesssim 1$) 
for estimating $\gamma$, while 
the optimal rate $\sqrt{n}h_{n}^{1-1/\beta}$ may become arbitrarily slow as $\beta$ gets close to $2$; 
we have $\sqrt{n}h_{n}^{1-1/\beta}=T^{1-1/\beta}n^{1/\beta-1/2}$ if $T_{n}\equiv T$. 
This is in accordance with the fact that we need $T_{n}\to\infty$ for consistent estimation of 
the drift in the Gaussian case ($\beta=2$). 


\subsubsection{Preliminary formulation for estimating $\beta$ and $\sig$}

By the definition, for each $j\le n$
\begin{align}
& Y_{nj}(\beta,\gamma)-\left(\textrm{Sample median of }
\{h_{n}^{-1/\beta}(\dd_{j}X-\gamma h_{n})\}_{j=1}^{n}\right) \nonumber \\
&=Y_{nj}(\beta,\gamma)-Y_{n,k+1}(\beta,\gamma)
=Y_{nj}(\beta,\hat{\gamma}_{n}). \nonumber
\end{align}
Let $g_{l}$ be measurable functions symmetric around the origin such that
\begin{equation}
\int|g_{l}(y)|^{3}\phi_{\beta}(y;\sig)dy<\infty,\quad l=1,2.
\nonumber
\end{equation}
A direct application of Theorem \ref{hm:smed_bthm} with $L=2$ yields that
\begin{align}
S_{n}(\theta)&:=\sqrt{n}\left(
\begin{array}{c}
G^{(1)}_{n}(\theta)-\rho_{1}(\theta) \\
G^{(2)}_{n}(\theta)-\rho_{2}(\theta) \\
Y_{n,k+1}(\beta,\gamma)-0
\end{array}
\right)
\nn\\
&=\left(
\begin{array}{c}
\sqrt{n}\{G^{(1)}_{n}(\theta)-\rho_{1}(\theta)\} \\
\sqrt{n}\{G^{(2)}_{n}(\theta)-\rho_{2}(\theta)\} \\
\sqrt{n}h_{n}^{1-1/\beta}(\hat{\gamma}_{n}-\gamma)
\end{array}
\right)\cil N_{3}(0,\Sigma(\theta)),\quad n\to\infty,
\label{hm:sslp_clt1}
\end{align}
where
\begin{align}
\rho_{l}(\theta)&:=\int g_{l}(y)\phi_{\beta}(y;\sig)dy,\nn\\
G^{(l)}_{n}(\theta)&:=\frac{1}{2k}\bigg\{\sum_{j=1}^{k}
g_{l}(Y_{nj}(\beta,\hat{\gamma}_{n}))+\sum_{j=k+2}^{2k+1}g_{l}(Y_{nj}(\beta,\hat{\gamma}_{n}))\bigg\},
\nonumber
\end{align}
and the asymptotic covariance matrix $\Sig=[\Sig^{(pq)}(\theta)]_{p,q=1}^{3}$ is given by
\begin{equation}
\Sig^{(pq)}(\theta)=\left\{
\begin{array}{ll}
\displaystyle{\int\{g_{p}(y)-\rho_{p}(\theta)\}\{g_{q}(y)-\rho_{q}(\theta)\}\phi_{\beta}(y;\sig)dy}, 
& 1\le p,q\le 2, \\[3mm]
\displaystyle{0}, & p=3, 1\le q\le 2, \\[3mm]
\displaystyle{\{2\phi_{\beta}(0;\sig)\}^{-2}}, & p=q=3.
\end{array}
\right.
\label{hm:sslp_clt2}
\end{equation}
For convenience, we introduce the notation 
$\Sig^{\ast}(\theta)=[\Sig^{\ast}_{kl}(\theta)]_{k,l=1}^{2}\in\mbbr^{2}\otimes\mbbr^{2}$ 
for the second leading principal submatrix of $\Sig(\theta)$:
\begin{equation}
\Sig^{\ast}_{kl}(\theta):=\Sig^{(kl)}(\theta),\quad k,l=1,2.
\label{hm:sslp_clt4}
\end{equation}
Having (\ref{hm:sslp_clt1}) in hand, we can apply the delta method. 
Assume that the function $F(\theta):=(\rho_{1}(\theta),\rho_{2}(\theta),\gam)$ 
has an inverse $F^{-1}$ at $\theta=(\beta,\sig,0)$, 
and let $(\hat{\beta}_{n},\hat{\sig}_{n})$ denote a solution of the estimating equation
\begin{equation}
\left(
\begin{array}{c}
G^{(1)}_{n}(\theta)-\rho_{1}(\theta) \\
G^{(2)}_{n}(\theta)-\rho_{2}(\theta) 
\end{array}
\right)=\left(
\begin{array}{c}
0 \\
0
\end{array}
\right),
\label{hm:sslp_Geq}
\end{equation}
which uniquely exists with $P_{\theta}$-probability tending to $1$; 
note that \eqref{hm:sslp_Geq} is free of the unknown quantity $\gam$ since we have replaced it by $\hat{\gamma}_{n}$. 
Let
\begin{equation}
K(\theta):=\p_{\theta}F(\beta,\sig,0)=\left(
\begin{array}{ccc}
\p_{\beta}\rho_{1}(\theta) & \p_{\sig}\rho_{1}(\theta) & 0 \\
\p_{\beta}\rho_{2}(\theta) & \p_{\sig}\rho_{2}(\theta) & 0 \\
0 & 0 & 1
\end{array}
\right)=:\left(
\begin{array}{cc}
K^{\ast}(\theta) & 
\left.
\begin{array}{c}
0 \\
0
\end{array}
\right. \\
\left.
\begin{array}{cc}
0 & 0
\end{array}
\right. & 1
\end{array}
\right).
\label{hm:sslp_K_def}
\end{equation}
Then we obtain the joint estimator 
$\hat{\theta}_{n}=(\hat{\beta}_{n},\hat{\sig}_{n},\hat{\gamma}_{n})$ such that
\begin{align}
\left(
\begin{array}{c}
\sqrt{n}(\hat{\beta}_{n}-\beta) \\
\sqrt{n}(\hat{\sig}_{n}-\sig) \\
\sqrt{n}h_{n}^{1-1/\beta}(\hat{\gamma}_{n}-\gamma)
\end{array}
\right)&=\sqrt{n}\left\{
F^{-1}\left(
\begin{array}{c}
G^{(1)}_{n}(\theta) \\
G^{(2)}_{n}(\theta) \\
Y_{n,k+1}(\beta,\gamma)
\end{array}
\right)-F^{-1}\left(
\begin{array}{c}
\rho_{1}(\theta) \\
\rho_{2}(\theta) \\
0
\end{array}
\right)\right\} \nonumber \\
&=K(\theta)^{-1}S_{n}(\theta)+o_{p}(1) \nonumber \\
&\cil N_{3}\big(0,V(\theta)\big),
\label{hm:sslp_clt3}
\end{align}
where, in view of (\ref{hm:sslp_clt4}) and (\ref{hm:sslp_K_def}), the asymptotic variance 
$V(\theta)=[V_{kl}(\theta)]_{k,l=1}^{3}:=K(\theta)^{-1}\Sig(\theta)K(\theta)^{-1 \top}$ 
is block diagonal:
\begin{equation}
V(\theta)
=\diag\left(
K^{\ast}(\theta)^{-1}\Sig^{\ast}(\theta)K^{\ast}(\theta)^{-1 \top},
~\Sig^{(33)}(\theta)\right).
\label{hm:sslp_V}
\end{equation}
Here are some remarks on \eqref{hm:sslp_clt3}.
\begin{itemize}
\item The estimator $\hat{\theta}_{n}$ so constructed is rate-efficient for $(\sig,\gamma)$, 
while not so for $\beta$ (recall Theorem \ref{hm:sslp_th1}). 
\item The estimations of $(\beta,\sig)$ and $\gamma$ are asymptotically independent. 
\item Thanks to the $\sqrt{n}$-consistency of $\hat{\beta}_{n}$ and \eqref{hm:addeqnum-1}, 
we have $h_{n}^{1/\beta-1/\hat{\beta}_{n}}\cip 1$, 
so that $\sqrt{n}h_{n}^{1-1/\hat{\beta}_{n}}=\sqrt{n}h_{n}^{1-1/\beta}(1+o_{p}(1))$. 
Then we can readily get the $100\al\%$-confidence interval about $\gamma$:
\begin{equation}
\hat{\gam}_{n}-\frac{z_{\al/2}}{\sqrt{n}h_{n}^{1-1/\hat{\beta}_{n}}}
\bigg(\frac{\hat{\sig}_{n}\pi}{2\Gamma(1+1/\hat{\beta}_{n})}\bigg)
<\gam<\hat{\gam}_{n}+\frac{z_{\al/2}}{\sqrt{n}h_{n}^{1-1/\hat{\beta}_{n}}}
\bigg(\frac{\hat{\sig}_{n}\pi}{2\Gamma(1+1/\hat{\beta}_{n})}\bigg),
\nonumber
\end{equation}
with $z_{\al/2}$ denoting the upper $100\al/2$th percentile of $N_{1}(0,1)$.
\end{itemize}
In order to make use of \eqref{hm:sslp_clt3}, 
we are left to computing $[V_{kl}(\theta)]_{k,l=1}^{2}$ for each specific choices of $g_{l}$, $l=1,2$. 


\subsubsection{Logarithmic moments}\label{hm:sslp_logMEsec}

Set
\begin{equation}
g_{l}(y)=(\log|y|)^{l},\quad l=1,2.
\nonumber
\end{equation}
The distribution $S_{\beta}(\sig)$ admit finite 
logarithmic moments of any positive order, the first two being given by
\begin{align}
\int(\log|y|)\phi_{\beta}(y;\sig)dy&=\mathfrak{C}\bigg(\frac{1}{\beta}-1\bigg)+\log\sig, \nonumber \\
\int(\log|y|)^{2}\phi_{\beta}(y;\sig)dy&=\frac{\pi^{2}}{6}\bigg(\frac{1}{\beta^{2}}+\frac{1}{2}\bigg)
+\bigg\{\mathfrak{C}\bigg(\frac{1}{\beta}-1\bigg)+\log\sig\bigg\}^{2}, \nonumber
\end{align}
where $\mathfrak{C}$ ($\fallingdotseq 0.5772$) denotes the Euler constant; see \cite[p.69]{NikSha95}. 
Write $x_{n1}<\cdots<x_{nn}$ for the ordered $\dd_{1}X,\dots,\dd_{n}X$. 
Solving the corresponding (\ref{hm:sslp_Geq}) gives the explicit solutions
\begin{align}
\hat{\beta}_{\log,n}&:=
\bigg\{\frac{6}{2k\pi^{2}}\sum_{j\le n, j\ne k+1}\bigg(\log\big|h_{n}^{-1/\beta}
(x_{nj}-\hat{\gamma}_{n}h_{n})\big| \nonumber \\
& \qquad {}-\frac{1}{2k}\sum_{j\le n, j\ne k+1}\log\big|h_{n}^{-1/\beta}
(x_{nj}-\hat{\gamma}_{n}h_{n})\big|\bigg)^{2}-\frac{1}{2}\bigg\}^{-1/2} \nonumber \\
&=\bigg\{\frac{6}{2k\pi^{2}}\sum_{j\le n, j\ne k+1}
\bigg(\log|x_{nj}-\hat{\gamma}_{n}h_{n}| \nonumber \\
& \qquad {}
-\frac{1}{2k}\sum_{j\le n, j\ne k+1}\log|x_{nj}-\hat{\gamma}_{n}h_{n}|\bigg)^{2}-\frac{1}{2}\bigg\}^{-1/2}, 
\label{hm:sslp_logal} \\
\hat{\sig}_{\log,n}&:=\exp\bigg\{\frac{1}{2k}\sum_{j\le n, j\ne k+1}
\log\Big|h_{n}^{-1/\hat{\beta}_{\log,n}}
(x_{nj}-\hat{\gamma}_{n}h_{n})\Big|-\mathfrak{C}
\bigg(\frac{1}{\hat{\beta}_{\log,n}}-1\bigg)\bigg\} \nonumber \\
&=\exp\bigg\{\frac{1}{\hat{\beta}_{\log,n}}\log(1/h_{n})
+\frac{1}{2k}\sum_{j\le n, j\ne k+1}\log|x_{nj}-\hat{\gamma}_{n}h_{n}|
-\mathfrak{C}\bigg(\frac{1}{\hat{\beta}_{\log,n}}-1\bigg)\bigg\}.
\label{hm:sslp_logsig}
\end{align}
Observe that 
the unknown factor ``$h_{n}^{-1/\beta}$'' involved in $Y_{nj}(\beta,\gamma)$ were cancelled out 
in the computation of $\hat{\beta}_{\log,n}$, 
making the quantities (\ref{hm:sslp_logal}) and (\ref{hm:sslp_logsig}) usable.

\medskip

Let us compute the corresponding $[V_{kl}(\theta)]_{k,l=1}^{2}$. 
We denote by $\nu_{1}$ and $\nu_{k}$ ($k\ge 2$) the mean and $k$th central moments of 
$\log|Y|$ with $\mcl(Y)=S_{\beta}(\sig)$, respectively. Then,
\begin{align}
& \nu_{1}=\mathfrak{C}\bigg(\frac{1}{\beta}-1\bigg)+\log\sig, \quad 
\nu_{2}=\frac{\pi^{2}}{6}\bigg(\frac{1}{\beta^{2}}+\frac{1}{2}\bigg), \nn\\
& \nu_{3}=2\zeta(3)(\beta^{-3}-1), \quad 
\nu_{4}=\pi^{4}\bigg(\frac{3}{20}\beta^{-4}+\frac{1}{12}\beta^{-2}+\frac{19}{240}\bigg),
\nonumber
\end{align}
where $\zeta(\cdot)$ denotes Riemann's zeta function; 
$\zeta(3)\approx 1.202057$. From (\ref{hm:sslp_clt2}), we get 
$\Sig^{(11)}(\theta)=\nu_{2}$, $\Sig^{(12)}(\theta)=\nu_{3}+2\nu_{1}\nu_{2}$, 
and $\Sig^{(22)}(\theta)=\nu_{4}+4\nu_{1}^{2}\nu_{2}+4\nu_{1}\nu_{3}-\nu_{2}^{2}$. 
Further, we note that $\mathrm{det}\Sig(\theta)=\Sig_{33}(\theta)(\nu_{4}\nu_{2}-\nu_{3}^{2}-\nu_{2}^{3})>0$, 
and that 
\begin{equation}
\mathrm{det}K(\theta)=\mathrm{det}K^{\ast}(\theta)=\mathrm{det}
\left(
\begin{array}{cc}
-\mathfrak{C}\beta^{-2} & \sig^{-1} \\
-\pi^{2}\beta^{-3}/3-2\mathfrak{C}\beta^{-2}\nu_{1} & 2\sig^{-1}\nu_{1}
\end{array}
\right)=\frac{\pi^{2}}{3\beta^{3}\sig}>0.
\nonumber
\end{equation}
Therefore $V(\theta)$ is positive definite. 
After some computations we obtain the explicit expressions 
for the matrix $V(\theta)=:V^{\log}(\theta)=[V^{\log}_{kl}(\theta)]_{k,l=1}^{3}$:
\begin{align}
V_{11}^{\log}(\theta)&=\frac{11}{10}\beta^{2}+\frac{1}{2}\beta^{4}+\frac{13}{20}\beta^{6},
\nn\\
V_{12}^{\log}(\theta)&=\frac{\sig}{\pi^{4}}\left\{
9\mathfrak{C}\beta^{4}(\nu_{4}-\nu_{2}^{2})-3\pi^{2}\beta^{3}\nu_{3}\right\},
\nn\\
V_{22}^{\log}(\theta)&=\frac{\sig^{2}}{\pi^{4}}\left\{
9\mathfrak{C}^{2}\beta^{2}(\nu_{4}-\nu_{2}^{2})+\pi^{4}\nu_{2}-6\mathfrak{C}\pi^{2}\beta\nu_{3}\right\},
\nn\\
V_{13}^{\log}(\theta)&=V_{23}^{\log}(\theta)=0, \nn\\
V_{33}^{\log}(\theta)&=\left(\frac{\sig\pi}{2\Gam(1+1/\beta)}\right)^{2}.
\nn
\end{align}
Finally using the continuity of $\theta\mapsto V^{\log}(\theta)$, we arrive at the following.

\begin{thm} Fix any $\theta\in\Theta$ and define
\begin{equation}
\hat{\theta}_{\log,n}=(\hat{\beta}_{\log,n},\hat{\sig}_{\log,n},\hat{\gamma}_{n})
\label{hm:sslp_logME}
\end{equation}
by (\ref{hm:sslp_smed}), (\ref{hm:sslp_logal}) and (\ref{hm:sslp_logsig}). Then,
\begin{equation}
V^{\log}(\hat{\theta}_{\log,n})^{-1/2}
\diag\big(\sqrt{n},\sqrt{n},\sqrt{n}h_{n}^{1-1/\hat{\beta}_{\log,n}}\big)
(\hat{\theta}_{\log,n}-\theta)\cil N_{3}(0,I_{3}),
\label{hm:sslp_wclogME}
\end{equation}
where $V^{\log}(\theta)$ is positive-definite.
\label{hm:sslp_thm0}
\end{thm}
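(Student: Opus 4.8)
The plan is to specialize the general $M$-estimation construction \eqref{hm:sslp_clt1}--\eqref{hm:sslp_clt3} to the moment functions $g_{l}(y)=(\log|y|)^{l}$, $l=1,2$, and then to make the limiting covariance explicit; recall that throughout this section $n=2k+1$. I would carry this out in the following steps.

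\textbf{Applying the median-adjusted central limit theorem.} First I would verify that Theorem \ref{hm:smed_bthm} applies with $f=\phi_{\beta}(\cdot\,;\sig)$ and $g=(g_{1},g_{2})$. Since $\phi_{\beta}(\cdot\,;\sig)$ is even, everywhere positive and of class $C^{\infty}(\mbbr)$, its unique median is $\gam=0$ with $f(0)=\phi_{\beta}(0;\sig)>0$. The tail asymptotics \eqref{hm:sslp_asymp}, which give $|\p^{k}\phi_{\beta}(y)|\sim C_{k}|y|^{-\beta-1-k}$ as $|y|\to\infty$, show that $S_{\beta}(\sig)$ has finite logarithmic moments of every positive order --- so hypothesis (b) holds --- and, together with the boundedness of $\p\phi_{\beta}$ near the origin and the fact that $1/(1+|g(y)|^{3})\to 0$ there only polylogarithmically, that the domination hypothesis (a) holds: at infinity one has $|\p\phi_{\beta}(y+a)|\,(1+|g(y)|^{3})\lesssim|y|^{-\beta-2}(\log|y|)^{6}$, which is Lebesgue integrable. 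Because $g_{l}$ and $\phi_{\beta}$ are even, $\int g_{l}(y)\p\phi_{\beta}(y;\sig)\,dy=0$ and $\int_{0}^{\infty}g_{l}\phi_{\beta}=\int_{-\infty}^{0}g_{l}\phi_{\beta}$, so the covariance matrix delivered by Theorem \ref{hm:smed_bthm} collapses to the block-diagonal form \eqref{hm:sslp_clt2}; transferring this back through the scaling identity \eqref{hm:ex_st3} and the definition \eqref{hm:sslp_smed} of $\hat{\gamma}_{n}$ yields the joint convergence \eqref{hm:sslp_clt1}.

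\textbf{Solving the estimating equation and inverting $F$.} Next I would note that the estimating equation \eqref{hm:sslp_Geq} is explicitly solvable: the second coordinate involves $\beta$ only through the strictly monotone map $\beta\mapsto\nu_{2}=\tfrac{\pi^{2}}{6}(\beta^{-2}+\tfrac12)$, so it has a root as soon as the sample second central logarithmic moment exceeds $\pi^{2}/12$, which occurs with $P_{\theta}$-probability tending to one since the true $\nu_{2}\ge\pi^{2}/8$; the first coordinate then determines $\sig$, the unknown factor $h_{n}^{-1/\beta}$ cancelling out. This gives the closed forms \eqref{hm:sslp_logal}--\eqref{hm:sslp_logsig} and shows that $F(\theta)=(\rho_{1}(\theta),\rho_{2}(\theta),\gam)$ is invertible near $(\beta,\sig,0)$, with Jacobian $K(\theta)$ of the block shape \eqref{hm:sslp_K_def}; a direct computation from $\rho_{1}(\theta)=\mfC(\beta^{-1}-1)+\log\sig$ and $\rho_{2}(\theta)=\nu_{2}+\rho_{1}(\theta)^{2}$ gives $\det K^{\ast}(\theta)=\pi^{2}/(3\beta^{3}\sig)\neq 0$. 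Consistency of $\hat{\theta}_{\log,n}$ follows from the law of large numbers \eqref{hm:sslp_me+1} (with $\gam$ replaced by $\hat{\gamma}_{n}$) and continuity of $F^{-1}$.

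\textbf{Delta method, explicit covariance, and studentization.} Applying the delta method to \eqref{hm:sslp_clt1} through $F^{-1}$ produces \eqref{hm:sslp_clt3} with $V(\theta)=K(\theta)^{-1}\Sig(\theta)K(\theta)^{-1\top}$ of the block-diagonal form \eqref{hm:sslp_V}; its entries are then computed from the first four (central) logarithmic moments $\nu_{1},\dots,\nu_{4}$ of $S_{\beta}(\sig)$, yielding the displayed $V^{\log}(\theta)$. Positive definiteness is clear: $\det\Sig(\theta)=\Sig^{(33)}(\theta)\det\Sig^{\ast}(\theta)>0$, since $1,\log|Y|,(\log|Y|)^{2}$ are linearly independent in $L^{2}$ (the law of $Y$ has a density) and $\Sig^{(33)}(\theta)=\{2\phi_{\beta}(0;\sig)\}^{-2}>0$, and $\det K(\theta)\neq 0$. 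Finally, using $\hat{\theta}_{\log,n}\cip\theta$, the continuity and positive-definiteness of $\theta\mapsto V^{\log}(\theta)$, and $h_{n}^{1/\beta-1/\hat{\beta}_{\log,n}}\cip 1$ (from the $\sqrt{n}$-consistency of $\hat{\beta}_{\log,n}$ and \eqref{hm:addeqnum-1}), Slutsky's theorem upgrades \eqref{hm:sslp_clt3} to \eqref{hm:sslp_wclogME}. I expect the main obstacle to be the verification of the domination hypothesis (a) of Theorem \ref{hm:smed_bthm} for the \emph{unbounded} functions $g_{l}(y)=(\log|y|)^{l}$, which requires careful control of $\p\phi_{\beta}$ both at the origin and in the tails via \eqref{hm:sslp_asymp}; the remaining reduction of the abstract covariance of Theorem \ref{hm:smed_bthm} to \eqref{hm:sslp_clt2} and the explicit evaluation of $V^{\log}(\theta)$ are routine though somewhat lengthy.
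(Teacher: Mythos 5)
Your proposal is correct and follows essentially the same route as the paper: the paper proves this theorem by exactly the chain you describe — Theorem \ref{hm:smed_bthm} applied with $g_{l}(y)=(\log|y|)^{l}$ to get \eqref{hm:sslp_clt1} with the block-diagonal covariance \eqref{hm:sslp_clt2}, the delta method through $F^{-1}$ giving \eqref{hm:sslp_clt3}--\eqref{hm:sslp_V}, the explicit central logarithmic moments $\nu_{1},\dots,\nu_{4}$, the determinant computations $\det\Sig(\theta)=\Sig_{33}(\theta)(\nu_{4}\nu_{2}-\nu_{3}^{2}-\nu_{2}^{3})>0$ and $\det K^{\ast}(\theta)=\pi^{2}/(3\beta^{3}\sig)$, and finally continuity of $V^{\log}$ plus Slutsky. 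Your explicit verification of the domination hypothesis (a) of Theorem \ref{hm:smed_bthm} via \eqref{hm:sslp_asymp} is a detail the paper leaves implicit, and your linear-independence argument for $\nu_{4}\nu_{2}-\nu_{3}^{2}-\nu_{2}^{3}>0$ is a clean way to justify the inequality the paper simply asserts.
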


\begin{rem}{\rm 
Sometimes it would be more convenient to take the logarithm in estimating the positive quantity $\sig$ 
for approximate normality of $\hat{\sig}_{\log,n}$ in moderate sample size:
\begin{align}
&
\left\{\sqrt{n}\left(\hat{\beta}_{\log,n}-\beta\right),~
\sqrt{n}\left(\log\hat{\sig}_{\log,n}-\log\sig\right),~
\sqrt{n}h_{n}^{1-1/\hat{\beta}_{\log,n}}\left(\hat{\gamma}_{n}-\gamma\right)
\right\}
\nn\\
& \qquad{} \cil N_{3}\left(0,\left(
\begin{array}{ccc}
V^{\log}_{11}(\theta) & \sig^{-1}V^{\log}_{12}(\theta) & 0 \\
\sig^{-1}V^{\log}_{12}(\theta) & \sig^{-2}V^{\log}_{22}(\theta) & 0 \\
0 & 0 & V^{\log}_{33}(\theta).
\end{array}
\right)\right).
\nn
\end{align}
The second leading principal submatrix of the asymptotic covariance matrix is free of $\sig$, 
hence a function only of $\beta$. 
We also note that the variance-stabilizing transform for $\hat{\beta}_{\log,n}$ is available: 
we have $\sqrt{n}\{\Psi(\hat{\beta}_{\log,n})-\Psi(\beta)\}\cil N_{1}(0,1)$ for
\begin{equation}
\Psi(x):=\sqrt{\frac{5}{22}}\left\{
2\log x-\log\left(22+5x^{2}+\sqrt{22(22+10x^{2}+13x^{4})}\right)
\right\}.
\nonumber
\end{equation}
Moreover, since $\mcl(x_{nj}-\hat{\gamma}_{n}h_{n})
=\mcl\{(x_{nj}-\gam h_{n})-(x_{n,k+1}-\gam h_{n})\}=S_{\beta}(2h_{n}^{1/\beta}\sig)$ 
for $j\ne k+1$, we see that
\begin{align}
\hat{\beta}_{\log,n}^{-2}&=
\frac{6}{(2k-1)\pi^{2}}\sum_{j\le n, j\ne k+1}\bigg(\log|x_{nj}-\hat{\gamma}_{n}h_{n}|
\nn\\
&{}\qquad
-\frac{1}{2k}\sum_{j\le n, j\ne k+1}\log|x_{nj}-\hat{\gamma}_{n}h_{n}|\bigg)^{2}-\frac{1}{2},
\nonumber
\end{align}
which satisfies that 
$\sqrt{n}(\hat{\beta}_{\log,n}^{-2}-\beta^{-2})\cil N_{1}(0,4\beta^{-6}V^{\log}_{11}(\theta))$, 
is an unbiased estimator of $\beta^{-2}$.
\label{hm:sslp_rem_dm+1}
}\qed\end{rem}

\begin{rem}{\rm 
If we beforehand know the true value of $\sig$ for some reason, 
then it is possible to construct a rate-efficient estimator of $\beta$ simply via the logarithmic-moment fitting. 
In fact, simple manipulation leads to the relation
\begin{align}
& \hspace{-1cm}-\frac{1}{\sqrt{2k}}\sum_{j\le n, j\ne k+1}
\big(\log\big|h_{n}^{-1/\beta}(x_{nj}-\hat{\gamma}_{n}h_{n})\big|-\nu_{1}\big)
\nn\\
&=\sqrt{n}\log(1/h_{n})\bigg\{
\frac{-(S_{n}+\mathfrak{C}-\log\sig)}{\log(1/h_{n})-\mathfrak{C}}
-\frac{1}{\beta}\bigg\}\{1+o_{p}(1)\},
\nonumber
\end{align}
where $S_{n}:=(2k)^{-1}\sum_{j\le n,j\ne k+1}\log|x_{nj}-\hat{\gamma}_{n}h_{n}|$, 
but from Theorem \ref{hm:smed_bthm} we know that the left-hand side tends in distribution to $N_{1}(0,\nu_{2})$. 
It follows from Slutsky's theorem that
\begin{equation}
\tilde{\beta}_{n}(\sig):=\frac{\log(1/h_{n})-\mathfrak{C}}{(\log\sig)-\mathfrak{C}-S_{n}}
\label{hm:sslp_real2}
\end{equation}
can serve as an asymptotically normally distributed rate-efficient estimator. 
As is expected, the estimator $\tilde{\beta}_{n}(\sig)$ exhibits excellent finite-sample performance; 
see Tables \ref{hm:sslp_table1} and \ref{hm:sslp_table2} in Section \ref{hm:sslp_comparison_sim}.
\label{hm:rem_rate-eff-beta}
}\qed\end{rem}


\subsubsection{Lower-order fractional moments: power-variation statistics}\label{hm:sec_sslp_lfm}

Now let set
\begin{equation}
g_{l}(y)=|y|^{pl},\quad l=1,2,\quad p\in(0,\beta/6),
\nonumber
\end{equation}
in applying (\ref{hm:sslp_clt3}); 
we can also pick a $p\in(-1,0)$, but do not consider it here. 
Especially when $T_{n}\equiv T$, this setting is related to the power-variation statistics 
applicable to a general class of semimartingales driven by a stable {\lp}; 
see \cite{CorNuaWoe07} and \cite{Tod13} together with their references. 
When concerned with joint estimation of $\theta$, 
we should be careful in applying the power-variation result directly because the effect of $\gamma\ne 0$ 
may not be ignorable.

\medskip

We know that $\int|y|^{q}\phi_{\beta}(y;\sig)dy=C(\beta,q)\sig^{q}$ where (e.g., \cite[Section 3.3]{NikSha95})
\begin{equation}
C(\beta,q):=
\frac{2^{q}\Gamma\left((q+1)/2\right)\Gamma(1-q/\beta)}{\sqrt{\pi}\Gamma(1-q/2)}.
\nonumber
\end{equation}
Using Theorem \ref{hm:smed_bthm} together with the present choice of $g_{l}$ 
we obtain a moment estimator of $(\beta,\sig)$ 
as a solution $\hat{\theta}_{p,n}=(\hat{\beta}_{p,n},\hat{\sig}_{p,n})$ to
\begin{equation}
\frac{1}{n}\sumj|h_{n}^{-1/\beta}(\dd_{j}X-\hat{\gamma}_{n}h_{n})|^{pl}=C(\beta,pl)\sig^{pl},
\quad l=1,2.
\label{hm:sslp_lm2}
\end{equation}
The solution takes the convenient form:
\begin{equation}
\frac{H_{1n}^{2}}{H_{2n}}=\frac{C(\hat{\beta}_{p,n},p)^{2}}
{C(\hat{\beta}_{p,n},2p)},\qquad
\hat{\sig}_{p,n}=\bigg(\frac{h_{n}^{-p/\hat{\beta}_{p,n}}H_{1n}}{C(\hat{\beta}_{p,n},p)}\bigg)^{1/p},
\label{hm:sslp_lm3-4}
\end{equation}
where
\begin{equation}
H_{ln}:=\frac{1}{n}\sumj|\dd_{j}X-\hat{\gamma}_{n}h_{n}|^{pl},\quad l=1,2.
\nonumber
\end{equation}
The factor ``$h_{n}^{-1/\beta}$'' in (\ref{hm:sslp_lm2}) can be effectively cancelled out 
in the first equation in \eqref{hm:sslp_lm3-4}. 
We can see that, for each $p\in(0,\beta/6)$, the right-hand side of the first one in (\ref{hm:sslp_lm3-4}) 
is a constant multiple of the map
\begin{equation}
\beta\mapsto\frac{\Gamma(1-p/\beta)^{2}}{\Gamma(1-2p/\beta)}.
\nonumber
\end{equation}
Since this map is strictly increasing in $\beta\in(6p,2)$, 
it is straightforward to find the root $\hat{\beta}_{p,n}$ by a standard numerical procedure.

\medskip

Let $\eta_{p}(\beta):=\psi(1-p/\beta)-\psi(1-2p/\beta)$; 
recall that $\psi(z):=\p_{z}\log\Gamma(z)$ denotes the digamma function. 
Then, the asymptotic covariance matrix $V^{p}(\theta)=[V^{p}_{kl}(\theta)]_{k,l=1}^{3}$ 
in the present case can be explicitly computed as follows:

\begin{align}
V_{11}^{p}(\theta)&=\frac{\beta^{4}}{p^{2}\eta_{p}(\beta)^{2}}\bigg\{
\frac{C(\beta,2p)}{C(\beta,p)^{2}}-\frac{C(\beta,3p)}{C(\beta,p)C(\beta,2p)}
+\frac{1}{4}\bigg(\frac{C(\beta,4p)}{C(\beta,2p)^{2}}-1\bigg)\bigg\},
\nn\\
V_{12}^{p}(\theta)&=\frac{\beta^{2}\sig}{p^{2}\eta_{p}(\beta)^{2}}\bigg\{
\psi\bigg(1-\frac{2p}{\beta}\bigg)
\bigg(\frac{C(\beta,3p)}{2C(\beta,p)C(\beta,2p)}-\frac{C(\beta,2p)}{C(\beta,p)^{2}}+\frac{1}{2}\bigg)
\nn\\
& {}\qquad\qquad
+\psi\bigg(1-\frac{p}{\beta}\bigg)
\bigg(\frac{C(\beta,3p)}{2C(\beta,p)C(\beta,2p)}
-\frac{C(\beta,4p)}{4C(\beta,2p)^{2}}-\frac{1}{4}\bigg)\bigg\},
\nn\\
V_{22}^{p}(\theta)&=\frac{\sig^{2}}{p^{2}\eta_{p}(\beta)^{2}}\bigg[\bigg\{
\psi\bigg(1-\frac{2p}{\beta}\bigg)\bigg\}^{2}\bigg(\frac{C(\beta,2p)}{C(\beta,p)^{2}}-1\bigg)
\nn\\
& {}\qquad\qquad
-\psi\bigg(1-\frac{p}{\beta}\bigg)\psi\bigg(1-\frac{2p}{\beta}\bigg)
\bigg(\frac{C(\beta,3p)}{C(\beta,p)C(\beta,2p)}-1\bigg)
\nn\\
& {}\qquad\qquad
+\frac{1}{4}\bigg\{\psi\bigg(1-\frac{p}{\beta}\bigg)\bigg\}^{2}
\bigg(\frac{C(\beta,4p)}{C(\beta,2p)^{2}}-1\bigg)\bigg],
\nn\\
V_{13}^{p}(\theta)&=V_{23}^{p}(\theta)=0, \nn\\
V_{33}^{p}(\theta)&=\left(\frac{\sig\pi}{2\Gam(1+1/\beta)}\right)^{2}.
\nn
\end{align}
We can prove that $V^{p}(\theta)$ is positive definite for any admissible $\theta\in\Theta$, 
for the details of which we refer to \cite[Section 3.2]{Mas09_slp}.

\begin{thm} Fix any $\theta\in\Theta$ and $p\in(0,\beta/6)$ and define
\begin{equation}
\hat{\theta}_{p,n}:=(\hat{\beta}_{p,n},\hat{\sig}_{p,n},\hat{\gamma}_{n}),
\label{hm:sslp_lfME}
\end{equation}
where $(\hat{\beta}_{p,n},\hat{\sig}_{p,n})$ is a solution of (\ref{hm:sslp_lm3-4}). 
Then,
\begin{equation}
V^{p}(\hat{\theta}_{p,n})^{-1/2}\diag\big(\sqrt{n},\sqrt{n},\sqrt{n}h_{n}^{1-1/\hat{\beta}_{p,n}}\big)
(\hat{\theta}_{p,n}-\theta)\cil N_{3}(0,I_{3}),
\label{hm:sslp_wclfME}
\end{equation}
where $V^{p}(\theta)$ is positive-definite.
\label{hm:sslp_thm1}
\end{thm}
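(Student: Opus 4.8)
The plan is to run the same argument as in the proof of Theorem~\ref{hm:sslp_thm0}, now with the symmetric test functions $g_{l}(y)=|y|^{pl}$, $l=1,2$, and $p\in(0,\beta/6)$. First I would check that this choice is admissible for the median-adjusted central limit theorem, Theorem~\ref{hm:smed_bthm}, applied with $L=2$ to the i.i.d.\ array $\{h_{n}^{-1/\beta}(\dd_{j}X-\gamma h_{n})\}_{j\le n}$ whose common law is $S_{\beta}(\sig)$, so that the relevant density is $f(\cdot)=\phi_{\beta}(\cdot;\sig)$ with unique median $0$ and $f(0)=(\sig\pi)^{-1}\Gam(1+1/\beta)>0$. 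Assumption~(b) of Theorem~\ref{hm:smed_bthm} amounts to $\int|y|^{3pl}\phi_{\beta}(y;\sig)\,dy<\infty$ for $l=1,2$, which holds precisely because $6p<\beta$ and $\phi_{\beta}(y;\sig)\sim C|y|^{-\beta-1}$ as $|y|\to\infty$ by \eqref{hm:sslp_asymp}; Assumption~(a) follows from the $C^{\infty}$-smoothness of $(\beta,y)\mapsto\phi_{\beta}(y)$ and the tail bound $|\p\phi_{\beta}(y;\sig)|\lesssim(1+|y|)^{-\beta-2}$, again from \eqref{hm:sslp_asymp}, since then $(1+|y|^{6p})\sup_{|a|\le\ep_{0}}|\p\phi_{\beta}(y+a;\sig)|$ is Lebesgue integrable as $6p<\beta+1$. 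This produces the joint weak convergence \eqref{hm:sslp_clt1} with the covariance matrix $\Sig(\theta)$ of \eqref{hm:sslp_clt2}; the vanishing of the off-diagonal blocks there (i.e.\ $\Sig^{(p3)}(\theta)=0$ for $p\le 2$) is forced by the symmetry of each $g_{l}$, which makes $H_{+}-H_{-}=0$ and $\int g_{l}(y)\p\phi_{\beta}(y;\sig)\,dy=0$ in the statement of Theorem~\ref{hm:smed_bthm}.

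Next I would feed this into the delta-method scheme \eqref{hm:sslp_Geq}--\eqref{hm:sslp_clt3}. Here $\rho_{l}(\theta)=\int|y|^{pl}\phi_{\beta}(y;\sig)\,dy=C(\beta,pl)\sig^{pl}$, so the estimating equation \eqref{hm:sslp_lm2} is solved in closed form as in \eqref{hm:sslp_lm3-4}, the key point being that the unknown factor $h_{n}^{-p/\beta}$ cancels in the ratio $H_{1n}^{2}/H_{2n}$, leaving the strictly increasing map $\beta\mapsto\Gam(1-p/\beta)^{2}/\Gam(1-2p/\beta)$ to be inverted; strict monotonicity on $(6p,2)$ both guarantees a unique root $\hat{\beta}_{p,n}$ with probability tending to one and shows that $\det K^{\ast}(\theta)$, which equals $2p^{2}\rho_{1}(\theta)\rho_{2}(\theta)\beta^{-2}\sig^{-1}\{\psi(1-p/\beta)-\psi(1-2p/\beta)\}=2p^{2}\rho_{1}(\theta)\rho_{2}(\theta)\beta^{-2}\sig^{-1}\eta_{p}(\beta)$ by way of $\p_{\beta}C(\beta,q)=(q/\beta^{2})\psi(1-q/\beta)C(\beta,q)$, does not vanish. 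Consequently $K(\theta)$ is invertible, \eqref{hm:sslp_clt3} applies, and $V^{p}(\theta)=K(\theta)^{-1}\Sig(\theta)K(\theta)^{-1\top}$ is block-diagonal as in \eqref{hm:sslp_V}; the explicit entries $V^{p}_{kl}(\theta)$ stated in the theorem are then obtained by a direct, if lengthy, computation of $K^{\ast}(\theta)^{-1}\Sig^{\ast}(\theta)K^{\ast}(\theta)^{-1\top}$ using $\int|y|^{q}\phi_{\beta}(y;\sig)\,dy=C(\beta,q)\sig^{q}$, together with $V^{p}_{33}(\theta)=\{2\phi_{\beta}(0;\sig)\}^{-2}$ read off from \eqref{hm:sslp_clt2}.

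The substantive remaining point, and the one I expect to be the main obstacle, is to show that $V^{p}(\theta)$ is positive definite for every admissible $\theta\in\Theta$ and every $p\in(0,\beta/6)$; by the block structure and $\det K^{\ast}(\theta)\ne0$ this reduces to $\det\Sig^{\ast}(\theta)>0$, i.e.\ to a strict Cauchy--Schwarz inequality for $(|Y|^{p},|Y|^{2p})$ under $\mcl(Y)=S_{\beta}(\sig)$, which holds because $|Y|^{p}$ is not a.s.\ constant; the verification that this strictness propagates to $V^{p}(\theta)$ uniformly in the parameter is carried out in \cite[Section~3.2]{Mas09_slp}, which I would cite. Finally, the studentized conclusion \eqref{hm:sslp_wclfME} follows by combining \eqref{hm:sslp_clt3} with three facts: the $\sqrt{n}$-consistency of $\hat{\beta}_{p,n}$ together with \eqref{hm:addeqnum-1} gives $h_{n}^{1/\beta-1/\hat{\beta}_{p,n}}\cip 1$, hence $\sqrt{n}h_{n}^{1-1/\hat{\beta}_{p,n}}=\sqrt{n}h_{n}^{1-1/\beta}(1+o_{p}(1))$; the continuity of $\theta\mapsto V^{p}(\theta)$ and the consistency of $\hat{\theta}_{p,n}$ give $V^{p}(\hat{\theta}_{p,n})\cip V^{p}(\theta)$ with invertible limit; and Slutsky's theorem then turns \eqref{hm:sslp_clt3} into \eqref{hm:sslp_wclfME}.
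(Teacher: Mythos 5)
Your proposal is correct and follows essentially the same route as the paper, which obtains Theorem \ref{hm:sslp_thm1} by specializing the general scheme \eqref{hm:sslp_clt1}--\eqref{hm:sslp_clt3} (built on Theorem \ref{hm:smed_bthm}) to $g_{l}(y)=|y|^{pl}$, exploiting the cancellation of $h_{n}^{-1/\beta}$ and the monotonicity of $\beta\mapsto\Gam(1-p/\beta)^{2}/\Gam(1-2p/\beta)$, and deferring the positive definiteness of $V^{p}(\theta)$ to \cite[Section 3.2]{Mas09_slp} exactly as you do. Your explicit verification of conditions (a)--(b) of Theorem \ref{hm:smed_bthm} via \eqref{hm:sslp_asymp} and your computation of $\det K^{\ast}(\theta)$ in terms of $\eta_{p}(\beta)>0$ are correct and merely make explicit what the paper leaves implicit.
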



\subsubsection{Simulation experiments}\label{hm:sslp_comparison_sim}

In this section, we will first make some comparisons between the asymptotic covariances 
$V^{\log}(\theta)$ and $V^{p}(\theta)$ given in Sections \ref{hm:sslp_logMEsec} and \ref{hm:sec_sslp_lfm}, respectively, 
and then observe finite-sample performance of our estimator through simulation experiments.


\runinhead{Comparing asymptotic variances.}

For conciseness, we will focus on comparisons between ``$\hat{\beta}_{\log,n}$ and $\hat{\beta}_{p,n}$'' 
and ``$\hat{\sig}_{\log,n}$ and $\hat{\sig}_{p,n}$'' individually. 
The function $V^{\log}(\theta)$ has a simple structure, while 
the dependence structure of $(\beta,p)$ on $V^{p}(\theta)$ is somewhat more messy. 
According to the construction of $\hat{\theta}_{p,n}$, 
the value $p\mapsto V^{p}(\theta)$ for a given $p$ diverges as $\beta$ decreases. 
Figure \ref{hm:sslp_figV11V22} shows plots of $\beta\mapsto V_{11}^{\log}(\theta),V_{11}^{p}(\theta)$ and 
$\beta\mapsto V_{22}^{\log}(\theta),V_{22}^{p}(\theta)$ on $(0,2)$ for $p=0.05,0.1$ and $0.2$; 
we refer to \cite{Tod13} for plots of the asymptotic variances 
in estimation of the stable-index and the integrated-scale parameters 
in a general class of pure-jump It\^o-process models.

\begin{figure}[htb!]
\centering
\includegraphics[width=10cm]{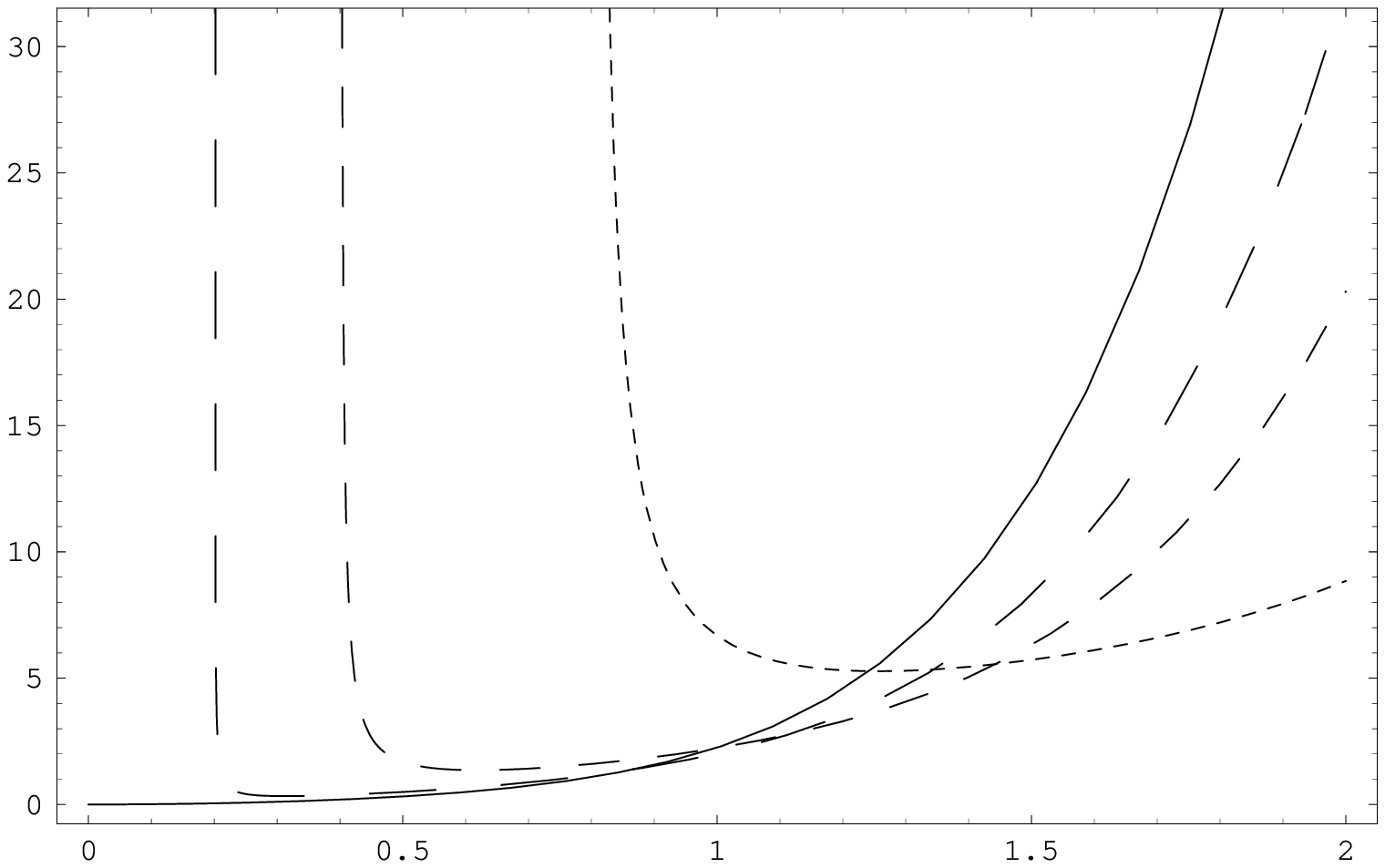}
\\$\ $ \\
\includegraphics[width=10cm]{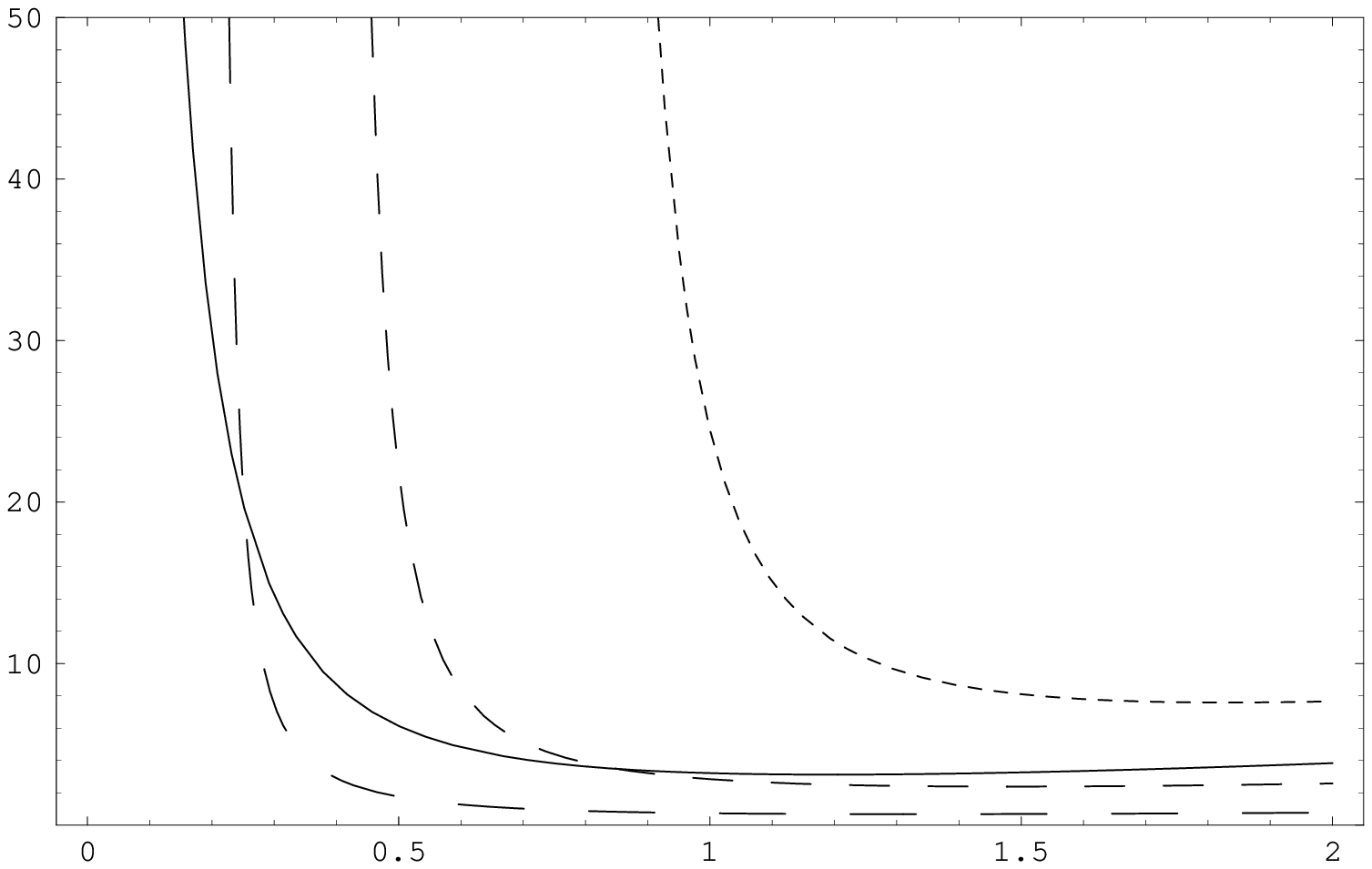}
\caption{
Plots of asymptotic variances of the estimators of $\beta$ (top) and $\sig$ (bottom), 
each panel containing the logarithmic moment based one (solid line), 
the three lower-order moment based ones with $p=0.05$ (the sparsest dashed line), 
$p=0.1$ (moderate dashed line), and $p=0.2$ (the finest dashed line). 
Upper panel: $\beta\mapsto V_{11}^{\log}(\beta,1)$ and $V_{11}^{p}(\beta,1)$. 
Lower panel: $\beta\mapsto V_{22}^{\log}(\beta,1)$ and $V_{22}^{p}(\beta,1)$.
}
\label{hm:sslp_figV11V22}
\end{figure}

From Figure \ref{hm:sslp_figV11V22}, we can observe the following.

\begin{itemize}
\item Concerning $(\hat{\beta}_{\log,n},\hat{\sig}_{\log,n})$: 
the asymptotic performance of $\hat{\beta}_{\log,n}$ monotonically changes with $\beta$ over $(0,2]$, 
better for smaller $\beta$ and worse for larger $\beta$. 
Further, the asymptotic performance of $\hat{\sig}_{\log,n}$ gets worse for smaller $\beta$; 
more precisely, the function $\beta\mapsto V_{22}^{\log}(\beta,1)$ on $(0,2)$ takes a unique minimum 
around $1.2$, increases to a finite value as $\beta\uparrow 2$ and to infinity as $\beta\downarrow 0$.

\item Concerning $(\hat{\beta}_{p,n},\hat{\sig}_{p,n})$: 
we expect that smaller (resp. larger) $p$ leads to smaller asymptotic variance of $\hat{\beta}_{p,n}$ 
for smaller (resp. larger) $\beta$, the thresholds lying in the region $(1,1.5)$. 
In contrast, given any $\beta$, smaller $p$ leads to better performance of $\hat{\sig}_{p,n}$.

\end{itemize}

For implementation we have to fix the value $p$ {\it a priori} when applying $\hat{\theta}_{p,n}$, 
the permissible zone of which depends on the unknown $\beta$. 
We will briefly discuss this point based on simulation results. 
As a matter of fact, it will turn out that selection of $p$ actually has non-negligible influence on 
the behavior of $(\hat{\beta}_{p,n},\hat{\sig}_{p,n})$.

We have remarked that given a $\beta$, the asymptotic behavior of $\hat{\sig}_{p,n}$ should be better for smaller $p$. 
This is, however, only based on the expressions of the asymptotic variance. 
Finite-sample performance of the estimator $\hat{\sig}_{p,n}$ must depend on that of $\hat{\beta}_{p,n}$, 
so that it may occur and indeed we will see shortly that, e.g., 
$\hat{\sig}_{0.2,n}$ behaves better than $\hat{\sig}_{0.1,n}$ 
for $\hat{\beta}_{0.2,n}$ behaving better than $\hat{\beta}_{0.1,n}$.


\medskip

\runinhead{Setting and results.}

We will observe different finite-sample behaviors according to the true value of $\beta$. 
In each simulation below, we generate $1000$ independent estimates 
of the parameter, and tabulate corresponding sample means and sample root mean-square errors (RMSEs).

We take $p=0.05,0.1$ and $0.2$ for $\hat{\theta}_{p,n}$. In each trial 
except for the cases where $p\ge \beta/6$, we tabulate the estimates
\begin{equation}
(\hat{\beta}_{\log,n},~\hat{\beta}_{0.05,n},~\hat{\beta}_{0.1,n},~\hat{\beta}_{0.2,n};
~\hat{\sig}_{\log,n},~\hat{\sig}_{0.05,n},~\hat{\sig}_{0.1,n},~\hat{\sig}_{0.2,n};~\hat{\gamma}_{n}),
\label{hm:sslp_sim1-1}
\end{equation}
all of which are computed from a single realization of $(X_{t^{n}_{j}})_{j=1}^{n}$. 
We set $\beta=0.8,1.0,1.5$ and $1.8$ as well as $(\sig,\gamma)=(0.5,-0.5)$ for the true values 
and also $h_{n}=5/n$ ($T_{n}\equiv 5$) and $h_{n}=n^{-3/5}$ ($T_{n}=n^{2/5}\to\infty$) for the sampling schemes.

\medskip

Tables \ref{hm:sslp_table1} and \ref{hm:sslp_table2} report the means and the RMSEs of the 
estimates (\ref{hm:sslp_sim1-1}) with different sample sizes $n=501,1001$, and $2001$, 
and the different true value of $\beta$. Just for reference and the sake of comparison, 
each numerical result includes the rate-efficient $\tilde{\beta}_{n}(\sig)$, 
for which the scale $\sig$ is assumed to be known (see Remark \ref{hm:rem_rate-eff-beta}); 
as was expected, $\tilde{\beta}_{n}(\sig)$ surpasses by far all the other estimators of $\beta$. 
In both of Tables \ref{hm:sslp_table1} and \ref{hm:sslp_table2}, the best estimates for $n=2001$ are 
$(\hat{\beta}_{\log,n},\hat{\sig}_{\log,n})$, $(\hat{\beta}_{0.05,n},\hat{\sig}_{0.05,n})$, 
$(\hat{\beta}_{0.2,n},\hat{\sig}_{0.2,n})$, and $(\hat{\beta}_{0.2,n},\hat{\sig}_{0.2,n})$, for 
$\beta=0.8,1.0,1.5$ and $1.8$, respectively (the bold-letter elements). 
The performances for estimating $\beta$ seem to bear no relation to sampling frequency, while 
larger $T_{n}$ may lead to better finite-sample performance in estimation of $\sig$. 
Both tables show that 
finite-sample performance of joint estimation of $(\beta,\sig)$ can exhibit a different feature 
from the individual comparison through the asymptotic variances. 
For instance, Figure \ref{hm:sslp_figV11V22} says that $\hat{\sig}_{0.05,n}$ 
individually behaves best for $\beta\ge 1.5$, while $\hat{\sig}_{0.2,n}$ is actually the best one in 
Tables \ref{hm:sslp_table1} and \ref{hm:sslp_table2}. 
This would be due to the better behaviors of $\hat{\beta}_{0.2,n}$ for $\beta\ge 1.5$.

\medskip

\begin{table}
\begin{small}
\begin{center}
\begin{tabular}{rrrrrrrrr}
\hline
&&&&&&&& \\
\multicolumn{9}{c}{Case of $h_{n}=5/n$} \\
&&&&&&&& \\
\cline{2-8}\\[-2mm]
& True $\beta$ & $n$ & 
\multicolumn{1}{c}{$\hat{\beta}_{\log,n}$} & \multicolumn{1}{c}{$\hat{\beta}_{0.05,n}$} & 
\multicolumn{1}{c}{$\hat{\beta}_{0.1,n}$} & \multicolumn{1}{c}{$\hat{\beta}_{0.2,n}$} & 
\multicolumn{1}{c}{$\tilde{\beta}_{n}(\sig)$} & \\
\cline{2-8}
&&&&&&&& \\
 & 0.8 & 501  & 0.807 (0.049) & 0.807 (0.050) & 0.809 (0.056) & & 0.800 (0.013) & \\
 &     & 1001 & 0.803 (0.034) & 0.803 (0.034) & 0.804 (0.039) & & 0.800 (0.008) & \\
 &     & 2001 & {\bf 0.800 (0.024)} & 0.801 (0.024) & 0.801 (0.028) & & 0.800 (0.005) & \\ 
&&&&&&&& \\
 & 1.0 & 501  & 1.010 (0.070) & 1.009 (0.066) & 1.009 (0.067) & \phantom{1.017 (0.081)} & 1.001 (0.018) & \\
 &     & 1001 & 1.003 (0.048) & 1.003 (0.045) & 1.003 (0.046) & \phantom{1.008 (0.060)} & 1.000 (0.011) & \\
 &     & 2001 & 1.003 (0.033) & {\bf 1.003 (0.033)} & 1.003 (0.033) & \phantom{1.007 (0.044)} & 1.000 (0.007) & \\ 
&&&&&&&& \\
 & 1.5 & 501  & 1.526 (0.162) & 1.518 (0.130) & 1.514 (0.112) & 1.514 (0.100) & 1.500 (0.031) & \\
 &     & 1001 & 1.516 (0.115) & 1.511 (0.093) & 1.508 (0.080) & 1.507 (0.073) & 1.500 (0.018) & \\
 &     & 2001 & 1.505 (0.081) & 1.504 (0.066) & 1.504 (0.058) & {\bf 1.504 (0.053)} & 1.500 (0.011) & \\ 
&&&&&&&& \\
 & 1.8 & 501  & 1.857 (0.288) & 1.804 (0.151) & 1.807 (0.133) & 1.809 (0.109) & 1.801 (0.042) & \\
 &     & 1001 & 1.824 (0.189) & 1.804 (0.125) & 1.805 (1.108) & 1.805 (0.085) & 1.799 (0.024) & \\
 &     & 2001 & 1.815 (0.133) & 1.807 (0.095) & 1.807 (0.081) & {\bf 1.805 (0.062)} & 1.800 (0.016) & \\ 
&&&&&&&& \\
\cline{2-8}\\[-2mm]
& True $\beta$ & $n$ & 
\multicolumn{1}{c}{$\hat{\sig}_{\log,n}$} & \multicolumn{1}{c}{$\hat{\sig}_{0.05,n}$} & 
\multicolumn{1}{c}{$\hat{\sig}_{0.1,n}$} & \multicolumn{1}{c}{$\hat{\sig}_{0.2,n}$} & 
\multicolumn{1}{c}{$\hat{\gamma}_{n}$} & \\
\cline{2-8}
&&&&&&&& \\
 & 0.8 & 501  & 0.518 (0.178) & 0.521 (0.200) & 0.531 (0.306) & & -0.500 (0.050) & \\
 &     & 1001 & 0.511 (0.139) & 0.513 (0.147) & 0.516 (0.181) & & -0.500 (0.006) & \\
 &     & 2001 & {\bf 0.513 (0.109)} & 0.513 (0.115) & 0.517 (0.139) & & -0.500 (0.003) & \\ 
&&&&&&&& \\
 & 1.0 & 501  & 0.511 (0.152) & 0.510 (0.147) & 0.511 (0.155) & \phantom{0.513 (0.214)} & -0.497 (0.036) & \\
 &     & 1001 & 0.511 (0.120) & 0.511 (0.116) & 0.512 (0.123) & \phantom{0.506 (0.179)} & -0.502 (0.025) & \\
 &     & 2001 & 0.504 (0.093) & {\bf 0.503 (0.089)} & 0.503 (0.095) & \phantom{0.505 (0.143)} & -0.500 (0.018) & \\ 
&&&&&&&& \\
 & 1.5 & 501  & 0.508 (0.134) & 0.504 (0.107) & 0.502 (0.094) & 0.501 (0.098) & -0.501 (0.180) & \\
 &     & 1001 & 0.504 (0.111) & 0.503 (0.090) & 0.502 (0.080) & 0.502 (0.080) & -0.503 (0.093) & \\
 &     & 2001 & 0.508 (0.095) & 0.504 (0.076) & 0.503 (0.067) & {\bf 0.501 (0.067)} & -0.504 (0.014) & \\ 
&&&&&&&& \\
 & 1.8 & 501  & 0.508 (0.144) & 0.513 (0.095) & 0.507 (0.077) & 0.502 (0.062) & -0.484 (0.306) & \\
 &     & 1001 & 0.509 (0.125) & 0.509 (0.089) & 0.505 (0.073) & 0.502 (0.059) & -0.510 (0.125) & \\
 &     & 2001 & 0.505 (0.100) & 0.503 (0.075) & 0.501 (0.062) & {\bf 0.500 (0.048)} & -0.501 (0.284) & \\ 
&&&&&&&& \\
\hline
\end{tabular}
\end{center}
\end{small}
\caption{Sample means with RMSEs in parentheses of the simultaneously computed nine estimates (\ref{hm:sslp_sim1-1}) 
and $\tilde{\beta}_{n}(\sig)$ in case of $h_{n}=5/n$ ($T_{n}\equiv 5$), 
based on $1000$ independent copies of $(X_{t^{n}_{j}})_{j=1}^{n}$, 
where $\sig=0.5$ and $\gamma=-0.5$ for the true values. The cases where $p\ge\beta/6$ are left in blank.
}
\label{hm:sslp_table1}
\end{table}

\begin{table}
\begin{small}
\begin{center}
\begin{tabular}{rrrrrrrrrr}
\hline
&&&&&&&&& \\
\multicolumn{10}{c}{Case of $h_{n}=n^{-3/5}$} \\
&&&&&&&&& \\
\cline{2-9}\\[-2mm]
& True $\beta$ & $n$ & $T_{n}$ &
\multicolumn{1}{c}{$\hat{\beta}_{\log,n}$} & \multicolumn{1}{c}{$\hat{\beta}_{0.05,n}$} & 
\multicolumn{1}{c}{$\hat{\beta}_{0.1,n}$} & \multicolumn{1}{c}{$\hat{\beta}_{0.2,n}$} & 
\multicolumn{1}{c}{$\tilde{\beta}_{n}(\sig)$} & \\
\cline{2-9}
&&&&&&&&& \\
 & 0.8 & 501  & 12.021 & 0.806 (0.047) & 0.806 (0.048) & 0.807 (0.054) & & 0.801 (0.017) & \\
 &     & 1001 & 15.855 & 0.802 (0.033) & 0.802 (0.034) & 0.804 (0.038) & & 0.800 (0.011) & \\
 &     & 2001 & 20.917 & {\bf 0.802 (0.024)} & 0.802 (0.024) & 0.803 (0.027) & & 0.800 (0.006) & \\ 
&&&&&&&&& \\
 & 1.0 & 501  & 12.021 & 1.012 (0.071) & 1.011 (0.067) & 1.012 (0.069) & \phantom{1.020 (0.084)} & 1.001 (0.022) & \\
 &     & 1001 & 15.855 & 1.005 (0.048) & 1.005 (0.045) & 1.005 (0.046) & \phantom{1.008 (0.058)} & 1.002 (0.014) & \\
 &     & 2001 & 20.917 & 1.003 (0.033) & {\bf 1.003 (0.031)} & 1.003 (0.033) & \phantom{1.007 (0.045)} & 1.000 (0.009) & \\ 
&&&&&&&&& \\
 & 1.5 & 501  & 12.021 & 1.529 (0.171) & 1.520 (0.135) & 1.516 (0.115) & 1.514 (0.099) & 1.502 (0.041) & \\
 &     & 1001 & 15.855 & 1.508 (0.111) & 1.505 (0.090) & 1.504 (0.078) & 1.505 (0.071) & 1.500 (0.025) & \\
 &     & 2001 & 20.917 & 1.508 (0.085) & 1.506 (0.069) & 1.505 (0.059) & {\bf 1.504 (0.053)} & 1.501 (0.016) & \\ 
&&&&&&&&& \\
 & 1.8 & 501  & 12.021 & 1.878 (0.308) & 1.812 (0.158) & 1.813 (0.139) & 1.813 (0.114) & 1.803 (0.053) & \\
 &     & 1001 & 15.855 & 1.824 (0.179) & 1.807 (0.122) & 1.807 (0.104) & 1.805 (0.080) & 1.801 (0.033) & \\
 &     & 2001 & 20.917 & 1.811 (0.130) & 1.805 (0.096) & 1.804 (0.080) & {\bf 1.801 (0.062)} & 1.800 (0.020) & \\ 
&&&&&&&&& \\
\cline{2-9}\\[-2mm]
& True $\beta$ & $n$ & $T_{n}$ & 
\multicolumn{1}{c}{$\hat{\sig}_{\log,n}$} & \multicolumn{1}{c}{$\hat{\sig}_{0.05,n}$} & 
\multicolumn{1}{c}{$\hat{\sig}_{0.1,n}$} & \multicolumn{1}{c}{$\hat{\sig}_{0.2,n}$} & 
\multicolumn{1}{c}{$\hat{\gamma}_{n}$} & \\
\cline{2-9}
&&&&&&&&& \\
 & 0.8 & 501  & 12.021 & 0.509 (0.131) & 0.512 (0.144) & 0.519 (0.196) & & -0.500 (0.012) & \\
 &     & 1001 & 15.855 & 0.509 (0.106) & 0.509 (0.112) & 0.510 (0.131) & & -0.500 (0.008) & \\
 &     & 2001 & 20.917 & {\bf 0.504 (0.081)} & 0.504 (0.084) & 0.504 (0.098) & & -0.500 (0.005) & \\ 
&&&&&&&&& \\
 & 1.0 & 501  & 12.021 & 0.503 (0.119) & 0.503 (0.125) & 0.509 (0.272) & \phantom{0.548 (1.546)} & -0.499 (0.036) & \\
 &     & 1001 & 15.855 & 0.504 (0.089) & 0.505 (0.086) & 0.505 (0.091) & \phantom{0.506 (0.120)} & -0.500 (0.025) & \\
 &     & 2001 & 20.917 & 0.501 (0.067) & {\bf 0.501 (0.065)} & 0.501 (0.072) & \phantom{0.504 (0.191)} & -0.499 (0.018) & \\ 
&&&&&&&&& \\
 & 1.5 & 501  & 12.021 & 0.503 (0.102) & 0.501 (0.083) & 0.501 (0.074) & 0.500 (0.072) & -0.498 (0.137) & \\
 &     & 1001 & 15.855 & 0.507 (0.084) & 0.504 (0.068) & 0.503 (0.060) & 0.502 (0.061) & -0.500 (0.110) & \\
 &     & 2001 & 20.917 & 0.502 (0.069) & 0.501 (0.056) & 0.501 (0.049) & {\bf 0.501 (0.047)} & -0.505 (0.084) & \\ 
&&&&&&&&& \\
 & 1.8 & 501  & 12.021 & 0.497 (0.115) & 0.507 (0.075) & 0.503 (0.062) & 0.500 (0.051) & -0.501 (0.199) & \\
 &     & 1001 & 15.855 & 0.053 (0.087) & 0.504 (0.062) & 0.502 (0.051) & 0.501 (0.040) & -0.493 (0.171) & \\
 &     & 2001 & 20.917 & 0.054 (0.073) & 0.503 (0.055) & 0.502 (0.045) & {\bf 0.502 (0.035)} & -0.505 (0.153) & \\ 
&&&&&&&&& \\
\hline
\end{tabular}
\end{center}
\end{small}
\caption{Sample means with RMSEs in parentheses of the 
simultaneously computed  nine estimates (\ref{hm:sslp_sim1-1}) and $\tilde{\beta}_{n}(\sig)$ 
in case of $h_{n}=n^{-3/5}$ ($T_{n}=n^{2/5}\to\infty$), 
based on $1000$ independent copies of $(X_{t^{n}_{j}})_{j=1}^{n}$, 
where $\sig=0.5$ and $\gamma=-0.5$ for the true values. The cases where $p\ge\beta/6$ are left in blank.
}
\label{hm:sslp_table2}
\end{table}

Though omitted here, we could also observe that the logarithmic transform of the estimators of $\sig$ 
mentioned in Remark \ref{hm:sslp_rem_dm+1} could gain accuracy of the normal approximations for $\beta\le 1$ 
in finite-sample. Further, we could observe reasonably accurate normal approximation 
of $\sqrt{n}h_{n}^{1-1/\hat{\beta}_{n}}(\hat{\gamma}_{n}-\gamma)$ upon a suitable choice of 
$\hat{\beta}_{n}$ within our estimators.


\runinhead{Some practical remarks.}
In practice, we may roughly proceed as follows: 
first we apply $\hat{\theta}_{\log,n}$ which has no fine-tuning parameter. 
Then, building on the estimated values $(\hat{\beta}_{\log,n},\hat{\sig}_{\log,n})$ and taking 
the interrelationship of $V^{\log}(\theta)$ and $V^{p}(\theta)$, 
we apply $(\hat{\beta}_{p,n},\hat{\sig}_{p,n})$ anew with a suitable choice of $p$, 
or keep using $(\hat{\beta}_{\log,n},\hat{\sig}_{\log,n})$ if the estimate of $\beta$ is small. 
In many applications in practice, the case of $\beta\in(1,2)$, 
i.e., finite-mean case, may be relevant. Then, we may simply adopt $(\hat{\beta}_{p,n},\hat{\sig}_{p,n})$ 
from the beginning with a small $p$ such as $p=0.1$, 
and then adaptively change $p$ according to the estimated value of $\beta$ 
(e.g., pick $p=0.2$ if the first estimate of $\beta$ is greater than $1.5$).

As a whole, we may conclude that:
\begin{itemize}
\item $\hat{\theta}_{\log,n}$ is recommended for $\beta\le 1$;
\item $\hat{\theta}_{p,n}$ with small $p$ such as $0.05\sim 0.2$ and up is recommended for $\beta>1$.
\end{itemize}
As was expected from Theorem \ref{hm:sslp_thm0} (also Figure \ref{hm:sslp_figV11V22}), 
we could observe that $\hat{\beta}_{\log,n}$ becomes more unstable for $\beta$ closer to $2$; 
several times, it returns a value greater than $2$ for $\beta=1.8$ in our simulations.


\subsection{Skewed {\lm} with possibly time-varying scale}\label{hm:sec_skewslp_me}

In the previous section, we considered a joint estimation of the index, scale, and location parameters 
when the L\'evy density is symmetric. 
There we have seen that the sample median based estimator is rate-efficient. 
The primary objective of this section is to provide a practical moment estimator of 
a process $X$ of the form $X_{t}=\int_{0}^{t}\sig_{s-}dZ_{s}$ where 
$Z$ is a possibly skewed strictly stable {\lp} without drift 
and $\sig$ is a positive {\cadlag} process independent of $Z$. 
We will consider estimation of integrated scale when the scale parameter is time-varying. 
The topic of this section is based on \cite{Mas10}; a closely related work is \cite[Section 4]{Tod13}.

Our estimation procedure utilizes empirical-sign statistics and realized multipower variations 
(MPV for short; see Section \ref{hm:sec_skewlp_preliminaries}). 
Its implementation is quite simple and requires no hard numerical optimization, hence preferable in practice.
Using MPVs essentially amounts to the classical method of moments with possibly random targets. 
Several authors investigated asymptotic behaviors of MPVs for estimating integrated-scale quantities of pure-jump models. 
Among others, we refer to \cite[Section 6]{BarShe05}, \cite{CorNuaWoe07}, \cite{Tod13} with the references therein, 
and \cite{Woe03-3}; in all the papers, the underlying model is driven by either a stable or locally stable {\lp} 
(see Section \ref{hm:sec_lslp} for the definition of the latter). 
It will turn out that estimation of the integrated time-varying scale 
by substituting a $\sqrt{n}$-consistent estimator of $\beta$ into the MPV statistics 
will lead to the slower rate of convergence $\sqrt{n}/\log n$ (Section \ref{hm:sec_tvsp}).


\subsubsection{Setup and description of estimation procedure}\label{hm:sec_estimation}

To describe the model setup, we will adopt another parameterization of a strictly $\beta$-stable distribution: 
with a slight abuse of notations, we write $\mcl(S)=S'_{\beta}(\mfp,\sig)$ for $\beta\ne 1$ if
\begin{equation}
\vp_{S}(u)=\exp\bigg\{
-\sig|u|^{\beta}\bigg(1-i\mathrm{sgn}(u)\tan\{\beta\pi(\mfp-1/2)\}\bigg)\bigg\},\quad u\in\mbbr.
\label{hm:stable_cf}
\end{equation}
Instead of the skewness parameter $\rho$ we now have the {\it positivity parameter} $\mfp:=P(S>0)$, 
whose range of value is given as follows:
\begin{equation}
\mfp\in\left\{
\begin{array}{ll}
[1-1/\beta,1/\beta], & \quad\beta\in(1,2), \\[1mm]
(0,1), & \quad\beta=1, \\[1mm]
{[0,1]}, & \quad\beta\in(0,1).
\end{array}
\right.
\nn
\end{equation}
For $\beta\ne 1$, the parametrizations \eqref{hm:ex_st1} and \eqref{hm:stable_cf} are linked 
by the one-to-one relation
\begin{equation}
\mfp=\frac{1}{2}+\frac{1}{\beta\pi}\arctan\left(\rho\tan\frac{\beta\pi}{2}\right).
\label{hm:skew_relation}
\end{equation}
For any fixed $\beta\in(1,2)$, $\mfp$ is monotonically decreasing on $(-1,1)$ as a function of $\rho$. 
Hence $\mfp-1/2$ and $\rho$ have opposite signs for $\beta\in(1,2)$, while the same signs for $\beta\in(0,1)$; 
Figure \ref{hm:figure1} illustrates this point.
\begin{figure}[htbp]
\centering
\includegraphics[width=10cm
]{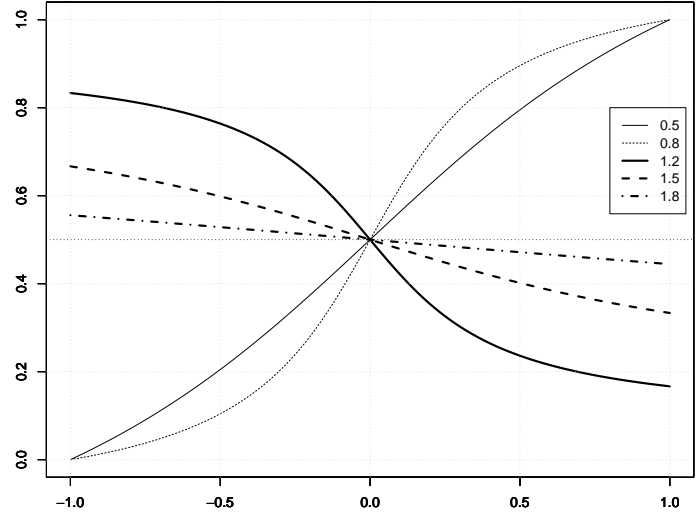}
\caption{
Plots of $\mfp$ as a function of $\rho$ for the values $\beta=1.2$, $1.5$, and $1.8$. 
Also included for comparison are the cases of $\beta=0.5$ and $0.8$.}
\label{hm:figure1}
\end{figure}
The primary reason why we have chosen the parametrization (\ref{hm:stable_cf}) is that, 
as is expected from Figure \ref{hm:figure1}, estimation performance of $\rho$ based on the empirical sign statistics, 
which we will make use of later, is destabilized for $\beta$ close to $2$: 
that is to say, the slope of the curve gets gentler for larger $\beta$, so that 
a small change of the empirical sign statistics results in a wide gap between the estimate of $\rho$ and the true value. 
Also, note the difference between the scale parameters of \eqref{hm:ex_st1} with $t=1$ and \eqref{hm:stable_cf}, 
which will turn out to be convenient for considering time-varying scale in a unified manner.

\medskip

Let $Z$ be a $\beta$-stable {\lp} such that
\begin{equation}
\mcl(Z_{t})=S'_{\beta}(\mfp,t),\quad t\in[0,1].
\label{hm:Z_def}
\end{equation}
Note that according to the scaling property, we have $\mfp=P(X_{t}>0)$ for each $t>0$. 
We will focus on the case where
\begin{equation}
\mfp\in(1-1/\beta,1/\beta),\quad \beta\in(1,2),
\label{hm:pb_region}
\end{equation}
so that jumps are not one-sided and are of infinite variation; 
nevertheless, it will be obvious from the subsequent discussion 
that our estimation procedure remains in force for $\beta\in(0,1)$ too. 
Figure \ref{hm:figure2} shows typical sample paths of $Z$. 
\begin{figure}[htbp]
\centering
\includegraphics[scale=0.5
]{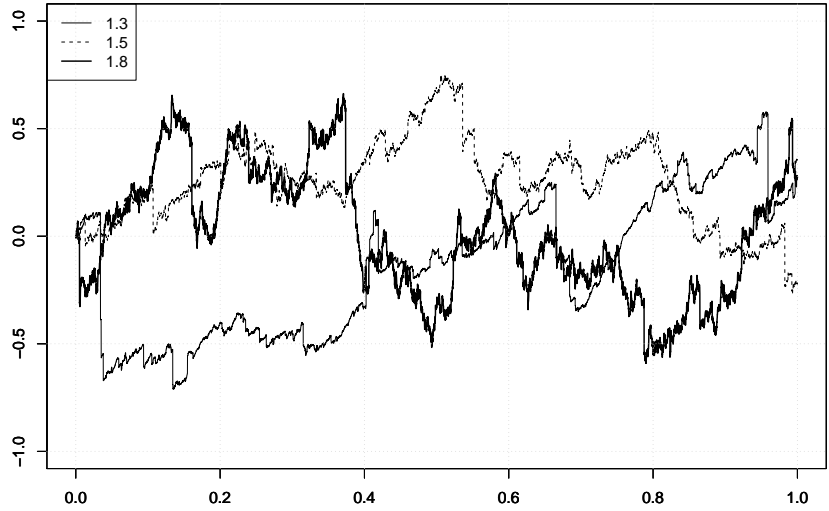}
\caption{
Plots (solid lines for clarity) of three typical sample paths of $Z$ of (\ref{hm:Z_def}) on $[0,1]$ 
for $\beta=1.3$, $1.5$, and $1.8$, with $\rho=-0.5$ and $\sig_{t}\equiv 1$ in common; 
large jumps are tend to be downward, while small fluctuations upward.
}
\label{hm:figure2}
\end{figure}

\medskip

We now accommodate a possibly time-varying scale process $\sig=(\sig_{t})_{t\in[0,1]}$, 
which is assumed to be {\cadlag} adapted and independent of $Z$, and also bounded away from zero and infinity. 
Let $X=(X_{t})_{t\in[0,1]}$ be the process given by
\begin{equation}
X_{t}=\int_{0}^{t}\sig_{s-}dZ_{s},
\label{hm:X_def}
\end{equation}
where the stochastic integral is well-defined since $P(\int_{0}^{1}\sig_{s}^{2}ds<\infty)=1$; 
see, e.g., \cite{JacShi03} and/or \cite{Pro05}.

\begin{rem}{\rm 
We may equivalently (in distribution) define $X$ of \eqref{hm:X_def} by the time-change representation 
with ``clock'' process $t\mapsto\int_{0}^{t}\sig_{s}^{\beta}ds$:
\begin{equation}
X_{t}=Z_{\int_{0}^{t}\sig_{s}^{\beta}ds}.
\nonumber
\end{equation}
Such kind of distributional equivalence 
can occur only for stable $Z$ among general {\lp es}: see \cite{KalShi01} for details. 
It is a matter of no importance that the target time period is $[0,1]$ from the very beginning: 
enlarging the length of the period is reflected 
in making $\int_{0}^{1}\sig_{s}^{\beta}ds$ larger through the process $\sig$.
\label{hm:rem_ssi_tc}
\qed}\end{rem}

\medskip

In the sequel, we fix a true value of $(\mfp,\beta,\sig_{\cdot})$. 
Note that the scaling property and the independence between $\sig$ and $Z$ give the $\sig$-conditional distribution
\begin{equation}
\mcl(X_{1}|\sig_{\cdot})=S'_{\beta}\bigg(\mfp,~\int_{0}^{1}\sig_{s}^{\beta}ds\bigg).
\nn
\end{equation}
Our objective is to estimate the following quantities under \eqref{hm:pb_region} from a sample $(X_{j/n})_{j=1}^{n}$:
\begin{description}
\item[(A)] $\theta=(\mfp,\beta,\sig)$ when $\sig_{t}\equiv \sig>0$ is constant;
\item[(B)] $\theta=(\mfp,\beta,\int_{0}^{1}\sig_{s}^{\beta}ds)$ 
when $\sig_{\cdot}$ is time-varying.
\end{description}
We will provide an explicit estimator of $\theta$ in each case, 
which is asymptotically (mixed) normal at rate $\sqrt{n}$. 
The case (A) is obviously included in the case (B), however, 
the case (B) will exhibit an essentially different feature from the case (A), 
requiring a separate argument. In both cases:
\begin{itemize}
\item We first construct a simple estimator of $(\mfp,\beta)$ with leaving $\sig_{\cdot}$ unknown;
\item Then, using the estimates of $(\mfp,\beta)$ we construct an estimator of $\sig_{\cdot}$ 
or $\int_{0}^{1}\sig_{s}^{\beta}ds$. 
\end{itemize}
It has been known that we can estimate the integrated-scale by means of MPV for pure-jump processes; 
see \cite{Tod13} and \cite{Woe03-3} together with the references therein. 
Thanks to the assumed independence between $\sig_{\cdot}$ and $Z$, 
our estimator of $(\mfp,\beta)$ can be computed without using information of $\sig$.

\begin{rem}{\rm 
For the model of the from \eqref{hm:X_def} with symmetric jumps and non-random $\sig$, 
\cite{ZhaWu09} studied the logarithmic-moment estimation of $\beta$ 
and the kernel based median-quantile estimation of $\sig$. 
Under the smoothness conditions on the sample path of $\sig$, 
they derived the asymptotic normality for $\beta$ at rate $\sqrt{n}$, 
and for $\sig_{\cdot}$ the Bahadur-Kiefer type representation, the point-wise asymptotic normality, 
and the maximal-deviation type distributional result.
\label{hm:rem_ZhaWu09}
}\qed\end{rem}

Conditional on the process $\sig_{\cdot}$, 
the random variables $\dd_{j}X$ are mutually independent and for each $n\in\mbbn$ and $j\le n$
\begin{equation}
\mcl(\dd_{j}X|\sig_{\cdot})=S'_{\beta}\bigg(\mfp,~\int_{(j-1)/n}^{j/n}\sig_{s}^{\beta}ds\bigg).
\nonumber
\end{equation}
Let us note the following two basic facts, which we will use several times without notice.

\begin{itemize}

\item Since we are concerned here with the weak property, we may proceed as if
\begin{equation}
\dd_{j}X=(\bar{\sig}_{j}/n)^{1/\beta}\zeta_{j}\quad\text{a.s.},
\label{hm:skewslp_expression}
\end{equation} 
where $\bar{\sig}_{j}:=n\int_{(j-1)/n}^{j/n}\sig_{s}^{\beta}ds$ and 
$(\zeta_{j})$ is a $S'_{\beta}(\mfp,1)$-i.i.d. sequence.

\item Let $\Lam_{n}$ be a sequence of essentially bounded functionals 
on the product space of the path spaces of $Z$ and $\sig$, 
and let $\lam_{n}(\sig):=\int\Lam_{n}(\sig,z)P^{Z}(dz)$, 
where $P^{\xi}$ denotes the image measure of a random element $\xi$. 
Assume that $\lam_{n}(\sig)\cil\lam_{0}(\sig)$ for some functional $\lam_{0}$ on the path space of $\sig$. 
By the independence between $Z$ and $\sig$, a disintegration argument gives 
$\lam_{n}(\sig)=E\{\Lam_{n}(\sig,Z)|\sig\}$ a.s., 
and moreover the boundedness of $\{\lam_{n}(\sig)\}_{n\in\mbbn}$ 
yields convergence of moments: $E\{\Lam_{n}(\sig,Z)\}=\int\lam_{n}(\sig)P^{\sig}(d\sig)\to
\int\lam_{0}(\sig)P^{\sig}(d\sig)$. That is to say, 
we may treat $\sig$ a non-random process in the process of deriving weak limit theorems. 
In particular, if some functionals $S_{n}(\sig',Z)$ for any fixed $\sig'$ are asymptotically centered normal 
with covariance matrix $V(\sig')$, then it automatically follows that 
the limit distribution of $S_{n}(\sig,Z)$ has the characteristic function 
$u\mapsto \int\exp\{-V(\sig)[u,u]/2\}P^{\sig}(d\sig)$ corresponding to the centered mixed normal distribution 
with random covariance matrix $V(\sig)$. 
\end{itemize}

For convenience, in the rest of this section 
we will use the symbol $N_{p}(\cdot,\cdot)$ also for the mixed-normal distributions.


\subsubsection{Preliminaries}\label{hm:sec_skewlp_preliminaries}

\runinhead{Lower-order fractional moments and logarithmic moments.}
The closed-form expressions of the $r$th absolute and $r'$th signed-absolute moments of $S'_{\beta}(\mfp,1)$ 
can be found in \cite{Kur01}: for any $r\in(-1,\beta)$ and $r'\in(-2,-1)\cup(-1,\beta)$,
\begin{align}
\mu_{r}
&:=\frac{\Gam(1-r/\beta)}{\Gam(1-r)}\frac{\cos(r\xi/\beta)}{\cos(r\pi/2)|\cos(\xi)|^{r/\beta}},
\label{hm:moment1} \\
\nu_{r'}
&:=\frac{\Gam(1-r'/\beta)}{\Gam(1-r')}\frac{\sin(r'\xi/\beta)}{\sin(r'\pi/2)|\cos(\xi)|^{r'/\beta}},
\label{hm:moment2}
\end{align}
where
\begin{equation}
\xi:=\beta\pi(\mfp-1/2).
\label{hm:xi_def}
\end{equation}
Therefore
\begin{equation}
E(|\zeta|^{r})=\sig^{r/\beta}\mu_{r},\quad 
E\{|\zeta|^{r'}\mathrm{sgn}(\zeta)\}=\sig^{r'/\beta}\nu_{r'}
\nonumber
\end{equation}
if $\mcl(\zeta)=S'_{\beta}(\mfp,\sig)$.

\runinhead{Empirical sign statistics.} 
To estimate $\mfp$, we make use of
\begin{equation}
\hat{\mfp}_{n}:=\frac{1}{2}(H_{n}+1),
\label{hm:rho_n}
\end{equation}
where
\begin{equation}
H_{n}:=\frac{1}{n}\sumj\sgn(\dd_{j}X).
\nonumber
\end{equation}
Then
\begin{equation}
\sqrt{n}(\hat{\mfp}_{n}-\mfp)=\sum_{i=1}^{n}\frac{1}{2\sqrt{n}}\{\mathrm{sgn}(\zeta_{i})-(2\mfp-1)\},
\label{hm:es1}
\end{equation}
from which we immediately deduce the asymptotic normality
\begin{equation}
\sqrt{n}(\mfp_{n}-\mfp)\cil N_{1}\left(0,\mfp(1-\mfp)\right).
\nonumber
\end{equation}
A nice feature is that the asymptotic variance of $\hat{\mfp}_{n}$ solely depends on $\mfp$, 
directly enabling us to provide a confidence interval of $\mfp$. 
It will be seen in Section \ref{hm:sec_skewslp_sim} that 
$\hat{\mfp}_{n}$ exhibits, despite of its simplicity, good finite-sample performance.

\runinhead{Stochastic expansion of MPV.}
Let $m\in\mbbn$ and pick a multi-index $r=(r_{1},\dots,r_{m})\subset\mbbrp^{m}$ such that
\begin{equation}
r_{+}:=\sum_{l=1}^{m}r_{l}>0,\qquad \max_{l\le m}r_{l}<\beta/2.
\label{hm:mpv_r_cond}
\end{equation}
Then the $r$th MPV is defined by
\begin{equation}
M_{n}(r)=\frac{1}{n}\sum_{j=1}^{n-m+1}\prod_{l=1}^{m}|n^{1/\beta}\dd_{j+l-1}X|^{r_{l}}.
\nn
\end{equation}
By the equivalent expression \eqref{hm:skewslp_expression}, we may write
\begin{equation}
M_{n}(r)=\frac{1}{n}\sum_{j=1}^{n-m+1}
\prod_{l=1}^{m}\bar{\sig}_{j+l-1}^{r_{l}/\beta}|\zeta_{j+l-1}|^{r_{l}}.
\nn
\end{equation}
Observe that
\begin{equation}
\sqrt{n}\big\{M_{n}(r)-\mu(r;\mfp,\beta)\sig^{\ast}_{r_{+}}\big\}
=\sum_{j=1}^{n-m+1}\frac{1}{\sqrt{n}}\chi'_{nj}(r)+R_{n}(r),
\nonumber
\end{equation}
where
\begin{align}
& \mu(r;\mfp,\beta):=\prod_{l=1}^{m}\mu_{r_{l}},\qquad
\sig^{\ast}_{q}:=\int_{0}^{1}\sig_{s}^{q}ds,\quad q>0, \nn\\
& \chi'_{nj}(r):=\bigg(\prod_{l=1}^{m}\bar{\sig}_{j+l-1}^{r_{l}/\beta}\bigg)
\bigg(\prod_{l=1}^{m}|\zeta_{j+l-1}|^{r_{l}}-\mu(r;\mfp,\beta)\bigg), \nn\\
& R_{n}(r):=\mu(r;\mfp,\beta)\bigg\{\sum_{j=1}^{n-m+1}\frac{1}{\sqrt{n}}\bigg(
\prod_{l=1}^{m}\bar{\sig}_{j+l-1}^{r_{l}/\beta}-\sig_{(j-1)/n}^{r_{+}}\bigg) \nonumber \\
& {}\qquad\qquad
+\sum_{j=1}^{n-m+1}\sqrt{n}\int_{(j-1)/n}^{j/n}(\sig_{(j-1)/n}^{r_{+}}-\sig_{s}^{r_{+}})ds\bigg\}
+O_{p}\bigg(\frac{1}{\sqrt{n}}\bigg).
\nonumber
\end{align}
Then, proceeding as in \cite{BGJPS06} or \cite{Woe07}, we can deduce that $R_{n}(r)\cip 0$; 
recall that we are assuming that $\sig$ is {\cadlag}. 
Further, straightforward but messy computations lead to 
\begin{equation}
\sum_{j=1}^{n-m+1}\frac{1}{\sqrt{n}}\chi'_{nj}(r)
=\sum_{j=m}^{n}\frac{1}{\sqrt{n}}\chi_{nj}(r)+o_{p}(1),
\nonumber
\end{equation}
where
\begin{equation}
\chi_{nj}(r):=\bigg(\prod_{l=1}^{m}\bar{\sig}_{j-m+l}^{r_{l}/\beta}\bigg)
\sum_{q=1}^{m}\left\{\bigg(\prod_{l=1}^{q-1}|\zeta_{j+l-q}|^{r_{l}}\bigg)
\bigg(\prod_{l=q+1}^{m}\mu_{r_{l}}\bigg)(|\zeta_{j}|^{r_{q}}-\mu_{r_{q}})\right\}.
\nonumber
\end{equation}
Thus we arrive at the stochastic expansion
\begin{equation}
\sqrt{n}\big\{M_{n}(r)-\mu(r;\mfp,\beta)\sig^{\ast}_{r_{+}}\big\}
=\sum_{j=m}^{n}\frac{1}{\sqrt{n}}\chi_{nj}(r)+o_{p}(1).
\label{hm:mpv2}
\end{equation}

\runinhead{A basic limit theorem.}
Let $r=(r_{l})_{l=1}^{m}$ be as in \eqref{hm:mpv_r_cond}, 
and also let $r'=(r'_{l})_{l=1}^{m}$ be another vector satisfying the same conditions. 
In what follows we set $r_{+}=r'_{+}=p$ for some $p>0$. 
We want to derive the limit distribution of the random vectors
\begin{equation}
S_{n}(r,r'):=\sqrt{n}\left(
\begin{array}{ccc}
H_{n} &-& (2\mfp-1)  \\
M_{n}(r) &-& \mu(r;\mfp,\beta)\sig^{\ast}_{p} \\
M_{n}(r') &-& \mu(r';\mfp,\beta)\sig^{\ast}_{p}
\end{array}
\right),
\nonumber
\end{equation}
which will serve as a basic tool for our purpose. 
From (\ref{hm:es1}) and (\ref{hm:mpv2}) we have
\begin{equation}
S_{n}(r,r')=\sum_{j=m}^{n}\frac{1}{\sqrt{n}}\left(
\begin{array}{c}
\mathrm{sgn}(\zeta_{j})-(2\mfp-1) \\
\chi_{nj}(r) \\
\chi_{nj}(r')
\end{array}
\right)+o_{p}(1)=:\sum_{j=m}^{n}\frac{1}{\sqrt{n}}\gam_{nj}+o_{p}(1).
\nonumber
\end{equation}
For the term $\sum_{j=m}^{n}n^{-1/2}\gam_{nj}$, 
we can apply the central limit theorem for martingale difference arrays (cf. \cite{Dvo77}), 
where the underlying filtration may be taken as $(\mcg_{n,j})_{j\le n}$ 
with $\mcg_{n,j}:=\sig(\zeta_{k}:k\le j)$; 
recall that we may now proceed as if the process $\sig_{\cdot}$ is non-random. 
The Lindeberg condition is easily verified 
under the condition $\max_{l\le m}(r_{l}\vee r'_{l})<\beta/2$. 
Concerning convergence of the quadratic characteristic, it is not difficult to prove that
\begin{equation}
\frac{1}{n}\sum_{j=m}^{n}E\big(\gam_{nj}^{\otimes 2}\big|\mcg_{n,j-1}\big)
\cip\Sig(\mfp,\beta,\sig_{\cdot}):=
\left(
\begin{array}{ccc}
4\mfp(1-\mfp) & A(r)\sig^{\ast}_{r_{+}} & A(r')\sig^{\ast}_{r'_{+}} \\
 & B(r,r)\sig^{\ast}_{2r_{+}} & B(r,r')\sig^{\ast}_{r_{+}+r_{+}'} \\
\text{{\rm sym.}} & & B(r',r')\sig^{\ast}_{2r_{+}'}	
\end{array}
\right),
\nonumber
\end{equation}
where
\begin{align}
A(r)&=\sum_{q=1}^{m}\bigg(\prod_{1\le l\le m, l\ne q}\mu_{r_{l}}\bigg)\{\nu_{r_{q}}-(2\mfp-1)\mu_{r_{q}}\},
\nonumber \\
B(r,r')&=\prod_{l=1}^{m}\mu_{r_{l}+r'_{l}}-(2m-1)\prod_{l=1}^{m}\mu_{r_{l}}\mu_{r'_{l}} \nonumber \\
&{}\qquad+\sum_{q=1}^{m-1}\bigg\{\bigg(\prod_{l=1}^{m-q}\mu_{r'_{l}}\bigg)
\bigg(\prod_{l=m-q+1}^{m}\mu_{r'_{l}+r_{l-m+q}}\bigg)\bigg(\prod_{l=q+1}^{m}\mu_{r_{l}}\bigg) \nonumber \\
&{}\qquad\qquad+\bigg(\prod_{l=1}^{m-q}\mu_{r_{l}}\bigg)
\bigg(\prod_{l=m-q+1}^{m}\mu_{r_{l}+r'_{l-m+q}}\bigg)\bigg(\prod_{l=q+1}^{m}\mu_{r'_{l}}\bigg)\bigg\},
\nonumber
\end{align}
with obvious analogues $A(r')$, $B(r,r)$, and $B(r',r')$. 
Thus the limit distribution of $S_{n}(r,r')$ is a normal variance mixture 
with conditional covariance matrix $\Sig(\mfp,\beta,\sig_{\cdot})$:
\begin{equation}
S_{n}(r,r')\cil N_{3}\big(0,\Sig(\mfp,\beta,\sig_{\cdot})\big).
\label{hm:mpvlim2}
\end{equation}
Note that $\Sig(\mfp,\beta,\sig_{\cdot})$ depends on the process $\sig_{\cdot}$ only through the integrated quantities 
$\sig^{\ast}_{r_{+}}$, $\sig^{\ast}_{r'_{+}}$, $\sig^{\ast}_{2r_{+}}$, $\sig^{\ast}_{2r'_{+}}$, 
and $\sig^{\ast}_{r_{+}+r'_{+}}$, for which, as will be mentioned later, 
we can readily provide consistent estimators by means of MPV.

\medskip

We write $(\hat{\mfp}_{n},\hat{\beta}_{p,n},\hat{\sig}^{\ast}_{p,n})$ for 
the solution to the estimating equation
\begin{equation}
\left(
\begin{array}{ccc}
H_{n} &-& (2\mfp-1)  \\
M_{n}(r) &-& \mu(r;\mfp,\beta)\sig^{\ast}_{p} \\
M_{n}(r') &-& \mu(r';\mfp,\beta)\sig^{\ast}_{p}
\end{array}
\right)=\left(
\begin{array}{c}
0 \\
0 \\
0
\end{array}
\right).
\label{hm:ra1}
\end{equation}
We introduce the function
\begin{equation}
F(\mfp,\beta,s):=\left(2\mfp-1,~\mu(r;\mfp,\beta)s,~\mu(r';\mfp,\beta)s\right).
\nonumber
\end{equation}
Since we are assuming that $\beta\in(1,2)$ and $\mfp\in(1-1/\beta,1/\beta)$, 
we have $\xi\in(-\pi/2,\pi/2)$, so that $\cos(\xi)>0$; recall the definition \eqref{hm:xi_def}. 
The quantities $\mu(r;\mfp,\beta)$ and 
$\mu(r';\mfp,\beta)$ are continuously differentiable with respect to $(\mfp,\beta)$, and
\begin{equation}
\nabla F(\mfp,\beta,s)=\left(
\begin{array}{ccc}
2 & 0 & 0 \\
s\p_{\mfp}\mu(r;\mfp,\beta) & s\p_{\beta}\mu(r;\mfp,\beta) & \mu(r;\mfp,\beta) \\
s\p_{\mfp}\mu(r';\mfp,\beta) & s\p_{\beta}\mu(r';\mfp,\beta) & \mu(r';\mfp,\beta)
\end{array}
\right)
\nonumber
\end{equation}
is non-singular for each $s>0$ if
\begin{equation}
\mu(r';\mfp,\beta)\p_{\mfp}\mu(r;\mfp,\beta)\ne\mu(r;\mfp,\beta)\p_{\beta}\mu(r;\mfp,\beta).
\nn
\end{equation}
We assume the non-singularity in the sequel. The delta method gives
\begin{equation}
\left(
\sqrt{n}(\hat{\mfp}_{n} - \mfp),~
\sqrt{n}(\hat{\beta}_{p,n} - \beta),~
\sqrt{n}(\hat{\sig}^{\ast}_{p,n} - \sig^{\ast}_{p})
\right)\cil N_{3}(0,V(\mfp,\beta,\sig_{\cdot})),
\label{hm:ra2}
\end{equation}
where
\begin{equation}
V(\mfp,\beta,\sig_{\cdot}):=\{\nabla F(\mfp,\beta,\sig^{\ast}_{p})\}^{-1}
\Sig(\mfp,\beta,\sig_{\cdot})\{\nabla F(\mfp,\beta,\sig^{\ast}_{p})\}^{-1\top}.
\nonumber
\end{equation}

\medskip

Now, we take $m=2$ and consider $r=(2q,0)$ and $r'=(q,q)$ with
\begin{equation}
q=p/2.
\nonumber
\end{equation}
We need $q<\beta/4$ for (\ref{hm:mpvlim2}) to be in force: 
for $\beta\in(1,2)$, a naive choice would be $q=1/4$. 
We can effectively solve (\ref{hm:ra1}) as in Section \ref{hm:sec_sslp_lfm}, 
namely, in order to compute $\hat{\beta}_{n}$ we can utilize the second and third arguments of (\ref{hm:ra1}) 
since we already have the estimator $\hat{\mfp}_{n}$ of (\ref{hm:rho_n}). 
Introduce the shorthand notation
\begin{equation}
\hat{\mu}(\cdot):=\mu(\cdot;\hat{\mfp}_{n},\hat{\beta}_{p,n}).
\nn
\end{equation}
Then, we consider the estimating equation $M_{n}(q,q)/M_{n}(2q,0)=\hat{\mu}(q,q)/\hat{\mu}(2q,0)$:
\begin{equation}
\frac{\sum_{j=1}^{n-1}|\dd_{j}X|^{q}|\dd_{j+1}X|^{q}}{\sum_{j=1}^{n}|\dd_{j}X|^{2q}}
=C_{1}(q)C_{2}(q,\hat{\mfp}_{n})
\frac{\{\Gam(1-q/\hat{\beta}_{p,n})\}^{2}}{\Gam(1-2q/\hat{\beta}_{p,n})},
\label{hm:ra3}
\end{equation}
where 
\begin{align}
C_{1}(q)&:=\frac{\Gam(1-2q)\cos(q\pi)}{\{\Gam(1-q)\cos(q\pi/2)\}^{2}},\nn\\
C_{2}(q,\hat{\mfp}_{n})&:=\frac{[\cos\{q\pi(\hat{\mfp}_{n}-1/2)\}]^{2}}{\cos\{2q\pi(\hat{\mfp}_{n}-1/2)\}}.
\nonumber
\end{align}
Since the function
\begin{equation}
g(\beta):=\frac{\{\Gam(1-q/\beta)\}^{2}}{\Gam(1-2q/\beta)}
\label{hm:added_func_g}
\end{equation}
is strictly monotone on $\beta\in(4q\vee 1,2)$, it is easy to search the root $\hat{\beta}_{p,n}$, 
which uniquely exists with probability tending to one (Figure \ref{hm:sslp_fig+1}). 
The range of $g$ becomes narrower for smaller $q$, 
so that the root $\hat{\beta}_{p,n}$ becomes too sensitive for a small change of the sample quantity 
on the left-hand side of (\ref{hm:ra3}). 

\begin{figure}[htb!]
\centering
\includegraphics[width=10cm]{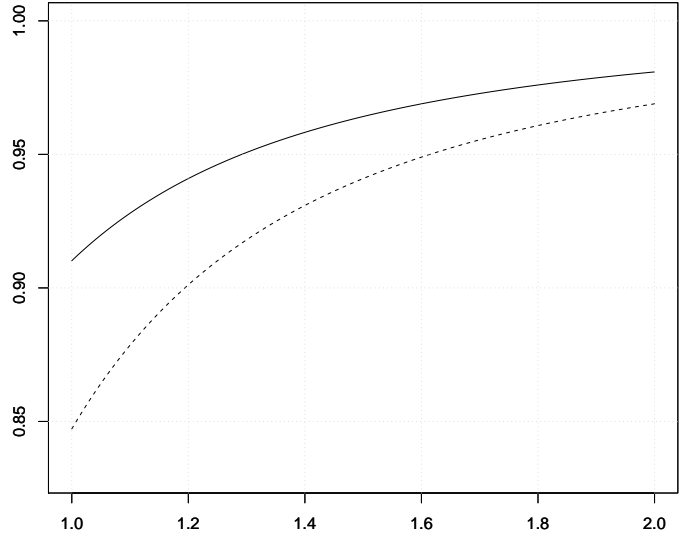}
\caption{
The function $g$ of \eqref{hm:added_func_g} on $(1,2)$ with $q=0.2$ (solid line) and $q=0.25$ (dashed line).}
\label{hm:sslp_fig+1}
\end{figure}

Thus, given a $p=2q>0$ we have got the estimates $\hat{\mfp}_{n}$ and $\hat{\beta}_{p,n}$ with leaving $\sig_{\cdot}$ unknown. 
The point here is that the bipower variation was used; 
the procedure using the first and second empirical moments as in Section \ref{hm:sec_sslp_lfm} 
is valid only when $\sig$ is constant.

We need a consistent estimator of the asymptotic covariance matrix $V(\mfp,\beta,\sig_{\cdot})$. 
Since the matrix $\Sig(\mfp,\beta,\sig_{\cdot})$ now depends on $\sig_{\cdot}$ only through 
$\sig^{\ast}_{2q}$ and $\sig^{\ast}_{4q}$, it is more accurate to use the notation 
$V(\mfp,\beta,\sig^{\ast}_{2q},\sig^{\ast}_{4q})$ instead of $V(\mfp,\beta,\sig_{\cdot})$. 
The function $V(\mfp,\beta,\sig^{\ast}_{2q},\sig^{\ast}_{4q})$ is fully explicit as 
a function of its four arguments, hence we only need to give consistent estimators of 
$\sig^{\ast}_{2q}$ and $\sig^{\ast}_{4q}$.

For example, we may proceed as follows. 
It follows from \eqref{hm:ra1} and \eqref{hm:ra2} with $p=2q$ that 
$M_{n}(2p,0)\cip\mu(2q,0)\sig^{\ast}_{2q}$. Using the estimates $(\hat{\mfp}_{n},\hat{\beta}_{p,n})$ and 
the continuous mapping theorem, we deduce that $M_{n}(2q,0)/\hat{\mu}(2q,0)\cip\sig^{\ast}_{2q}$. 
Let us remind that $\hat{\mu}(2q,0)$ can be easily computed in view of (\ref{hm:moment1}). 
Next, we replace $\beta$ by $\hat{\beta}_{p,n}$ 
in the expression $M_{n}(2q,0)=n^{2q/\beta-1}\sum_{j=1}^{n}|\dd_{j}X|^{2q}$; 
this is possible since we beforehand know that $\sqrt{n}(\hat{\beta}_{p,n}-\beta)=O_{p}(1)$. 
Thus
\begin{equation}
\hat{\sig}^{\ast}_{2q,n}:=\frac{n^{2q/\hat{\beta}_{p,n}-1}}{\hat{\mu}(2q,0)}\sum_{j=1}^{n}|\dd_{j}X|^{2q}
\cip\sig^{\ast}_{2q}.
\label{hm:ra4}
\end{equation}
By the same token, we can deduce that (still under $4q<\beta$, of course)
\begin{equation}
\hat{\sig}^{\ast}_{4q,n}:=\frac{n^{4q/\hat{\beta}_{p,n}-1}}{\hat{\mu}(2q,2q)}
\sum_{j=1}^{n-1}|\dd_{j}X|^{2q}|\dd_{j+1}X|^{2q}\cip\sig^{\ast}_{4q}.
\nonumber
\end{equation}
We conclude that $V(\hat{\mfp}_{n},\hat{\beta}_{p,n},\hat{\sig}^{\ast}_{2q,n},\hat{\sig}^{\ast}_{4q,n})\cip
V(\mfp,\beta,\sig_{\cdot})$.

\medskip

Now we turn to our main objectives (A) and (B).

\subsubsection{Case (A): Skewed stable L\'evy process}

When $\sig_{t}\equiv\sig>0$, the process $X$ is the skewed stable L\'evy process such that 
$\mcl(X_{t})=S'_{\beta}(\mfp,\sig^{\beta}t)$, and it directly follows from (\ref{hm:ra2}) that
\begin{equation}
\left(
\sqrt{n}(\hat{\mfp}_{n} - \mfp),~
\sqrt{n}(\hat{\beta}_{p,n} - \beta),~
\sqrt{n}\{(\hat{\sig}_{p,n})^{p} - \sig^{p}\}
\right)
\cil N_{3}\big(0,V(\mfp,\beta,\sig)\big),
\label{hm:gslp1}
\end{equation}
where $V(\mfp,\beta,\sig)$ explicitly depends on $(\mfp,\beta,\sig)$; recall that $p=2q<\beta/2$. 
Hence, as soon as $V(\mfp,\beta,\sig)$ is invertible we can readily apply the delta method to (\ref{hm:gslp1}) 
to formulate the joint interval estimation of $\theta=(\mfp,\beta,\sig)$ at rate $\sqrt{n}$. 
We omit the expression of the asymptotic covariance matrix of 
$\sqrt{n}(\hat{\mfp}_{n}-\mfp,\hat{\beta}_{p,n}-\beta,\hat{\sig}_{p,n}-\sig)$.

\medskip

In summary, the following multi-step estimation is feasible for any $p\in(0,\beta/2)$.
\begin{enumerate}
\item Compute the estimate $\hat{\mfp}_{n}$ of $\mfp$ by (\ref{hm:rho_n}).
\item Using the $\hat{\mfp}_{n}$, find the root $\hat{\beta}_{p,n}$ of (\ref{hm:ra3}).
\item Using the $(\hat{\mfp}_{n},\hat{\beta}_{p,n})$ thus obtained, 
estimate $\sig$ by $(\hat{\sig}^{\ast}_{p,n})^{1/p}$ via \eqref{hm:ra4}.
\end{enumerate}
A naive choice would be $p=1/2$ ($q=1/4$), hence in particular
\begin{equation}
\hat{\sig}_{1/2,n}=\left\{
\frac{n^{1/(2\hat{\beta}_{1/2,n})-1}}{\hat{\mu}(1/2,0)}\sum_{j=1}^{n}\sqrt{|\dd_{j}X|}\right\}^{2}.
\label{hm:skewslp_eq.sig1/2}
\end{equation}

\begin{rem}{\rm 
We can deal with the case of $\beta\in(0,1)$ in an analogous way, although we then have to be 
more careful about the selection of the tuning-parameter $p$. 
In this case, more suitable would be the logarithmic-moment estimator as in the symmetric-jump case considered in 
Section \ref{hm:sslp_logMEsec}; we can derive the closed-form expressions for 
$E\{(\log|\zeta|)^{k}\}$ for $\mcl(\zeta)=S'_{\beta}(\mfp,\sig)$ 
by a slight modification of those given in \cite[Section IV]{Kur01}.
}\qed\end{rem}

\subsubsection{Case (B): time-varying scale process}\label{hm:sec_tvsp}

\runinhead{Estimator.}
We can use the same estimator of $(\mfp,\beta)$ as in the previous case, hence 
it remains to construct an estimator of $\sig^{\ast}_{\beta}=\int_{0}^{1}\sig_{s}^{\beta}ds$. 
From (\ref{hm:mpvlim2}) we have
\begin{equation}
\sqrt{n}\{M_{n}(r)-\mu(r)\sig^{\ast}_{r_{+}}\}\cil N_{1}\big(0,B(r,r)\sig^{\ast}_{2r_{+}}\big).
\label{hm:rs1}
\end{equation}
In view of the condition $\max_{l\le m}r_{l}<\beta/2$, 
we need (at least) a tripower variation for setting $r_{+}=\beta$. 
Here, setting $m=3$ and
\begin{equation}
r=r(\beta)=(\beta/3,\beta/3,\beta/3),
\nonumber
\end{equation}
we will provide an estimator of $\sig^{\ast}_{\beta}$ having specific 
rate of convergence and asymptotic distribution. 
The point here is that, different from the case (A), a direct use of (\ref{hm:ra2}) 
is not sufficient to deduce the distributional result, 
because of the dependence of $(r,r')$ on $\beta$. 
In order to utilize $M_{n}(r)$ with $r$ depending on $\beta$, we need some additional arguments. 

Let
\begin{equation}
M^{\ast}_{n}(\beta):=M_{n}(r(\beta))
=\sum_{j=1}^{n-2}\prod_{l=1}^{3}|\dd_{j+l-1}X|^{\beta/3},
\nonumber
\end{equation}
which is computable as soon as we have an estimate of $\beta$. 
We will look at the statistics $M^{\ast}_{n}(\hat{\beta}_{p,n})$, 
with the estimator $\hat{\beta}_{p,n}$ constructed beforehand.

\runinhead{Bias specification.}
We have to specify the effect of ``plugging in $\hat{\beta}_{p,n}$'', that is, how the gap
\begin{equation}
\sqrt{n}\left\{M_{n}^{\ast}(r(\beta))-\mu(r(\beta);\mfp,\beta)\sig^{\ast}_{\beta}\right\}
-\sqrt{n}\left\{M_{n}^{\ast}(\hat{\beta}_{p,n})-\hat{\mu}(r(\hat{\beta}_{p,n}))\sig^{\ast}_{\beta}\right\}
\nonumber
\end{equation}
behaves asymptotically. It will turn out that the effect is significant.

\medskip

Let
\begin{equation}
x_{nj}:=\prod_{l=1}^{3}|\dd_{j+l-1}X|.
\nonumber
\end{equation}
By means of Taylor's formula
\begin{equation}
a^{x}=a^{y}+(\log a)^{y}(x-y)+(\log a)^{2}\int_{0}^{1}(1-u)a^{y+u(x-y)}du(x-y)^{2}
\nonumber
\end{equation}
applied to the function $x\mapsto a^{x}$ ($x,y,a>0$), we get
\begin{align}
& \sqrt{n}\bigg\{M^{\ast}_{n}(\hat{\beta}_{p,n})
-\mu(r(\beta);\mfp,\beta)\sig^{\ast}_{\beta}\bigg\}
\nonumber \\
&=\sqrt{n}\bigg\{M^{\ast}_{n}(\beta)-\mu(r(\beta);\mfp,\beta)\sig^{\ast}_{\beta}\bigg\}
\nn\\
&{}\qquad
+\frac{1}{3}\sqrt{n}(\hat{\beta}_{p,n}-\beta)\sum_{j=1}^{n-2}x_{nj}^{\beta/3}\log x_{nj} \nonumber \\
&{}\qquad +\bigg\{\frac{1}{3}\sqrt{n}(\hat{\beta}_{p,n}-\beta)\bigg\}^{2}
\frac{1}{\sqrt{n}}\sum_{j=1}^{n-2}
(\log x_{nj})^{2}\int_{0}^{1}(1-u)x_{nj}^{\{\beta+u(\hat{\beta}_{p,n}-\beta)\}/3}du\nn\\
&=:\mcm_{1n}+\mcm_{2n}+\mcm_{3n}.
\label{hm:rs2}
\end{align}
Let us look at the right-hand side of (\ref{hm:rs2}) termwise.

It is evident from (\ref{hm:rs1}) that
\begin{equation}
\mcm_{1n}=O_{p}(1).
\label{hm:skweslp_m1n}
\end{equation}
Letting
\begin{equation}
y_{nj}:=\prod_{l=1}^{3}|n^{1/\beta}\dd_{j+l-1}X|=n^{3/\beta}x_{nj},
\nonumber
\end{equation}
we have
\begin{align}
\sum_{j=1}^{n-2}x_{nj}^{\beta/3}\log x_{nj}
&=\frac{1}{n}\sum_{j=1}^{n-2}y_{nj}^{\beta/3}\log y_{nj}
-\frac{3}{\beta}(\log n)\frac{1}{n}\sum_{j=1}^{n-2}y_{nj}^{\beta/3}
\nonumber \\
&=O_{p}(1)-(\log n)\frac{3}{\beta}\bigg\{
\mu(r(\beta);\mfp,\beta)\sig^{\ast}_{\beta}
+O_{p}\bigg(\frac{1}{\sqrt{n}}\bigg)\bigg\} \nonumber \\
&=O_{p}(1)-(\log n)\frac{3}{\beta}\mu(r(\beta);\mfp,\beta)\sig^{\ast}_{\beta}.
\nonumber
\end{align}
It follows that
\begin{equation}
\mcm_{2n}=-(\log n)\frac{1}{\beta}\mu(r(\beta);\mfp,\beta)\sig^{\ast}_{\beta}\sqrt{n}(\hat{\beta}_{p,n}-\beta)
+O_{p}(1).
\label{hm:skweslp_m2n}
\end{equation}
Put $\mcm_{3n}=\{\sqrt{n}(\hat{\beta}_{p,n}-\beta)/3\}^{2}\mch_{n}$. 
We will prove that $\mch_{n}=o_{p}(1)$. Fix any $\ep>0$ and $\ep_{0}\in(0,\beta/2)$. Then,
\begin{align}
P(|\mch_{n}|>\ep)&\le P\left(|\hat{\beta}_{p,n}-\beta|>\ep_{0}\right)
+P\left(|\mch_{n}|>\ep,\ |\hat{\beta}_{p,n}-\beta|\le\ep_{0}\right)
\nn\\
&=:p'_{n}+p''_{n}.
\nonumber
\end{align}
Clearly $p'_{n}\to 0$ by the $\sqrt{n}$-consistency of $\hat{\beta}_{p,n}$. As for $p''_{n}$, we first note that 
\begin{equation}
\inf_{u\in[0,1]}\frac{1}{\beta}\{\beta+u(\hat{\beta}_{p,n}-\beta)\}\ge 1-\frac{\ep_{0}}{\beta}>0
\nonumber
\end{equation}
on the event $\{|\hat{\beta}_{p,n}-\beta|\le\ep_{0}\}$. Hence,
\begin{align}
p''_{n}&=P\bigg(|\hat{\beta}_{p,n}-\beta|\le\ep_{0}, \nonumber \\
&{}\qquad\frac{1}{\sqrt{n}}\sum_{j=1}^{n-2}(\log x_{nj})^{2}\int_{0}^{1}(1-u)
y_{nj}^{\{\beta+u(\hat{\beta}_{p,n}-\beta)\}/3}n^{-\{\beta+u(\hat{\beta}_{p,n}-\beta)\}/\beta}du>\ep\bigg)
\nonumber \\
&\le P\bigg(|\hat{\beta}_{p,n}-\beta|\le\ep_{0},
\nn\\
&{}\qquad n^{\ep_{0}/\beta-1/2}\frac{1}{n}\sum_{j=1}^{n-2}(\log x_{nj})^{2}
\int_{0}^{1}(1-u)y_{nj}^{\{\beta+u(\hat{\beta}_{p,n}-\beta)\}/3}du>\ep\bigg)
\nonumber \\
&\le P\bigg(n^{\ep_{0}/\beta-1/2}
\frac{1}{n}\sum_{j=1}^{n-2}\{(\log n)^{2}+(\log y_{nj})^{2}\}(1+y_{nj})^{(\beta+\ep_{0})/3}\gtrsim\ep\bigg)
\nn \\
&\le P\bigg(n^{\ep_{0}/\beta-1/2}(\log n)^{2}
\frac{1}{n}\sum_{j=1}^{n-2}\{1+(\log|\zeta_{j}\zeta_{j+1}\zeta_{j+2}|)^{2}\}
\nn\\
&{}\qquad\cdot
(1+|\zeta_{j}\zeta_{j+1}\zeta_{j+2}|)^{(\beta+\ep_{0})/3}\gtrsim\ep\bigg)
\label{hm:ssm_pe+1}\\
&\lesssim\frac{1}{\ep}n^{\ep_{0}/\beta-1/2}(\log n)^{2}.
\label{hm:ssm_pe+2}
\end{align}
Here, for \eqref{hm:ssm_pe+1} we used 
the assumption that the process $\sig$ is bounded and bounded away from zero 
(recall the expression \eqref{hm:skewslp_expression}), 
and also Markov's inequality for \eqref{hm:ssm_pe+2}; 
the latter is possible since the condition $(\beta+\ep_{0})/3<\beta/2$ implies that
\begin{equation}
E\left[\{1+(\log|\zeta_{1}\zeta_{2}\zeta_{3}|)^{2}\}(1+|\zeta_{1}\zeta_{2}\zeta_{3}|)^{(\beta+\ep_{0})/3}\right]<\infty.
\nonumber
\end{equation}
It follows that $p''_{n}\to 0$, hence $\mch_{n}=o_{p}(1)$, from which we get
\begin{equation}
\mcm_{3n}=o_{p}(1).
\label{hm:skweslp_m3n}
\end{equation}

\medskip

Now, piecing together (\ref{hm:rs2}), \eqref{hm:skweslp_m1n}, \eqref{hm:skweslp_m2n}, and \eqref{hm:skweslp_m3n} 
we arrive at the asymptotic relation
\begin{align}
& \frac{\sqrt{n}}{\log n}\bigg\{M^{\ast}_{n}(\hat{\beta}_{p,n})
-\mu(r(\beta);\mfp,\beta)\sig^{\ast}_{\beta}\bigg\}
\nn\\
&\quad =-\frac{1}{\beta}\mu(r(\beta);\mfp,\beta)
\sig^{\ast}_{\beta}\sqrt{n}(\hat{\beta}_{p,n}-\beta)+O_{p}\bigg(\frac{1}{\log n}\bigg).
\label{hm:rs3}
\end{align}
The map $(\mfp,\beta)\mapsto\mu(r(\beta);\mfp,\beta)$ is continuously differentiable. 
Using the $\sqrt{n}$-consistency of $(\hat{\mfp}_{n},\hat{\beta}_{p,n})$ and the delta method, we obtain
\begin{equation}
\mu(r(\beta);\mfp,\beta)=
\mu(r(\hat{\beta}_{p,n});\hat{\mfp}_{n},\hat{\beta}_{p,n})+O_{p}\bigg(\frac{1}{\sqrt{n}}\bigg).
\label{hm:rs4}
\end{equation} 
Substituting (\ref{hm:rs4}) in (\ref{hm:rs3}), we end up with
\begin{equation}
\frac{\sqrt{n}}{\log n}\bigg\{\frac{M^{\ast}_{n}(\hat{\beta}_{p,n})}
{\mu(r(\hat{\beta}_{p,n});\hat{\mfp}_{n},\hat{\beta}_{p,n})}
-\sig^{\ast}_{\beta}\bigg\}=-\frac{1}{\beta}\sig^{\ast}_{\beta}\sqrt{n}(\hat{\beta}_{p,n}-\beta)+O_{p}\bigg(\frac{1}{\log n}\bigg),
\label{hm:rs5}
\end{equation}
which implies that
\begin{equation}
\hat{\sig}^{\ast}_{\beta,n}:=\frac{M^{\ast}_{n}(\hat{\beta}_{p,n})}
{\mu(r(\hat{\beta}_{p,n});\hat{\mfp}_{n},\hat{\beta}_{p,n})}
\label{hm:rs6}
\end{equation}
serves as a $(\sqrt{n}/\log n)$-consistent estimator of $\sig^{\ast}_{\beta}$ 
having the asymptotic mixed normality:
\begin{equation}
\frac{\sqrt{n}}{\log n}(\hat{\sig}^{\ast}_{\beta,n}-\sig^{\ast}_{\beta})
\cil N_{1}\left(0,
~\bigg(\frac{\sig^{\ast}_{\beta}}{\beta}\bigg)^{2}V_{22}(\mfp,\beta,\sig^{\ast}_{p},\sig^{\ast}_{2p})
\right)
\nonumber
\end{equation}
where $V_{22}$ denotes the $(2,2)$th entry of $V$; 
recall that $p$ is a tuning parameter to be given {\it a priori}. 
As mentioned in Section \ref{hm:sec_skewlp_preliminaries}, 
a consistent estimator of the asymptotic random covariance matrix 
can be constructed through plugging in consistent estimators of its arguments.

\medskip

The stochastic expansion (\ref{hm:rs5}) clarifies the asymptotic linear dependence of 
$\sqrt{n}(\hat{\beta}_{p,n}-\beta)$ and $(\sqrt{n}/\log n)(\hat{\sig}^{\ast}_{\beta,n}-\sig^{\ast}_{\beta})$, 
which occurs even for constant $\sig$ if we try to estimate $(\beta,\sig^{\beta})$ instead of $(\beta,\sig)$. 
Put simply, plugging in a $\sqrt{n}$-consistent estimator of $\beta$ into the index $r$ of the MPV $M_{n}(r)$ 
slows down estimation of $\sig^{\ast}_{\beta}$ from $\sqrt{n}$ to $\sqrt{n}/\log n$. 
We refer to \cite[Theorem 3]{Tod13} for a related result.


\subsubsection{Simulation experiments}\label{hm:sec_skewslp_sim}

\runinhead{Case (A).}
We set
\begin{equation}
(\mfp,\beta)=(0.7638,1.2),~(0.5984,1.5),~(0.5467,1.7),~(0.5132,1.9)
\nonumber
\end{equation}
for the true values, with $\rho=-0.5$ and $\sig=1$ in common. 
For each value of $(\mfp,\beta,\sig)$, we set $n=500$, $1000$, $2000$, and $5000$. 
In all cases, the tuning parameter $q=1/4$, and 
$1000$ independent sample paths of $X$ are generated; 
the estimators are given by \eqref{hm:rho_n}, \eqref{hm:ra3}, and \eqref{hm:skewslp_eq.sig1/2}. 
Empirical means and empirical RMSEs based on $1000$ independent estimates are computed. 
The results are reported in Table \ref{hm:skewslp_table1}.
\begin{itemize}
\item On the one hand, $(\hat{\mfp}_{n},\hat{\beta}_{n})$ is, despite of its simplicity, rather reliable. 

\item On the other hand, variance of $\hat{\sig}_{n}$ is larger compared with those of $\hat{\mfp}_{n}$ and $\hat{\beta}_{n}$, 
while the bias seems small. Moreover, as $\beta$ gets close to $2$, 
the performance of $\hat{\sig}_{n}$ becomes better 
while that of $(\hat{\mfp}_{n},\hat{\beta}_{p,n})$ is much less affected.

\end{itemize}

We have also conducted simulations with $q$ other than $1/4$, 
and observed that a change of $q$ within its admissible region 
does not lead to a drastic change unless it is too small.

\begin{table}[htbp]
\begin{small}
\begin{center}
\begin{tabular}{lrlcccccccc}
\hline\\[-3mm]
True $\beta$ & $n$ & & \multicolumn{2}{c}{$\hat{\mfp}_{n}$} & & 
\multicolumn{2}{c}{$\hat{\beta}_{n}$} & & \multicolumn{2}{c}{$\hat{\sig}_{n}$} \\
\hline\\
1.2 &500 & & 0.7627 & (0.0186) && 1.2026 & (0.0790) && 1.1021 & (0.8717) \\
&1000& & 0.7634 & (0.0137) && 1.2031 & (0.0575) && 1.0450 & (0.4643) \\
&2000& & 0.7645 & (0.0096) && 1.2031 & (0.0437) && 1.0253 & (0.5102) \\
&5000& & 0.7636 & (0.0061) && 1.2023 & (0.0313) && 1.0123 & (0.2854) \\
&&&&&&&&&&\\
1.5 &500 & & 0.5988 & (0.0222) && 1.4929 & (0.1030) && 1.0751 & (0.4066) \\
&1000& & 0.5981 & (0.0162) && 1.5010 & (0.0757) && 1.0289 & (0.2549) \\
&2000& & 0.5986 & (0.0106) && 1.4986 & (0.0564) && 1.0284 & (0.2355) \\
&5000& & 0.5984 & (0.0073) && 1.4983 & (0.0364) && 1.0169 & (0.1516) \\
&&&&&&&&&\\
1.7 &500 & & 0.5476 & (0.0219) && 1.6810 & (0.1103) && 1.0633 & (0.2359) \\
&1000& & 0.5474 & (0.0158) && 1.6830 & (0.0823) && 1.0567 & (0.1948) \\
&2000& & 0.5472 & (0.0113) && 1.6930 & (0.0625) && 1.0308 & (0.1611) \\
&5000& & 0.5466 & (0.0070) && 1.6977 & (0.0375) && 1.0126 & (0.1022) \\
&&&&&&&&&\\
1.9 &500 & & 0.5129 & (0.0224) && 1.8553 & (0.1026) && 1.0821 & (0.1767) \\
&1000& & 0.5133 & (0.0164) && 1.8767 & (0.0808) && 1.0535 & (0.1568) \\
&2000& & 0.5131 & (0.0109) && 1.8870 & (0.0579) && 1.0330 & (0.1111) \\
&5000& & 0.5128 & (0.0073) && 1.8971 & (0.0401) && 1.0097 & (0.0809) \\
\\
\hline
\end{tabular}
\end{center}
\end{small}
\caption{Estimation results for the true parameters $(\mfp,\beta)=(0.7638,1.2)$, $(0.5984,1.5)$, 
$(0.5467,1.7)$, and $(0.5132,1.9)$ with $\sig=1$ in common for the skewed stable L\'evy processes. 
In each case, the empirical mean and the empirical RMSE (in parenthesis) are given.}
\label{hm:skewslp_table1}
\end{table}

\runinhead{Case (B).} 
Next we observe the time-varying but non-random scale
\begin{equation}
\sig_{t}^{\beta}=\frac{2}{5}\bigg\{\cos(2\pi t)+\frac{3}{2}\bigg\},
\label{hm:sim1}
\end{equation}
so that $\sig^{\ast}_{\beta}=0.6$. 
With the same choices of $(\mfp,\beta)$, $q$, and $n$ as in the case (A), 
we obtained the results in Table \ref{hm:skewslp_table2}; 
the estimator of $\sig^{\ast}_{\beta}$ here is based on (\ref{hm:rs6}). 
The estimation performance about $(\mfp,\beta)$ shows a similar tendency to the case (A), 
while $\hat{\sig}^{\ast}_{\beta,n}$ exhibits an upward bias in most cases.

\medskip

\begin{table}[htbp]
\begin{small}
\begin{center}
\begin{tabular}{lrlcccccccc}
\hline\\[-3mm]
True $\beta$ & $n$ && \multicolumn{2}{c}{$\hat{\mfp}_{n}$} 
& & \multicolumn{2}{c}{$\hat{\beta}_{n}$} & & \multicolumn{2}{c}{$\hat{\sig}^{\ast}_{\beta,n}$} \\
\hline\\
1.2&500 & & 0.7632 & (0.0179) && 1.1951 & (0.0794) && 0.6730 & (0.3857) \\
&1000& & 0.7636 & (0.0139) && 1.2042 & (0.0619) && 0.6274 & (0.3094) \\
&2000& & 0.7638 & (0.0098) && 1.2044 & (0.0472) && 0.6105 & (0.2323) \\
&5000& & 0.7641 & (0.0059) && 1.2025 & (0.0305) && 0.6029 & (0.1521) \\
&&&&&&&&&\\
1.5&500 & & 0.5978 & (0.0220) && 1.4877 & (0.1023) && 0.6697 & (0.3031) \\
&1000& & 0.5981 & (0.0159) && 1.4908 & (0.0733) && 0.6551 & (0.2488) \\
&2000& & 0.5985 & (0.0111) && 1.4960 & (0.0573) && 0.6349 & (0.2033) \\
&5000& & 0.5987 & (0.0069) && 1.4990 & (0.0376) && 0.6151 & (0.1414) \\
&&&&&&&&&\\
1.7&500 & & 0.5460 & (0.0216) && 1.6727 & (0.1038) && 0.6832 & (0.2465) \\
&1000& & 0.5465 & (0.0160) && 1.6801 & (0.0820) && 0.6714 & (0.2280) \\
&2000& & 0.5468 & (0.0113) && 1.6931 & (0.0600) && 0.6318 & (0.1607) \\
&5000& & 0.5465 & (0.0071) && 1.6988 & (0.0393) && 0.6116 & (0.1135) \\
&&&&&&&&&\\
1.9&500 & & 0.5130 & (0.0229) && 1.8440 & (0.1039) && 0.7196 & (0.2233) \\
&1000& & 0.5131 & (0.0159) && 1.8703 & (0.0823) && 0.6762 & (0.1897) \\
&2000& & 0.5138 & (0.0114) && 1.8851 & (0.0588) && 0.6412 & (0.1349) \\
&5000& & 0.5135 & (0.0068) && 1.8956 & (0.0411) && 0.6168 & (0.0998) \\
\\
\hline
\end{tabular}
\end{center}
\end{small}
\caption{Estimation results for the true parameters $(\mfp,\beta)=(0.7638,1.2)$, $(0.5984,1.5)$, 
$(0.5467,1.7)$, and $(0.5132,1.9)$ with $\sig^{\ast}_{\beta}=0.6$ of (\ref{hm:sim1}) in common. 
In each case, the empirical mean and the empirical RMSE (in parenthesis) are given.}
\label{hm:skewslp_table2}
\end{table}

Overall, except for the relatively larger variances and upward biases in estimating the integrated scale, 
our simulation results say that finite-sample performance of our estimators is reliable despite of their simplicity.


\subsection{Remark on estimation of general stable L\'evy process}\label{hm:sec_gen_slp}

So far, we have separately treated the symmetric-jump case with drift and the skewed-jump case without drift 
in Sections \ref{hm:sec_sym.slp_me} and \ref{hm:sec_skewslp_me}, respectively. 
Unfortunately, none of them can directly apply to the full stable {\lp} model 
$\mcl(X_{1})=S_{\beta}(\sig,\rho,\gam)$ with $\theta=(\beta,\sig,\rho,\gam)$. 
In this section we will briefly mention a naive but promising way built on the previous results. 
The terminal sampling time $T_{n}$ may or may not be bounded.

\medskip

We may handle the general skewed case with trend 
through some convenient transformations of the increments $(\dd_{j}X)_{j=1}^{n}$. 
Let $\beta\ne 1$ and pick any $(c_{1},\dots,c_{q})\in\mbbr^{q}\backslash\{0\}$. 
Then, it follows from \eqref{hm:ex_st1} that
\begin{align}
&\mcl\left(\sum_{k=1}^{q}c_{k}\dd_{k}X\right)
\nn\\
&=S_{\beta}\left(
h_{n}^{1/\beta}\sig\left(\sum_{k=1}^{q}|c_{k}|^{\beta}\right)^{1/\beta},~
\frac{\ds{\rho\sum_{k=1}^{q}|c_{k}|^{\beta}\sgn(c_{k})}}{\ds{\sum_{k=1}^{q}|c_{k}|^{\beta}}},~
h_{n}\gam\sum_{k=1}^{q}c_{k}\right).
\label{hm:slp_inc_trans}
\end{align}
Making use of \eqref{hm:slp_inc_trans} as in \cite{Kur01} (see also \cite[Chapter 4]{Zol86}), 
we get the following distributional identities:
\begin{align}
\mcl(\dd_{j}X-\dd_{j-1}X)&=
S_{\beta}\left(2^{1/\beta}h_{n}^{1/\beta}\sig,~0,~0\right),
\label{hm:slptrans_sym}\\
\mcl(\dd_{j+1}X+\dd_{j-1}X-2\dd_{j}X)&=
S_{\beta}\left(
(2+2^{\beta})^{1/\beta}h_{n}^{1/\beta}\sig,~\frac{2-2^{\beta}}{2+2^{\beta}}\rho,~0\right),
\label{hm:slptrans_center}\\
\mcl(\dd_{j+1}X+\dd_{j-1}X-2^{1/\beta}\dd_{j}X)&=
S_{\beta}\left(2^{2/\beta}h_{n}^{1/\beta}\sig,~0,~(2-2^{1/\beta})h_{n}\gam\right).
\label{hm:slptrans_deskew}
\end{align}
Note that the relation \eqref{hm:slp_inc_trans} generally fails to hold for $\beta=1$; 
the symmetrization \eqref{hm:slptrans_sym} is valid even for $\beta=1$, 
but \eqref{hm:slptrans_center} and \eqref{hm:slptrans_deskew} are not.

We can adopt the estimation methods discussed in Sections \ref{hm:sec_sym.slp_me} and \ref{hm:sec_skewslp_me}. 
A naive practical way for joint estimation of $\theta=(\beta,\sig,\mfp,\gam)$ would be as follows:
\begin{itemize}
\item First, we apply \eqref{hm:slptrans_sym} to estimate $(\beta,\sig)$ as in Section \ref{hm:sec_sym.slp_me};

\item Second, changing the skewness parameter to the positivity parameter 
(recall that the relation \eqref{hm:skew_relation}) and then making use of \eqref{hm:slptrans_center}, 
we apply the results presented in Section \ref{hm:sec_skewslp_me} to estimate $\mfp$;

\item Finally, in order to estimate the remaining trend parameter $\gam$ as in Section \ref{hm:sec_sym.slp_me} 
(by the sample median), we apply \eqref{hm:slptrans_deskew} with substituting 
the estimator $\hat{\beta}_{n}$ constructed in the first step 
into $\beta$ of the deskewed increments ``$\dd_{j+1}X+\dd_{j-1}X-2^{1/\beta}\dd_{j}X$''. 
\end{itemize}
To keep having rowwise independent arrays in the above scenario, 
the actual number of data must become $[n/2]$ for \eqref{hm:slptrans_sym}, 
and $[n/3]$ for \eqref{hm:slptrans_center} and \eqref{hm:slptrans_deskew}. 
The efficiency loss caused by this data-number reduction may get diminished if we look not at 
\begin{align}
& (\dd_{2l}X-\dd_{2l-1}X)_{l=1}^{[n/2]}, \quad
(\dd_{3l}X+\dd_{3l-2}X-2\dd_{3l-1}X)_{l=1}^{[n/3]},\nn\\
& (\dd_{3l}X+\dd_{3l-2}X-2^{1/\beta}\dd_{3l-1}X)_{l=1}^{[n/3]},
\nonumber
\end{align}
but at
\begin{align}
& (\dd_{j}X-\dd_{j-1}X)_{j=2}^{n},\quad 
(\dd_{j}X+\dd_{j-2}X-2\dd_{j-1}X)_{j=3}^{n}, \nn\\
& (\dd_{j}X+\dd_{j-2}X-2^{1/\beta}\dd_{j-1}X)_{j=3}^{n}.
\nonumber
\end{align}
But then, since the random variable are no longer independent even conditional on $\sig$, 
the forms of the asymptotic covariance matrices in the methods of moments discussed 
in Sections \ref{hm:sec_sym.slp_me} and \ref{hm:sec_skewslp_me} take different forms in a similar manner to \cite{Tod13}. 
Further, and more importantly, 
we need to look at asymptotic effect of plugging in $\hat{\beta}_{n}$ in the transformed increments in the final step 
for estimation of $\gam$.


\subsection{Remark on locally stable L\'evy process}\label{hm:sec_lslp}

The great advantage of the stable {\lp es} is the inherent scaling property \eqref{hm:ex_st3}, 
which enables us to {\it exactly} reduce things to those concerning i.i.d. stable random variables. 
As we have seen in the previous subsections, we do not suffer from the annoying lack of finite moments so much, 
by making use of sample median and appropriate moment fittings together with convenient transforms of the increments.

The infinite-variance tail may be too heavy in several modeling purpose. 
In view of Lemma \ref{hm:lem_ltst}, a far-reaching extension of the non-Gaussian stable L\'evy process is immediate: 
we call $X$ a {\it locally stable {\lp}} if there exist 
a constant $\mu\in\mbbr$ and a non-random positive function $\sig(h)\to 0$ as $h\to 0$ such that the linear transform
\begin{equation}
\sig(h)^{-1}(X_{h}-\mu h)\cil F
\nn
\end{equation}
for a strictly $\beta$-stable distribution $F$; specifically, all the possible cases are 
$F=S_{\beta}(\sig,\rho,0)$ for $\beta\ne 1$, and $F=S_{1}(\sig,0,\gam)$. 
Recall that the scaling function $\sig(\cdot)$ is necessarily of regular variation with index $1/\beta$ 
where $\beta\in(0,2]$, most typically $\sig(h)=h^{1/\beta}$. 
We claim that the whole locally stable {\lp es} constitute 
an important subclass of general infinite-activity L\'evy processes, 
since they can exhibit not only approximate scaling property in small-time, 
but also a variety of tail behavior of the {\lm}. 
We should note, however, that convergence of moments of $\sig(h)^{-1}(X_{h}-\mu h)$ for $h\to 0$ is quite severe. 
As a matter of fact, the convergence in $L^{2}$ cannot hold regardless of the tail behavior of $\mcl(X_{h})$: 
assume, for example, that $h^{-1/\beta}X_{h}\cil S$ with $\mcl(S)=S_{\beta}(1)$ and $E(|X_{1}|^{\beta})<\infty$. 
Then we have 
$\sup_{h>0}E(|h^{-1/\beta}X_{h}|^{q'})\lesssim\sup_{h>0}h^{1-q'/\beta}\lesssim 1$ only for $q'\le\beta$ 
(see \cite{LusPag08}), so that 
\begin{equation}
E(|h^{-1/\beta}X_{h}|^{q})\to E(|S|^{q})
\label{hm:me_oconv}
\end{equation}
may hold only when $q<\beta$. 
This is in sharp contrast to the case of Wiener process, where $(h^{-1/2}X_{h})_{h>0}$ is $L^{q}$-bounded for any $q>0$. 

By the way, we have already encountered in this chapter several concrete examples of the locally stable {\lp}: 
the inverse-Gaussian subordinator is locally half-stable, and 
the Meixner and the normal inverse-Gaussian {\lp es} are locally Cauchy. 
One of the other prominent examples is the {\it (exponentially) tempered stable {\lp}} 
(see \cite{Ros07} and the references therein), 
which has several merits from both theoretical and numerical points of view; 
we refer to \cite{KawMas11ts} for 
a comparative study of numerical recipes for generating tempered-stable random numbers 
as well as a summary of basic facts concerning the tempered stable {\lp es}. 
A detailed study of the tempered stable model with a view toward application to finance can be found in \cite{KucTap13}. 
Yet another interesting example is the {\it normal tempered stable {\lp}} \cite{BarShe02}, 
which is defined as the normal variance-mean mixture of a tempered $\beta$-stable subordinator $\tau$:
\begin{equation}
X_{t}=t\mu+\beta\tau_{t}+w_{\tau_{t}},
\label{hm:lp_nvmm}
\end{equation}
where $w$ is a standard Wiener process independent of $\tau$.

\medskip

For a pure-jump {\lp} to have the local-stable property, 
it suffices to look at the behavior of the {\lm} $\nu(dz)$ near the origin. 
It is the case especially if $\nu(dz)=g(z)dz$ in a neighborhood $U$ of the origin with the {\ld} $g$ satisfying that
\begin{equation}
g(z)=\frac{c}{|z|^{1+\beta}}\{1+g^{\natural}(z)\}
\nonumber
\end{equation}
for constants $c>0$ and $\beta\in(0,2)$ and for a continuous function $g^{\natural}$ 
which is bounded in $U$ with $\lim_{|z|\to 0}g^{\natural}(z)=0$; 
see \cite[Lemma 4.4]{Mas10} and \cite{Tod13} as well as the references therein 
for details and more general criteria. 
Further, the following two points are worth mentioning.
\begin{itemize}
\item 
If $\tau^{+}$ and $\tau^{-}$ are mutually independent 
locally $\beta_{+}$-stable and locally $\beta_{-}$-stable subordinators with no drift, 
then $X_{t}:=\tau^{+}_{t}-\tau^{-}_{t}$ is a locally $\beta$-stable {\lp} with $\beta:=\beta_{+}\vee\beta_{-}$; 
in particular, if $\beta_{+}>\beta_{-}$ (resp. $\beta_{+}<\beta_{-}$), 
then the asymptotic distribution of $h^{-1/\beta}X_{h}$ is 
spectrally positive $\beta_{+}$-stable (resp. spectrally negative $\beta_{-}$-stable), 
that is to say, the more active part is dominant.

\item Given a locally $\beta$-stable subordinator $\tau$ with no drift, 
a {\lp} $X$ of the form \eqref{hm:lp_nvmm} defines a locally $2\beta$-stable {\lp} on $\mbbr$; 
indeed, it is easy to see that
\begin{equation}
h^{-1/(2\beta)}(X_{h}-\mu h)\cil Y:=(S^{+}_{\beta})^{1/2}\eta
\nonumber
\end{equation}
for independent random variables $S^{+}_{\beta}$ and $\eta$ 
where $\mcl(S^{+}_{\beta})$ is positive strictly $\beta$-stable and $\mcl(\eta)$ is standard normal. 
The distribution $\mcl(Y)$ is symmetric $2\beta$-stable; see Sato \cite[Theorem 30.1]{Sat99} for general details.

\end{itemize}
The asymptotic singularity in joint estimation of the index $\beta$ and a scale-parameter 
(recall Theorem \ref{hm:sslp_th1}) would also emerge for locally stable {\lp es}. 
This is expected from the form of the likelihood function of the totally skewed tempered stable distribution, 
whose probability density takes the exponential-tilting form $x\mapsto ce^{-\lam x}p_{\beta}(x)$ 
with a totally skewed $\beta$-stable probability density $p_{\beta}$, 
and any general tempered stable density is a convolution of them; see \cite[Proposition 1]{BaeMee10} for details.


\section{Uniform tail-probability estimate of statistical random fields}\label{hm:sec_pldi}

In practice we may resort to some tractable $M$- or $Z$-estimation procedure 
other than likelihood based ones, in compensation for possible efficiency loss (e.g. \cite[Chapter 5]{vdV98}). 
In this section we will prove a uniform tail-probability estimate of statistical random fields, 
applying the general polynomial type large deviation inequality developed in \cite{Yos11}.

We are assuming that the parameter space $\Theta\subset\mbbr^{p}$ is a bounded convex domain. 
Throughout this section we fix a $\theta_{0}\in\Theta$ to be estimated. 
An estimator $\hat{\theta}_{n}$ of $\theta_{0}$ is usually defined to be any
\begin{equation}
\hat{\theta}_{n}\in\argmax_{\theta\in\overline{\Theta}}\mbbm_{n}(\theta)
\label{hm:M_estimator_def}
\end{equation}
for some contrast function $\mbbm_{n}:\Theta\to\mbbr$. 
By means of the argmax continuous mapping argument \cite[Section 5.9]{vdV98}, 
we can derive an asymptotic distribution of $\hat{\theta}_{n}$ by verifying 
the weak convergence of the statistical random field associated with $\mbbm_{n}$ 
(also referred to as the local criterion function) on compact sets, 
the identifiability condition on the weak limit, 
and the tightness of the suitably scaled estimator, say $A_{n}(\theta_{0})^{-1}(\hat{\theta}_{n}-\theta_{0})$, 
where the rate matrix satisfies that $A_{n}(\theta_{0})>0$ and $|A_{n}(\theta_{0})|\to 0$. 
Possible form of $\mbbm_{n}$ is strongly model-dependent and may be several things, 
and wide applicability (simplicity) and large loss of asymptotic efficiency may often occur simultaneously. 
Let us recall that we can specify an asymptotically optimal phenomenon 
if we have the asymptotic normality of the form 
$A_{n}(\theta_{0})^{-1}(\hat{\theta}_{n}-\theta_{0})\cil N_{p}\left(0,\Sig(\theta_{0})\right)$ 
for a regular estimator and if the LAN is in force (cf. Section \ref{hm:sec_la}): 
the LAN tells us which $A_{n}(\theta_{0})$ and $\Sig(\theta_{0})$ are the best possible.

We here consider $\hat{\theta}_{n}$ of \eqref{hm:M_estimator_def} with $\mbbm_{n}$ taking the form
\begin{equation}
\mbbm_{n}(\theta):=-|\mbbg_{n}(\theta)|^{2}=-\sum_{k=1}^{p}\mbbg_{k,n}(\theta)^{2}
\label{hm:MC_H_def}
\end{equation}
for a continuous random function $\mbbg_{n}=(\mbbg_{k,n})_{k=1}^{p}: \Theta\to\mbbr^{p}$, 
each $\mbbg_{k,n}(\theta)$ being $\sig(X_{t^{n}_{j}}; j\le n)$-measurable.

The estimate $\hat{\theta}_{n}$ can be any root of $\mbbg_{n}(\theta)=0$ if exists. 
For brevity, we here suppose that there exists a $\theta\in\overline{\Theta}$ such that $\mbbg_{n}(\theta)=0$ 
from the beginning. The merit of the form \eqref{hm:MC_H_def} is that 
it provides us with a unified way to deal with $Z$-estimation such as the method of moments, 
as well as $M$-estimation such as minimum-distance and quasi-likelihood type contrast functions.

We will prove an extension of the argument \cite[Theorem 3.5(a)]{Mas13as} to the two-scaling case, 
from which directly follows the $L^{q}$-boundedness of the scaled $M$-estimator; 
in particular, we can deduce the convergence of moments of the scaled $M$-estimator.


\subsection{Polynomial type large deviation inequality}\label{hm:ssec_pldi}

To handle a contrast function of the form \eqref{hm:MC_H_def} possibly having more than one scaling rate, 
we will prove a general result on the polynomial type large deviation estimate. 
For this purpose, in this section we proceed with an {\it auxiliary} setting, and will return to our main context 
in Section \ref{hm:sec_2step_pldi}.

Suppose that we are given the random function $\mbbh_{n}$ of the form
\begin{equation}
\mbbh_{n}(\zeta,\tau)=-\frac{1}{b_{n}}|\mbbs_{n}(\zeta,\tau)|^{2},
\label{hm:def_constastH}
\end{equation}
where $\theta:=(\zeta,\tau)\in\Theta_{\zeta}\times\Theta_{\tau}=:\Theta$ with 
$\Theta_{\zeta}\subset\mbbr^{p_{\zeta}}$ and $\Theta_{\tau}\subset\mbbr^{p_{\tau}}$ being bounded convex domains, 
where $(b_{n})$ is a sequence of positive constants such that $b_{n}\to\infty$, 
and where $\mbbs_{n}=(\mbbs_{k,n})_{k=1}^{p}:\Theta_{\zeta}\times\Theta_{\tau}\to\mbbr^{p}$ is a continuous random function. 
We fix a true parameter value $\theta_{0}=(\zeta_{0},\tau_{0})\in\Theta$ and let $a_{n}(\theta_{0})=a_{n}:=b_{n}^{-1/2}$ 
($b_{n}$ may depend on $\theta_{0}$); 
in the sequel, we will largely omit the dependence on the fixed argument $\theta_{0}$ from notation. 
Informally speaking, the first element ``$\zeta$'' can be estimated more quickly than the remaining ``$\tau$'', 
the latter being regarded as a nuisance parameter at first stage; 
in the single-scaling case we may ignore $\tau$ from the very beginning. 
In case where there are two different scalings for $\zeta$ and $\tau$ with $\mbbh_{n}$ being the log-likelihood 
continuously differentiable in $\theta$, 
we may think of the score function $\mbbs_{n}(\theta)=(\p_{\zeta}\mbbh_{n}(\theta),~\p_{\tau}\mbbh_{n}(\theta))$; 
in this case, the squared-norm form \eqref{hm:def_constastH} is redundant 
and we may set $\mbbh_{n}$ to be the log-likelihood itself. 
Nevertheless, as mentioned before the form \eqref{hm:def_constastH} may be more beneficial 
since it can subsume the $Z$-estimation setting.

\medskip

We now introduce the statistical random field
\begin{equation}
\mbbz_{n}(u;\tau):=\exp\left\{\mbbh_{n}(\zeta_{0}+a_{n}u,\tau)-\mbbh_{n}(\zeta_{0},\tau)\right\}
\label{hm:MC_Z_def}
\end{equation}
for $u\in\{v\in\mbbr^{p_{\zeta}};~\zeta_{0}+a_{n}v\in\Theta_{\zeta}\}$. 
Following \cite{Yos11}, we will provide a set of sufficient conditions under which 
the {\it polynomial type large deviation inequality (PLDI)} holds: 
given a constant $M>0$, there exists a constant $C_{M}>0$ such that
\begin{equation}
\sup_{n\in\mbbn}P_{0}\left(\sup_{|u|>r}~\sup_{\tau\in\Theta_{\tau}}
\mbbz_{n}(u,\tau)\ge e^{-r}\right)\le\frac{C_{M}}{r^{M}},\quad r>0,
\label{hm:PLDI_def2}
\end{equation}
where $P_{0}:=P_{\theta_{0}}$. We define $\hat{\theta}_{n}=(\hat{\zeta}_{n},\hat{\tau}_{n})$ 
to be any $\hat{\theta}_{n}\in\argmax_{\theta\in\overline{\Theta}}\mbbh_{n}(\theta)$. Let
\begin{equation}
\hat{u}_{n}:=a_{n}^{-1}(\hat{\zeta}_{n}-\zeta_{0}),
\nonumber
\end{equation}
which is to have a non-trivial asymptotic distribution, namely, $a_{n}$ is 
the right norming for estimating $\zeta_{0}$ by $\mbbh_{n}$. 
Since $\sup_{\tau\in\Theta_{\tau}}\mbbz_{n}(\hat{u}_{n},\tau)\ge 1$ by the definition of $\hat{\theta}_{n}$, 
the PLDI (\ref{hm:PLDI_def2}) gives
\begin{equation}
\sup_{n\in\mbbn}P_{0}(|\hat{u}_{n}|>r)
\le\sup_{n\in\mbbn}P_{0}\left(\sup_{|u|>r}~\sup_{\tau\in\Theta_{\tau}}
\mbbz_{n}(u,\tau)\ge 1\right)\le\frac{C_{M}}{r^{M}},\quad r>0,
\nonumber
\end{equation}
entailing the $L^{q}(P_{0})$-boundedness $\sup_{n}E_{0}(|\hat{u}_{n}|^{q})<\infty$ for $q\in(0,M)$ 
as well as the tightness of $(\hat{u}_{n})_{n}$. 
Therefore, if in particular $\hat{u}_{n}\cil\hat{u}_{0}$ for some random variable $\hat{u}_{0}$, 
then, supposing for brevity that $\hat{u}_{0}$ is defined on the original probability space, 
we immediately get the convergence of moments
\begin{equation}
E_{0}\{f(\hat{u}_{n})\}\to E\{f(\hat{u}_{0})\}
\nonumber
\end{equation}
for any measurable function $f:\mbbr^{p}\to\mbbr$ satisfying that $\lim_{|u|\to\infty}|u|^{-q}|f(u)|<\infty$. 
This greatly improves the mode of convergence of $\hat{u}_{n}$. 

\medskip

It is convenient first to state a general theorem without specific form of $\mbbs_{n}$.

\begin{ass}[Smoothness]
The random function $\mbbs_{n}(\cdot,\tau)$ for each $\tau$ 
is of class $\mcc^{3}(\Theta_{\zeta})$, $P_{0}$-a.s, and moreover, 
$\p_{\zeta}^{k}\mbbs_{n}(\cdot)$ for $k\in\{0,1,2,3\}$ 
can be continuously extended to the boundary of $\Theta$; 
we denote the extended versions by the same notations.
\label{hm:MC_A1}
\end{ass}

\begin{ass}[Bounded moments]
For every $K>0$, we have
\begin{equation}
\sup_{n\in\mbbn}E_{0}\left(\sup_{\tau\in\Theta_{\tau}}
\left|a_{n}\mbbs_{n}(\zeta_{0},\tau)\right|^{K}\right)
+\max_{0\le l\le 3}
\sup_{n\in\mbbn}E_{0}\left(\sup_{\theta\in\Theta}
\left|\frac{1}{b_{n}}\p_{\zeta}^{l}\mbbs_{n}(\theta)\right|^{K}\right)<\infty.
\nonumber
\end{equation}
\label{hm:MC_A2}
\end{ass}

\begin{ass}[Limits]
\begin{description}
\item[(a)] There exist a non-random function $\mbbs_{0}: \Theta\to\mbbr^{p}$ and 
positive constants $\chi=\chi(\theta_{0})$ and $\ep_{0}$ such that: 
$\mbbs_{0}(\zeta_{0},\tau)=0$ for every $\tau$; 
$\sup_{\theta}|\mbbs_{0}(\theta)|<\infty$; 
$|\mbbs_{0}(\theta)|^{2}\ge\chi|\zeta-\zeta_{0}|^{2}$ for every $\theta\in\Theta$; and
\begin{equation}
\sup_{n\in\mbbn}E_{0}\left\{\sup_{\theta\in\Theta}\left|b_{n}^{\ep_{0}}
\left(\frac{1}{b_{n}}\mbbs_{n}(\theta)-\mbbs_{0}(\theta)\right)\right|^{K}\right\}<\infty
\nn
\end{equation}
for every $K>0$.

\item[(b)] There exist non-random functions 
$\mbbs_{1,\infty}'(\zeta_{0},\cdot),\dots,\mbbs_{p,\infty}'(\zeta_{0},\cdot): \Theta_{\tau}\to\mbbr^{p_{\zeta}}$ 
and a positive constant $\ep_{1}$ such that: 
$\max_{1\le k\le p}\sup_{\tau}|\mbbs'_{k,0}(\zeta_{0},\tau)|<\infty$; 
the minimum eigenvalue of the matrix
\begin{equation}
\Gam_{0}(\tau):=2\sum_{k=1}^{p}\left\{\mbbs_{k,0}'(\zeta_{0},\tau)\right\}^{\otimes 2}
\nonumber
\end{equation}
is bounded away from zero uniformly in $\tau\in\Theta_{\tau}^{-}$; and
\begin{equation}
\sup_{n\in\mbbn}E_{0}\left\{\sup_{\tau\in\Theta_{\tau}}
\left|b_{n}^{\ep_{1}}\left(\frac{1}{b_{n}}\p_{\zeta}\mbbs_{k,n}(\zeta_{0},\tau)
-\mbbs_{k,0}'(\zeta_{0},\tau)\right)\right|^{K}\right\}<\infty,\qquad k=1,\dots,p.
\nn
\end{equation}

\end{description}
\label{hm:MC_A3}
\end{ass}

Now we can state our basic tool:

\begin{thm}
Under Assumptions \ref{hm:MC_A1} to \ref{hm:MC_A3}, 
the PLDI (\ref{hm:PLDI_def2}) holds for any $M>0$.
\label{hm:thm_PLDI_G}
\end{thm}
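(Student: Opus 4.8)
The plan is to apply Yoshida's general polynomial type large deviation inequality \cite{Yos11}, so the bulk of the work is to translate Assumptions \ref{hm:MC_A1}--\ref{hm:MC_A3} into the hypotheses of Yoshida's theorem, treating $\tau$ as a nuisance parameter over which a supremum is taken. First I would set up the statistical random field $\mbbz_{n}(u;\tau)$ of \eqref{hm:MC_Z_def} and perform the standard Taylor expansion of $\mbbh_{n}(\zeta_{0}+a_{n}u,\tau)-\mbbh_{n}(\zeta_{0},\tau)$ around $\zeta=\zeta_{0}$ in the $\zeta$-direction, using Assumption \ref{hm:MC_A1} to justify the third-order expansion: writing $\mbbh_{n}=-b_{n}^{-1}|\mbbs_{n}|^{2}$ and recalling $a_{n}=b_{n}^{-1/2}$, the quadratic term produces $-\frac{1}{2}\Gam_{n}(\tau)[u,u]$ with $\Gam_{n}(\tau):=2b_{n}^{-1}\sum_{k}\{\p_{\zeta}\mbbs_{k,n}(\zeta_{0},\tau)\}^{\otimes 2}-(\text{lower order})$, the linear term produces $\Del_{n}(\tau)[u]$ with $\Del_{n}(\tau):=-2a_{n}\sum_{k}(a_{n}\mbbs_{k,n}(\zeta_{0},\tau))\,(b_{n}^{-1}\p_{\zeta}\mbbs_{k,n}(\zeta_{0},\tau))$, and the cubic remainder is controlled by $b_{n}^{-1}\sup_{\theta}|\p_{\zeta}^{3}\mbbs_{n}|$ times lower derivatives.

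Next I would verify the three structural ingredients Yoshida requires. (i) \emph{Key index / nondegeneracy of the limit information}: Assumption \ref{hm:MC_A3}(b) gives $\Gam_{n}(\tau)\to_{\!} \Gam_{0}(\tau)$ uniformly in $\tau$ in $L^{K}$ for every $K$, at polynomial rate $b_{n}^{-\ep_{1}}$, with $\Gam_{0}(\tau)$ uniformly positive definite on $\Theta_{\tau}^{-}$; this yields the required uniform (in $\tau$) quadratic lower bound on the negative of the Hessian, so the exponent of $\mbbz_{n}$ is, modulo controllable errors, $\le \Del_{n}(\tau)[u]-\frac{\chi'}{2}|u|^{2}$ for large $|u|$. (ii) \emph{Moment bounds on $\Del_{n}$ and the Hessian}: Assumption \ref{hm:MC_A2} gives $\sup_{n}E_{0}(\sup_{\tau}|a_{n}\mbbs_{n}(\zeta_{0},\tau)|^{K})<\infty$ and $\sup_{n}E_{0}(\sup_{\theta}|b_{n}^{-1}\p_{\zeta}^{l}\mbbs_{n}(\theta)|^{K})<\infty$ for $l\le 3$ and all $K$, whence $\sup_{n}E_{0}(\sup_{\tau}|\Del_{n}(\tau)|^{K})<\infty$ and the same for $\sup_{\theta}|\Gam_{n}(\tau)|$ and the cubic remainder coefficient; these are exactly the tail inputs that make $C_{M}$ finite for every $M$. (iii) \emph{Global separation away from $\zeta_{0}$}: Assumption \ref{hm:MC_A3}(a) supplies a nonrandom limit $\mbbs_{0}$ with $|\mbbs_{0}(\theta)|^{2}\ge\chi|\zeta-\zeta_{0}|^{2}$ and $b_{n}^{\ep_{0}}(b_{n}^{-1}\mbbs_{n}-\mbbs_{0})$ bounded in every $L^{K}$ uniformly in $\theta$; hence $\mbbh_{n}(\theta)-\mbbh_{n}(\zeta_{0},\tau)= -b_{n}(|b_{n}^{-1}\mbbs_{n}(\theta)|^{2}-|b_{n}^{-1}\mbbs_{n}(\zeta_{0},\tau)|^{2})$, and since $b_{n}^{-1}\mbbs_{n}(\zeta_{0},\tau)\to\mbbs_{0}(\zeta_{0},\tau)=0$, this is $\le -b_{n}\chi|\zeta-\zeta_{0}|^{2}+ (\text{polynomially small in }L^{K})$, which gives the large-$|u|$ decay $\sup_{|u|>r}\sup_{\tau}\mbbz_{n}(u,\tau)\le e^{-r}$ off a set of probability $\le C_{M}r^{-M}$.

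With these verified, I would invoke the theorem of \cite{Yos11} in the parametrized form (the parameter being $\tau\in\Theta_{\tau}$, over which we take the supremum inside the probability): the combination of the uniform nondegenerate quadratic approximation near $\zeta_{0}$, the uniform-in-$\tau$ $L^{K}$ bounds on $\Del_{n}$, $\Gam_{n}$ and the higher remainders, and the uniform global separation \ref{hm:MC_A3}(a) are precisely Yoshida's sufficient conditions [PLDI], so \eqref{hm:PLDI_def2} follows for arbitrary $M>0$. Concretely one splits $\{|u|>r\}$ into a dyadic annulus decomposition $\{2^{m}r<|u|\le 2^{m+1}r\}$, bounds $\sup$ over each annulus and over $\tau$ by a chaining/Sobolev argument (here Assumption \ref{hm:MC_A1}'s $\mcc^{3}$-regularity and the derivative moment bounds of \ref{hm:MC_A2} feed the Sobolev inequality on the $(p_{\zeta}+p_{\tau})$-dimensional parameter block), and sums the resulting geometric-in-$m$ series; the uniform positive-definiteness from \ref{hm:MC_A3}(b) together with \ref{hm:MC_A3}(a) makes each annulus contribute a factor that, after taking $K$ large, beats $r^{-M}$.

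I expect the main obstacle to be the bookkeeping that makes the ``uniformly in $\tau$'' qualifier rigorous: one must carry the $\tau$-supremum through the Taylor expansion of the squared norm, through the Sobolev embedding (which requires sufficiently high moments of $\tau$-derivatives — here guaranteed only because \ref{hm:MC_A2} asserts finiteness for \emph{every} $K$, so one should take $K$ depending on the target $M$ and on $p_{\zeta}+p_{\tau}$), and through the extraction of the nondegenerate quadratic form, all while keeping the error terms at a genuine polynomial rate $b_{n}^{-(\ep_{0}\wedge\ep_{1})}$ so that they are negligible after integration. This is essentially the content of the proof of \cite[Theorem 3.5(a)]{Mas13as}, and the only genuinely new point over that reference is that the ``nuisance'' coordinate $\tau$ is not estimated at the same rate (or at all at first stage), which is handled by never localizing in $\tau$ and instead always taking $\sup_{\tau\in\Theta_{\tau}}$ before applying Markov's inequality; I would carry out that adaptation step by step and otherwise cite \cite{Yos11} and \cite{Mas13as} for the remaining routine estimates.
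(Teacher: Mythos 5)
Your proposal is correct in substance and reaches the result, but it is organized differently from the paper's argument. You reduce everything to the hypotheses of Yoshida's general theorem and then sketch the mechanism as a dyadic annulus decomposition of $\{|u|>r\}$ combined with a chaining/Sobolev bound over the parameter block; the paper instead gives a self-contained elementary proof with a single crossover scale, splitting $\{|u|\ge r\}$ into $U_{n}^{0}(r)=\{|u|\ge b_{n}^{(1-\al)/2}\}$ and $U_{n}^{1}(r)=\{r\le|u|\le b_{n}^{(1-\al)/2}\}$ for a fixed $\al\in(0,\ep_{0})$. On the far region the global identifiability of Assumption \ref{hm:MC_A3}(a) gives the deterministic lower bound $-\mbby_{0}(\zeta_{0}+\rho,\tau)\gtrsim|\rho|^{2}\ge b_{n}^{-\al}$, which beats the stochastic error of order $b_{n}^{-\ep_{0}}$ precisely because $\al<\ep_{0}$; on the moderate region the constraint $a_{n}|u|\le b_{n}^{-\al/2}$ keeps the cubic remainder $r_{n}(u;\tau)$ negligible relative to the quadratic form $\Gam_{0}(\tau)[u,u]$, and a single application of Markov's inequality with a large enough moment exponent (chosen after $M$) finishes each piece. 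Two points where your sketch is heavier than needed: the suprema over $\tau$ (and over $\theta$) are already placed \emph{inside} the expectations in Assumptions \ref{hm:MC_A2} and \ref{hm:MC_A3}, so no chaining or Sobolev embedding is required in the proof of the theorem itself --- that device enters only later, when one verifies these sup-moment bounds for a concrete estimating function of the form \eqref{hm:pldi_G_form}; and the dyadic decomposition is replaced by the explicit choice of the threshold $b_{n}^{(1-\al)/2}$ together with auxiliary exponents $\kappa_{0}\in(1-2\ep_{0},1-2\al)$ and $\kappa_{1}$, which is the only genuinely delicate bookkeeping in the paper's version. What your route buys is modularity (everything is delegated to \cite{Yos11}); what the paper's route buys is a transparent, fully quantitative two-case estimate that makes visible exactly how $\ep_{0}$, $\ep_{1}$, and the identifiability constant $\chi$ enter the constant $C_{M}$.
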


It is worth mentioning that Assumptions \ref{hm:MC_A1} to \ref{hm:MC_A3} 
do not refer to any concrete structure of the underlying model. 
We also remark that it is possible to give weaker conditions 
if we want to prove the PLDI for not every but only some specific value of $M$, 
although the resulting conditions are then somewhat more complex to write down.

Theorem \ref{hm:thm_PLDI_G} is due to \cite[Theorems 1 and 3(c)]{Yos11}, its proof being elementary but artful. 
For convenience and completeness, we give a self-contained proof.

\begin{proof}[Theorem \ref{hm:thm_PLDI_G}] 
Taylor's formula applied to (\ref{hm:MC_Z_def}) gives
\begin{equation}
\log\mbbz_{n}(u;\tau)=\Delta_{n}(\tau)[u]-\frac{1}{2}\Gam_{0}(\tau)[u,u]+r_{n}(u;\tau),
\nn
\end{equation}
where
\begin{align}
\Delta_{n}(\tau)&:=a_{n}\p_{\zeta}\mbbh_{n}(\zeta_{0},\tau),
\nn\\
r_{n}(u;\tau)&:=\frac{1}{2}\{\Gam_{0}(\tau)-\Gam_{n}(\theta)\}[u,u]
\nn\\
&{}\qquad
-\int_{0}^{1}(1-s)\int\p_{\zeta}\Gam_{n}(\zeta_{0}+sta_{n}u,\tau)
[sa_{n}u,u^{\otimes 2}]dtds,
\nn
\end{align}
with $\Gam_{n}(\theta):=-b_{n}^{-1}\p_{\zeta}^{2}\mbbh_{n}(\theta)$. 
Without loss of generality, we may and do suppose that
\begin{equation}
\ep_{0}\vee\ep_{1}<\frac{1}{2}
\nonumber
\end{equation}
for the constants $\ep_{0}$ and $\ep_{1}$ given in Assumption \ref{hm:MC_A3}. 
Fix any $M>0$ and $\al\in(0,\ep_{0})$ in what follows. 
Instead of the target region $\{u\in\mbbr^{p_{\zeta}}: |u|\ge r\}$, we will look at the following two separately:
\begin{equation}
U_{n}^{0}(r):=\left\{u: |u|\ge b_{n}^{(1-\al)/2}\right\}, \qquad 
U^{1}_{n}(r):=\left\{u: r\le|u|\le b_{n}^{(1-\al)/2}\right\}.
\nonumber
\end{equation}
To complete the proof, obviously it suffices to focus on $r>0$ and $n$ large enough. 
We will proceed with:
\begin{itemize}
\item Making use of the global identifiability condition on $U_{n}^{0}(r)$; 
\item Direct estimate of the remainder $r_{n}(u;\tau)$ on $U_{n}^{1}(r)$.
\end{itemize}
(The newly introduced threshold ``$b_{n}^{(1-\al)/2}$'' will turn out to work effectively.) 
We will denote by $C$ a generic positive constant possibly varying from line to line.

\medskip

First we look at $\sup_{u\in U_{n}^{0}(r)}\mbbz_{n}(u;\tau)$. 
According to the boundedness of $\Theta_{\zeta}$, the variable $\rho:=a_{n}u$ is bounded: $|\rho|\le C$. 
Then $|u|>b_{n}^{(1-\al)/2}$ implies that
\begin{equation}
|\rho|\ge b_{n}^{-\al/2},
\label{hm:pldi_proof_1}
\end{equation}
and also $|u|\ge r$ does
\begin{equation}
r\le Cb_{n}^{1/2}.
\label{hm:pldi_proof_2}
\end{equation}
Put $\mbby_{n}(\theta)=b_{n}^{-1}\{\mbbh_{n}(\zeta,\tau)-\mbbh_{n}(\zeta_{0},\tau)\}$ 
and $\mbby_{0}(\theta)=-|\mbbs_{0}(\zeta,\tau)|^{2}$. 
Fix any constant $\kappa_{0}\in(1-2\ep_{0},1-2\al)$, 
and observe that by using \eqref{hm:pldi_proof_1} and \eqref{hm:pldi_proof_2} we have
\begin{align}
& P_{0}\left(\sup_{u\in U_{n}^{0}(r)}~\sup_{\tau\in\Theta_{\tau}}
\mbbz_{n}(u;\tau)\ge\exp(-r^{1+\kappa_{0}})\right) \nn\\
&\le P_{0}\left(
\sup_{\rho:~b_{n}^{-\al/2}\le|\rho|\le C}~\sup_{\tau\in\Theta_{\tau}}
\mbby_{n}(\zeta_{0}+\rho,\tau)\ge-r^{1+\kappa_{0}}b_{n}^{-1}\right) \nn\\
&\le P_{0}\left(
\sup_{\theta\in\Theta}\left|b_{n}^{\ep_{0}}\{\mbby_{n}(\theta)-\mbby_{0}(\theta)\}\right|
\ge r^{1+\kappa_{0}}b_{n}^{\ep_{0}-1}\right)
\nn\\
&{}\qquad +P_{0}\left(
\sup_{\rho:~b_{n}^{-\al/2}\le|\rho|\le C}~\sup_{\tau\in\Theta_{\tau}}
\mbby_{0}(\zeta_{0}+\rho,\tau)\ge -2r^{1+\kappa_{0}}b_{n}^{-1}\right)
\nn\\
&\le P_{0}\left(
\sup_{\theta\in\Theta}\left|b_{n}^{\ep_{0}}\{\mbby_{n}(\theta)-\mbby_{0}(\theta)\}\right|
\gtrsim b_{n}^{\ep_{0}-(1-\kappa_{0})/2}\right)
\nn\\
&{}\qquad +P_{0}\left(
\inf_{\rho:~b_{n}^{-\al/2}\le|\rho|\le C}~\inf_{\tau\in\Theta_{\tau}}
(-\mbby_{0}(\zeta_{0}+\rho,\tau))\lesssim b_{n}^{-(1-\kappa_{0})/2}\right).
\label{hm:pldi_proof_5}
\end{align}
Using the estimate
\begin{align}
& \left|b_{n}^{\ep_{0}}(\mbby_{n}(\theta)-\mbby_{0}(\theta))\right| \nn\\
&\le b_{n}^{\ep_{0}-1}\left|a_{n}\mbbs_{n}(\zeta_{0},\tau)\right|^{2}
+\left(\left|\frac{1}{b_{n}}\mbbs_{n}(\theta)\right|+|\mbbs_{0}(\theta)|\right)
\left|b_{n}^{\ep_{0}}\left(\frac{1}{b_{n}}\mbbs_{n}(\theta)-\mbbs_{0}(\theta)\right)\right|,
\nonumber
\end{align}
it is straightforward under the assumptions to deduce
\begin{equation}
\sup_{n\in\mbbn}E_{0}\left(\sup_{\theta\in\Theta}
\left|b_{n}^{\ep_{0}}\{\mbby_{n}(\theta)-\mbby_{0}(\theta)\}\right|^{K}\right)<\infty.
\label{hm:pldi_proof_3}
\end{equation}
Further, under \eqref{hm:pldi_proof_1} we have
\begin{equation}
\inf_{\rho:~b_{n}^{-\al/2}\le|\rho|\le C}~\inf_{\tau\in\Theta_{\tau}}
(-\mbby_{0}(\zeta_{0}+\rho,\tau))\gtrsim\inf_{\rho:~b_{n}^{-\al/2}\le|\rho|\le C}|\rho|^{2}\ge b_{n}^{-\al},
\nn
\end{equation}
from which combined with the present choice of $\kappa_{0}$ 
it follows that the second term on the right-hand side of \eqref{hm:pldi_proof_5} becomes zero for every $n$ large enough. 
Let $M_{0}:=(M/2)\{\ep_{0}-(1-\kappa_{0})/2\}^{-1}$ and note that $b_{n}^{-1/2}\lesssim r^{-1}$. 
Substituting this together with \eqref{hm:pldi_proof_3} into \eqref{hm:pldi_proof_5}, we have
\begin{equation}
P_{0}\left(\sup_{u\in U_{n}^{0}(r)}~\sup_{\tau\in\Theta_{\tau}}
\mbbz_{n}(u;\tau)\ge e^{-r}\right)\lesssim b_{n}^{-M_{0}\{\ep_{0}-(1-\kappa_{0})/2\}}\lesssim r^{-M}
\nonumber
\end{equation}
for every large $n$ and $r$, achieving the desired bound.

\medskip

Now we turn to prove the bound
\begin{equation}
P_{0}\left(\sup_{u\in U_{n}^{1}(r)}~\sup_{\tau\in\Theta_{\tau}}
\mbbz_{n}(u;\tau)\ge e^{-r}\right)\lesssim r^{-M}
\label{hm:pldi_proof_9}
\end{equation}
for every large $n$ and $r$. 
Recalling the definition \eqref{hm:def_constastH}, we have 
$|\p_{\zeta}\Gam_{n}(\theta)|\lesssim
|b_{n}^{-1}\mbbs_{n}(\theta)|\cdot|b_{n}^{-1}\p_{\zeta}^{3}\mbbs_{n}(\theta)|
+|b_{n}^{-1}\p_{\zeta}\mbbs_{n}(\theta)|\cdot|b_{n}^{-1}\p_{\zeta}^{2}\mbbs_{n}(\theta)|$, 
from which
\begin{equation}
\sup_{n\in\mbbn}E_{0}
\left(\sup_{\tau\in\Theta_{\tau}}\left|\p_{\zeta}\Gam_{n}(\zeta_{0},\tau)\right|^{K}
\right)<\infty
\label{hm:pldi_proof_6}
\end{equation}
for every $K>0$. Moreover,
\begin{align}
& \left|b_{n}^{\ep_{1}}\left(\Gam_{n}(\zeta_{0},\tau)-\Gam_{0}(\tau)\right)\right|
\nn\\
&=\bigg|b_{n}^{\ep_{1}}
\left\{2\sum_{k=1}^{p}\left(\frac{1}{b_{n}}\p_{\zeta}\mbbs_{k,n}(\zeta_{0},\tau)\right)^{\otimes 2}-\Gam_{0}(\tau)
\right\}
\nn\\
&{}\qquad+2b_{n}^{\ep_{1}-1/2}\sum_{k=1}^{p}\left\{a_{n}\mbbs_{k,n}(\zeta_{0},\tau)\cdot
\frac{1}{b_{n}}\p_{\zeta}^{2}\mbbs_{k,n}(\zeta_{0},\tau)\right\}\bigg|
\nn\\
&\lesssim
\left(\left|\frac{1}{b_{n}}\p_{\zeta}\mbbs_{n}(\zeta_{0},\tau)\right|+
\sum_{k=1}^{p}\left|\mbbs'_{k,0}(\zeta_{0},\tau)\right|\right)
\!\cdot\!
\sum_{k=1}^{p}
\left|b_{n}^{\ep_{1}}
\left(\frac{1}{b_{n}}\p_{\zeta}\mbbs_{k,n}(\zeta_{0},\tau)-\mbbs'_{k,0}(\zeta_{0},\tau)
\right)\right|
\nn\\
&{}\qquad+
b_{n}^{\ep_{1}-1/2}\left|a_{n}\mbbs_{n}(\zeta_{0},\tau)\right|
\left|\frac{1}{b_{n}}\p_{\zeta}^{2}\mbbs_{n}(\zeta_{0},\tau)\right|.
\nn
\end{align}
This leads to
\begin{equation}
\sup_{n\in\mbbn}E_{0}\left(
\sup_{\tau\in\Theta_{\tau}}
\left|b_{n}^{\ep_{1}}\left(\Gam_{n}(\zeta_{0},\tau)-\Gam_{0}(\tau)\right)\right|^{K}
\right)<\infty.
\label{hm:pldi_proof_7}
\end{equation}
Put $\del=\{\al\wedge(2\ep_{1})\}/(1-\al)$. Then, by the inequality 
$|r_{n}(u;\tau)|\lesssim |\Gam_{n}(\zeta_{0},\tau)-\Gam_{0}(\tau)||u|^{2}
+a_{n}|u|^{3}\sup_{\tau}|\p_{\zeta}\Gam_{n}(\zeta_{0},\tau)|$, 
the following estimate holds whenever $r\le|u|\le b_{n}^{(1-\al)/2}$:
\begin{align}
r^{\del}\frac{|r_{n}(u;\tau)|}{1+|u|^{2}}
&\lesssim(b_{n}^{-\ep_{1}}r^{\del})b_{n}^{\ep_{1}}|\Gam_{n}(\zeta_{0},\tau)-\Gam_{0}(\tau)|
+(a_{n}|u|r^{\del})\sup_{\tau\in\Theta_{\tau}}|\p_{\zeta}\Gam_{n}(\zeta_{0},\tau)|
\nn\\
&\lesssim
b_{n}^{\ep_{1}}|\Gam_{n}(\zeta_{0},\tau)-\Gam_{0}(\tau)|
+\sup_{\tau\in\Theta_{\tau}}|\p_{\zeta}\Gam_{n}(\zeta_{0},\tau)|.
\label{hm:pldi_proof_10}
\end{align}
Pick any $\kappa_{1}\in(1-\del,1)$. 
Markov's inequality for the exponent $M_{1}:=M\{\del-(1-\kappa_{1})\}^{-1}$ 
together with the estimates \eqref{hm:pldi_proof_6}, \eqref{hm:pldi_proof_7} and \eqref{hm:pldi_proof_10} 
leads to
\begin{equation}
P_{0}\left(\sup_{u\in U_{n}^{1}(r)}~\sup_{\tau\in\Theta_{\tau}}
\frac{|r_{n}(u;\tau)|}{1+|u|^{2}}\ge r^{-(1-\kappa_{1})}\right)
\lesssim r^{-M_{1}\{\del-(1-\kappa_{1})\}}=r^{-M}.
\label{hm:pldi_proof_11}
\end{equation}
Moreover, for every $K>0$ H\"older's inequality gives
\begin{equation}
E_{0}\left(\sup_{\tau\in\Theta_{\tau}}|\Del_{n}(\tau)|^{K}\right)
\lesssim
E_{0}\left(\sup_{\tau\in\Theta_{\tau}}
\left|\frac{1}{b_{n}}\p_{\zeta}\mbbs_{n}(\zeta_{0},\tau)\right|^{K}
\left|a_{n}\mbbs_{n}(\zeta_{0},\tau)\right|^{K}\right)\lesssim 1.
\label{hm:pldi_proof_8}
\end{equation}
By \eqref{hm:pldi_proof_11} and \eqref{hm:pldi_proof_8}, for every large $r>0$ 
the left-hand side of \eqref{hm:pldi_proof_9} can be bounded by
\begin{align}
& P_{0}\left\{\sup_{u\in U_{n}^{1}(r)}~\sup_{\tau\in\Theta_{\tau}}
\mbbz_{n}(u;\tau)\ge\exp\left(-\frac{1}{2}r^{1+\kappa_{1}}\right)\right\}
\nn\\
&\lesssim r^{-M}+
P_{0}\Bigg\{\sup_{u\in U_{n}^{1}(r)}~\sup_{\tau\in\Theta_{\tau}}\left(
|\Del_{n}(\tau)||u|-\frac{1}{2}\Gam_{0}(\tau)[u,u]\right.
\nn\\
&{}\qquad\left.
+r^{-(1-\kappa_{1})}(1+|u|^{2})\right)\ge-\frac{r^{1+\kappa_{1}}}{2}\Bigg\}
\nn\\
&\lesssim r^{-M}+
P_{0}\left\{\sup_{u\in U_{n}^{1}(r)}~\sup_{\tau\in\Theta_{\tau}}\left(
|\Del_{n}(\tau)||u|-r^{-(1-\kappa_{1})}|u|^{2}\right)
\ge-\frac{r^{1+\kappa_{1}}}{2}(1+2r^{-2})\right\}
\nn\\
&\le r^{-M}+P_{0}\left(\sup_{\tau\in\Theta_{\tau}}|\Del_{n}(\tau)|\ge 2r^{\kappa_{1}}\right)
\nn\\
&{}\qquad+P_{0}\left\{\left(\sup_{\tau\in\Theta_{\tau}}|\Del_{n}(\tau)|\right)r
-r^{-(1-\kappa_{1})+2}\ge-\frac{r^{1+\kappa_{1}}}{2}(1+2r^{-2})\right\}
\nn\\
&\lesssim r^{-M}+P_{0}\left(\sup_{\tau\in\Theta_{\tau}}|\Del_{n}(\tau)|\gtrsim r^{\kappa_{1}}\right)
\lesssim r^{-M}.
\end{align}
Hence \eqref{hm:pldi_proof_9} follows and we are done.
\qed\end{proof}

\begin{rem}{\rm 
The differentiability of $\theta\mapsto\mbbh_{n}(\theta)$ is not essential for the PLDI. 
For example, we could derive the PLDI for the least-absolute deviation type contrast function 
$\gam\mapsto-\sumj|\dd_{j}X-h_{n}\gam|$ 
for estimating the location parameter $\gam$ of the stable {\lp} $X$ such that 
$\mcl(X_{t})=S_{\beta}(t^{1/\beta}\sig)\ast\del_{\gam t}$ 
based on a high-frequency sampling, which we discussed in Section \ref{hm:sec_sym.slp_me}. 
The maximum point of the contrast function equals the sample median $\hat{\gam}_{n}$ defined by \eqref{hm:sslp_smed}. 
In this case, under appropriate conditions we could follow exactly 
the same line of the proof of \cite[Theorem 2.2]{Mas10}, which made use of \cite[Theorem 3(a)]{Yos11}, 
to conclude that
\begin{equation}
\sup_{n\in\mbbn}E_{0}\left\{\left|\sqrt{n}h_{n}^{1-1/\beta_{0}}(\hat{\gam}_{n}-\gam_{0})\right|^{q}\right\}<\infty
\nonumber
\end{equation}
for every $q>0$.
}\qed\end{rem}


\subsection{Description of a two-step procedure}\label{hm:sec_2step_pldi}

The concrete form of the partition $\theta=(\zeta,\tau)$ is of course model dependent; 
recall that the argument $\tau$ is unnecessary if we have only single rate. 
In this section, returning to \eqref{hm:MC_H_def} we observe how Theorem \ref{hm:thm_PLDI_G} works 
for establishing the $L^{q}(P_{0})$-boundedness of the rescaled $\hat{\theta}_{n}$ under multi-scaling. 
The subsequent argument is essentially due to \cite[Section 5, Proposition 2]{Yos11}.

We focus on the case of two-different rates:
\begin{equation}
\theta=(\theta_{1},\theta_{2})\mapsto\mbbm_{n}(\theta_{1},\theta_{2})
=-|\mbbg_{n}(\theta_{1},\theta_{2})|^{2},
\label{hm:2step_pldi_eq+1}
\end{equation}
where $\theta_{i}\in\Theta_{i}\subset\mbbr^{p_{i}}$, $i=1,2$ ($p_{1}+p_{2}=p$). 
This contrast function is maximized at 
$\hat{\theta}_{n}=(\hat{\theta}_{1,n},\hat{\theta}_{2,n})\in\overline{\Theta}$. 
We set the rate matrix to be
\begin{equation}
A_{n}=\begin{pmatrix}
a_{1n}I_{p_{1}} & 0 \\
0 & a_{2n}I_{p_{2}} \\
\end{pmatrix}
\nonumber
\end{equation}
for some sequences $a_{1n}$ and $a_{2n}$ satisfying that as $n\to\infty$
\begin{equation}
a_{1n}\vee a_{2n}\to 0,\quad \frac{a_{1n}}{a_{2n}}\to 0.
\label{hm:a1-a2}
\end{equation}
The latter condition implies that $\theta_{1,0}$ is estimated more quickly than $\theta_{2,0}$, 
where $\theta_{0}=(\theta_{1,0},\theta_{2,0})$ denotes the true value of $\theta$. 
This setting is in particular relevant when the scaled estimator
\begin{equation}
\left(a_{1n}^{-1}(\hat{\theta}_{1,n}-\theta_{1,0}),~
a_{2n}^{-1}(\hat{\theta}_{2,n}-\theta_{2,0})\right)
\nn
\end{equation}
is asymptotically normally distributed, so one may keep this in mind in the rest of this section. 
Furthermore, we split the estimating function as $\mbbg_{n}=(\mbbg^{(1)}_{n},\mbbg^{(2)}_{n})$, 
where $\mbbg_{n}^{(i)}$ is $\mbbr^{p_{i}}$-valued.

With the setup described above, given a specific $\mbbg_{n}$ 
we call for the {\it two-step} application of Theorem \ref{hm:thm_PLDI_G}.

\begin{itemize}
\item In the {\it first} step, we apply Theorem \ref{hm:thm_PLDI_G} 
with setting $\zeta=\theta_{1}$, $\tau=\theta_{2}$, $a_{n}=a_{1n}$ ($b_{1n}:=a_{1n}^{-2}$), 
and $\mbbs_{n}(\zeta,\tau)=\mbbg_{n}^{(1)}(\theta_{1},\theta_{2})$, so that
\begin{equation}
\mbbh_{n}(\zeta,\tau)=-\frac{1}{b_{1n}}|\mbbg_{n}^{(1)}(\theta_{1},\theta_{2})|^{2}.
\nonumber
\end{equation}
Under appropriate conditions we deduce the $L^{q}(P_{0})$-boundedness of
\begin{equation}
\hat{u}_{n}:=a_{1n}^{-1}(\hat{\theta}_{1,n}-\theta_{1,0}).
\label{hm:hat-u}
\end{equation}
This step regards the second component $\theta_{2}$ as a nuisance parameter.

\item In the {\it second} step, having the $L^{q}(P_{0})$-boundedness of $\hat{u}_{n}$, 
we apply Theorem \ref{hm:thm_PLDI_G} with setting $\zeta=\theta_{2}$ and $a_{n}=a_{2n}$ ($b_{2n}:=a_{2n}^{-2}$), 
and $\mbbs_{n}(\zeta,\tau)=\mbbs_{n}(\zeta)=\mbbg_{n}^{(2)}(\hat{\theta}_{1,n},\theta_{2})$, 
\begin{equation}
\mbbh_{n}(\zeta,\tau)=-\frac{1}{b_{2n}}|\mbbg_{n}^{(2)}(\hat{\theta}_{1,n},\theta_{2})|^{2}.
\nonumber
\end{equation}
This is maximized at $\hat{\theta}_{2,n}$ as a function of $\theta_{2}$. 
As before, under appropriate conditions we deduce the $L^{q}(P_{0})$-boundedness of
\begin{equation}
\hat{v}_{n}:=a_{2n}^{-1}(\hat{\theta}_{2,n}-\theta_{2,0}).
\label{hm:hat-v}
\end{equation}
Note that in this step we do not have a nuisance argument $\tau$, 
hence the supremum taken over $\tau$ can be removed from the conditions. 
For checking the moment boundedness, it is convenient to partly utilize the expansion
\begin{align}
\frac{1}{b_{2n}}\mbbg_{n}^{(2)}(\hat{\theta}_{1,n},\theta_{2})
&=a_{2n}\left\{a_{2n}\mbbg_{n}^{(2)}(\theta_{0})\right\}
\nn\\
&{}+a_{1n}\left\{
\int_{0}^{1}\frac{1}{b_{2n}}\p_{\theta_{1}}
\mbbg_{n}^{(2)}\binom{\theta_{1,0}+s(\hat{\theta}_{1,n}-\theta_{1,0})}{\theta_{2,0}+s(\theta_{2}-\theta_{2,0})}ds
\right\}[\hat{u}_{n}]
\nn\\
&{}+\left\{
\int_{0}^{1}\frac{1}{b_{2n}}\p_{\theta_{2}}
\mbbg_{n}^{(2)}\binom{\theta_{1,0}+s(\hat{\theta}_{1,n}-\theta_{1,0})}{\theta_{2,0}+s(\theta_{2}-\theta_{2,0})}ds
\right\}[\theta_{2}-\theta_{2,0}],
\nn
\end{align}
together with the previously obtained $L^{q}(P_{0})$-boundedness of $(\hat{u}_{n})$; 
the three $\{\cdots\}$ terms in the right-hand side should be $O_{p}(1)$.
\end{itemize}
Building on the two-step argument, 
the $L^{q}$-boundedness of $A_{n}^{-1}(\hat{\theta}_{n}-\theta_{0})=(\hat{u}_{n},\hat{v}_{n})$ 
follows from \eqref{hm:hat-u} and \eqref{hm:hat-v}. 
It is straightforward to extend the above procedure to the case where we have more than two different rates. 

The interested reader can refer to \cite[Section 6]{Yos11} for a detailed exposition of 
the two-step argument in estimating a multi-dimensional nonlinear ergodic diffusion observed at high frequency.

\begin{rem}{\rm 
The uniform tail estimate $P_{0}(|A_{n}^{-1}(\hat{\theta}_{n}-\theta_{0})|>r)\lesssim r^{-M}$ 
entails the consistency of $\hat{\theta}_{n}$. 
Concerning our contrast function $\mbbm_{n}$ of the form \eqref{hm:MC_H_def}, 
in order to deduce the asymptotic normality under the conditions of Theorem \ref{hm:thm_PLDI_G} 
it just remains to prove a central limit theorem for 
$\{a_{1n}\mbbg_{n}^{(1)}(\theta_{0}),~a_{2n}\mbbg_{n}^{(2)}(\theta_{0})\}$, 
together with some ``separation'' condition when we have more than or equal to two rates; 
we refer to \cite[Theorem 3.5(b)]{Mas13as} for details in the single-scaling case.
}\qed\end{rem}


\bigskip

Most often, $\mbbg_{n}$ is a sum of independent random functions:
\begin{equation}
\mbbg_{n}(\theta)=\mbbg_{n}(\theta_{1},\theta_{2})=\sumj g_{n}(\dd_{j}X;\theta)=:\sumj g_{nj}(\theta),
\label{hm:pldi_G_form}
\end{equation}
for some measurable function $g_{n}=(g_{k,n})_{k=1}^{p}:\mbbr\times\Theta=\mbbr\times(\Theta_{1}\times\Theta_{2})\to\mbbr^{p}$. 
Under suitable regularity conditions on $\mbbg_{n}$ and the identifiability condition, 
it is more or less routine to verify the assumptions of Theorem \ref{hm:thm_PLDI_G}.

Still, a remark on the uniform moment estimate in Assumptions \ref{hm:MC_A2} and \ref{hm:MC_A3} is in order. 
Suppose that $u$ is continuously differentiable with $u$ and $\p_{\theta}u$ having a continuous extension to 
the compact set $\overline{\Theta}$. Then, it follows from the boundedness and convexity of $\Theta$ that 
$\sup_{\theta\in\overline{\Theta}}|u(\theta)|^{q}
\lesssim\int_{\overline{\Theta}}\{|u(\theta)|^{q}+|\p_{\theta}u(\theta)|^{q}\}d\theta$. 
This is a version of Sobolev-imbedding type integral inequalities (see \cite[Section 1.4]{Ada73}), 
based on which we have for random $u$
\begin{equation}
E_{0}\left(\sup_{\theta\in\overline{\Theta}}|u(\theta)|^{q}\right)
\lesssim \sup_{\theta\in\overline{\Theta}}E_{0}\left(|u(\theta)|^{q}\right)
+\sup_{\theta\in\overline{\Theta}}E_{0}\left(|\p_{\theta}u(\theta)|^{q}\right),
\nonumber
\end{equation}
the upper bound being much easier to handle.


\subsection{Examples}

Let us briefly illustrate application of Theorem \ref{hm:thm_PLDI_G} in situations where 
an asymptotic normality of the form
\begin{equation}
A_{n}^{-1}(\hat{\theta}_{n}-\theta_{0})\cil N_{p}\left(0,\Sig(\theta_{0})\right)
\nonumber
\end{equation}
holds; then, once the PLDI \eqref{hm:PLDI_def2} is derived for a given $M>0$, we have
\begin{equation}
E_{0}\left\{f\left(A_{n}^{-1}(\hat{\theta}_{n}-\theta_{0})\right)\right\}
\to\int f(y)\phi\left(y;0,\Sig(\theta_{0})\right)dy
\nonumber
\end{equation}
for every continuous function $f: \mbbr^{p}\to\mbbr$ such that 
$\limsup_{|u|\to\infty}|u|^{-q}|f(u)|<\infty$ for some $q<M$.


\subsubsection{Maximum-likelihood estimation}

Theorem \ref{hm:thm_PLDI_G} is applicable to 
likelihood-ratio random fields with non-degenerate asymptotic Fisher information. 
Let us consider the inverse-Gaussian subordinator $X$ such that $\mcl(X_{t})=IG(\del t,\gamma)$ 
treated in Section \ref{hm:sec_ig_subo_uan}. 
Although we know that the MLE is explicitly given by \eqref{hm:ig1}, 
it is not a trivial matter to verify convergence of its moments. 
Put $a_{1n}=1/\sqrt{n}$ and $a_{2n}=1/\sqrt{T_{n}}$. 
Recalling the log-likelihood function $\ell_{n}(\theta)$ of \eqref{hm:ll-ig}, we set 
$\mbbg_{n}(\theta)=(\mbbg_{n}^{(1)}(\theta), \mbbg_{n}^{(2)}(\theta))$ with
\begin{align}
\mbbg_{n}^{(1)}(\theta):=\p_{\del}\ell_{n}(\theta)&=\sumj\left(
\frac{1}{\del}+\gam h_{n}-\frac{\del h_{n}^{2}}{\dd_{j}X}
\right),\nn\\
\mbbg_{n}^{(2)}(\theta):=\p_{\gam}\ell_{n}(\theta)&=\sumj\left(\del h_{n}-\gam \dd_{j}X\right).
\nonumber
\end{align}
We have to verify the moment estimates concerning:
\begin{itemize}
\item $\del\mapsto\mbbg_{n}^{(1)}(\del,\gam)$ and its partial derivatives uniformly in $\gam$ 
in the {\it first} step;
\item $\gam\mapsto\mbbg_{n}^{(2)}(\hat{\del}_{n},\gam)
=\mbbg_{n}^{(2)}(\theta_{0})+X_{T_{n}}(\gam_{0}-\gam)+\hat{u}_{n}\sqrt{nh_{n}^{2}}$ 
with $\hat{u}_{n}:=\sqrt{n}(\hat{\del}_{n}-\del_{0})$ in the {\it second} step.
\end{itemize}
From the expression of $\mbbg_{n}^{(2)}(\hat{\del}_{n},\gam)$, we obviously need $nh_{n}^{2}\lesssim 1$, 
which is also necessary for verifying the moment boundedness 
$\sup_{n}E_{0}\{\sup_{\gam}|n^{-1/2}\mbbg_{n}^{(1)}(\del_{0},\gam)|^{K}\}<\infty$ in the first step 
since the (non-random) second term in the right-hand side of
\begin{equation}
\frac{1}{\sqrt{n}}\mbbg_{n}^{(1)}(\del_{0},\gam)
=\frac{1}{\sqrt{n}}\mbbg_{n}^{(1)}(\theta_{0})+\del_{0}(\gam-\gam_{0})\sqrt{nh_{n}^{2}}
\nonumber
\end{equation}
has to stay bounded uniformly in $\gam$ 
(we can apply Burkholder's inequality for the first term $n^{-1/2}\mbbg_{n}^{(1)}(\theta_{0})$). 
Through the use of the moment bound \eqref{hm:ig_inv_moments} as well as 
the explicit expressions of the moments $E_{\theta}(X_{h}^{k})$ for $k\in\{-2,-1,1,2\}$ 
mentioned in Section \ref{hm:sec_ig_subo_uan}, 
it is straightforward to verify all the conditions in Theorem \ref{hm:thm_PLDI_G}. 
Thus we can follow the two-step PLDI argument described in Section \ref{hm:sec_2step_pldi}.


\subsubsection{Gaussian quasi-likelihood estimation}

Here is an example where we do have an original smooth contrast function $\mbbm_{n}(\theta_{1},\theta_{2})$ 
but direct setting $\mbbg_{n}=(\p_{\theta_{1}}\mbbm_{n},\p_{\theta_{2}}\mbbm_{n})$ does not work properly.

Let $X$ be given by
\begin{equation}
X_{t}=bt+\sig Z_{t},
\nonumber
\end{equation}
where $Z$ is a non-degenerate {\lp} with jumps satisfying that 
$E(Z_{1})=0$, $E(Z_{1}^{2})=1$ and $E(|Z_{1}|^{q})<\infty$ for every $q>0$, 
and where the parameter of interest is $\theta=(b,\sig^{2})\in\Theta$ with compact $\overline{\Theta}\subset\mbbr\times(0,\infty)$. 
The process $Z$ may have both continuous and jump parts. 
The Gaussian quasi-maximum likelihood estimator is defined to be a maximizer 
$\hat{\theta}_{n}=(\hat{b}_{n}, \hat{\sig}^{2}_{n})$ of
\begin{equation}
\mbbm_{n}(b,\sig^{2}):=-\sumj\left\{\log\sig^{2}+\frac{1}{\sig^{2}h_{n}}(\dd_{j}X-bh_{n})^{2}\right\},
\nonumber
\end{equation}
which stems from the small-time Gaussian approximation
\begin{equation}
\mcl(\dd_{j}X)\approx N_{1}(bh_{n},\sig^{2}h_{n}).
\nn
\end{equation}
Although the approximation is wrong in the presence of jumps, 
the resulting estimator is consistent and asymptotically normal at rate $\sqrt{T_{n}}$, say
\begin{equation}
\hat{u}_{n}:=\sqrt{T_{n}}(\hat{\theta}_{n}-\theta_{0})
=\sqrt{T_{n}}\left(\hat{b}_{n}-b_{0},~\hat{\sig}^{2}_{n}-\sig^{2}_{0}\right)
\cil N_{2}\left(0,
\begin{pmatrix}
V_{bb} & V_{b\sig} \\
V_{b\sig} & V_{\sig\sig}
\end{pmatrix}
\right),
\nonumber
\end{equation}
where $V_{b\sig}\ne 0$ if and only if $\int z^{3}\nu(dz)\ne 0$, where $\nu$ denotes the {\lm} of $X$. 
See \cite[Theorem 2.9]{Mas13as} for details; although the main objective of \cite{Mas13as} 
is a possibly multivariate ergodic diffusion with jumps, 
it is trivial that its main results remain valid even for {\lp es}.

In the present setting, $Z$ is a sum of a constant (possibly zero) multiple of a standard Wiener process 
and a pure-jump {\lp} $J$ with mean zero. By means of \cite[Lemma 3.1]{AsmRos01} we have
\begin{equation}
\lim_{h\to 0}\frac{1}{h}E_{\theta_{0}}(|X_{h}|^{q})
=\left\{
\begin{array}{ll}
\ds{c+\int |z|^{2}\nu(dz)}, &\quad q=2, \\
\ds{\int |z|^{q}\nu(dz)}, &\quad q>2,
\end{array}\right.
\label{hm:pldi_mm_1}
\end{equation}
where $c\ge 0$ denotes the Gaussian variance of $X$; 
in particular, $\sup_{h>0}h^{-1}E(|J_{h}|^{q})<\infty$ for $q\ge 2$. 
We also note that
\begin{equation}
\lim_{h\to 0}\frac{1}{h}E_{\theta_{0}}(X_{h}^{k})=\int z^{k}\nu(dz)
\nonumber
\end{equation}
for $k\ge 3$ being an integer, which is easier to derive 
(through the differentiation of the characteristic function of $\mcl(X_{h})$). 
Then, by setting
\begin{equation}
\mbbg_{n}=\left(\p_{b}\mbbm_{n},~h_{n}\p_{\sig^{2}}\mbbm_{n}\right),
\label{hm:pldi_gqmle_eq1}
\end{equation}
and by making use of \eqref{hm:pldi_mm_1} repeatedly for the moment estimates, 
we can apply Theorem \ref{hm:thm_PLDI_G} to derive the PLDI concerning $\hat{u}_{n}$. 
The situation here is entirely different from the case of Gaussian $X$, 
where $(\sqrt{n}(\hat{\sig}^{2}_{n}-\sig^{2}_{0}),~\sqrt{T_{n}}(\hat{b}_{n}-b_{0}))$ is asymptotically normal. 
It is the property \eqref{hm:pldi_mm_1} that slows down the speed of estimating $\sig$; 
\eqref{hm:pldi_mm_1} implies that $(h_{n}^{-1/2}X_{h_{n}})$ is no longer $L^{q}(P_{0})$ bounded for $q>2$ 
(See Section \ref{hm:sec_lslp} for a related remark). 
This is reflected by the factor ``$h_{n}$'' in front of $\p_{\sig^{2}}\mbbm_{n}$ in \eqref{hm:pldi_gqmle_eq1}. 

The Gaussian quasi-maximum likelihood estimator is too naive to estimate the Gaussian variance and {\lm} separately. 
Nevertheless, the estimator is easy-to-use and may exhibit unexpectedly good finite-sample performance 
if $\mcl(Z_{1})$ is ``distributionally'' close to the normal; see the simulations in \cite{Mas13as}.


\subsubsection{Method of moments at single rate $\sqrt{T_{n}}$}

Typically, the method of moments \cite[Chapter 4]{vdV98} based on the law of large numbers
\begin{equation}
\frac{1}{T_{n}}\sumj g(\dd_{j}X)\cip m(\theta_{0};g)
\label{hm:pldi_mm_0}
\end{equation}
leads to an asymptotically normally distributed estimator at rate $\sqrt{T_{n}}$ for all components, 
where the non-random function $g:\mbbr\to\mbbr^{p}$ 
is to be chosen so as to make the limit $m(\theta;g)$ non-trivial function of $\theta$; 
note that the method of moments for the stable {\lp es} treated in Section \ref{hm:sec_sym.slp_me} 
does not fit \eqref{hm:pldi_mm_0}. 
The convergence \eqref{hm:pldi_mm_0} suggests the estimating equation
\begin{equation}
\mbbg_{n}(\theta):=\sumj g(\dd_{j}X)-T_{n}m(\theta;g)=0.
\label{hm:pldi_mm_add2}
\end{equation}
As seen in the gamma-subordinator case (Remark \ref{hm:rem1_gamma_uan}), 
naive moment fitting may entail information loss with slower rate of convergence. 
Nevertheless, this procedure would still deserve to be considered 
if $m(\theta;g)$ can be an explicit or numerically tractable.

In the present setting, the moment bounds involving the non-random derivatives 
$\p_{\theta}^{k}\mbbg_{n}(\theta)=-\p_{\theta}^{k}m(\theta;g)$, $k\ge 1$, are easy to verify. 
Let us make a few comments on the verification of 
$\sup_{n}E_{0}\{|T_{n}^{-1/2}\mbbg_{n}(\theta_{0})|^{K}\}<\infty$ required in Assumption \ref{hm:MC_A2}. 
We have
\begin{align}
\frac{1}{\sqrt{T_{n}}}\mbbg_{n}(\theta_{0})
&=\frac{1}{\sqrt{T_{n}}}\sumj\left(g(\dd_{j}X)-E_{0}\{g(X_{h_{n}})\}\right)
\nn\\
&{}\qquad
+\sqrt{T_{n}}\left(\frac{1}{h_{n}}E_{0}\{g(X_{h_{n}})\}-m(\theta_{0};g)\right),
\nonumber
\end{align}
which is to be $L^{q}(P_{0})$-bounded for any $q>0$. 
Obviously, under appropriate integrability conditions 
the first term in the right-hand side is $L^{q}(P_{0})$-bounded for every $q>0$. 
We need to be a little more careful in verifying
\begin{equation}
\sup_{n\in\mbbn}\left|
\sqrt{T_{n}}\left(\frac{1}{h_{n}}E_{0}\{g(X_{h_{n}})\}-m(\theta_{0};g)\right)
\right|<\infty.
\label{hm:pldi_mm_2}
\end{equation}
This may not be quite obvious for general $g$, 
but it is possible to provide a simple sufficient condition when $g$ is smooth enough.

To be specific, we suppose that
\begin{equation}
X_{t}=bt+\sqrt{c}w_{t}+J_{t}
\nonumber
\end{equation}
for a standard Wiener process $w$ and a pure-jump {\lp} $J$ 
such that $E(J_{1})=0$ and $E(|J_{1}|^{q})<\infty$ for every $q>0$. 
We may and do set $g(0)=0$, and suppose that $g$ is smooth. 
The extended infinitesimal generator of $X$ (under $P_{\theta_{0}}$) takes the form
\begin{equation}
\mca_{\theta_{0}}g(x)=b\p g(x)+\frac{1}{2}c\p^{2}g(x)
+\int\left\{g(x+z)-g(x)-\p g(x)z\right\}\nu(dz),
\label{hm:pldi_mm_4}
\end{equation}
where we implicitly suppose that the integral exist for each $x$. 
Applying It\^o's formula twice, we get the expression of the form
\begin{equation}
g(X_{h_{n}})=h_{n}\mca_{\theta_{0}}g(0)
+\int_{0}^{h_{n}}\!\!\int_{0}^{s}\mca_{\theta_{0}}^{2}g(X_{u})duds+M_{\theta_{0},h_{n}}.
\label{hm:pldi_mm_add1}
\end{equation}
Here we can deduce that $E_{0}(M_{\theta_{0},h_{n}})=0$ and 
$\sup_{t\le 1}E_{0}(|\mca_{\theta_{0}}^{3}g(X_{t})|)<\infty$ under appropriate integrability conditions, 
and in that case it follows from \eqref{hm:pldi_mm_add1} that
\begin{equation}
\left|\frac{1}{h_{n}}E_{0}\{g(X_{h_{n}})\}-\mca_{\theta_{0}}g(0)\right|\lesssim h_{n}.
\label{hm:pldi_mm_3}
\end{equation}
Having \eqref{hm:pldi_mm_3} in hand, the condition \eqref{hm:pldi_mm_2} holds 
with
\begin{equation}
m(\theta_{0};g)=\mca_{\theta_{0}}g(0)
\nonumber
\end{equation}
if $nh_{n}^{3}\lesssim 1$. 
For example, we have $m(\theta_{0};g)=\int z^{k}\nu(dz)$ for the choice $g(x)=x^{k}$, $k\ge 3$, 
which is explicit in case of the tempered stable {\lp es}; see the references cited in Section \ref{hm:sec_lslp}. 

\medskip

It is obvious that instead of \eqref{hm:pldi_mm_add2} we could more generally consider
\begin{equation}
\mbbg_{n}^{[k]}(\theta):=\sumj g(\dd_{j}X)
-T_{n}\sum_{l=1}^{k}\frac{h_{n}^{l-1}}{l!}\mca_{\theta}^{l}g(0),
\nonumber
\end{equation}
that is, we could utilize the higher-order It\^o-Taylor expansion to make the estimating function 
more closer to the genuine martingale estimating function:
\begin{equation}
\sumj\left(g(\dd_{j}X)-E_{\theta}\{g(X_{h_{n}})\}\right)
=\mbbg_{n}^{[k]}(\theta)+O_{p}(nh_{n}^{k+1}).
\nonumber
\end{equation}
Then, since
\begin{equation}
\left|\frac{1}{h_{n}}E_{0}\{g(X_{h_{n}})\}-\sum_{l=1}^{k}\frac{h_{n}^{l-1}}{l!}\mca^{l}_{\theta_{0}}g(0)\right|\lesssim h_{n}^{k}
\nonumber
\end{equation}
we can put a weaker condition on the decreasing rate of $h_{n}$, 
in compensation for a more complicated form of the estimating function.

\begin{rem}{\rm 
We refer to \cite[Section 3]{Jac07} for a detailed asymptotics for power-variation statistics, 
where, in particular, laws of large numbers for $T_{n}^{-1}\sumj g(\dd_{j}X)$ and 
central limit theorems for $\sqrt{T_{n}}\{T_{n}^{-1}\sumj g(\dd_{j}X)-h_{n}^{-1}E_{\theta_{0}}\{g(X_{h_{n}})\}\}$ 
have been derived for certain classes of $g$. When making use of them for our estimation problem, 
we do not need to verify \eqref{hm:pldi_mm_2} if $E_{\theta}\{g(X_{h_{n}})\}$ is explicit as a function of $\theta$, 
while unfortunately this is not often the case. Hence we have resorted to the approximation procedure, 
presupposing that the quantities $\mca_{\theta}^{k}g(0)$ are explicit.
}\qed\end{rem}

The paper \cite{Fig11} studied a non-parametric estimation problem of the functional parameter
\begin{equation}
\beta(\vp):=\int\vp(z)\nu(dz)
\nonumber
\end{equation}
under $T_{n}\to\infty$ for a random time change of a {\lp} $Y$ with {\lm} $\nu$ 
and for a measurable function $\vp$ such that the integral $\beta(\vp)$ is well-defined. 
There, the author suggests using the natural statistics
\begin{equation}
\hat{\beta}_{n}(\vp):=\frac{1}{T_{n}}\sumj\vp(\dd_{j}Y),
\nonumber
\end{equation}
and provides sets of conditions under which the estimator is asymptotically normal at rate $\sqrt{T_{n}}$. 
To deduce it, as in \eqref{hm:pldi_mm_2} we need information about the rate of convergence of 
$h_{n}^{-1}E_{0}\{\vp(X_{h_{n}})\}$ to its limit; see \cite[Assumption 3]{Fig11}. 
If in particular $Y$ is a {\lp} with no drift and no Gaussian component 
and if $\vp$ is smooth enough with $\vp(0)=\p\vp(0)=0$, 
then it follows from \eqref{hm:pldi_mm_4} and \eqref{hm:pldi_mm_3} that 
\begin{equation}
\left|\frac{1}{h_{n}}E\{\vp(X_{h_{n}})\}-\beta(\vp)\right|\lesssim h_{n}.
\nonumber
\end{equation}
See also \cite{Fig08-2} for a related discussion.


\section{Concluding remarks}\label{hm:sec_cr}


In this chapter we have mainly discussed parametric estimation of jump-type {\lp es} observed at high-frequency. 
Our primary interest is in explicit case studies, with special attention on the stable {\lp es}. 
That said, the specific {\lp es} treated here seems to have shown that 
possible asymptotic phenomena in small time are of wide variety. 

We close this chapter with mentioning a few related topics, which we did not touch in this chapter.


\runinhead{Fourier-transform based methods.} 
Estimation methodologies based on the empirical characteristic function or Laplace transform 
are quite popular in non-parametric estimation of {\lp es}. 
It is certainly relevant for parametric situation too. 
For the state-of-the-art of this research area, 
the interested reader can refer to the other chapters in this book: 
see \tcm{the chapter by Comte and Genon-Catalot} for non-parametric adaptive estimation method 
with regularization under high-frequency and long-term sampling, 
and \tcm{the chapter by Belomestny and {Rei\ss}} for the Fourier method under low-frequency sampling. 
See also \cite{TodTau12} for a related issue about the realized Laplace transform.


\runinhead{On model building}

One may want to do some statistical test about presence of the Gaussian and jump parts: 
is the underlying process continuous, pure-jump, or both? 
Many kinds of test statistics for this do exist, most of which are based on the multipower variation 
with or without threshold (jump-detection filter) carving up the increments of the underlying process.  
The recent development of this research area has its root in financial econometrics. 
We refer the interested reader to \cite{AitJac12} for a nice overview of many recent results in this direction; 
see also \cite{ConMan11}. 
Further, we refer to \cite{JinKonLiu13} for testing presence of the Gaussian component, 
namely, a test to support pure-jump models. 
These analyses are only utilizing small-time structure of the model, 
and applicable to a broad class of general It\^o processes. 


\runinhead{Threshold estimation}

Obviously, the coexistence of the Gaussian part and the jump part makes the parametric estimation problem 
much more difficult and cumbersome when, for example, 
trying to estimate the both parts separately via likelihood-based method. 
Researches in this direction basically build on threshold estimation to judge whether or not a jump occurred in 
each small-time interval $(t^{n}_{j-1},t^{n}_{j}]$; 
in small-time scale, big-size (resp. small-size) increments should come from a big jump 
(resp. Gaussian fluctuation and/or small jumps). 
We refer to \cite{Man04}, \cite{OgiYos11}, and \cite{ShiYos06} for theoretical results 
concerning diffusion processes with finite intensity of jumps. 

We should note that asymptotic theory normally does not tell us how to select a threshold in finite sample. 
Indeed, the selection is in general a difficult practical problem. 
As was exemplified in \cite{Shi08} through simulations, 
a naive choice of the threshold may severely deteriorate estimation performance. 

Simultaneous estimation of all the elements of the generating triplet was studied by \cite{GegSta10}, 
which may be seen as a refinement of the classical result \cite{RubTuc59}. 
Although their result are not asymptotically optimal, 
they looked at both finite- and infinite-activity cases and also discussed data-driven choice of the threshold.



\begin{acknowledgement}
I extend my thanks to Professor Jean Jacod and the anonymous referee for their detailed suggestions and comments, 
which not only brought some errors in the first draft to my attention 
but also led to substantial improvement in the exposition of this chapter. 
I am grateful to Professor Claudia Kl\"uppelberg for her encouragement. 
My thanks also go to Sangji Kim, Yuma Uehara, Shoichi Eguchi, and Yusuke Shimizu for proofreading. 
Needless to say, all remaining errors are of my own. 
Some materials of this chapter are based on the joint papers with Dr. Reiichiro Kawai, 
to whom I thank for fruitful discussions during the works. 

This work was partly supported by JSPS KAKENHI Grant Numbers 23740082, 26400204.
\end{acknowledgement}


%
%


\addcontentsline{toc}{chapter}{References}
\bibliography{Hiroki_bibs.bib}
\bibliographystyle{BibStyleLM}


\addcontentsline{toc}{chapter}{Index}

\endgroup

\end{document}